\title{Products of manifolds with fibered corners}
\author{Chris Kottke}
\address{New College of Florida, Division of Natural Sciences}
\email{ckottke@ncf.edu}
\author{Fr\'{e}d\'{e}ric Rochon}
\address{D\' epartement de Math\'ematiques, Universit\'e du Qu\'ebec \`{a} Montr\'eal}
\email{rochon.frederic@uqam.ca}
\date{\today}
\begin{document}

\begin{abstract}
Manifolds with fibered corners arise as resolutions of stratified spaces, as `many body' compactifications of vector spaces, and as compactifications of certain moduli spaces including 
those of nonabelian Yang-Mills-Higgs monopoles, 
among other settings.
However, cartesian products of manifolds with fibered corners do not generally have fibered corners themselves,
and thus fail
to reflect the appropriate structure of products of the underlying spaces in the above settings.
Here we determine a resolution of the cartesian product of fibered corners manifolds by blow-up which we call the `ordered product',
which leads to a well-behaved category of fibered corners manifolds in which the ordered product satisfies the appropriate universal property.
In contrast to the usual category of manifolds with corners, this category of fibered corners not only has all finite products, but all finite
transverse fiber products as well, and we show in addition that 
the ordered product is a natural product for wedge (aka incomplete edge) metrics and quasi-fibered boundary metrics, a class which 
includes QAC and QALE metrics.
\end{abstract}

\maketitle
\tableofcontents

\section{Introduction} \label{S:intro}
There is a large body of work in geometric analysis in which non-compact and/or singular spaces are compactified and/or resolved to manifolds with corners, on which
detailed analysis, especially of asymptotic expansions at boundary faces, gives precise results not readily available by other means.
The manifolds with corners associated to \emph{products} of such spaces are naturally of interest, not least because the typical approach 
in such problems is to consider operators via their distributional Schwartz kernels. 
%
%
Especially when there are many boundary faces involved, the identification of the `correct' 
version of the product can be a subtle problem which may involve seemingly arbitrary choices, and can often seem somewhat ad hoc.
However, in many (if not most) of these problems, the manifolds with corners additionally admit a natural \emph{fibered corners} structure 
in which each boundary hypersurface is the total space of a fiber bundle comparable in a precise sense to its neighboring bundles at the corners. 
We show here that consideration of this additional structure leads to a remarkably well-behaved theory of products and fiber products.

Fibered corners structures arise in particular in two settings: the first setting is the resolution of \emph{(smoothly) stratified spaces} \cite{AM,ALMP,ALMP2,Albin}, which are often equipped with \emph{wedge (or `iterated incomplete edge') metrics}, Riemannian metrics degenerating conically along the strata in an iterated fashion. 
The second setting is typified in simplest form by \emph{many body spaces}, which are vector spaces which have been radially compactified and subsequently blown up along the boundaries of a family of linear subspaces \cite{Vasy,Kmb,AMN}. 
Euclidean metrics on the original vector spaces become \emph{quasi-asymptotically conic} (QAC) metrics on the associated many body spaces, and this asymptotic geometry may be generalized to the manifold setting in the form of QAC manifolds \cite{DM} and even more generally in the form of  `quasi-fibered boundary' (QFB, aka $\Phi$) manifolds \cite{CDR},
extending the \emph{scattering} and \emph{fibered boundary} structures of \cite{Msc} and \cite{MM} on manifolds with boundary, respectively, as well as the \emph{quasi-asymptotically locally euclidean} (QALE) metrics introduced by Joyce \cite{JoyceQALE, JoyceBook,Carron}.
Important examples of QFB manifolds include the compactifications
of the hyperK\"ahler moduli spaces of $\SU(2)$ monopoles of arbitrary charge \cite{FKS}. 
Roughly speaking, spaces arising in the first setting are modelled (iteratively and in parameterized fashion) on the small ends of cones, while those arising in the second setting are modelled on the large ends.

While a fair amount has been written about manifolds with fibered corners, this article establishes some aspects of an associated \emph{category}, in particular the existence of products and fiber products in this category.
The results presented here will have applications to the construction of various pseudodifferential calculi associated to manifolds with fibered corners.
In particular, these results are used in an essential way in \cite{KR1} and \cite{KR2}, the authors' work on elliptic pseudodifferential operator analysis and $L^2$-cohomology of QFB manifolds.
In fact, it will be shown in a forthcoming work that a wide variety of the known pseudodifferential caculi adapted to various geometric settings
can be constructed within the category of manifolds with fibered corners introduced below, and that doing so simplifies and clarifies many of their features.
This suggests that the fibered corners category may be the appropriate setting in which to answer a question posed by Richard Melrose,
namely to axiomatize and subsequently classify
all `generalized products', meaning the sequences of spaces $X_1, X_2, X_3,\ldots$ and maps between these spaces which satisfy certain properties enjoyed by the projection maps and diagonal inclusions
when $X_1$, $X_2$, and $X_3$ are the respective single, double and triple spaces of a geometric pseudodifferential calculus.

\medskip
In more detail, a \emph{fibered corners structure}\footnote{variously known also as a `resolution structure', `iterated boundary fibration structure', or simply `iterated structure' \cite{DLR, AM, ALMP}}
on a compact (not necessarily connected) manifold with corners $X$ consists of a locally trivial fiber bundle structure $\bfib G : G \to \bfb G$ (hereafter simply called a fibration) on each boundary hypersurface $G$ (again not necessarily connected), the base $\bfb G$ and typical fiber $\bff G$ of which are also manifolds with corners (and in fact fibered corners), satisfying a comparability condition wherever two hypersurfaces intersect (see Definition~\ref{D:fc}).
%
Among other things this
determines a partial order on the set $\M 1(X)$ of boundary hypersurfaces, in which $G < G'$ whenever $G \cap G' \neq
\emptyset$ and $G$ has a strictly coarser fibration (meaning that $G$ is associated with a smaller stratum when $X$ is associated with a stratified space).
As a matter of notation, we write $G \sim G'$ when $G$ and $G'$ are comparable, meaning $G = G'$, $G < G'$, or $G' < G$.

A key observation, seemingly overlooked in the literature, is that this structure and associated partial order extends to the larger set $\Mtot(X) = \M 1(X) \cup \set{X}$ of \emph{principal faces}, meaning the boundary hypersurfaces along with the interior,
provided the interior itself is equipped with a fibration $\bfib X : X \to \bfb X$ similarly comparable to the $\bfib G$.
%
The two most common situations arise by taking one of the two trivial fibration structures on $X$:
\begin{itemize}
\item 
If $\bfb X = X$ and $\bfib X = \id$, making $X \in \Mtot(X)$ maximal in the order, we say $X$ is \emph{interior maximal}, denoted by $X = X_\tmax$, which
turns out to be natural in the setting of stratified spaces and wedge metrics.
\item
If $\bfb X = \pt$ is a single point and $\bfib X$ is the constant map, making $X \in \Mtot(X)$ minimal in the order, we say $X$ is \emph{interior minimal}, denoted by $X = X_\tmin$,
which turns out to be natural in the setting of many body spaces and QFB manifolds.
\end{itemize}
%
We treat the general case of an arbitrary fibration on $X$ below, obtaining results for interior maximal and interior minimal spaces by specialization.
Note that every manifold with fibered corners admits both wedge and QFB metrics.

Even with interior fibrations specified, the
cartesian product $X\times Y$ is not generally a manifold with fibered corners in any natural way, since the product fibrations are not generally comparable
where they meet.
Instead, the product in the fibered corners category is the \emph{ordered product}, defined for interior maximal manifolds with fibered corners by the iterated blow-up
\[
	X_\tmax\ttimes Y_\tmax = [X\times Y; \M1(X) \times \M 1(Y)]
\]
of all codimension 2 corners of the form $G\times H$, where $G \in \M 1(X)$ and $H \in \M 1(Y)$,
taken in any order consistent with the partial order on the product $\M1(X)\times \M1(Y)$; in other words $G' \times H'$ must be blown-up prior to $G\times H$
whenever $G' < G$ and $H' < H$, and any order satisfying this condition leads the same space up to natural diffeomorphism.

On the other hand, for interior minimal manifolds with fibered corners, the ordered product is defined by the opposite order blow-up
\[
	X_\tmin\ttimes Y_\tmin = [X\times Y; \rM 1(X)\times \rM 1(Y)]
\]
where $\rM 1(X)$ denotes the partially ordered set $\M 1(X)$ with order reversed; 
in other words $G' \times H'$ must be blown up prior to $G\times H$ whenever $G' > G$ and $H' > H$.
In general, these are not diffeomorphic, i.e.,
$
	X_\tmax \ttimes Y_\tmax \not\cong X_\tmin \ttimes Y_\tmin,
$
whenever one of the factors has a corner of codimension at least 2.
Both products are in fact special cases of the general ordered product
\begin{equation}
	X\ttimes Y = [X\times Y; \M1^<(X)\times \M 1^<(Y), \rM1^>(X)\times \rM1^>(Y)],
	\label{E:gen_ord_prod}
\end{equation}
where $\M 1^\gtrless(X) = \set{G \in \M 1(X) : G \gtrless X}$.
Our main results about the ordered product are summarized as follows.
\begin{thm*}
[Theorems~\ref{T:fibcorn_product}, \ref{T:weak_category}, \ref{T:phi_metric}, \ref{T:phi_metric}]
For manifolds $X$ and $Y$ with fibered corners:
\begin{enumerate}
\item 
The ordered product $X\ttimes Y$ is naturally a manifold with fibered corners,
%
with principal faces $\tlift G H \in \Mtot(X\ttimes Y)$ identified with those pairs $(G,H) \in \Mtot(X)\times \Mtot(Y)$ which are comparable to $(X,Y)$
in the product order, with $\tlift G H < \tlift {G'}{H'}$ if $(G,H) < (G',H')$.
In particular $\tlift{G}{H}$ and $\tlift{G'}{H'}$ are disjoint if $(G,H) \not\sim (G',H')$.
\item 
For interior maximal spaces, the fibrations of $X_\tmax \ttimes Y_\tmax$ have the form
\begin{equation}
\begin{tikzcd}[column sep=small]
	\bff H \ar[r, -]  &\tlift{X}{ H} \ar[d, "\phi_{X,H}"] \\ 
	& X \ttimes \bfb H,
\end{tikzcd}
\quad 
\begin{tikzcd}[column sep=small]
	\bff G \ar[r, -]  &\tlift{G}{Y} \ar[d, "\phi_{G,Y}"] \\ 
	& \bfb G \ttimes Y,
\end{tikzcd}
\quad \text{or}
\begin{tikzcd}[column sep=small]
	\bff G\jtimes_\tmax \bff H \ar[r, -]  &\tlift{G}{ H} \ar[d, "\phi_{G,H}"] \\ 
	& \bfb G \ttimes \bfb H,
\end{tikzcd}
	\label{E:intro_bdy_fib_prod}
\end{equation}
for $H \in \Mtot(Y)$ and $G \in \Mtot(X)$ in the first and second cases, and 
$(G,H) \in \M 1(X)\times \M 1(Y)$ in the third case, and
where $\jtimes$ denotes the \emph{join product} discussed below (see also Definition~\ref{D:rel_join}). 
\item 
For interior minimal spaces, the fibrations of $X_\tmin \ttimes Y_\tmin$ have the form
\begin{equation}
\begin{tikzcd}[column sep=small]
	X\ttimes \bff H \ar[r, -]  &\tlift{X}{ H} \ar[d, "\phi_{X,H}"] \\ 
	& \bfb H,
\end{tikzcd}
\quad 
\begin{tikzcd}[column sep=small]
	\bff G\ttimes Y \ar[r, -]  &\tlift{G}{Y} \ar[d, "\phi_{G,Y}"] \\ 
	& \bfb G,
\end{tikzcd}
\quad \text{or}
\begin{tikzcd}[column sep=small]
	\bff G\ttimes \bff H \ar[r, -]  &\tlift{G}{ H} \ar[d, "\phi_{G,H}"] \\ 
	& \bfb G \jtimes_\tmin \bfb H.
\end{tikzcd}
	\label{E:intro_bdy_fib_rev}
\end{equation}
For the structure of the fibrations on a general ordered product 
\eqref{E:gen_ord_prod}
specializing to \eqref{E:intro_bdy_fib_prod} and \eqref{E:intro_bdy_fib_rev}, we refer to 
Theorem~\ref{T:fibcorn_product} below.  
\item\label{I:cat_prod_intro}
$X\ttimes Y$ satisfies the universal property of the product in a category of manifolds with fibered corners and appropriate morphisms (see below),
namely, any morphisms $f : W \to X$ and $g : W \to Y$ factor through a unique morphism $W \to X\ttimes Y$ forming a commutative diagram
\[
\begin{tikzcd}
	& W \ar[d, dashed, "\exists !"] \ar[dl, swap, "f"] \ar[dr, "g"] &
\\ 	X & X\ttimes Y \ar[l] \ar[r] & Y
\end{tikzcd}
\]
with the lifted projections $\pi_X : X\ttimes Y \to X$ and $\pi_Y : X\ttimes Y \to Y$.
In particular, as a consequence of this universal property, the ordered product is associative and commutative up to unique isomorphism.
\item 
If $g_X$ and $g_Y$ are wedge metrics (resp.\ QFB metrics) on $X_\tmax$ and $Y_\tmax$ (resp. $X_\tmin$ and $Y_\tmin$), then $g_X+ g_Y$ lifts to a wedge metric (resp.\ QFB metric) on $X_\tmax \ttimes Y_\tmax$ (resp.\ $X_\tmin \ttimes Y_\tmin$).
\end{enumerate}
\end{thm*}
\noindent
Some remarks:
\begin{itemize}
\item The fibered corners structure on $X$ induces a fibered corners structure on each boundary hypersurface $G$ as well as its fiber $\bff G$ and base $\bfb G$,
with respect to which $\bff G$ is interior minimal and $\bfb G$ is interior maximal.
In particular, the base spaces in \eqref{E:intro_bdy_fib_prod} are ordered products of interior maximal manifolds, while the fibers in \eqref{E:intro_bdy_fib_rev}
are ordered products of interior minimal manifolds.
\item
We do not assume that the spaces $X$, $G$, $\bff G$ or $\bfb G$ are connected; in particular, it often arises in practice that $\bfib G : G \to \bfb G$ is a fibration with fiber $\bff G$ having 
disjoint components.
\item The \emph{join product} $\jtimes$ appearing in the fibers in \eqref{E:intro_bdy_fib_prod}, so named 
for the fact that its associated stratified space is the topological join of the stratified spaces of its factors (see Corollary~\ref{C:join}), may be identified as the iterated blow-up
\begin{equation}
\begin{aligned}
	\bff G\jtimes_\tmax \bff H = [\bff G\times \bff H \times I;\; &\M 1(\bff G)\times \bff H \times \set 0, 
		\\&\bff G\times \M 1(\bff H)\times \set 1, \\&\M 1(\bff G)\times \M 1(\bff H)\times I],
	\quad I = [0,1],
\end{aligned}
	\label{E:intro_join_fiber}
\end{equation}
with blow-ups taken in any order consistent with the partial orders on $\M 1(\bff G)$, $\M 1(\bff H)$, and $\M 1(\bff G)\times \M 1(\bff H)$.
The join product appearing the bases in \eqref{E:intro_bdy_fib_rev} is given by a similar formula but with orders reversed:
\begin{equation}
\begin{aligned}
	\bfb G\jtimes_\tmin \bfb H = [\bfb G\times \bfb H \times I;\; &\rM 1(\bfb G)\times \bfb H \times \set 0, 
		\\&\bfb G\times \rM 1(\bfb H)\times \set 1, \\&\rM 1(\bfb G)\times \rM 1(\bfb H)\times I],
\end{aligned}
	\label{E:intro_join_base}
\end{equation}
In contrast to the ordered product, there is in fact a remarkably non-trivial diffeomorphism $X \jtimes_\tmax Y \cong X \jtimes_\tmin Y$ between the minimal
and maximal join products which is proved in \S\ref{S:equiv}, from which it follows that the associated stratified space to \eqref{E:intro_join_base} is
also the topological join of the stratified spaces of the factors.
A general version of the join (see Definition~\ref{D:rel_join}) which appears in the boundary fibrations for a general ordered product \eqref{E:gen_ord_prod} is somewhat more complicated and depends
on the place of $X$ in the order on $\Mtot(\bff G)$ and $\Mtot(\bfb G)$, viewed as ordered subsets of $\Mtot(X)$.
%
%
\item 
The boundary fibrations \eqref{E:intro_bdy_fib_rev} in the interior minimal case (and more generally whenever $X$ and $Y$ are not both maximal) depend 
on equivalence classes of boundary defining functions on $X$ and $Y$, where two defining functions are equivalent if their ratio is basic (i.e., constant on fibers)
over each boundary hypersurface (see Definition~\ref{D:basic_bdf}).
Such a choice of equivalence classes of boundary defining functions is therefore part of the data for $X$ and $Y$ as objects in the general category of manifolds with fibered corners (unless all spaces are taken to be interior maximal).
This equivalence relation also arises naturally in the consideration of QFB metrics (see \S\ref{S:geom}).
\item 
The morphisms referenced in part~\ref{I:cat_prod_intro} are taken to be those interior b-maps $f : X \to Y$ which are
\begin{itemize}
\item \emph{simple}, meaning all boundary exponents are $0$ or $1$,
\item \emph{b-normal}, in particular boundary hypersurfaces map either to boundary hypersurfaces of the target or to the interior, but never to faces of higher codimension, 
so $f$ determines a map $f_\sharp: \Mtot(X) \to \Mtot(Y)$ between sets of principal faces,
\item \emph{ordered}, meaning $f_\sharp : \Mtot(X) \to \Mtot(Y)$  is a map of ordered sets,
\item \emph{fibered}, meaning $f$ restricts over each $G \in \Mtot(X)$ to a map of fiber bundles
\[
\begin{tikzcd}
	G \ar[r, "f"] \ar[d, "\bfib G"] & H = f_\sharp(G) \ar[d, "\bfib H"]
	\\ \bfb G \ar[r, "f_G"] & \bfb H
\end{tikzcd}
\qquad \text{for some $f_G$}
\]
\item \emph{consistent} with the equivalence classes of boundary defining functions on $X$ and $Y$ (see Definition~\ref{D:basic_bdf} for a precise statement),
except in the case that all spaces are interior maximal.
\end{itemize}
\end{itemize}

In contrast to the usual category of manifolds with corners and b-maps\footnote{As shown in \cite{Joycegc}, the suitably transverse fiber product of manifolds with corners exists
as a manifold with `generalized corners', a larger category which has products and transverse fiber products, but not typically a manifold with ordinary corners.}, the fibered corners category has all transverse fiber products:
\begin{thm*}[Theorems~\ref{T:fib_prod}, \ref{T:bhs_of_fib_prod} and Proposition~\ref{P:fib_corn_fib_prod}]
If $f : X \to Z$ and $g : Y \to Z$ are fibered corners morphisms which are \emph{b-transverse},
meaning that 
$\bd f_\ast \bT_x X + \bd g_\ast \bT_y Y = \bT_z Z$ whenever $f(x) = g(y) =z$,
then
\[
	X\ttimes_Z Y = \overline{\set{(x,y) \in X^\circ\times Y^\circ : f(x) = g(y)}} \subset X\ttimes Y
\]
is a manifold with fibered corners, embedded as a p-submanifold in the ordered product and satisfying the universal property of the fiber product in the category
of fibered corners.
The fibrations on $X\ttimes_Z Y$, consistent with the restriction of from those of $X\ttimes Y$, have the form
\[
\begin{tikzcd}
	\bff G \ttimes_{\bff K} \bff H \ar[r, -] & G\ttimes_K H \ar[d, "\bfib{G,H}"] 
	\\ & \bfb G\ttimes_{\bfb K} \bfb H
\end{tikzcd}
\qquad
\text{where $K = f_\sharp(G) = g_\sharp(H) \in \Mtot(Z)$}
\]
except for the case that $G \in \M 1(X)$ and $H \in \M 1(Y)$ are proper boundary hypersurfaces with $f_\sharp(G) = g_\sharp(H) = Z \in \M 0(Z)$, in which case
the fibrations have a more complicated form
\[
\begin{aligned}
\begin{tikzcd}
	\bff G \jtimes_{\tmax,Z} \bff H \ar[r, -] & X\ttimes_Z Y \cap \tlift G H \ar[d, "\bfib{G,H}"] 
	\\ & \bfb G\ttimes_{\bfb Z} \bfb H
\end{tikzcd}
&& \text{if $X = X_\tmax$, $Y = Y_\tmax$, or} 
\\
\begin{tikzcd}
	\bff G \ttimes_{\bff Z} \bff H \ar[r, -] & X\ttimes_Z Y \cap \tlift G H \ar[d, "\bfib{G,H}"] 
	\\ & \bfb G\jtimes_{\tmin,Z} \bfb H
\end{tikzcd}
&& \text{if $X = X_\tmin, Y = Y_\tmin$.} 
\end{aligned}
\]
\end{thm*}
In the notation, $\bff G \jtimes_{\tmax,Z} \bff H$ denotes the closure of $\bff G^\circ \times_Z \bff H^\circ \times I$ in \eqref{E:intro_join_fiber}
and $\bfb G \jtimes_{\tmin,Z} \bfb H$ denotes the closure of $\bfb G^\circ \times_Z \bfb H^\circ \times I$ in \eqref{E:intro_join_base}; the general
case ($X$ and $Y$ not necessarily minimal or maximal) is covered in Theorem~\ref{T:bhs_of_fib_prod}.
In particular, for any morphism $f : X \to Y$, the graph $\Gr(f) = X \ttimes_Y Y$ is realizable as a fiber product; thus $f$ factors canonically within the category
as the inclusion of a p-submanifold $X \cong \Gr(f) \hookrightarrow X\ttimes Y$ and a b-fibration $X\ttimes Y \to Y$.

\medskip
The ordered product realizes and generalizes known products from two different settings mentioned previously.
The first is the product of \emph{smoothly stratified spaces}, which are topological spaces $\strat X = \bigsqcup s_i$ decomposed into disjoint
manifolds $s_i$ of varying dimension called \emph{strata}, with some conditions on how the strata fit together (see \S\ref{S:strat}) amounting
to an iterative, parameterized conic degeneration of each stratum onto the next.
There is a well-known equivalence between a smoothly stratified space $\strat X$ on one hand, and a manifold with fibered corners $X$ on the other, wherein $\strat X$ is obtained by collapsing
the fibers of the boundary fibrations of $X$, while $X$ is recovered from $\strat X$ by iteratively resolving the strata by a kind of blow-up which resolves cones to cylinders
\cite{ALMP,Albin}.
\begin{thm*}[Theorem~\ref{T:prod_resolves_prod}]
If $X$ and $Y$ are manifolds with fibered corners associated to smoothly stratified spaces $\strat X = \bigsqcup_i s_i$ and $\strat Y = \bigsqcup_j s'_j$, then 
the interior maximal ordered product
$X_\tmax \ttimes Y_\tmax$ is the manifold with fibered corners associated to the product $\strat X \times \strat Y = \bigsqcup_{i,j} s_i\times s'_j$ of stratified spaces.
\end{thm*}

The second is the product of \emph{many body spaces}, which are compactifications 
\[
	\mb V = [\ol{V}; \set{\pa \ol S : S \in \cS_V}] 
\]
of finite
dimensional real vector spaces $V$ obtained by iteratively blowing up the radial compactification $\ol V$ along the boundaries of a finite set $\cS_V$ of
linear subspaces.
Such a space is naturally an interior minimal manifold with fibered corners.
\begin{thm*}[Theorem~\ref{T:mb_prod}]
There is a natural isomorphism $\mb V_\tmin \ttimes \mb W_\tmin \cong \mb{V\times W} = [\ol{V\times W}; \set{\pa (\ol{S\times S'}) : S \in \cS_V,\ S' \in \cS_W}]$
between the interior minimal ordered product of $\mb V$ and $\mb W$ and the many body space of the product $V\times W$.
\end{thm*}

\smallskip
An overview of the paper is as follows.
We first recall some notation and background on manifolds with corners in \S\ref{S:bkg}, including some of the theory of \emph{generalized blow-up} from \cite{KMgen} (even though all blow-ups here are ordinary ones), which is used throughout to simplify the proofs of our results.
As it happens, a number of properties of the ordered product do not actually depend on the fibration structures on the principal faces of the factors, but rather only on the induced order 
among these principal faces.
%
For this reason, and since it may have more general applications, we first develop much of the theory in a larger category of \emph{ordered corners} in \S\ref{S:ordcorn} 
in which manifolds are equipped only with a suitable orderings of their principal faces and morphisms are consistent with these.
The properties of products in this ordered corners category are proved in \S\ref{S:ordcorn_product} and the properties of fiber products are proved in \S\ref{S:ord_fib_prod}.
In \S\ref{S:ord_bhs} we give a detailed analysis of the product structure of boundary hypersurfaces of $X\ttimes Y$ in preparation for the fibered corners category; 
a key role is played by so-called \emph{compressed projection maps} which in a certain sense extend the boundary fibrations to fibrations involving a cone
over the fiber or base, depending on context.
We then review fibered corners structures in \S\ref{S:fibcorn} and obtain the results stated as parts (a)--(d) of the main product theorem above as well as the fiber product theorem in \S\ref{S:fibcorn_product}.
The connection to stratified spaces and many body spaces is discussed in sections \S\ref{S:strat} and \S\ref{S:mb}, respectively.
In \S\ref{S:geom} we introduce the geometric structures encoded by the wedge and \phistr\ tangent bundles, proving the product result for 
\phistr\ structures in \S\ref{S:geom_phi}
and for 
wedge structures in \S\ref{S:geom_wedge}.
In fact we give two proofs each of the product results for wedge and QFB metrics, one based on a direct analysis of the respective tangent bundles, and another alternate proof based on examination of the form of the lifted product metrics.
In \S\ref{S:equiv}, we prove the diffeomorphism between the minimal and maximal join products, which apart from its conceptual relevance in this paper, gives an 
unusual example of a diffeomorphism of blow-ups of manifolds with corners which is non-trivial on the interior.
Finally, Appendix~\ref{S:tubes} contains a short exposition of some technical results concerning tubular neighborhood structures with nice properties and an Ehresmann lemma in the fibered corners
category.

\begin{ack}
CK was supported by NSF grant DMS-1811995, and FR was supported by NSERC and a Canada Research chair.
The authors are grateful to Richard Melrose and Pierre Albin for helpful discussions during the preparation of this material.
\end{ack}

\section{Background} \label{S:bkg}

We briefly recall some of the important notions from the theory of manifolds with corners used below, though we assume the reader is already somewhat familiar; 
for more complete references and/or more leisurely introductions, see \cite{Grieser,HMM,MAPSIT,MDAOMWC}. 

A \emph{manifold with corners} is a Hausdorff space $X$ locally diffeomorphic\footnote{i.e.,
locally homeomorphic with transition maps given by diffeomorphisms.} to open
sets in $\bbR_+^n$, where $\bbR_+ = [0,\infty)$. 
With a few notable exceptions, such as the cones and normal models of hypersurfaces considered below, all manifolds with corners in this article are assumed to be compact unless context makes it clear otherwise.
Every point has a well defined codimension, given by the number of vanishing $\bbR_+$ factors in any chart, and the closure of a maximal (connected) set 
of points with a fixed codimension is a \emph{boundary face}, with the set of boundary faces of codimension $k$ denoted by $\M k (X)$; in particular $\M 1(X)$ is
the set of \emph{boundary hypersurfaces}.
We write $\M {}(X) = \bigcup_{k\geq 0} \M k(X)$ for the set of all boundary faces.
The \emph{interior}, $X^\circ$, of $X$ is the set of points with codimension 0, and the \emph{depth} of $X$ is the maximum codimension occurring on $X$, or equivalently the 
maximum number of boundary hypersurfaces that have a nonempty mutual intersection.
We assume boundary hypersurfaces (and therefore all boundary faces) are embedded; in particular each boundary face is again a manifold with corners.
A function $\rho_G : X \to \bbR_+$ is \emph{boundary defining} for $G \in \M 1(X)$ if $\rho_G^\inv(0) = G$ and $d\rho_G \neq 0$ on $G$;
the ratio $\rho_G/\rho'_G$ of two boundary defining functions for $G$ is 
a strictly positive smooth function. 

Note that we do not require that a manifold with corners is connected, nor do we strictly require that boundary faces are connected, despite how they have just been defined!
Indeed, it will be useful below to allow certain unions of disjoint boundary hypersurfaces to be considered as a single hypersurface, or what is called in \cite{AM}
a \emph{collective boundary hypersurface}.
This identification of collective boundary hypersurfaces (i.e., which components are to be considered to belong to `the same' hypersurface) constitutes additional data on the manifold with corners,
and in the event that such data has been specified, we abuse notation by using $\M 1(X)$ to refer to the set of collective boundary hypersurfaces; in the absence of 
such a specification $\M 1(X)$ consists by default of connected components only.
By contrast, we adopt the convention that $X \in \M 0(X)$ is always considered as a single face even when $X$ is not connected; thus $\M 0(X) = \set{X}$ is always a singleton.

%

The b-vector fields $\bV(X) \subset \cV(X) = C^\infty(X; TX)$ are by definition the Lie subalgebra of smooth vector fields tangent to all boundary faces.
These form a locally
free sheaf of constant rank, defining the b-tangent vector bundle 
\[
	\bT X \to X 
\quad \text{by} \quad
	\bV(X) = C^\infty(X; \bT X),
\]
spanned in local coordinates 
$(x_1,\ldots,x_k,y_1,\ldots,y_l) \in \bbR_+^k\times \bbR^l$ by $x_i \npd {x_i}$ and $\npd {y_j}$.
Over $x_i = 0$, the b-tangent vector $x_i \npd {x_i} = \rho_{G_i} \pa_{\rho_{G_i}}$ is actually independent of the choice of boundary defining coordinate $x_i$.
The inclusion $\bV(X) \subset \cV(X)$ induces a natural bundle map $\bT X \to T X$ which is an isomorphism over the interior; at $p \in X$, the kernel
defines the b-normal space $\bN_p E$,
where $E$ is the unique boundary face whose interior contains $p$.
Taken together, these form a vector subbundle, $\bN E$, of $\bT X$ over the interior of $E$ which extends by continuity to the whole of $E$.
In fact $\bN E \to E$ is trivialized globally by the frame $\bigl\{\rho_{G_i} \pa_{\rho_{G_i}}\bigr\}$, where $\rho_{G_i}$ are any boundary defining functions
for the hypersurfaces $G_1,\ldots,G_k$ whose intersection has $E$ as a connected component.

An \emph{interior b-map} (hereafter simply a b-map) $f : X \to Y$ is a smooth map such that, for any sets $\set{\rho_G : G \in \M 1(X)}$ and $\set{\rho_H : H \in \M 1(Y)}$ of boundary defining functions
for $X$ and $Y$,
\begin{equation}
	f^\ast(\rho_H) = a_H \prod_{G \in \M 1(X)} \rho_G^{e(H,G)}, \quad \text{for every $H \in \M 1(Y)$}
	\label{E:bmap}
\end{equation}
where $a_H > 0$ is smooth and strictly positive and the \emph{boundary exponents} $e(H,G) \in \bbZ_+$ are nonnegative integers.
We will say that a b-map $f$ is \emph{rigid} with respect to fixed sets of boundary defining functions on $X$ and $Y$ if each $a_H$ in \eqref{E:bmap}
is identically 1; while this is not a standard notion it turns out to be useful below.

Returning to standard concepts, a b-map $f$ is said to be \emph{simple}
if each $e(H,G) \in \set{0,1}$.
Under an interior b-map $f$, every boundary face $E \in \M k(X)$ is mapped into a unique smallest face $F \in \M l(Y)$ (determined by the condition that $f(E^\circ) \subset F^\circ$), an assignment 
we denote by
\begin{equation}
	f_\sharp : \M {}(X) \to \M {}(Y).
	\label{E:sharpmap}
\end{equation}
In general the differential 
of a b-map
extends by continuity from the interior to a bundle map
\begin{equation}
	\bd f_\ast : \bT X \to \bT Y
	\label{E:bdifferential}
\end{equation}
over $f$, and
this \emph{b-differential}
restricts over $E \in \M {}(X)$ to a bundle map 
\begin{equation}
	\bd f_\ast : \bN E \to \bN F, \quad \text{where $F = f_\sharp(E) \in \M {} (Y)$}.
	\label{E:bnormal_map}
\end{equation}
%
Indeed, \eqref{E:bnormal_map} is determined on the frame $\set{\rho_G \pa_{\rho_G}}$ by 
\begin{equation}
	\rho_G \pa_{\rho_G} \mapsto \sum_H e(H,G)\, \rho_H \pa_{\rho_H},
	\label{E:monoid_gen_map}
\end{equation}
where $e(H, G)\in \bbZ_+$ are the boundary exponents of $f$.

A map $f$ is said to be a \emph{b-submersion} (resp.\ \emph{b-immersion}) if \eqref{E:bdifferential} is a surjective (resp.\ injective) on fibers, and
is said to be \emph{b-normal} provided each map \eqref{E:bnormal_map} is surjective on fibers; note that the latter condition holds if and only if it holds for all hypersurfaces $E \in \M 1(X)$.
In particular a b-normal map cannot map a boundary hypersurface into any face of codimension 2 or more, so \eqref{E:sharpmap} restricts to a map
\[
	f_\sharp : \M 1(X) \to \Mtot(Y) \qquad \text{if $f$ is b-normal}
\]
where $\Mtot(Y) = \M 1(Y) \cup \M 0(Y) = \M 1(Y) \cup \set Y$ is the set of \emph{principal faces}, notation and terminology which is frequently employed below.
An equivalent condition to b-normality is that for each $G \in \M 1(X)$, there is at most one $H \in \M 1(Y)$ for which the boundary exponent $e(H,G)$ is nonzero.

A b-normal map can be \emph{rigidified} in the sense that, for any set of boundary defining functions on $Y$, there is a set
of boundary defining functions on $X$ with respect to which $f$ is rigid; indeed, starting with an arbitrary set $\set{\rho_G : G \in \M 1(X)}$, this is achieved by replacing $\rho_G$ by $b_G \rho_G$, where $b_G = \prod_{H \in \M 1(Y)} (a_H^{\inv})^{g(G,H)}$ and $g(\bullet,\bullet)$ forms a right inverse for $e(\bullet,\bullet)$ viewed as a 
$\abs{f_\sharp(\M 1(X))}\times \abs{\M 1(X)}$ matrix.
In particular, a b-normal map can be put locally into the normal coordinate form: 
\begin{equation}
	f : (x,y) \mapsto \big(x^e, \ol y(x,y)\big) = (\ol x, \ol y)
	\label{E:bnormal_normal}
\end{equation}
where $(\ol x,\ol y) \in \bbR_+^{k'} \times\bbR^{l'}$ are coordinates centered at $f(p) \in Y$ and
$(x, y) \in \bbR_+^k\times \bbR^l$ are coordinates centered at $p \in X$, and where
$\ol x = x^e$ is shorthand for $\ol x_j = \prod_i x_i^{e(j,i)}$, $1 \leq j \leq k'$.

\begin{rmk}
We will occasionally employ the exponential notation $x^e = (\prod_{i=1}^k x_i^{e(1,i)}, \ldots, \prod_{i=1}^k x_i^{e(n,i)})$ where $x = (x_1,\ldots,x_k)$ and $e = [e(j,i)]$
is an $n\times k$ matrix. 
In this convention $(x^e)^g = x^{ge}$, where $ge$ is the usual matrix product of $g$ and $e$. 
\end{rmk}

A b-normal b-submersion is said to be a \emph{b-fibration}, and such a map can always be written in the local normal form
\begin{equation}
	f : (x,y) \mapsto (x^e, \pr_{l'} y), 
	\label{E:bfibn_normal}
\end{equation}
where $\pr_{l'} : (y_1,\ldots,y_l) \to (y_1,\ldots,y_{l'})$ denotes the projection onto the first $l' \leq l$ coordinates.
The restriction of a b-fibration to any boundary face $E \in \M {}(X)$ is again a b-fibration $f : E \to f_\sharp(E)$ onto its image in $Y$.
A simple b-fibration
with the property that each boundary hypersurface $H$ of $Y$ has a unique hypersurface $G \in f_\sharp^\inv(H)$ in $X$ which maps to it is in fact a fiber bundle of manifolds
with corners; indeed in this case it follows from the local normal form \eqref{E:bfibn_normal} (in which $e$ is a projection matrix up to permutation) that $f$ is a surjective submersion of compact manifolds with corners, and the standard argument for manifolds then shows that it is a fiber bundle.

A subset $P\subset X$ is a \emph{p-submanifold} if it is covered by coordinate charts $(x,y)$ in which it is locally defined by the vanishing of $\codim(P)$ of the coordinates.
%
Examples include boundary faces themselves.

The \emph{blow-up} of a closed p-submanifold $P \subset X$ is the space
\[
	[X; P] = (X \setminus P) \cup S_+P
\]
where the \emph{front face} of the blow-up, $S_+P$, consists of the inward pointing spherical normal bundle of $P$, the fiber at $p \in P$
of which consists of normal vectors to $P$ at $p$ which have unit length with respect to any
norm and point inward, with smooth structure generated by polar coordinates normal to $P$.
This is equipped with a canonical b-map $\beta : [X; P] \to X$ (which is not generally b-normal), given by the bundle projection $S_+P \to P$ at the front face and the identity elsewhere.
If $S \subset X$ is another (connected) set, the \emph{lift} of $S$ to $[X; P]$ is defined by $\beta^\inv(S)$ if $S \subset P$, and the closure of $S \setminus P$ in $[X; P]$ 
otherwise, and provided $S$ lifts to a p-submanifold in $[X; P]$ (abusively denoted by the same letter), the iterated blow-up 
\[
	[X; P, S] := [[X; P]; S]
\]
is well-defined.
In particular, every boundary face lifts to a p-submanifold under a blow-up, and the iterated blow-up of any number of boundary faces is always well-defined.

Three additional topics from manifolds with corners will be used below. 
As these are either new or relatively unknown, we devote a subsection to each for ease of reference.

\subsection{Normal Models} \label{S:bkg_normal}

For $G \in \M 1(X)$ we define $\nu : \Np G\to G$ to be the inward pointing normal (not b-normal!) bundle to $G$, which will be referred to as the \emph{normal model}
of $G$ in $X$.
That is to say, $NG = TX\rst_G / TG$, which is a non-canonically trivial line bundle with well-defined inward pointing subspace $\Np G$, taken to include
the zero section, with an 
action by $(0,\infty)$ fixing the zero section $G_0$ which we identify with $G$ itself.
It is convenient to extend this to the case $G = X \in \M 0 (X)$ as well, in which case $\Np X$ is the unique 0-dimensional bundle $\Np X \equiv X_0 \to X$, identifed with $X$ itself, with trivial $(0,\infty)$ action.
If $f : X \to Y$ is a b-normal b-map, then the differential descends to a well-defined $(0,\infty)$-equivariant map
\begin{equation}
	df : \Np G \to \Np H, \qquad H = f_\sharp(G) \in \Mtot(Y)
	\label{E:normal_functor}
\end{equation}
where again $\Np Y = Y$ itself in case $f_\sharp(G) = Y$.
In this latter case $df = \nu^\ast (f\rst_G)$ factors as the composition of the bundle projection $\nu : \Np G \to G$ with $f\rst_G : G \to Y$. 
This consideration applies in particular to smooth functions, with $f \in C^\infty(X; \bbR)$ regarded as a b-map $f : X \to \bbR$ with $f_\sharp(G) = \bbR \in \M 0(\bbR)$, lifting to $df = \nu^\ast f : \Np G \to \bbR$.
In contrast, boundary defining functions on $X$,
regarded as b-maps $\rho_{G'} : X \to [0,\infty)$,
lift to $\Np G$ 
as $d\rho_{G'} = \nu^\ast (\rho_{G'} \rst_G)$
for $G' \neq G$ and $d\rho_G : \Np G \to \Np \set0 = [0,\infty)$ (with nontrivial $(0,\infty)$ action) for $G$ itself. 
It follows from this observation that \eqref{E:normal_functor} is a b-normal b-map of non-compact manifolds with corners serving as an infinitesimal model for $f$ itself near $G \subset X$,
and we frequently use such models below.

\subsection{Generalized blow-up} \label{S:bkg_genblow}
We will make use of some of the theory of \emph{generalized blow-up}
developed in \cite{KMgen}, and we now recall the relevant details here, keeping the treatment as self-contained as possible.
We begin with the observation that within each b-normal space $\bN_p F$, $F
\in \M {}(X)$, there is a freely generated
\emph{monoid} (meaning a commutative semigroup with identity)
\[
	\bM_p F := \bbZ_+\pair{\rho_1\pa_{\rho_1}, \ldots, \rho_k \pa_{\rho_k}},
\]
which is well-defined by the independence of the b-normal vectors $\rho_i \npd{\rho_i}$ of the choices $\rho_i = \rho_{G_i}$ of boundary defining functions,
and which is likewise independent of $p \in F$, so we just denote this monoid by $\bM F$ from now on and 
denote 
the generator 
associated to the boundary hypersurface $G_i = \set{\rho_i = 0}$
simply by 
\[
	g_i := \rho_i \npd{\rho_i}.
\]
Whenever $F \subset E$, the normal monoid $\bM E$ (which is generated by fewer
of the same $g_i$) includes as a \emph{face} of $\bM F$, which, up to
reordering, is just the inclusion
\begin{equation}
	\bM E \cong \bbZ_+\pair{g_1,\ldots,g_l,0,\ldots,0} \subset \bbZ_+\pair{g_1,\ldots, g_l, g_{l+1},\ldots, g_k} \cong \bM F.
	\label{E:monoid_face_incl}
\end{equation}

Since the coefficients of \eqref{E:bnormal_map} are nonnegative integers, the b-differential of a b-map $f : X \to Y$ induces well-defined monoid homomorphisms\footnote{meaning
maps intertwining addition which send $0$ to $0$}
\begin{equation}
	\bd f_\ast : \bM F \to \bM f_\sharp(F), \quad \text{for each $E \in \M {}(X)$},
	\label{E:monoid_hom}
\end{equation}
which are consistent with the inclusions \eqref{E:monoid_face_incl}.
Note that if $f$ is both simple and b-normal then in \eqref{E:monoid_hom} each generator of $\bM F$ 
maps to a generator of $\bM f_\sharp(F)$, and conversely: the condition that generators map to generators in \eqref{E:monoid_hom} implies that $f$ is simple and b-normal.

In the particular case that $f = \beta : \wt X = [X; E_1,\ldots,E_n]\to X$ is the blow-down map of an iterated blow-up of boundary faces $\set{E_i} \subset \M {}(X)$ of $X$, 
the images under \eqref{E:monoid_hom} of the monoids from $\wt X$ form a complete decomposition of each monoid $\bM F$ of the target, in the sense that 
$\bM F$ is a union of such image monoids $\bd f_\ast(\bM \wt F)$, $\wt F \in \M {}(\wt X)$, meeting along boundary faces.
Such decompositions constitute what is called a \emph{refinement} in \cite{KMgen, Kgc}.

For example, if $E = G_1 \cap \cdots \cap G_l$, with $\bM E = \bbZ_+\pair{g_1,\ldots,g_l}$, then 
for any $F \subseteq E$, the refinement of $\bM F$ induced by the blow-up $\beta : [X; E] \to X$
consists of the $l$ submonoids of $\bM F$ in which precisely one of the generators $\set{g_1,\ldots,g_l}$
is replaced by $g_1+\cdots + g_l$; this corresponds to the geometric process of `barycentric subdivision' of $\bM E$, considered as a face of $\bM F$.
Examples are illustrated in Figure~\ref{F:blowup}.

\begin{figure}[htb!]
\[
\begin{tikzcd}
\text{(a)}
\begin{tikzpicture}[baseline=(current bounding box.north),x = (90:2cm), y=(210:2cm), z=(-30:2cm)]
	\node [above] at (1,0,0) {$g_1$};
	\node [below left] at (0,1,0) {$g_2$};
	\node [below right] at (0,0,1) {$g_3$};

	\draw (1,0,0) -- (0,1,0) -- (0,0,1) -- cycle;
	\filldraw (1,0,0) circle (2pt);
	\filldraw (0,1,0) circle (2pt);
	\filldraw (0,0,1) circle (2pt);
\end{tikzpicture}
&
\text{(b)}
\begin{tikzpicture}[baseline=(current bounding box.north),x = (90:2cm), y=(210:2cm), z=(-30:2cm)]
	\node [above] at (1,0,0) {$g_1$};
	\node [below left] at (0,1,0) {$g_2$};
	\node [below right] at (0,0,1) {$g_3$};
	\node at (0.25,0.5,0.5) {$\scriptstyle g_1+g_2+g_3$};

	\draw (1,0,0) -- (0,1,0) -- (0,0,1) -- cycle;
	\filldraw (1,0,0) circle (2pt);
	\filldraw (0,1,0) circle (2pt);
	\filldraw (0,0,1) circle (2pt);
	\filldraw (1,1,1) circle (2pt);
	\draw (1,0,0) -- (1,1,1);
	\draw (0,1,0) -- (1,1,1);
	\draw (0,0,1) -- (1,1,1);
\end{tikzpicture}
\\
\text{(c)}
\begin{tikzpicture}[baseline=(current bounding box.north),x = (90:2cm), y=(210:2cm), z=(-30:2cm)]
	\node [above] at (1,0,0) {$g_1$};
	\node [below left] at (0,1,0) {$g_2$};
	\node [below right] at (0,0,1) {$g_3$};

	\draw (1,0,0) -- (0,1,0) -- (0,0,1) -- cycle;
	\filldraw (1,0,0) circle (2pt);
	\filldraw (0,1,0) circle (2pt);
	\filldraw (0,0,1) circle (2pt);
	\filldraw (0,0.5,0.5) circle (2pt);
	\node [below] at (0,0.5,0.5) {$\scriptstyle h_2 + h_3$};
	\draw (1,0,0) -- (0,0.5,0.5);
\end{tikzpicture}
&
\text{(d)}
\begin{tikzpicture}[baseline=(current bounding box.north),x = (90:2cm), y=(210:2cm), z=(-30:2cm)]
	\node [above] at (1,0,0) {$g_1$};
	\node [below left] at (0,1,0) {$g_2$};
	\node [below right] at (0,0,1) {$g_3$};

	\draw (1,0,0) -- (0,1,0) -- (0,0,1) -- cycle;
	\filldraw (1,0,0) circle (2pt);
	\filldraw (0,1,0) circle (2pt);
	\filldraw (0,0,1) circle (2pt);
	\filldraw (1,1,1) circle (2pt);
	\draw (1,0,0) -- (1,1,1);
	\draw (0,1,0) -- (1,1,1);
	\draw (0,0,1) -- (1,1,1);
	\filldraw (0,0.5,0.5) circle (2pt);
	\draw (1,0,0) -- (0,0.5,0.5);
\end{tikzpicture}
\end{tikzcd}
\]
\caption{A schematic depiction of (a) the monoid $\bbZ_+\pair{g_1,g_2,g_3}$ associated to a boundary face $E_{123} = G_1 \cap G_2 \cap G_3$, 
(b) the refinement associated to the blow-up $[X; E_{123}]$,
(c) the refinement associated to the blow-up
$[X; E_{23}]$, 
and (d) the refinement associated to their common resolution, the iterated blow-up $[X; E_{123}, E_{23}] \cong [X; E_{23}, E_{123}]$.}
\label{F:blowup}
\end{figure}
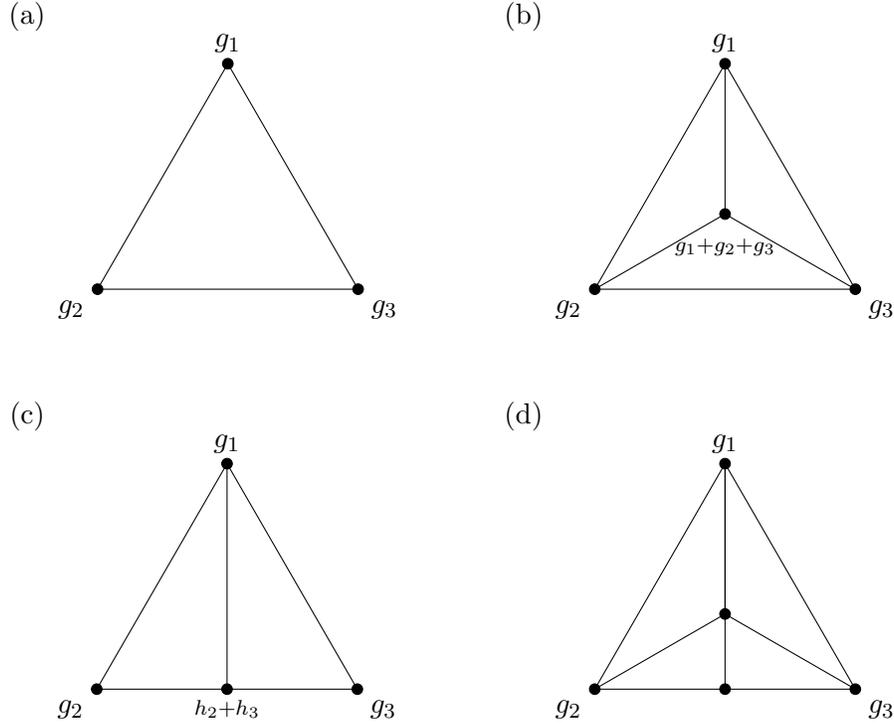

As summarized below, the monoids of the refinement actually characterize the iterated blow-up $[X; E_1,\ldots,E_n]$ completely up to diffeomorphism, and an arbitrary
refinement determines what is called a \emph{generalized blow-up}\footnote{While the refinements considered here 
arise from ordinary iterated boundary blow-up, the general theory in \cite{KMgen} accounts for inhomogeneous blow-ups, as well as 
refinements which do not arise from any classical iterated boundary blow-up, hence the term `generalized blow-up'. 
The paper \cite{Kgc} extends the notion of refinement and generalized blow-up to the category \cite{Joycegc} of manifolds with generalized corners.}, meaning
a manifold with corners $\wt X$
and a suitable blow-down map $\beta : \wt X \to X$, the images of the induced monoid homomorphisms \eqref{E:monoid_hom} of which constitute the given refinement.
In particular, the boundary hypersurfaces of $\wt X$ and their incidence relations (meaning their intersection properties) can be simply read off from the monoids of the refinement: hypersurfaces are in bijection with generators (i.e., 1 dimensional monoids) and intersect if the associated generators together generate a 2 dimensional monoid in the refinement.
The main results we use below are the following, which assert the characterization of an iterated boundary blow-up by its monoid refinement, as well as 
a criterion for lifting b-maps under such a blow-up.

\begin{thm}[{\cite[Theorem 6.3]{KMgen}, also \cite[Theorem 3.7]{Kgc}}]
Let $[Y;F_1,\ldots, F_n]$ be an iterated blow-up of boundary faces $F_1,\ldots,F_n \in \M{}(Y)$ of a manifold with corners $Y$, with blow down map $\beta : [Y; F_1,\ldots,F_n] \to Y$, 
and let $\cR$ denote the associated collection of monoid refinements (given by $\bd\beta_\ast(\bM \wt F) \subset \bM F$ for $\wt F \in \cM {} ([Y; F_1,\ldots,F_n])$ and
 $F = \beta_\sharp(\wt F) \in \cM {} (Y)$). Then
\mbox{}
\begin{enumerate}
\item $\cR$ determines a manifold with corners $\wt Y$ and along with an interior b-map $\beta' : \wt Y \to Y$, restricting to a diffeomorphism $\beta' : \wt Y^\circ \cong Y^\circ$ of interiors, such that
\item
if $f : X \to Y$ is an interior b-map with the property that 
every
monoid homomorphism $\bd f_\ast : \bM E \to \bM f_\sharp(E)$, $E \in \M {}(X)$, factors through a monoid in $\cR$,
then $f$ lifts to a unique interior b-map $\wt f : X \to \wt Y$ such that $f = \beta' \circ \wt f$:
\[
\begin{tikzcd}
	& \wt Y \ar[d,"\beta"]	
	\\ X \ar[r, "f"] \ar[ur, "\wt f"] & Y
\end{tikzcd}
\]
\item
In particular, the blow-down map $\beta : [Y; F_1,\ldots,F_n] \to Y$ factors through a unique diffeomorphism $\wt \beta : [Y;F_1,\ldots,F_n] \cong \wt Y$ identifying $\wt Y$ with the iterated blow-up itself,
and if $[Y; F_{\sigma(1)},\ldots, F_{\sigma(n)}]$ is an iterated blow-up of the same boundary faces in a different order (i.e., for a permutation $\sigma$ of $\set{1,\ldots,n}$), then as long as the two associated refinements of the monoids of $Y$ agree, there is a canonical diffeomorphism $[Y; F_1,\ldots,F_n] \cong [Y; F_{\sigma(1)}, \ldots, F_{\sigma(n)}]$ extending the identity in the interior.
\end{enumerate}
\label{T:lifting_b-maps}
\end{thm}

Though we refer to \cite{KMgen} for the full proof of these results, we give a
sketch here for the convenience of the unfamiliar
reader.
First, the space $\wt Y$ may be constructed directly from the monoid refinement of $Y$ as follows.
Locally, if $(x,y) \in \bbR_+^l \times \bbR^{n-l}$ are coordinates on $Y$ centered at an interior point of a face $F \in \M l(Y)$, then for each monoid of
dimension $l$ in the refinement of $\bM F$, let $\nu = [\nu\pns{i,j}] \in \Mat(l\times l,\bbZ_+)$ denote the matrix of the inclusion map with respect to the generators
(which is unimodular and in particular invertible) and construct a coordinate patch $(t,y) \in \bbR_+^l \times \bbR^{n-l}$ for $\wt Y$ with local blow-down map 
\begin{equation}
	\beta : (t,y) \mapsto (x,y) = (t^\nu, y).  
\label{E:genblow_down_coords}
\end{equation}
In particular $t = x^{\nu^\inv}$ gives `blow-up coordinates' on $\wt Y$ as rational combinations of the $x$.
For adjacent monoids in the refinement, associated to matrices $\nu$ and $\nu'$ respectively, the transition maps
\[
	(t',y) \mapsto (t,y) = (t^{\nu^{\inv} \nu'}, y)
\]
are seen to be diffeomorphisms on open subsets where some of the variables (associated to those generators which are not common to both monoids)
are strictly positive, and glue together naturally with respect to coordinate transitions on $Y$ itself to determine $\wt Y$ and the blow down map
$\beta : \wt Y \to Y$ whose boundary exponents are the $\nu\pns{i,j}$. 

For the lifting result, suppose $f$ is given locally by 
\[
	f : (x',y') \to 
	(x,y) = 
	\big(a{x'}^\mu, y\big) 
\]
where $y = y(x',y')$, and $a{x'}^\mu = (a_1(x',y')\prod_{j} {x'}_j^{\mu\pns{1,j}},\ldots, a_l(x',y') \prod_{j} {x'}_j^{\mu\pns{l,j}})$ with $a_i > 0$,
so that the matrix $\mu = [\mu\pns{i,j}] \in \Mat(l\times k, \bbZ_+)$ of boundary exponents represents the associated monoid homomorphism $\bd f_\ast : \bM E \to \bM F$ (with $E \in \M k(X)$ given locally by $\set{x' = 0}$).
The condition that the latter factors through some monoid in the refinement means that $\mu = \nu \wt \mu$ for some $\nu$ as above, and then
\begin{equation}
	\wt f : (x',y') \to 
	(t,y) = 
	\big(a^{\nu^{\inv}}{x'}^{\wt \mu},y)
\label{E:genblow_lift}
\end{equation}
gives the local coordinate expression for the unique lift $\wt f : X \to \wt Y$; it is straightforward to check that this is functorial and in particular 
behaves well with respect to changes of coordinates.

\subsection{Rational combinations of boundary defining functions} \label{S:bkg_proj}
A \emph{rational combination} of boundary defining functions $\set{\rho_i}$ on a space $X$ associated to boundary hypersurfaces $G_1,\ldots,G_n$ is defined to be a product
of the form
\begin{equation}
	\sigma = \prod_j \rho_j^{a_j}, \qquad a_j \in \bbZ.
	\label{E:proj_comb}
\end{equation}
It will be of interest to know where in $X$ such a combination determines a smooth function locally, and more importantly, when $\sigma$ lifts
globally to a smooth function under a blow-up $\beta : \wt X \to X$.
These 
properties may be determined by considering the logarithmic differential 
\[
	\bd \sigma = \mathrm{d} \log \sigma = \sum_j a_j \frac{d\rho_j}{\rho_j}
\]
of $\sigma$, which induces a $\bbZ$-linear map 
\[
	\bd \sigma : \bM F \to \bbZ 
\]
on the monoid of each boundary face $F \in \M {}(X)$, determined on generators 
by $\bd \sigma (h_i) = a_i$.

Since $F$ is covered by coordinate charts with boundary defining coordinates $x_i$ associated to the generators $g_i \in \bM F$ in which
$\sigma$ has the local coordinate expression $\sigma(x,y) = b(x,y)\prod_i x_i^{a_i}$ where $b > 0$, it follows that $\sigma$ determines a smooth funtion 
$\sigma : U \supset F^\circ \to [0,\infty)$ for some neighborhood $U$ of $F^\circ$ if and only if each $a_i = \bd \sigma(g_i) \geq 0$, or equivalently if $\bd \sigma(\bM F) \subset \bbZ_+$.
Likewise, $\sigma^\inv$ determines a smooth function $\sigma^\inv : U \subset F^\circ \to [0,\infty)$ if and only if $\bd \sigma(\bM F) \subset \bbZ_-$.
Moreover, $\sigma$ (resp.\ $\sigma^\inv$) vanishes on a hypersurface $G_j \cap U$ if and only if $\bd \sigma(g_j) > 0$ (resp.\ $\bd \sigma(g_j) < 0$), since this corresponds 
to $\sigma$ having a local expression involving a term $x_j^{a_j}$ with $a_j > 0$ (resp.\ $a_j < 0$).
Note that in either case (in which $\bd \sigma$ has a constant sign on $\bM F$), $\sigma$ can be regarded as a b-map $\sigma : U \to [0,\infty]$, with $[0,\infty]$ considered as a one-dimensional compact manifold with two boundary hypersurfaces $\set{0}$ and $\set{\infty}$.

Now if $\beta : \wt X = [X; F_1,\ldots,F_n] \to X$ is an iterated bounday blow-up, the behavior of the pullback $\beta^\ast \sigma$ of $\sigma$ to $\wt X$ can be analyzed using the fact that $\bd (\beta^\ast \sigma)(m) = \bd \sigma(\beta_\ast m)$
for $m \in \bM \wt F$, $\wt F \in \M {}(\wt X)$, from which we obtain the following result.

%

\begin{prop}
A rational combination $\sigma$ lifts to a smooth b-map $\beta^\ast \sigma : \wt X = [X; F_1,\ldots,F_n]\to [0,\infty]$ if and only if $\bd \sigma$ takes a constant sign on each monoid in the refinement associated to $\wt X$, and in this case $\beta^\ast \sigma$ (resp.\ $\beta^\ast \sigma^\inv$) vanishes on $\wt G \in \M {}(\wt X)$ provided $\bd \sigma(\wt g) > 0$ (resp.\ $\bd \sigma(\wt g) < 0$), where $\wt g$ is the generator in the refinement associated to the hypersurface $\wt G \in \M 1(\wt X)$.
\label{P:lift_proj_comb}
\end{prop}

Note that boundary defining functions on a blow-up $\wt Y$ of $Y$ are given locally by rational combinations of boundary defining functions from $Y$, as follows
from writing $t = x^{\nu^\inv}$ in \eqref{E:genblow_down_coords}.
Moreover, if a map $f : X \to Y$ is locally rigid and satisfies the hypotheses of Theorem~\ref{T:lifting_b-maps}, then its lift $f : X \to \wt Y$ is rigid with respect to the rational boundary defining functions on $\wt Y$ as follows from \eqref{E:genblow_lift}.

It is evident from Proposition~\ref{P:lift_proj_comb} that a set of the form
$\set{\sigma = 1} \subset X$ (or more generally $\set{\sigma = c}$ for $c \in (0,\infty)$) lifts to an interior p-submanifold of a blow-up
$\wt X$ whenever $\sigma$ lifts to be smooth on $\wt X$,
and by iteration, a set of the form $\set{\sigma_1 = \cdots = \sigma_r = 1}$ 
lifts to an interior p-submanifold of $\wt X$ if each of the $\sigma_i$ lifts to be smooth; equivalently each $\bd \sigma_i$ has a constant sign on each monoid associated to the blow-up $\wt X$.

On the other hand, $\set{\sigma_1= \cdots = \sigma_r = 1}$ may lift to a
p-submanifold of $\wt X$ even when none of the individual rational
combinations $\sigma_i$ lift to be smooth, as in the following result (a specialization of \cite[Prop.~10.3]{KMgen}) that will be used below.
Note that the property that $\bd \sigma$ has a constant sign on a monoid $M
= \bbZ_+\pair{g_1,\ldots,g_l}$ is equivalent to the property that the
intersection of $M$ with the subspace $\ker \bd \sigma$ is a face (possibly $\set{0}$) of $M$, meaning 
a submonoid generated by a (possibly empty) subset of $\set{g_1,\ldots,g_l}$.
Indeed, this is the case if and only if the remaining generators of $M$ all lie entirely within one or the other of the half-spaces 
determined by $\bd \sigma > 0$ or $\bd \sigma < 0$.

\begin{prop}[{\cite[Prop.~10.3]{KMgen}}]
Let $\sigma_i$
be rational combinations of boundary defining functions on $X$ and $c_i \in (0,\infty)$
for
$1 \leq i \leq r$.
Then the set $\set{\sigma_1 = c_1,\ldots,\sigma_r = c_r} \subset X$ lifts to an interior p-submanifold of the iterated blow-up $\wt X = [X; F_1,\ldots, F_n]$ provided
that each monoid $M$ in the refinement associated to $\wt X$ intersects the subspace $\bigcap_{i=1}^r \ker \bd \sigma_i$ along a face of $M$.
\label{P:p-sub_lift}
\end{prop}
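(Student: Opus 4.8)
The plan is to reduce the global statement to a local computation in each coordinate patch of $\wt X$ coming from the refinement, where it becomes a statement in monoidal linear algebra resolved by a separation argument.

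First I would fix a monoid $M = \bbZ_+\pair{g_1,\ldots,g_l}$ in the refinement associated to $\wt X$ and work in the corresponding patch with coordinates $(t,y)$ and blow-down $\beta : (t,y) \mapsto (t^\nu, y)$ as in \eqref{E:genblow_down_coords}. Since being a p-submanifold is local, it suffices to analyze the lift (the closure in $\wt X$ of $\set{\sigma_1 = c_1,\ldots,\sigma_r = c_r}\cap X^\circ$) in each such patch. Using $\bd(\beta^\ast \sigma_i)(g_k) = \bd\sigma_i(\beta_\ast g_k)$, set $b_{ik} = \bd\sigma_i(g_k) \in \bbZ$; then, exactly as in the proof of Lemma~\ref{L:proj_comb_vanish} pulled back by $\beta$, each $\beta^\ast \sigma_i$ has local form $\beta^\ast\sigma_i = \wt a_i(t,y)\prod_k t_k^{b_{ik}}$ with $\wt a_i > 0$ smooth. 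Writing $S = \set{k : b_{ik} = 0 \text{ for all } i}$, the hypothesis that $M \cap \bigcap_i \ker\bd\sigma_i$ is a face of $M \cong \bbZ_+^l$ forces that face to be generated precisely by $\set{g_k : k \in S}$.

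The key step, which I expect to be the main obstacle, is to show that the lifted set avoids the hypersurface $\set{t_k = 0}$ for every $k \notin S$. The face hypothesis implies that no nontrivial nonnegative combination $\sum_{k\notin S} m_k (b_{1k},\ldots,b_{rk})$ vanishes in $\bbZ^r$: such a combination would give an element of $M \cap \bigcap_i \ker\bd\sigma_i$ supported on generators outside the face, contradicting freeness of $M$. By Gordan's theorem there is then $\lambda = (\lambda_1,\ldots,\lambda_r) \in \bbZ^r$ (rational first, then cleared of denominators) with $\sum_i \lambda_i b_{ik} > 0$ for $k \notin S$ and $= 0$ for $k \in S$. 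Forming the rational combination $\tau = \prod_i \sigma_i^{\lambda_i}$, we get $\bd\tau(g_k) = \sum_i \lambda_i b_{ik} \geq 0$ on all generators of $M$, so by Corollary~\ref{C:lift_proj_comb} $\tau$ lifts to a smooth b-map with $\beta^\ast\tau$ vanishing exactly on $\set{t_k = 0}$, $k \notin S$. Since $\tau \equiv \prod_i c_i^{\lambda_i} \in (0,\infty)$ on the constraint set, that set lies in $\set{\beta^\ast\tau > 0}$ and hence avoids $\set{t_k = 0}$ for all $k \notin S$. This is the heart of the matter: it turns the combinatorial face condition into the analytic avoidance of the ``essential'' boundary hypersurfaces, and once it is in hand the rest is routine.

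On the region $\set{t_k > 0 : k \notin S}$ each $\beta^\ast\sigma_i = \wt a_i \prod_{k\notin S} t_k^{b_{ik}}$ is genuinely smooth and positive, and depends only on $y$ and on $(t_k)_{k\notin S}$, being independent of the boundary coordinates $t_k$, $k \in S$. Thus $\set{\beta^\ast\sigma_i = c_i}$ is locally a product of the free $t_k$-axes ($k \in S$) with a level set in the remaining interior-type variables, which makes it automatically transversal to each remaining boundary hypersurface $\set{t_k = 0}$, $k \in S$, hence an interior p-submanifold once the level set itself is smooth. After discarding redundant constraints I may assume $\bd\sigma_1,\ldots,\bd\sigma_r$ are linearly independent (inconsistent constraints merely yield the empty set, a p-submanifold), whence the $d\log\beta^\ast\sigma_i$ are pointwise independent on this region, $(\beta^\ast\sigma_i)$ is a submersion, and the level set is a smooth interior p-submanifold of codimension $r$. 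Finally, since the local pieces are all restrictions of the single globally defined set $\overline{\set{\sigma_i = c_i}\cap X^\circ}$, they glue to an interior p-submanifold of $\wt X$, completing the argument.
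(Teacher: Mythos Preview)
Your approach is correct and is essentially the paper's proof, spelled out in more detail. The paper's argument is a single sentence: it asserts that the face hypothesis allows the $\log\sigma_i$ to be replaced by equivalent $\bbZ$-linear combinations $\log\wt\sigma_i$ each of which has determinate sign on $M$, then invokes Corollary~\ref{C:lift_proj_comb} and the remark preceding the proposition. Your separation argument (Stiemke's lemma, commonly grouped with Gordan's) is precisely the justification the paper leaves implicit; indeed once your single $\tau$ is in hand, setting $\wt\sigma_i = \sigma_i\,\tau^{N_i}$ for $N_i$ large makes every $\bd\wt\sigma_i$ nonnegative on $M$ and recovers the paper's formulation, while your choice to use $\tau$ alone to localize away from $\{t_k=0\}$, $k\notin S$, and then argue directly is an equivalent and slightly more economical variant.

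One genuine slip: you claim $\beta^\ast\sigma_i$ is independent of $t_k$ for $k\in S$, but the smooth positive factor $\wt a_i(t,y)$ can depend on those variables in arbitrary coordinates (take $X=\bbR_+^2$, $\rho_1 = x_1(1+x_2)$, $\rho_2 = x_2$, $\sigma = \rho_1/\rho_2$, and blow up the corner). The fix is immediate: choose the local coordinates on $X$ with $x_j = \rho_j$, which is legitimate since the $\rho_j$ are themselves boundary defining; then $\wt a_i \equiv 1$, $\beta^\ast\sigma_i = \prod_{k\notin S} t_k^{b_{ik}}$ on the nose, and your product-structure and transversality claims hold literally. Since being a p-submanifold is coordinate-independent, this suffices.
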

\begin{proof}
The key observation is that the equations $\set{\log \sigma_i = \log c_i : 1 \leq i \leq r}$ may be replaced by equivalent $\bbZ$-linear combinations. 
Thus for any given monoid $M$ 
in the refinement associated to $\wt X$, 
the hypothesis that $M$ meets $\bigcap_{i=1}^r \ker \bd \sigma_i$ along a face of $M$ means that the $\log \sigma_i$ may be replaced by $\bbZ$-linear combinations $\log \wt \sigma_i = \sum_j {b_{ij}} \log \sigma_j$ with the property that $\bd \wt \sigma_i = \sum_j b_{ij} \bd \sigma_j$ has constant sign on $M$,
and then it follows from Proposition~\ref{P:lift_proj_comb} that the set lifts to a p-submanifold in any local coordinates on $\wt X$ associated to $M$.
\end{proof}

As an example, in Figure~\ref{F:blowup}, neither of the rational combinations $\rho_1/\rho_2$ or $\rho_1/\rho_3$ lifts to be smooth in any of the depicted blow-ups, yet the set $\set{\rho_1/\rho_2 = \rho_1/\rho_3 = 1}$ lifts to a p-submanifold in the blow-ups depicted in (b) and (d), since $\ker \bd (\rho_1/\rho_2) \cap \ker \bd (\rho_1/\rho_3)$ is the span of $g_1+g_2 +g_3$.

\section{Ordered corners} \label{S:ordcorn}
\begin{defn}
The category of \emph{manifolds with ordered corners} is as follows:
\begin{itemize}
\item An \emph{object} is a manifold with corners $X$ equipped with a partial order on the set
\[
	\Mtot(X) := \M 1(X) \cup \M 0(X) = \M 1 (X) \cup \set{X}
\]
of principal faces,
with the property that every pair of faces in $\Mtot(X)$
which meet are comparable with respect to the order; equivalently, incomparable faces must be disjoint. 
In particular every element of $\Mtot(X)$ is comparable to $X \in \M 0(X)$. 
Recall that we use the notation
\[
	G \sim G' \quad \text{if $G = G'$, $G < G'$, or $G' < G$}
\]
to denote comparable elements.

\item A \emph{morphism} is a simple, b-normal, interior b-map $f : X \to Y$ which is \emph{ordered}, meaning that the induced map
\begin{equation}
	f_\sharp : \Mtot(X) \to \Mtot(Y)
	\label{E:induced_bdy_map}
\end{equation}
is order preserving.
\end{itemize}
\label{D:ordcorn}
\end{defn}
\begin{rmk}
\mbox{}
\begin{itemize}
\item
As noted in \S\ref{S:bkg}, we allow for the possibility that $\M 1(X)$ consists of collective boundary hypersurfaces, that is, unions of disjoint components which are identified as a
single element of $\M 1(X)$.
This always applies to $X \in \M 0(X)$ if $X$ is disconnected, recalling that by convention we always require $\M 0(X) = \set X$ to be a singleton.
\item 
We do not require that a pair of faces be comparable \emph{if and only if} they are non-disjoint;
it is sometimes convenient to consider orders in which possibly disjoint faces are deemed comparable, such as a total order;
nevertheless, for any admissible order on $\Mtot(X)$ there is a minimal suborder with the property that elements are comparable if and only if they are non-disjoint.
\end{itemize}

\end{rmk}

We will often be interested in the case that $X \in \M 0(X)$ is either maximal or minimal, in which case we say $X$ is respectively \emph{interior maximal} or \emph{interior minimal}.
%
Given an order on $\M 1(X)$ satisfying the requirements of Definition~\ref{D:ordcorn},
we denote by $X_\tmax$ or $X_\tmin$ the manifold with ordered corners in which the interior is made maximal or minimal, respectively.
We introduce the notation
\[
\begin{aligned}
	\M 1^<(X) &= \set{G \in \M 1(X) : G < X}, 
	& \M 1^>(X) &= \set{G \in \M 1(X) : G > X}, 
\end{aligned}
\]
with respect to which $\Mtot(X)$ decomposes as a disjoint union
\[
	\Mtot(X) = \M 1^<(X) \cup \set X \cup \M 1^>(X).
\]
Note that $X \in \M 0(X)$ forms a basepoint for the ordered set $\Mtot(X)$, in that it is comparable to every other element and is preserved by the map
$f_\sharp : \Mtot(X) \to \Mtot(Y)$ associated to a morphism $f : X \to Y$ (as $f$ is interior); thus
the association $X \mapsto \Mtot(X)$ is a functor from the category
of manifolds with ordered corners to the category of \emph{pointed ordered sets}.
Note also that $f_\sharp(\M 1^<(X)) \subset \M1^<(Y) \cup \set Y$ and $f_\sharp(\M 1^>(X)) \subset \M1^>(Y) \cup \set Y$, with notation as above.

\subsection{Products} \label{S:ordcorn_product}
Products of manifolds with ordered corners can be motivated by considering the functor $X \mapsto \Mtot(X)$.
The product of $\Mtot(X)$ and $\Mtot(Y)$ as ordered sets is the cartesian product $\Mtot(X)\times \Mtot(Y)$ equipped with the product order, depicted schematically as follows:
\begin{equation}
\begin{tikzcd}[sep=small]
	\M 1^<(X) \times \M 1^>(Y) \ar[r] & X \times \M 1^>(Y) \ar[r] & \M 1^>(X)\times \M 1^>(Y)
	\\
	\M 1^<(X) \times Y \ar[r] \ar[u] & X \times Y \ar[r] \ar[u] & \M 1^>(X)\times Y \ar[u]
	\\
	\M 1^<(X) \times \M 1^<(Y) \ar[r] \ar[u] & X \times \M 1^<(Y) \ar[r] \ar[u] & \M 1^>(X)\times \M 1^<(Y) \ar[u]
\end{tikzcd}
	\label{E:ord_prod_bhs}
\end{equation}
\begin{conv}
Here and below we employ the following diagrammatic notation conventions for products of ordered sets: an arrow of the form $A \times B \to A \times B'$ 
between products with a single factor in common (and $B \cap B' = \emptyset$) means that $(a,b) < (a, b')$ for each $a \in A$, $b \in B$, and $b' \in B'$.
On the other hand, an arrow of the form $A \times B' \to A' \times B$ with no common factor on both sides (and $A \cap A' = B \cap B' = \emptyset$) means that $(a,b') < (a',b)$ for every $a \in A$, $a' \in A'$, $b \in B$, and $b' \in B'$. 
\label{Conv:ord_diag}
\end{conv}
However, the product in the category of \emph{pointed ordered sets} is the subset of pairs which are comparable to the base point $(X,Y) \in \M0(X)\times \M0(Y)$:
\begin{equation}
\begin{tikzcd}[sep=small]
	 & X \times \M 1^>(Y) \ar[r] & \M 1^>(X)\times \M 1^>(Y)
	\\
	\M 1^<(X) \times Y \ar[r]  & X \times Y \ar[r] \ar[u] & \M 1^>(X)\times Y \ar[u]
	\\
	\M 1^<(X) \times \M 1^<(Y) \ar[r] \ar[u] & X \times \M 1^<(Y)  \ar[u] & 
\end{tikzcd}
	\label{E:point_ord_prod}
\end{equation}

It is easy to see that the cartesian product $X\times Y$ cannot be a product in the ordered corners category in general; its set $\Mtot(X\times Y)$ of boundary hypersurfaces may be identified with the subset 
of $\Mtot(X)\times \Mtot(Y)$ depicted by
\[
\begin{tikzcd}[sep=small]
	 & X \times \M 1^>(Y)  &
	\\
	\M 1^<(X) \times Y \ar[r]  & X \times Y \ar[r] \ar[u] & \M 1^>(X)\times Y 
	\\
	 & X \times \M 1^<(Y) \ar[u] &
\end{tikzcd}
\]
and hypersurfaces in $\M1^>(X)\times Y$ and $X \times \M 1^>(Y)$ meet yet are incomparable in the product order (and similarly for 
faces in $\M1^<(X) \times Y$ and $X \times \M 1^<(Y)$).
There is therefore no natural way to put an ordered corners structure on $X\times Y$ such that both projection maps $X\times Y \to X$ and $X\times Y \to Y$ are ordered.
On the other hand, faces in $\M1^<(X) \times Y$ (resp.\ $\M1^>(X)\times Y$) are comparable to those in $X \times \M 1^>(Y)$ (resp.\ $X\times \M 1^<(Y)$), 
so in the case that one of $X$ or $Y$ is interior minimal while the other is interior maximal, $X\times Y$ \emph{is} naturally a manifold with ordered corners when equipped with the product
order, and as we will see below, it satisfies the universal property of the product in this case.

\begin{rmk}
It is also possible to equip $X\times Y$ with one of the two \emph{lexicographic} orders rather than the product order, which gives it an ordered structure with respect to which
one of the projections (but generally not the other) is ordered; we will not make use of this.
\end{rmk}

\begin{defn}
The \emph{ordered product} $X\ttimes Y$ is the iterated blow-up
\begin{equation}
\begin{aligned}
	X\ttimes Y &= [X\times Y; \M 1^<(X)\times \M 1^<(Y), \rM 1^>(X) \times \rM 1^>(Y)]
	\\&\cong [X\times Y; \rM 1^>(X) \times \rM 1^>(Y),\M 1^<(X)\times \M 1^<(Y)]
\end{aligned}
	\label{E:ord_prod}
\end{equation}
where the blow-up is performed in any order consistent with the partial order on the products, and 
where here $\rM 1^>(\bullet)$ denotes the ordered set $\M 1^>(\bullet)$ with the opposite order.
\label{D:ordprod}
\end{defn}

\begin{thm}
The space $X\ttimes Y$ in \eqref{E:ord_prod} is a well-defined manifold with ordered corners, with $\Mtot(X\ttimes Y)$ isomorphic to the ordered set $\set{(G,H) \in \Mtot(X)\times \Mtot(Y) : (G,H) \sim (X,Y)}$ as depicted in \eqref{E:point_ord_prod}.

It is the product of $X$ and $Y$ in the category of manifolds with ordered corners.
More precisely, the projections of $X\times Y$ onto $X$ and $Y$ lift to morphisms (in fact ordered b-fibrations) $X\ttimes Y \to X$ and $X\ttimes Y \to Y$,
and $X\ttimes Y$ satisfies the universal property that if $W$ is a manifold with ordered corners and $f : W \to X$ and $g: W \to Y$ are morphisms, then there exists a unique
morphism $W \to X\ttimes Y$ making the following diagram commute:
\[
\begin{tikzcd}
	& W \ar[d, dashed, "\exists !"] \ar[dl, swap, "f"] \ar[dr, "g"] &
\\ 	X & X\ttimes Y \ar[l] \ar[r] & Y
\end{tikzcd}
\]
\label{T:ord_prod}
\end{thm}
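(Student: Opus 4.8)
The plan is to reduce everything to the combinatorics of the monoid refinement of $\bM(X\times Y)$ induced by the iterated blow-up \eqref{E:ord_prod}, using Theorem~\ref{T:lifting_b-maps} as the main engine. Work locally at a corner of $X\times Y$ where hypersurfaces $H_1,\dots,H_p$ of $X$ and $G_1,\dots,G_q$ of $Y$ meet; the relevant normal monoid is $\bbZ_+\pair{h_1,\dots,h_p,g_1,\dots,g_q}$, and each blow-up of a face $H_i\times G_j$ performs a barycentric subdivision introducing the ray $h_i+g_j$. The first claim is that, for any linear extension of the product order on $\M 1^<(X)\times\M 1^<(Y)$ and the reversed product order on $\M 1^>(X)\times\M 1^>(Y)$, these subdivisions produce one and the same refinement. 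The key observation is that two faces $H\times G$ and $H'\times G'$ fail to commute as blow-ups only when they share a common hypersurface (i.e.\ $H=H'$ or $G=G'$), in which case they are strictly comparable in the relevant product order and the order fixes their relative position; in all remaining cases they are either disjoint (when $H,H'$ or $G,G'$ are incomparable in $X$ resp.\ $Y$) or transversal (the ``opposite'' case $H<H'$, $G>G'$), and transversal or disjoint boundary blow-ups commute. Moreover the $<$- and $>$-faces never share a hypersurface and hence always commute, justifying the two orders in \eqref{E:ord_prod}. Well-definedness then follows since any two admissible orders differ by transpositions of commuting pairs, and the identification of $\Mtot(X\ttimes Y)$ with \eqref{E:point_ord_prod} is read off the refinement: its rays are exactly $r_{H,G}:=h_H+g_G$ (with the convention $h_X=g_Y=0$) for $(H,G)\sim(X,Y)$, and two rays span a face of a common cone (so $\tlift H G\cap\tlift{H'}{G'}\neq\emptyset$) precisely when $(H,G)$ and $(H',G')$ are comparable in the product order. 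By Theorem~\ref{T:lifting_b-maps}(1,3) this refinement characterizes $X\ttimes Y$ up to canonical diffeomorphism.

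For the projections I would compute the induced monoid homomorphism of the lifted projection $\pi_X=\mathrm{pr}_X\circ\beta\colon X\ttimes Y\to X$ (with $\beta\colon X\ttimes Y\to X\times Y$ the blow-down) on each refinement cone: the ray $r_{H,G}=h_H+g_G$ maps to $h_H$ (killing the $Y$-directions), so every generator of a refinement cone maps to a generator of $\bM(X)$ or to $0$. By the criterion following \eqref{E:monoid_hom} this shows $\pi_X$ is simple and b-normal, and surjectivity of the induced maps on fibres shows it is a b-fibration; that $(\pi_X)_\sharp\colon\tlift H G\mapsto H$ is order-preserving is immediate from the description of the order on $\Mtot(X\ttimes Y)$, so $\pi_X$ is an ordered b-fibration, and symmetrically for $\pi_Y$.

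The universal property is the heart of the matter. Given morphisms $f\colon Z\to X$ and $g\colon Z\to Y$, I would form $(f,g)\colon Z\to X\times Y$ and verify the hypothesis of Theorem~\ref{T:lifting_b-maps}(2): that on each face $F\in\M{}(Z)$ the monoid homomorphism $\bd(f,g)_\ast\colon\bM F\to\bM(X\times Y)$ factors through a single cone of the refinement. At a corner of $Z$ with totally ordered hypersurfaces $E_1<\cdots<E_n$, simplicity and b-normality of $f,g$ give $\bd(f,g)_\ast(e_i)=h_{f_\sharp(E_i)}+g_{g_\sharp(E_i)}=r_{H_i,G_i}$ where $(H_i,G_i)=(f_\sharp(E_i),g_\sharp(E_i))$; since $f_\sharp$ and $g_\sharp$ are order-preserving, $(H_1,G_1)\le\cdots\le(H_n,G_n)$ is a chain in the product order, and by the combinatorial fact above the rays $r_{H_i,G_i}$ span a face of a single refinement cone. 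This is exactly where the \emph{ordered} hypothesis is indispensable: it rules out configurations (e.g.\ $E_i<E_j$ with $H_i<H_j$ but $G_i>G_j$) whose images would straddle a subdivision and fail to factor. Theorem~\ref{T:lifting_b-maps}(2) then produces a unique interior b-map $h\colon Z\to X\ttimes Y$ with $\beta\circ h=(f,g)$, whence $\pi_X\circ h=f$ and $\pi_Y\circ h=g$. It remains to check $h$ is a morphism --- simple and b-normal because each $e_i\mapsto r_{H_i,G_i}$ is a generator or $0$, and ordered because $h_\sharp(E_i)=\tlift{H_i}{G_i}$ together with the order-isomorphism of $\Mtot(X\ttimes Y)$ with \eqref{E:point_ord_prod} --- and to deduce uniqueness: any morphism $k$ with $\pi_X\circ k=f$ and $\pi_Y\circ k=g$ satisfies $\beta\circ k=(f,g)$, hence equals $h$ by the uniqueness in Theorem~\ref{T:lifting_b-maps}(2).

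The main obstacle I anticipate is the combinatorial lemma that a chain in the pointed product order corresponds to rays generating a face of a single cone of the refinement (equivalently, that the corresponding hypersurfaces have nonempty common intersection). Establishing this cleanly --- most naturally by an induction on the barycentric subdivisions tracking how the rays of each cone form a chain --- is the technical crux underlying both the identification of $\Mtot(X\ttimes Y)$ in the first part and the factoring condition driving the universal property.
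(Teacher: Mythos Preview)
Your proposal is correct and follows essentially the same route as the paper: both reduce to the monoid refinement of $X\times Y$ and use Theorem~\ref{T:lifting_b-maps} as the engine, with the universal property argument (generators $e_i\mapsto h_{\alpha(i)}+g_{\beta(i)}$ forming a chain by orderedness of $f_\sharp,g_\sharp$) matching the paper almost verbatim. The only stylistic difference is in establishing well-definedness: you argue via commutativity of blow-ups for incomparable pairs, whereas the paper traces the barycentric subdivisions step by step to compute the refinement explicitly---but since you correctly flag that the description of the refinement cones as those generated by maximal chains in \eqref{E:point_ord_prod} is the technical crux needed anyway (for both the order structure on $\Mtot(X\ttimes Y)$ and the factoring condition), you end up doing the same induction the paper does.
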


\begin{cor}
If $X = X_\tmin$ and $Y = Y_\tmax$ are interior minimal and maximal, respectively, 
then the cartesian product $X \times Y \equiv X \ttimes Y$ is already the product
of $X$ and $Y$ in the category of manifolds with ordered corners.
\label{C:cart_prod}
\end{cor}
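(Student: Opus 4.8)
The plan is to specialize the blow-up formula \eqref{E:ord_prod} to the interior minimal/maximal case and observe that no blow-up is actually performed, after which the statement follows directly from Theorem~\ref{T:ord_prod}. First I would unwind the definitions of $X_\tmin$ and $Y_\tmax$: since $X \in \M 0(X)$ is minimal, every boundary hypersurface of $X$ satisfies $H > X$, so $\M 1^<(X) = \emptyset$ and $\M 1^>(X) = \M 1(X)$; dually, since $Y \in \M 0(Y)$ is maximal, every $G \in \M 1(Y)$ satisfies $G < Y$, so $\M 1^>(Y) = \emptyset$ and $\M 1^<(Y) = \M 1(Y)$.

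Substituting these into the two blow-up centers appearing in \eqref{E:ord_prod}, the first set $\M 1^<(X) \times \M 1^<(Y) = \emptyset \times \M 1(Y)$ is empty because $\M 1^<(X) = \emptyset$, and the second set $\rM 1^>(X) \times \rM 1^>(Y) = \M 1(X) \times \emptyset$ is empty because $\M 1^>(Y) = \emptyset$ (the order reversal being immaterial for an empty set). Hence \eqref{E:ord_prod} reduces to the blow-up of $X \times Y$ along no centers at all, and therefore $X \ttimes Y = X \times Y$, which is exactly the identification $X \times Y \equiv X \ttimes Y$ asserted in the corollary.

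It then remains only to invoke Theorem~\ref{T:ord_prod}, which already establishes that $X \ttimes Y$ is the product of $X$ and $Y$ in the category of manifolds with ordered corners; since $X \ttimes Y = X \times Y$ here, the cartesian product is the categorical product, with the two ordered b-fibrations being the ordinary projections $X \times Y \to X$ and $X \times Y \to Y$. The only point one should confirm directly is that the product order on $\Mtot(X \times Y) \subset \Mtot(X)\times\Mtot(Y)$ is admissible, i.e.\ that incomparable hypersurfaces are disjoint; but this is precisely the observation recorded in the discussion preceding the corollary, since the only hypersurfaces of $X \times Y$ which meet without sharing a factor are $H \times Y$ and $X \times G$, and these are comparable because $H > X$ and $Y > G$ force $(H,Y) > (X,G)$. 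I do not anticipate any genuine obstacle: the entire content lives in Theorem~\ref{T:ord_prod}, and this corollary is simply the degenerate case in which the defining iterated blow-up collapses to the identity.
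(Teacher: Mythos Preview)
Your proposal is correct and is precisely the argument the paper intends: the corollary has no separate proof in the paper because it is immediate from Theorem~\ref{T:ord_prod} once one observes that $\M 1^<(X) = \emptyset$ and $\M 1^>(Y) = \emptyset$ force both blow-up centers in \eqref{E:ord_prod} to be empty. Your additional verification that the product order on $\Mtot(X\times Y)$ is admissible is a helpful sanity check that the paper already sketches in the discussion preceding the corollary.
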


\begin{proof}[Proof of Theorem~\ref{T:ord_prod}]
While it is possible to prove these results by local coordinate computations, the sheer number of coordinate charts on 
$X\ttimes Y$ makes this tedious.
Instead, we employ the generalized blow-up machinery discussed in Section~\ref{S:bkg_genblow} to identify the monoid refinement
of $X\ttimes Y$ as a generalized blow-up of $X\times Y$, from which its ordered corners structure and universal property are easily derived.
We will see that the refinement consists of monoids which are freely generated by sums of generators $g + h$ forming maximally 
ordered chains in $\Mtot(X)\times \Mtot(Y)$, 
under the identification of hypersurfaces $G \in \M 1 (X)$ and $H \in \M 1(Y)$ with their associated monoid generators $g \in \bM G$ and $h \in \bM H$, and with $X$ and $Y$ identified with $0$.

Thus,
consider an arbitrary corner in $X\times Y$. 
By relabeling if necessary, we may assume this has the form
\[
	(G_{-m'} \cap \cdots \cap G_{-1} \cap G_1 \cap \cdots \cap G_m)\times (H_{-n'} \cap \cdots \cap H_{-1} \cap H_1 \cap \cdots \cap H_n)
\]
for totally ordered chains $G_{-m'} < \cdots < G_{-1} < X < G_{1} < \cdots < G_m$ and $H_{-n'} < \cdots < H_{-1} < Y < H_{1} < \cdots < H_n$ of hypersurfaces of $X$ and $Y$, respectively, which, since we work locally near
the corner, we may assume constitute all of the hypersurfaces of $X$ and $Y$.
The monoid associated to this corner of $X\times Y$ has the form
\begin{multline}
	\bbZ_+\pair{g_{-m'},\ldots, g_{-1},g_1,\ldots,g_m}\times \bbZ_+\pair{h_{-n'},\ldots,h_{-1},h_1,\ldots,h_n} 
	\\= \bbZ_+\pair{g_{-m'},\ldots,g_{-1},g_1,\ldots,g_m,h_{-n'},\ldots,h_{-1},h_1,\ldots, h_n}
	\label{E:orig_monoid}
\end{multline}
where we denote the generators associated to the hypersurfaces by lower case letters. 
Identifing $g_0 := 0 \in \bbZ_+\pair{g_{-m'},\ldots,g_m}$ and $h_0 := 0 \in \bbZ_+\pair{h_{-n'},\ldots,h_n}$ with $X$ and $Y$, respectively, determines
an identification between the ordered set $\Mtot(X)\times \Mtot(Y)$ and the elements 
$\set{g_i+h_j: -m \leq i \leq m, -n' \leq j \leq n} \subset \bbZ_+\pair{g_{-m'},\ldots,g_m,h_{-n'},\ldots,h_n}$,
and therefore determines a partial order on the latter set.
In other words, we regard $g_i$ and $h_j$ as $g_i + 0 = g_i + h_0$ and $0 + h_j = g_0 + h_j$, respectively and equip the sums (where we fix the order of addition) with the product order induced by
\begin{equation}
\begin{aligned}
	g_{-m'} < \cdots < g_{-1} < 0&=g_0 < g_{1} < \cdots < g_m, 
	\quad \text{and} \quad
	\\h_{-n'} < \cdots < h_{-1} < 0&=h_0 < h_{1} < \cdots < h_n,
	\label{E:mon_gen_order}
\end{aligned}
\end{equation}

Assuming temporarily that $X$ and $Y$ are minimal in $\Mtot(X)$ and $\Mtot(Y)$, so $m' = n' = 0$, consider the iterated blow-up
$[X\times Y; \rM 1^>(X) \times \rM 1^>(Y)]$.
The first blow-up of $G_m\times H_n$ subdivides \eqref{E:orig_monoid} into the pair of monoids
\[
	\bbZ_+\pair{g_1,\ldots, g_{m}+h_n, h_1, h_2, \ldots, h_{n}}
	\tand
	\bbZ_+\pair{g_1, \ldots, g_{m}, h_1,h_2, \ldots, g_m+h_{n}}.
\]
In particular, the incomparable pair $\set{g_m,h_n} = \set{g_m+0,0+h_n}$ is replaced by either of the comparable pairs $\set{g_m+h_n, g_m + 0}$ or $\set{g_m+h_n,0+h_n}$, and in the resulting monoids, the generator $g_m + h_n$ is comparable
to all other generators $g_i = g_i + 0$, $i < m$ and $h_j = 0 + h_j$, $j < n$.
Proceding through the rest of the iterated blow-up of $\rM 1^>(X)\times \rM 1^>(Y)$ (in any order consistent with the product order) has the effect of subsequently subdividing each remaining monoid with incomparable generators 
into two new monoids
by replacing the highest indexed incomparable pair $\set{g_i,h_j}$ with one of the comparable pairs $\set{g_i+h_j, g_i+0}$ or $\set{g_i+h_j,0+h_j}$, at the end of which process
\eqref{E:orig_monoid} is replaced by the collection of monoids generated by totally ordered chains among 
\[
	\set{g_i+h_j : 0 \leq i \leq m, 0 \leq j \leq n}.
\]
%
Note that had we instead blown up in the opposite order $\M 1^>(X)\times \M 1^>(Y)$, we would have 
started with the blow-up of $G_1 \times H_1$, in which $\set{g_1+0,0+h_1}$ is replaced by either $\set{g_1 + h_1, g_1 + 0}$ or $\set{g_1 + h_1, 0 + h_1}$;
however, the sum $g_1 + h_1$ is then incomparable to the remaining generators $g_i + 0$, $i > 0$ and $0 + h_j$, $j > 0$ in the resulting monoids, an incomparability which is not resolved by any subsequent blow-up.
In particular, the iterated blow up $[X\times Y; \M 1^>(X)\times \M 1^>(Y)]$ (or indeed the iterated blow-up in any other order besides $\rM 1^>(X) \times \rM 1^>(Y)$) fails to obtain
an ordered corners structure consistent with \eqref{E:point_ord_prod}.

Likewise, assume temporarily that $X$ and $Y$ are maximal in $\Mtot(X)$ and $\Mtot(Y)$, so $m= n = 0$, and 
consider the iterated blow-up $[X\times Y, \M 1^<(X)\times \M 1^<(Y)]$.
Now the first blow-up of $G_{-m'}\times H_{-n'}$ subdivides \eqref{E:orig_monoid} into the pair of monoids
\begin{multline*}
	\bbZ_+\pair{g_{-m'}+h_{-n'},g_{-m'+1},\ldots, g_{-1}, h_{-n'}, h_{-n'+1}, \ldots, h_{-1}}
	\\ \text{and} \quad
	\bbZ_+\pair{g_{-m'}, g_{-m'+1},\ldots, g_{-1}, g_{-m'}+h_{-n'},h_{-n'+1}, \ldots, h_{-1}},
\end{multline*}
in which the incomparable pair $\set{g_{-m'}+0,0+h_{-n'}}$ is replaced by either of the comparable pairs $\set{g_{-m'}+h_{-n'}, g_{-m'} + 0}$ or $\set{g_{-m'}+h_{-n'},0+h_{-n'}}$, with the result that the sum $g_{-m'}+h_{-n'}$ is comparable to all other generators $g_i = g_i + 0, i > -m'$ and $h_j = 0 + h_j, j > -n'$.
Proceding through the rest of the iterated blow-up of $\M 1^<(X)\times \M 1^<(Y)$ has the effect of iteratively subdividing each remaining monoid with incomparable generators 
$g_i$ and $h_j$, $-m' \leq i \leq -1$ and $-n' \leq j \leq -1$ into two new monoids, 
by replacing its lowest indexed incomparable pair $\set{g_i,h_j}$ with one of the comparable pairs $\set{g_i+h_j, g_i+0}$ or $\set{g_i+h_j,0+h_j}$.
At the end of this process \eqref{E:orig_monoid} is replaced by the collection of monoids generated by maximal totally ordered chains among the generators
\begin{equation}
	\set{g_i+h_j : -m' \leq i \leq 0, -n' \leq j \leq 0}.
	\label{E:sum_generators}
\end{equation}
%
Again, had we begun instead by blowing up $G_{-1} \times H_{-1}$, then 
generator pairs $\set{g_{-1}+0,0+h_{-1}}$ would be replaced by either $\set{g_{-1} + h_{-1}, g_{-1} + 0}$ or $\set{g_{-1} + h_{-1}, 0 + h_{-1}}$, leaving 
$g_{-1} + h_{-1}$ incomparable to generators $g_{i} + 0 = g_i + h_0, i < -1$ and $0 + h_j, j < -1$.

In the general case, since the monoids generated by totally ordered chains
decompose as products of factors with generators indexed by $i,j < 0$ on the one hand and 
factors with generators indexed by
$i,j > 0$ on the other, 
it follows from Theorem~\ref{T:lifting_b-maps} 
that an equivalent blow-up is obtained by combining the above procedures in either order;
in other words
$[X\times Y; \M 1^<(X)\times \M 1^<(Y), \rM 1^>(X)\times \rM 1^>(Y)] \cong [X\times Y; \rM 1^>(X)\times \rM 1^>(Y), \M 1 ^<(X)\times \M 1^<(Y)]$,
as both are associated to the same refinement of monoids.

Since $X\ttimes Y$ has boundary hypersurfaces associated to the one dimensional
monoids $\bbZ_+\pair{g_i + h_j}$ in the refinement, with hypersurfaces meeting
if and only if their associated monoids generate a two dimensional monoid, it follows
immediately that $X\ttimes Y$ admits an ordered corners structure with
$\Mtot(X\ttimes Y)$ order isomorphic to the pointed product 
\[
	\set{(G,H) \in \Mtot(X)\times \Mtot(Y) : (G,H) \sim (X,Y)}.
\]

To prove the universal property, suppose that
an arbitrary corner in $W$ has the local form $F_{-l'} \cap \cdots \cap F_{-1} \cap F_1 \cap \cdots \cap F_l$ for a totally ordered chain $F_{-l'} < \cdots < F_{-1} < F_0 := W < F_{1} < \cdots < F_l$ of boundary hypersurfaces
of $W$, with associated monoid $\bbZ_+\pair{f_{-l'},\ldots,f_{-1}, f_{1}, \ldots f_l}$.

Given ordered morphisms $f: W \to X$ and $g: W \to Y$, with respect to which
this corner of $W$ maps into the corner $G_{-m'} \cap \cdots \cap G_{-1} \cap G_1 \cap \cdots \cap G_{m}$
of $X$ and $H_{-n'} \cap \cdots \cap H_{-1} \cap H_1 \cap \cdots \cap H_n$ of $Y$,
it follows that the monoid
homomorphism
\[
	\bbZ_+\pair{f_{l'},\ldots,f_{-1}, f_{1}, \ldots ,f_l} \to \bbZ_+\pair{g_{-m'},\ldots, g_{-1}, g_{1}, \ldots,g_m}
\]
is determined on generators by a map of pointed ordered sets
\[
	\set{f_{-l'} < \cdots < f_{-1} < f_0 := 0 < f_{1} < \cdots < f_l} \to \set{g_{-m'} < \cdots < g_{-1} < g_0 := 0 < g_{1} < \cdots < g_m },
\]
which is encoded by an assignment $f_i \mapsto g_{\alpha(i)}$ for an increasing sequence $\alpha(-l') \leq \cdots \leq\alpha(l)$ 
with $\alpha(0) = 0$.
Similarly, the homomorphism $\bbZ_+\pair{f_{-l'},\ldots,f_{-1},f_{1},\ldots,f_l} \to \bbZ_+\pair{h_{-n'},\ldots,h_{-1}, h_{1}, \ldots h_n}$ is determined by 
an 
assignment $f_i \mapsto h_{\beta(i)}$ for an increasing sequence $\beta(-l') \leq \cdots \leq \beta(l)$ with $\beta(0) = 0$.
%
It follows that the product homomorphism
\[
	\bbZ_+\pair{f_{-l'},\ldots,f_l} \to \bbZ_+\pair{g_{-m'},\ldots,g_m,h_{-n'},\ldots,h_n}
\]
is determined by
\[
	f_i \mapsto g_{\alpha(i)} + h_{\beta(i)}
\]
and hence its image is the submonoid generated by the totally ordered chain $g_{\alpha(-l')} + h_{\beta(-l')} \leq  \cdots  \leq 0 \leq \cdots \leq g_{\alpha(l)} + h_{\beta(l)}$
within \eqref{E:sum_generators}, which is precisely a monoid in the refinement discussed above (or a face thereof).
As a consequence of Theorem~\ref{T:lifting_b-maps}, the map $W \to X\times Y$ factors uniquely through the blow-up $X\ttimes Y$, and 
moreover the map $W \to X\ttimes Y$ is simple and b-normal since generators are mapped to generators
in the associated monoid homomorphisms, completing the proof.
\end{proof}

\begin{rmk}
As is evident from the proof, the order of blow-ups of corners necessary to obtain
an order structure consistent with \eqref{E:ord_prod_bhs} follows the rule of thumb that elements further away from the base point $(X,Y) \in \M 0(X) \times \M 0(Y)$ are to be blown up prior to those which are closer to it.
\end{rmk}

The boundary hypersurfaces of $X\ttimes Y$ are the lifts and/or front faces of 
the corners $G\times H \subset X\times Y$ for $(G,H) \in \Mtot(X)\times \Mtot(Y)$, 
and we denote these by
\[
	\tlift G H := \text{lift of $G\times H$ in } \Mtot(X\ttimes Y).
\]
In the next section we determine the structure of these boundary hypersurfaces.

\subsection{Boundary hypersurfaces} \label{S:ord_bhs}
We show below (and it is straightforward to see) that a face of the form $\tlift X H$ or $\tlift G Y$ of $X\ttimes Y$ is essentially an ordered product $X\ttimes H$ or $G\ttimes Y$, respectively, at least when $X$ and $Y$ are both either maximal or minimal.
On the other hand, 
when both $G \in \M 1(X)$ and $H \in \M 1(Y)$ are proper hypersurfaces, $\tlift G H$ is the (lifted) front face of the blow-up of the codimension 2 corner $G\times H$, and as such has dimension
strictly larger than that of the product of $G$ and $H$. 
One description of $\tlift G H$, as an iterated blow-up of $G\times H \times
I$, where $I = [0,1]$, is obtained by considering the
restriction to $G\times H$ of the sequence of steps in the iterated blow-up $X\ttimes Y \to X\times Y$; we record this
below as Proposition~\ref{P:bhs_ordprod}.
Far more important than this description, however, is the description of the \emph{product structure} of $\tlift{G}{H}$
obtained when $G$ and $H$ decompose into cartesian products, as is the case locally when the spaces have fibered corners,
and we devote the remainder of the section to this latter situation.

\begin{prop}
Let $(G,H) \in \M 1(X) \times \M 1(Y)$ and decompose $\M 1(G)$ into the following sets
\[
\begin{aligned}
	\M 1^{<,<}(G) &= \set{G' \cap G : G' < G,\ G' < X},
	&\M 1^{<,>}(G) &= \set{G' \cap G : G' < G,\ G' > X},
	\\\M 1^{>,<}(G) &= \set{G' \cap G : G' > G,\ G' < X},
	&\text{and} \quad \M 1^{>,>}(G) &= \set{G' \cap G : G' > G,\ G' > X},
\end{aligned}
\]
with a similar decomposition for $\M 1(H)$.
The boundary hypersurface $\tlift G H$ of $X\ttimes Y$ has the following form.
\begin{itemize}
\item If $(G,H) \in \M 1^<(X)\times \M1^<(Y)$, then
\begin{equation}
\begin{aligned}
	\tlift{G}{ H} \cong 
	[&G \times H \times I; 
	\\&\M 1^{<,<}(G)\times \M 1^{<,<}(H)\times I, 
	\\&\M 1^{>,<}(G)\times H \times \set{0},\, G\times \M 1^{>,<}(H)\times \set 1,\, 
	\\& \M 1^{>,<}(G)\times \M 1^{>,<}(H) \times I,
\\&\rM 1^{>,>}(G)\times \rM 1^{>,>}(H)\times I]
	\label{E:bhs_ordprod_less}
\end{aligned}
\end{equation}
\item If $(G,H) \in \M 1^>(X)\times \M 1^>(Y)$, then
\begin{equation}
\begin{aligned}
	\tlift{G}{ H} \cong 
	[&G \times H \times I; 
	\\&\rM 1^{>,>}(G)\times \rM 1^{>,>}(H)\times I,
	\\&\rM 1^{<,>}(G)\times H \times \set{0},\, G\times \rM 1^{<,>}(H)\times \set 1,\, 
	\\&\rM 1^{<,>}(G)\times \rM 1^{<,>}(H) \times I,
	\\&\M 1^{<,<}(G)\times \M 1^{<,<}(H)\times I]
	\label{E:bhs_ordprod_gtr}
\end{aligned}
\end{equation}
\item Finally, if $G = X \in \M 0(X)$ or $H = Y \in \M 0(Y)$, then
\begin{equation}
\begin{aligned}
	\tlift{X}{H} &\cong [X\times H; \M 1^<(X) \times \M 1^{<,<}(H), \rM 1^>(X)\times \rM 1^{>,>}(H)] \qquad \text{or}
	\\ \tlift{G}{Y} &\cong [G\times Y; \M 1^{<,<}(G) \times \M 1^<(Y), \rM 1^{>,>}(G) \times \rM 1^>(Y)].
\end{aligned}
\end{equation}
In particular, if $X$ and $Y$ are both interior maximal or both interior minimal, these reduce to the ordered products $X\ttimes H$ and $G\ttimes Y$, respectively.
\end{itemize}
\label{P:bhs_ordprod}
\end{prop}
\begin{proof}
Suppose that $(G,H) < (X,Y)$ and consider the effect of the blow-up $X\ttimes Y \to X\times Y$ on the boundary face $G\times H$. 
Only 
blow-ups of the form $G'\times H'$ for $(G',H') \sim (G,H)$ need to be considered since the others are either disjoint or induce a blow-up of a boundary hypersurface
of $G\times H$, having no effect.

First, the blow ups of $G'\times H'$ for $(G',H') < (G,H)$ induce the blow-up $[G\times H; \M 1^{<,<}(G) \times \M 1^{<,<}(H)]$.
Note that the blow ups of $G\times H'$ and $G' \times H$ for $G' < G$ and $H' < H$ induce blow ups of hypersurfaces in $G\times H$ and have no effect.
Next comes the blow-up of (the lift of) $G\times H$ itself, introducing the product with an interval:
$[G\times H; \M 1^{<,<}(G) \times \M 1^{<,<}(H)] \times I
\cong [G\times H\times I ; \M 1^{<,<}(G) \times \M 1^{<,<}(H) \times I]$.
Next come the blow-up of faces $G\times H'$ and $G'\times H$ for $G < G' < X$ and $H < H' < Y$ which induce the blow-ups
$\M 1^{>,<}(G)\times H \times \set 0$ and $G \times \M 1^{>,<}(H)\times \set 1$, respectively.
Next come the blow-ups of $G'\times H'$ for $G < G' <X$ and $H < H' < Y$, which induce blow-ups of $\M 1^{>,<}(G) \times \M 1^{>,<}(H) \times I$,
and finally come the blow-ups (in reverse order) of $G' \times H'$ for $G' > X$ and $H' > Y$, which induce the blow-ups of $\rM 1^{>,>}(G) \times \rM 1^{>,>}(H)\times I$.

The case that $(G,H) > (X,Y)$ is similar, proceding in the order indicated on the second line of \eqref{E:ord_prod}.
The case of $(X,H)$ (or $(G,Y)$) is also similar, with the omission of the blow-up of $X\times H$ itself (since it is already codimension 1), as well as the omission of the blow-ups of the faces $G\times H'$
for $G < X$ and $H' > H$ or $G > X$ and $H' < H$ since these are incomparable as noted above.
\end{proof}

We say that a boundary hypersurface $G$ is \emph{product-type} if
it is a product, and hence an ordered product by Corollary~\ref{C:cart_prod}, $G = \bff {} \times \bfb {} = {\bff {}}_\tmin \ttimes {\bfb {}}_\tmax$ of an interior minimal manifold $\bff {} = {\bff{}}_\tmin$ and an interior
maximal manifold $\bfb {} = {\bfb{}}_\tmax$.
As noted, this holds locally when $X$ is a manifold with fibered corners, where 
$G$ is a fiber bundle $G \to \bfb {}$ with fiber $\bff {}$ (hence our choice of notation). 
Moreover this is always true locally in general; indeed,
a manifold with ordered corners can always be decomposed locally as the product of an interior minimal and an interior maximal manifold.
For a product-type hypersurface, the ordered corners structures on $G$, $\bff {}$ and $\bfb {}$ are identified with the following subsets of $\Mtot(X)$:
\begin{equation}
\begin{aligned}
	\Mtot(G) &\cong \set{G' \in \M 1(X) : G' \sim G}, &\pa_{G'} G = G \cap G' &\leftrightarrow G'
	\\ \Mtot(\bff {}) &\cong \set{G' \in \M 1(X) : G' \geq G }, & \pa_{G'} \bff {} = (\bff {} \times \pt) \cap G' &\leftrightarrow G'
	\\ \Mtot(\bfb {}) &\cong \set{G' \in \M 1(X) : G' \leq G }, & \pa_{G'} \bfb {} = (\pt \times \bfb {}) \cap G' &\leftrightarrow G'
\end{aligned}
	\label{E:hs_order_ident}
\end{equation}
and we note that $\M 1^<(G) = \bff{}\times\M 1(\bfb{})$ while $\M 1^>(G) = \M 1(\bff{}) \times \bfb{}$.

Fix a product-type hypersurface $G = \bff{}\times \bfb{}\in \M 1(X)$ and consider the ordered corners structure of the normal model $\Np G \cong \bff{}\times\bfb{} \times [0,\infty)$ as a model for $X$ near $G$. 
When $G < X$, $\Mtot(\Np G)$ has the order structure
\begin{equation}
\begin{tikzcd}[baseline=(top.base), sep=small]
	& |[alias=top]| \M 1^{>X}(\bff{}) \times \bfb{} \times [0,\infty)
	\\\bff{} \times \bfb{} \times 0 \ar[r] \ar[ur, end anchor=south west] \ar[dr, end anchor=north west]  & \bff{} \times \bfb{} \times [0,\infty) \ar[u]
	\\  & \M 1^{<X}(\bff{}) \times \bfb{} \times [0,\infty) \ar[u]
	\\ & \bff{} \times \M 1(\bfb{}) \times [0,\infty) \ar[u] \ar[uul, start anchor=north west]
\end{tikzcd}
	\label{E:NpH_below}
\end{equation}
while for $G > X$, $\Mtot(\Np G)$ has the order structure
\begin{equation}
\begin{tikzcd}[baseline=(top.base), sep=small]
	& |[alias=top]| \M 1(\bff{}) \times \bfb{} \times [0,\infty)
	\\ & \bff{} \times \M 1^{>X}(\bfb{}) \times [0,\infty) \ar[u] \ar[dl, start anchor=south west]
	\\ \bff{} \times \bfb{} \times 0 \ar[uur, end anchor=south west] & \bff{} \times \bfb{} \times [0,\infty) \ar[u] \ar[l]
	\\ & \bff{} \times \M 1^{< X}(\bfb{}) \times [0,\infty) \ar[u] \ar[ul, start anchor=north west]
\end{tikzcd}
	\label{E:NpH_above}
\end{equation}
where we decompose $\M 1(\bff{}) = \M 1^{<X}(\bff{}) \cup \M 1^{>X}(\bff{})$ or $\M 1(\bfb{}) = \M 1^{<X}(\bfb{}) \cup \M 1^{>X}(\bfb{})$ into subsets
of elements which are above or below $X$ in the order with respect to the identifications \eqref{E:hs_order_ident}.

Discarding the factor $\bfb{}$ in the first case or $\bff{}$ in the second leads to the following `relative cone' construction, which plays an important role
below.
\begin{defn}
Let $G = \bff{} \times \bfb{}$ be a product-type hypersurface of a manifold with ordered corners $X$. 
If $G < X$, the \emph{cone on $\bff{}$ relative to $X$} is the ordered corners manifold $C_X(\bff{}) := \bff{}\times [0,\infty)$ with the 
order on 
$\Mtot(C_X(\bff{}))$
depicted by the following diagram:
\begin{equation}
\begin{tikzcd}[baseline=(top.base), sep=small]
	&|[alias=top]| \M 1^{> X}(\bff {}) \times [0,\infty)
	\\ \bff {}\times 0 \ar[dr] \ar[r] \ar[ur]& \bff {}\times [0,\infty) \ar[u]
	\\ & \M 1^{< X}(\bff {})\times [0,\infty) \ar[u]
\end{tikzcd}
	\label{E:ord_cone_F}
\end{equation}
Thus $\bff{}\times 0$ is minimal, while $\bff{} \times [0,\infty)$ plays the role of $X$ in the order.
Likewise, if $G  > X$, the \emph{cone on $\bfb{}$ relative to $X$} is the ordered corners manifold $C_X(\bfb{}) = \bfb{}\times [0,\infty)$ with the
order on  
$\Mtot(C_X(\bfb{}))$
depicted by the following diagram:
\begin{equation}
\begin{tikzcd}[baseline=(top.base), sep=small]
	& |[alias=top]|\M 1^{> X}(\bfb{}) \times [0,\infty) \ar[dl]
	\\ \bfb{} \times 0 & \bfb{} \times [0,\infty) \ar[u] \ar[l]
	\\ & \M 1^{< X}(\bfb{}) \times [0,\infty) \ar[u] \ar[ul]
\end{tikzcd}
	\label{E:ord_cone_B}
\end{equation}
We call $F_0 := \bff {}\times 0$ or $B_0 := \bfb{}\times 0$ the \emph{end} of the cone.
If $X$ is either maximal or minimal, then so is $C_X(\bfb{})$ or $C_X(\bff{})$; in this case we use the alternate notation $C_\tmax(\bff{})$ or $C_\tmin(\bfb{})$, respectively.
\label{D:rel_cone}
\end{defn}

\begin{defn}
If $G = \bff G \times \bfb G$ and $H = \bff H \times \bfb H$ are product-type hypersurfaces of $X$ and $Y$, respectively, 
then the \emph{relative join} of $\bff G$ and $\bff H$ for $(G,H) < (X,Y)$, or $\bfb G$ and $\bfb H$ for $(G,H) > (X,Y)$, is the boundary hypersurface of the 
product of relative cones associated to the ends:
\[
\begin{aligned}
	\bff G \jtimes_{X,Y} \bff H &:= \tlift{\bff G\times 0}{\bff H\times 0} \subset C_X(\bff G)\ttimes C_Y(\bff H),
	\\ \bfb G \jtimes_{X,Y} \bfb H &:= \tlift{\bfb G\times 0}{\bfb H\times 0} \subset C_X(\bfb G)\ttimes C_Y(\bfb H).
\end{aligned}
\]
In view of Proposition~\ref{P:bhs_ordprod} these spaces can be expressed as iterated blow-ups
\[
\begin{aligned}
	\bff G \jtimes_{X,Y} \bff H \cong 
	[&\bff G \times \bff H \times I; 
	\\&\M 1^{<X}(\bff G)\times \bff H \times \set{0},\, \bff G\times \M 1^{<Y}(\bff H)\times \set 1,\, 
	\\&\M 1^{<X}(\bff G)\times \M 1^{<Y}(\bff H) \times I,
	\\&\rM 1^{>X}(\bff G)\times \rM 1^{>Y}(\bff H)\times I],
\\\bfb G \jtimes_{X,Y} \bfb H \cong 
	[&\bfb G \times \bfb H \times I; 
	\\&\rM 1^{>X}(\bfb G)\times \bfb H \times \set{0},\, \bfb G\times \rM 1^{>Y}(\bfb H)\times \set 1,\, 
	\\&\rM 1^{>X}(\bfb G)\times \rM 1^{>Y}(\bfb H) \times I,
	\\&\M 1^{<X}(\bfb G)\times \M 1^{<Y}(\bfb H)\times I],
\end{aligned}
\]
specializing to \eqref{E:intro_join_fiber} and \eqref{E:intro_join_base}
in the case that $X$ and $Y$ are maximal or minimal, respectively.
\label{D:rel_join}
\end{defn}

Note that, in view of 
\eqref{E:NpH_below} and \eqref{E:ord_cone_F},
or
\eqref{E:NpH_above} and \eqref{E:ord_cone_B},
the projection maps
\begin{equation}
\begin{aligned}
	\pr_{\bfb{}} : \Np G &= \bff{}\times \bfb{}\times [0,\infty) \to \bfb{} && \text{for $G < X$, or}
	\\ \pr_{\bff{}} : \Np G &= \bff{} \times \bfb{} \times [0,\infty) \to \bff{} && \text{for $G > X$}
\end{aligned}
	\label{E:normal_model_simple_projection}
\end{equation}
are ordered morphisms.
In contrast, neither of the other factor projection maps
\[
\begin{aligned}
	\Np G &= \bff{}\times \bfb{}\times [0,\infty) \to \bff{} \times [0,\infty) = C_X(\bff{}) && \text{for $G < X$, or}
	\\\Np G &= \bff{} \times \bfb{} \times [0,\infty) \to \bfb{} \times [0,\infty) = C_X(\bfb{}) && \text{for $G > X$}
\end{aligned}
\]
are ordered morphisms,
since in the first case, faces of the form $\bff{} \times \M 1(\bfb{}) \times [0,\infty)$, which lie below those of the form $\M 1(\bff{}) \times \bfb{} \times [0,\infty)$ relative to the order, 
are mapped to $\bff{} \times [0,\infty)$, which is situated above faces of the form $\M 1(\bff{}) \times [0,\infty)$ in the order on $C_X(\bff{})$, with a similar order violation in the second case.

The remedies for this are the following `compressed projections', defined relative to the differentials, in the sense of \S\ref{S:bkg_normal}, of a choice of boundary defining functions on $X$,
lifted to
$\Np G$.
These compressed projections play an important role at various points throughout the remainder of the paper.

\begin{lem}
Let $G = \bff{} \times \bfb{}$ be a product-type hypersurface of $X$, and
fix a set of boundary defining functions $\set{\rho_{G'}}$ on $X$, with differentials $d\rho_{G'} = \nu^\ast \rho_{G'}$ for $G' \neq G$ and $t := d\rho_G : \Np G \to [0,\infty)$ (which we assume
coincides with the projection to $[0,\infty)$ with respect to the trivialization $\Np G \cong \bff{}\times \bfb{} \times [0,\infty)$) on $\Np G$.
Set $\rho_{\leq G} = t \prod_{G' < G} \nu^\ast \rho_{G'}$ and $\rho_{\geq G} = t \prod_{G' > G} \nu^\ast \rho_{G'}$ on $\Np G$.

Then the \emph{compressed projections}
\begin{equation}
\begin{aligned}
	\wt \pr_{C_X(\bff{})} := \pr_{\bff{}} \times \rho_{\leq G} : \Np G &\to C_X(\bff{}), & (f,b,t) &\mapsto \big(f, t \prod_{G'<G} \rho_{G'}(b,f)\big), &&\text{for $G < X$, and }
	\\ \wt \pr_{C_X(\bfb{})}:= \pr_{\bfb{}} \times \rho_{\geq G} : \Np G &\to C_X(\bfb{}), &(f,b,t) &\mapsto\big(b, t\prod_{G'> G} \rho_{G'}(b,f)\big), &&\text{for $G > X$}
\end{aligned}
	\label{E:compressed_projection}
\end{equation}
are morphisms of ordered corners.
\label{L:compressed_projection}
\end{lem}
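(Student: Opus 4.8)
The plan is to verify directly that each compressed projection in \eqref{E:compressed_projection} has the four defining properties of a morphism of ordered corners (Definition~\ref{D:ordcorn}): that it is a simple, b-normal, interior b-map whose induced map $f_\sharp$ on boundary hypersurfaces is order-preserving. I treat the case $H < X$ in detail; the case $H > X$ follows by the evident symmetry interchanging the roles of $\bff{}$ and $\bfb{}$ and reversing all orders, comparing \eqref{E:NpH_above} with \eqref{E:ord_cone_B} in place of \eqref{E:NpH_below} with \eqref{E:ord_cone_F}.

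First I would fix boundary defining functions on the target cone $C_X(\bff{}) = \bff{}\times[0,\infty)$: let $s$ denote the standard coordinate on $[0,\infty)$, a boundary defining function for the principal hypersurface $\bff{}\times 0$, and let $\set{\rho^C_{H'} : H' \in \M 1(\bff{})}$ be boundary defining functions for the remaining hypersurfaces, where $\M 1(\bff{}) = \set{H' : H' > H}$ under \eqref{E:hs_order_ident}. Pulling these back along $\wt\pr_{C_X(\bff{})} = \pr_{\bff{}}\times \rho_{\leq H}$, and recalling $\rho_{\leq H} = t\prod_{H'<H}\nu^\ast\rho_{H'}$, I obtain
\[
	\wt\pr^\ast s = t\prod_{H'<H}\nu^\ast\rho_{H'},
	\qquad
	\wt\pr^\ast\rho^C_{H'} = a_{H'}\,\nu^\ast\rho_{H'} \quad (H' > H),
\]
with each $a_{H'}>0$ smooth. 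Each pullback is thus a monomial in the boundary defining functions $t$ and $\set{\nu^\ast\rho_{H'}}$ of $\Np H$ times a positive smooth factor, with all exponents in $\set{0,1}$, so $\wt\pr_{C_X(\bff{})}$ is a simple interior b-map (interior because $t$ and all $\nu^\ast\rho_{H'}$ are strictly positive on $\Np H^\circ$, so the interior maps into $C_X(\bff{})^\circ$).

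Next I read off $f_\sharp$ and check b-normality at the same time. Writing $P = \bff{}\times\bfb{}\times 0$ (bdf $t$), $A_{<} = \M 1^{<X}(\bff{})\times\bfb{}\times[0,\infty)$ and $A_{>} = \M 1^{>X}(\bff{})\times\bfb{}\times[0,\infty)$ (bdfs $\nu^\ast\rho_{H'}$ with $H'>H$), and $B = \bff{}\times\M 1(\bfb{})\times[0,\infty)$ (bdfs $\nu^\ast\rho_{H'}$ with $H'<H$), the displayed pullbacks show that the defining function of $P$ and those of $B$ occur, with exponent $1$, \emph{only} in $\wt\pr^\ast s$, while each defining function of $A_{<}, A_{>}$ occurs only in the corresponding $\wt\pr^\ast\rho^C_{H'}$. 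Hence every hypersurface of $\Np H$ has a single image hypersurface, so the map is b-normal, and $f_\sharp$ sends both $P$ and $B$ to the principal face $\bff{}\times 0$, sends $A_{<}$ and $A_{>}$ to $\M 1^{<X}(\bff{})\times[0,\infty)$ and $\M 1^{>X}(\bff{})\times[0,\infty)$ respectively, and sends the interior to the interior. The decisive point is that the compression weight $\prod_{H'<H}\rho_{H'}$ forces the family $B$ onto the \emph{principal} face of the cone rather than onto the interior. I then verify order-preservation by comparing the source order \eqref{E:NpH_below} with the target order \eqref{E:ord_cone_F}: since $\bff{}\times 0$ is minimal in $\Mtot(C_X(\bff{}))$ and the images of $A_{<}$, the interior, and $A_{>}$ respect the chain $\M 1^{<X}(\bff{})\times[0,\infty) < C_X(\bff{})^\circ < \M 1^{>X}(\bff{})\times[0,\infty)$ of \eqref{E:ord_cone_F}, every relation of \eqref{E:NpH_below} is carried to a valid one; in particular the relations $B < A_{<}$ and $B < A_{>}$—which are exactly the ones the unweighted projection violates, as $B$ would otherwise map to the interior, sitting \emph{above} $\M 1^{<X}(\bff{})\times[0,\infty)$—now hold because $f_\sharp(B)=\bff{}\times 0$ lies below everything.

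I expect the only genuine subtlety to be the determination of $f_\sharp$ on the family $B$: recognizing that the purpose of the weights $\prod_{H'<H}\rho_{H'}$ (resp.\ $\prod_{H'>H}\rho_{H'}$ in the case $H>X$) is precisely to redirect $B$ (resp.\ its analogue) onto the principal hypersurface of the cone, thereby repairing the order violation of the naive factor projection. Once $f_\sharp$ is identified, the b-normality and order checks reduce to a direct reading of the two order diagrams, and the case $H>X$ is verbatim after reversing orders.
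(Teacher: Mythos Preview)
Your proof is correct and follows essentially the same approach as the paper's: identify the pullbacks of boundary defining functions to see the map is a simple b-normal interior b-map, then observe that the key to order-preservation is that the hypersurfaces $\bff{}\times\M_1(\bfb{})\times[0,\infty)$ (your family $B$) are sent by the compression weight to the minimal principal face $\bff{}\times 0$. The paper's proof is a terse one-liner asserting ``clearly simple and b-normal'' and stating this same key observation, whereas you carry out the verification of $f_\sharp$ and the order check against the diagrams \eqref{E:NpH_below} and \eqref{E:ord_cone_F} in full detail.
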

\begin{proof}
The maps are clearly simple and b-normal, and satisfy the ordered condition since in the first case all hypersurfaces of the form $\bff{} \times \M 1(\bfb{}) \times [0,\infty)$ are mapped
to the minimal hypersurfaces $\bff{}\times 0$ and in the second case all hypersurfaces of the form $\M 1(\bff{}) \times \bfb{}\times [0,\infty)$ are mapped 
to the maximal hypersurface $\bfb{} \times 0$.
\end{proof}

We are now in a position to state the main result about product-type hypersurfaces, which identifies the product-type structure of their lifts in $X\ttimes Y$.
\begin{thm}
For product-type boundary hypersurfaces $G = \bff G \times \bfb G \in \M 1(X)$ and $H = \bff H \times \bfb H \in \M 1 (Y)$, the hypersurface
$\tlift G H \in \M 1(X\ttimes Y)$, which is canonically diffeomorphic to $\tlift{G_0}{H_0} \in \M 1(\Np G \ttimes \Np H)$,
has a product-type structure as follows.
\begin{itemize}
\item If $(G,H) < (X,Y)$, then 
\begin{equation}
	\tlift G H \cong (\bff G \jtimes_{X,Y} \bff H)_\tmin \times (\bfb G \ttimes \bfb H)_\tmax
	\label{E:prod_bhs_below}
\end{equation}
with the projections coinciding with the restriction to $\tlift G H \subset \Np G \ttimes \Np H$ of the lifts
$\wt \pr_{C_X(\bff{G})} \ttimes \wt \pr_{C_Y(\bff{H})} : \Np G\ttimes \Np H \to C_X(\bff{G}) \ttimes C_Y(\bff{H})$
and $\pr_{\bfb{G}} \ttimes \pr_{\bfb{H}} : \Np G \ttimes \Np H \to \bfb G \ttimes \bfb H$, respectively.
\item If $(G,H) > (X,Y)$, then 
\begin{equation}
	\tlift G H \cong (\bff G \ttimes \bff H)_\tmin \times (\bfb G \jtimes_{X,Y} \bfb H)_\tmax.
	\label{E:prod_bhs_above}
\end{equation}
with the projections coinciding with the restriction to $\tlift G H \subset \Np G \ttimes \Np H$ of the lifts
$\pr_{\bff{G}} \ttimes \pr_{\bff{H}} : \Np G \ttimes \Np H \to \bff G \ttimes \bff H$ and 
and $\wt \pr_{C_X(\bfb{G})} \ttimes \wt \pr_{C_Y(\bfb{H})} : \Np G\ttimes \Np H \to C_X(\bfb{G}) \ttimes C_Y(\bfb{H})$, respectively.
\end{itemize}
\label{T:ord_bhs_str}
\end{thm}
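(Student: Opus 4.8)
The plan is to reduce to the normal models and then read the product structure directly off the blow-up description of $\tlift{H}{G}$ furnished by Proposition~\ref{P:bhs_ordprod}. First I would establish the canonical diffeomorphism $\tlift{H}{G}\cong\tlift{H_0}{G_0}$. Since the boundary hypersurface $\tlift{H}{G}$ is determined by those blow-ups in $X\ttimes Y\to X\times Y$ supported near $H\times G$, and a collar identifies a neighborhood of $H\times G$ in $X\times Y$ with one of $H_0\times G_0$ in $\Np H\times\Np G$ compatibly with the ordered structure and with the relevant blow-up centers (using the tubular neighborhood results of Appendix~\ref{S:tubes}), the two hypersurfaces agree. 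This identification is also what allows the compressed projections of Lemma~\ref{L:compressed_projection}, which are defined on the normal model, to be interpreted as maps out of $\tlift{H}{G}$. Henceforth I work in $\Np H=\bff H\times\bfb H\times[0,\infty)$ and $\Np G=\bff G\times\bfb G\times[0,\infty)$, where $H_0,G_0$ are the principal (zero section) hypersurfaces, so that $(H_0,G_0)$ lies below the interior and the case $(H,G)<(X,Y)$ of Proposition~\ref{P:bhs_ordprod} applies.

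The heart of the argument is that, under the product-type hypotheses, this iterated blow-up factors cleanly. Treating $(H,G)<(X,Y)$, I would first unwind the decompositions of $\M 1(H_0)$ via the identifications \eqref{E:hs_order_ident}: the faces with $H'<H$ are precisely the base faces $\bff H\times\M 1(\bfb H)$, all of which satisfy $H'<X$ so that $\M 1^{<,>}(H)=\emptyset$, while the faces with $H'>H$ are the fiber faces, splitting according to position relative to $X$ as $\M 1^{>,<}(H)=\M 1^{<X}(\bff H)\times\bfb H$ and $\M 1^{>,>}(H)=\M 1^{>X}(\bff H)\times\bfb H$, with the analogous statements for $G$. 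Substituting these into \eqref{E:bhs_ordprod_less} and rearranging the cartesian factors as $H_0\times G_0\times I=(\bff H\times\bff G\times I)\times(\bfb H\times\bfb G)$, every blow-up center is of exactly one of two types: the center $\M 1^{<,<}(H)\times\M 1^{<,<}(G)\times I$ becomes the full first factor times $\M 1(\bfb H)\times\M 1(\bfb G)$ in the second, whereas each of the remaining centers (the second through fourth lines of \eqref{E:bhs_ordprod_less}) becomes a submanifold of $\bff H\times\bff G\times I$ times the full second factor.

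Since the two families of centers lie in complementary factors, the blow-ups commute and the iterated blow-up factors as a cartesian product; using the monoid-refinement characterization of Theorem~\ref{T:lifting_b-maps} (equivalently, the elementary identity $[V\times W;P\times W]\cong[V;P]\times W$ and its iterates) I would conclude
\[
	\tlift{H_0}{G_0}\cong\big[\bff H\times\bff G\times I;\ \ldots\big]\times\big[\bfb H\times\bfb G;\ \M 1(\bfb H)\times\M 1(\bfb G)\big],
\]
where the centers ``$\ldots$'' of the first factor are exactly those on the second through fourth lines of \eqref{E:bhs_ordprod_less}. Comparing the first factor with Definition~\ref{D:rel_join} identifies it with $(\bff H\jtimes_{X,Y}\bff G)_\tmin$, the induced blow-up order being consistent with the one prescribed there; comparing the second with the ordered product of the interior maximal spaces $\bfb H$ and $\bfb G$---for which $\M 1^<(\bfb H)=\M 1(\bfb H)$ and the reversed part is empty---identifies it with $(\bfb H\ttimes\bfb G)_\tmax$, yielding \eqref{E:prod_bhs_below}. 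The case $(H,G)>(X,Y)$ is entirely symmetric: one substitutes the dual decompositions ($\M 1^{>,<}(H)=\emptyset$, all fiber faces above $X$, base faces split) into \eqref{E:bhs_ordprod_gtr} and rearranges as $(\bff H\times\bff G)\times(\bfb H\times\bfb G\times I)$, so that the interval now accompanies the base factor, giving $(\bff H\ttimes\bff G)_\tmin\times(\bfb H\jtimes_{X,Y}\bfb G)_\tmax$ and hence \eqref{E:prod_bhs_above}.

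It remains to identify the two product projections with the stated maps. The projection onto the base factor is immediate: $\pr_{\bfb H}\ttimes\pr_{\bfb G}$ is, before blow-up, the coordinate projection of $(\bff H\times\bff G\times I)\times(\bfb H\times\bfb G)$ onto its second factor, and lifts to the product projection. The delicate point---and what I expect to be the main obstacle---is matching the join factor projection with the lifted compressed projection $\wt\pr_{C_X(\bff H)}\ttimes\wt\pr_{C_Y(\bff G)}$. Here one must check that the rescaled radial coordinate $\rho_{\leq H}=t\prod_{H'<H}\rho_{H'}$ of \eqref{E:compressed_projection}, which is precisely the device that collapses the base faces into the principal face of the cone and renders the projection ordered, is compatible with the interval coordinate of the front face of the blow-up of $H_0\times G_0$, so that the compressed projection restricts on $\tlift{H_0}{G_0}$ to the projection onto $\bff H\jtimes_{X,Y}\bff G=\tlift{\bff H\times 0}{\bff G\times 0}$. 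I would verify this by tracking blow-up coordinates through the factorization above, using functoriality of the lift in Theorem~\ref{T:lifting_b-maps} together with the realization of the relative join as a boundary hypersurface of $C_X(\bff H)\ttimes C_Y(\bff G)$ in Definition~\ref{D:rel_join}; since both sides are ordered b-maps agreeing on the dense interior, they agree everywhere.
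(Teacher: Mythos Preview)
Your argument is correct and takes a genuinely different route from the paper's. Where you factor the iterated blow-up of Proposition~\ref{P:bhs_ordprod} directly---observing that under the product-type hypothesis every blow-up center splits as either (full fiber factor) $\times$ (subset of base factor) or (subset of fiber factor) $\times$ (full base factor), so that the iterated blow-up decomposes as a cartesian product---the paper instead argues from the projections. It first constructs the two maps $\tlift{H}{G}\to\bff H\ttimes\bff G$ and $\tlift{H}{G}\to\bfb H\jtimes_{X,Y}\bfb G$ via the universal property of the ordered product applied to the ordinary and compressed projections, then shows each is a simple b-fibration whose target boundary hypersurfaces have \emph{unique} preimages (verified by a case-by-case analysis of the four types of hypersurfaces of the join), hence each is a surjective submersion; transversality follows from complementarity of the sets of hypersurfaces mapping to the interior of each target, and the product map is then a surjective submersion between compact manifolds of equal dimension which descends to a diffeomorphism before blow-up, hence is itself a diffeomorphism.

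Your approach is more elementary and makes the product structure immediately visible from the blow-up description, at the cost of having to identify the projections afterward; the paper's approach builds the projections in from the start, so the identification of the factors with the stated spaces and of the projections with the stated compressed maps come out simultaneously. In particular, the paper's case analysis of unique hypersurface preimages does the work that in your argument is deferred to the final paragraph---and your sketch there (``track blow-up coordinates'' and ``agree on the dense interior'') would benefit from exactly that kind of boundary hypersurface bookkeeping to confirm that the lifted compressed projection, restricted to $\tlift{H_0}{G_0}$, really is the projection onto the join factor under your factorization. For the canonical diffeomorphism $\tlift{H}{G}\cong\tlift{H_0}{G_0}$, the paper uses the normal differentials of the lifted projections $X\ttimes Y\to X$ and $X\ttimes Y\to Y$ together with the universal property, rather than a direct collar argument, though the two are essentially equivalent.
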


\begin{proof}
To establish the canonical diffeomorphism $\tlift{G}{H} \cong \tlift{G_0}{H_0}$, we note that the normal differentials of the lifted projections $X\ttimes Y \to X$ and $X\ttimes Y \to Y$ determine ordered morphisms $\Np \tlift{G}{H} \to \Np G$ and $\Np \tlift{G}{H} \to \Np H$ which by the universal property for the ordered product factor through a unique morphism
$\Np \tlift{G}{H} \to \Np G \ttimes \Np H$, and this restricts to a morphism $\tlift{G}{H} \cong \tlift{G}{H}_0 \to \tlift{G_0}{H_0}$.
This map is clearly a diffeomorphism since $G$ (resp.\ $H$) has diffeomorphic neighborhoods in $X$ and $\Np G$ (resp.\ $Y$ and $\Np H$).
Thus it suffices from now on to replace $X$ and $Y$ with $\Np G$ and $\Np H$, respectively.

We consider in detail the case $(G,H) > (X,Y)$ as it has the most relevance in \S\ref{S:fibcorn}; the case that $(G,H) < (X,Y)$ is similar.
From the universal property of the ordered product applied to the ordered morphisms \eqref{E:normal_model_simple_projection} and \eqref{E:compressed_projection} we obtain morphisms
\[
\begin{aligned}
	\pr_{\bff G} \ttimes \pr_{\bff H} &: \Np G \ttimes \Np H \to \bff G \ttimes \bff H, \quad \text{and}
	\\
	\wt \pr_{C_X(\bfb G)} \ttimes \wt \pr_{C_Y(\bfb H)} &: \Np G \ttimes \Np H \to C_X(\bfb G)\ttimes C_Y(\bfb H).
\end{aligned}
\]
As morphisms these are simple and b-normal, and they are also easily seen to be b-surjective (for example using \cite[Lemmas 2.5 and 2.7]{HMM} applied to the b-fibrations 
$\Np G \times \Np H \to \bff G \times \bff H$
and
$\Np G \times \Np H \to C_X(\bfb G)\times C_Y(\bfb H)$, noting that all blow-ups of the domain are either lifts of blow-ups of the target or are transversal).
In particular they are b-fibrations. 
We claim that the restrictions 
\begin{equation}
\begin{aligned}
	\pr_{\bff G} \ttimes \pr_{\bff H} &: \tlift G H  \to \bff G \ttimes \bff H, \quad \text{and}
	\\
	\wt \pr_{C_X(\bfb G)} \ttimes \wt \pr_{C_Y(\bfb H)} &: \tlift G H  \to \bfb G \jtimes_{X,Y} \bfb H \subset C_X(\bfb G)\ttimes C_Y(\bfb H)
\end{aligned}
	\label{E:restricted_lifted_projections}
\end{equation}
of these b-fibrations to the boundary hypersurface $\tlift G H$ are in fact transverse surjective submersions of manifolds with corners, from which it follows that the product map
\begin{equation}
	(\pr_{\bff G} \ttimes \pr_{\bff H}) \times (\wt \pr_{C_X(\bfb G)} \ttimes \wt \pr_{C_Y(\bfb H)}) : \tlift G H \to (\bff G \ttimes \bff H) \times (\bfb G \jtimes_{X,Y} \bfb H)
	\label{E:boundary_product_diffeo}
\end{equation}
is a surjective submersion of compact manifolds with corners of the same dimension.
This latter map cannot be a nontrivial cover or else its descent to $G\times H \to (\bff G \times \bff H) \times (\bfb G \times \bfb H)$ would be a nontrival cover as well; hence 
it must be a diffeomorphism.
Moreover, since $\bff G \ttimes \bff H$ is interior minimal and $\bfb G \jtimes_{X,Y} \bfb H$ is interior maximal, the target in \eqref{E:boundary_product_diffeo} is an ordered product
and the diffeomorphism is an isomorphism of ordered corners manifolds.

To show that \eqref{E:restricted_lifted_projections} are surjective submersions
we note first of all that they are simple b-fibrations (being restrictions to a
boundary hypersurface of simple b-fibrations), and proceed to show that they
have the property discussed in \S\ref{S:bkg}, that each boundary hypersurface of the target is mapped onto by a unique boundary hypersurface of the source.
The boundary hypersurfaces of $\bff G \ttimes \bff H$ are indexed by $(G',H')$ for $G' \geq G$ and $H' \geq H$ (except for $(G,H)$ itself, which corresponds to the interior) and consist of the lifts of $\bface{G'} {\bff G} \times \bface{H'}{\bff H}$.
Each of these is mapped onto uniquely by the face of $\tlift G H$ given by its intersection with $\tlift{G'}{H'}$ in $\Np G \ttimes \Np H$, and all other boundary hypersurfaces of $\tlift{G}{H}$
map to the interior of $\bff G\ttimes \bff H$.
Regarding the second map, boundary hypersurfaces of $\bfb G\jtimes_{X,Y} \bfb H$ can be grouped into four types: 

First are those indexed by $(G',H')$ for $G' \leq X$ and $H' \leq Y$, which arise as the intersection
of $\bfb G \jtimes_{X,Y} \bfb H$ with the lift of $(\bface{G'} {\bfb{G}} \times [0,\infty)) \times (\bface{H'}{\bfb H} \times [0,\infty))$ in the ordered product of cones, and each of these is mapped onto
uniquely by the boundary hypersurface of $\tlift G H$ given by its intersection with $\tlift{G'}{H'}$.

Second are those indexed by 
$(G', H')$ for $X \leq G' < G$ or
and $Y \leq H' < H$ (except $(X,Y)$ itself), which similarly arise as the intersection of $\bfb G \jtimes_{X,Y} \bfb H$ with the lift of $(\bface{G'} {\bfb G} \times [0,\infty)) \times (\bface{H'} {\bfb H} \times [0,\infty))$,
and again each of these is mapped onto uniquely by the boundary hypersurface of $\tlift G H$ given by 
its intersection with $\tlift{G'}{H'}$.

Third are those associated to 
$(G,Y)$ or
$(X,H)$,
given by the intersection of $\bfb G \jtimes_{X,Y} \bfb H$ with the lift of 
$(\bfb G \times 0) \times(\bfb H \times [0,\infty))$
or
$(\bfb G \times [0,\infty)) \times (\bfb H \times 0)$, respectively. 
While the latter boundary hypersurfaces of $C_X(\bfb G)\ttimes C_Y(\bfb H)$ typically have many preimage hypersurfaces in $\Np G \ttimes \Np H$, namely 
$\tlift{G'}{Y}$ for each $G' \geq G$
or 
$\tlift{X}{H'}$ for each $H' \geq H$, of these only 
$\tlift{G}{Y}$
and 
$\tlift{X}{H}$ 
are comparable to $\tlift G H$ itself, so the restriction $\tlift G H \to \bfb G\jtimes_{X,Y} \bfb H$ has unique preimages of these boundary hypersurfaces.

Last  are the boundary hypersurfaces associated to $(G,H')$ for $Y < H' < H$ and $(G',H)$ for $X < G' < G$, given by the intersection of $\bfb G \jtimes_{X,Y} \bfb H$
with the lift of $(\bfb G \times 0)\times(\bface{H'} {\bfb H} \times [0,\infty))$ or $(\bface{G'}{\bfb G} \times [0,\infty))\times (\bfb H \times \set 0)$, respectively, 
and each of these is mapped onto uniquely by the boundary hypersurface of $\tlift G H$ given by its intersection with $\tlift{G}{H'}$ or $\tlift{G'}{H}$, respectively.
All other boundary hypersurfaces of $\tlift{G}{H}$, given by its intersection with $\tlift{G'}{H'}$ for $G' \geq G$ and $H' \geq H$, are mapped to the interior of $\bfb G \jtimes_{X,Y} \bfb H$.

Thus we have shown that \eqref{E:restricted_lifted_projections} are surjective
submersions and it remains to see that they are transverse (i.e., the kernels
of their pointwise differentials intersect trivially). 
As this is a local property we can consider local coordinates, and
we can essentially ignore any interior (meaning non-boundary defining) coordinates, since these are unaffected by boundary blow-ups and are carried along as products locally; 
moreover transversality is clear for interior coordinates since they decompose into factors belonging to the interiors of the factors $\bff \bullet$ or $C_\bullet(\bfb \bullet)$, respectively.
On the other hand, for a simple surjective submersion, the behavior of the boundary defining coordinates is entirely combinatorial: the kernel of the differential of $\tlift G H \to \bff G \ttimes \bff H$ is spanned by the coordinate vector fields of boundary defining coordinates for hypersurfaces mapping to the interior of the target and likewise for $\tlift G H \to \bfb G \jtimes_{X,Y} \bfb H$.
As we have just shown above that these sets of boundary hypersurfaces are complementary, transversality follows, completing the proof.

The proof of the analogous product-type structure of $\tlift G H$ for $(G, H) < (X,Y)$ is essentially the same but with orders reversed, so we omit the details.
\end{proof}

\subsection{Fiber products} \label{S:ord_fib_prod}
We now show that the ordered corners category contains fiber products of appropriately transverse maps.
In general, recall that interior b-maps $f : X \to Z$ and $g : Y \to Z$ are said to be \emph{b-transverse} \cite{KMgen,Joycegc} provided that whenever $f(x) = g(y) = z$, 
\begin{equation}
	\bd f_\ast \bT_x X + \bd g_\ast \bT_y Y = \bT_z Z.
	\label{E:b-transverse}
\end{equation}
Such maps are transverse in the ordinary sense over the interiors of $X$, $Y$, and $Z$, where the b-tangent bundles coincide with the ordinary tangent bundles, and here
it is a standard result that 
\begin{equation}
	X^\circ\times_Z Y^\circ = \set{(x,y) \in X^\circ \times Y^\circ : f(x) = g(y)} \subset X^\circ\times Y^\circ 
	\label{E:prefibprod}
\end{equation}
is a smoothly embedded submanifold.
On the other hand, the closure of this set in the cartesian product $X \times Y$ is typically singular, and even when it can be given a smooth manifold with corners structure, it is almost
never a p-submanifold apart from extremely trivial cases.
The main result below says that the closure of \eqref{E:prefibprod} in the \emph{ordered} product $X\ttimes Y$ is well-behaved, and satisfies the universal property of the fiber product.
\begin{rmk}
In \cite{KMgen} the notion of a `binomial subvariety' is defined, generalizing sets like the closure of \eqref{E:prefibprod} in $X\times Y$, and the authors determine a monoidal condition
under which such a subvariety lifts to be smooth under a generalized blow-up of the ambient manifold, with applications to fiber products more specifically.
In a related but slightly different direction, Joyce in \cite{Joycegc} develops the differential topology of `manifolds with generalized corners', an intrinsic structure that can be considered on a binomial subvariety
without reference to the ambient manifold, and which always contains b-transverse fiber products; in particular the closure of \eqref{E:prefibprod} in $X\times Y$
can always be given a manifold with generalized corners structure when $f$ and $g$ are b-transverse.
Finally, in \cite{Kgc} the theory of generalized blow-up is extended to manifolds with generalized corners, which could be used to characterize resolutions of the singular fiber product.
While these theories apply to the present context, they involve quite a bit of machinery, so we opt for a self-contained treatment that can be understood independently.
\end{rmk}

\begin{lem}[{\cite[Prop.~11.3]{KMgen}}]
If $f : X \to Z$ and $g : Y \to Z$ are b-transverse interior b-maps, 
then the restrictions $f\rst_E : E \to G$ and $g \rst_F : F \to G$ are also b-transverse for every pair of boundary faces $E \in \M {}(X)$ and $F \in \M {}(Y)$
such that $f_\sharp(E) = g_\sharp(F) = G \in \M {} (Z)$.
\label{L:iterated_transv}
\end{lem}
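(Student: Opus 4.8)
The plan is to reduce the claim to a fiberwise linear-algebra statement by exploiting how the b-differential respects the natural splitting of $\bT X$ along a boundary face. For a face $E \in \M{}(X)$, the relative b-normal bundle $\bN E$ (globally framed over all of $E$ by the vectors $\rho_{H_i}\pa_{\rho_{H_i}}$ of the hypersurfaces $H_i$ cutting out $E$) is a complement in $\bT X\rst_E$ to the included b-tangent bundle $\bT E$, giving a decomposition $\bT_x X = \bN_x E \oplus \bT_x E$ valid at \emph{every} $x \in E$, not just the interior. I would fix such splittings for $E \subseteq X$, $F \subseteq Y$, and $G \subseteq Z$, and then record the two key containments for $\bd f_\ast$ when $G = f_\sharp(E)$: it carries $\bN_x E$ into $\bN_z G$, which is exactly the b-normal map \eqref{E:bnormal_map}, and it carries $\bT_x E$ into $\bT_z G$, restricting there to $\bd(f\rst_E)_\ast$, by naturality of the b-differential under restriction to boundary faces (equivalently, the chain rule applied to $f\circ \iota_E = \iota_G\circ(f\rst_E)$, where $\iota_E,\iota_G$ are the face inclusions). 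Thus $\bd f_\ast = A_f \oplus C_f$ is block-diagonal with $A_f : \bN_x E \to \bN_z G$ and $C_f = \bd(f\rst_E)_\ast : \bT_x E \to \bT_z G$, and likewise $\bd g_\ast = A_g \oplus C_g$.

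Since $A_f$ lands in $\bN_z G$ and $C_f$ in $\bT_z G$, the image $\bd f_\ast \bT_x X$ equals the direct sum $A_f(\bN_x E) \oplus C_f(\bT_x E)$ inside $\bN_z G \oplus \bT_z G$, and similarly for $g$. The b-transversality hypothesis \eqref{E:b-transverse}, applied directly at the triple $(x,y,z)$ with $x \in E$, $y \in F$, $z = f(x) = g(y) \in G$, then reads
\[
	\bigl[A_f(\bN_x E) + A_g(\bN_y F)\bigr] \oplus \bigl[C_f(\bT_x E) + C_g(\bT_y F)\bigr] = \bN_z G \oplus \bT_z G.
\]
Projecting this equality onto the $\bT_z G$-factor kills the b-normal summands and yields $C_f(\bT_x E) + C_g(\bT_y F) = \bT_z G$, which is precisely the b-transversality of $f\rst_E$ and $g\rst_F$ at $(x,y,z)$. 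Since $(x,y,z)$ is an arbitrary triple of the required form, this proves the lemma.

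The step I expect to be the main obstacle is establishing block-diagonality uniformly over \emph{all} of $E$, $F$, $G$ rather than only at their interiors. A direct coordinate computation shows $\bd f_\ast(\bT_x E) \subseteq \bT_z Z$, but the fact that it actually lands in the subspace $\bT_z G$ is only transparent at interior points of $E$; at deeper corners the naive expression mixes tangential and b-normal terms. The conceptual remedy is to phrase everything through the globally defined relative b-normal bundle $\bN E$ together with the naturality of $\bd f_\ast$, so that both containments $\bd f_\ast(\bN E) \subseteq \bN G$ and $\bd f_\ast(\bT E) \subseteq \bT G$ hold verbatim over all of $E$ and no separate limiting argument is needed. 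Two enabling observations make this clean: the b-transversality condition is pointwise, so it applies directly at boundary points $x \in E$, $y \in F$; and the assumption $f_\sharp(E) = g_\sharp(F) = G$ guarantees that $z = f(x) = g(y)$ lies in $G$, so that the decomposition of $\bT_z Z$ relative to $G$ is indeed the relevant one.
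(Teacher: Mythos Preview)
Your proof is correct and takes essentially the same approach as the paper: the paper phrases the argument via the commutative diagram of short exact sequences $0 \to \bN E \to \bT X\rst_E \to \bT E \to 0$ (and likewise for $F,G$), from which surjectivity in the middle column forces surjectivity on the right, whereas you make the canonical splitting $\bT_x X = \bN_x E \oplus \bT_x E$ explicit and observe block-diagonality of $\bd f_\ast$. These are the same idea, and your naturality argument $f\circ\iota_E = \iota_G\circ(f\rst_E)$ together with \eqref{E:bnormal_map} is exactly what justifies the commutativity of the paper's diagram.
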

\begin{proof}
The short exact sequence $\bN_x E \to \bT_x X \to \bT_x E$ for $E \in \M{}(X)$ along with similar sequences for $Y$ and $Z$ fit into a commutative diagram
\[
\begin{tikzcd}[sep=small]
	0 \ar[r] & \bN_x E \ar[d,swap, "\bd f_\ast"] \ar[r] & \bT_x X \ar[d, swap, "\bd f_\ast"] \ar[r]& \bT_x E \ar[r] \ar[d, "\bd (f\rst_E)_\ast"] & 0
	\\0 \ar[r] & \bN_z G \ar[r] & \bT_z Z \ar[r] & \bT_z G \ar[r] & 0
	\\ 0 \ar[r] & \bN_y F \ar[u, "\bd g_\ast"] \ar[r] & \bT_y Y \ar[u, "\bd g_\ast"] \ar[r] & \bT_y F \ar[r] \ar[u,swap, "\bd (g \rst_F)_\ast"] & 0
\end{tikzcd}
\]
in light of which it follows that $\bd (f\rst_E)_\ast \bT_x E + \bd (g\rst_F)_\ast \bT_y F = \bT_z G$.
\end{proof}

\begin{thm}
If $f : X \to Z$ and $g : Y \to Z$ be ordered morphisms which are b-transverse,
then the lift of \eqref{E:prefibprod} to $X\ttimes Y$ is a smooth interior p-submanifold
\begin{equation}
	X\ttimes_Z Y := \ol{\set{(x,y) \in X^\circ \times Y^\circ : f(x)=g(y)}} \subset X\ttimes Y.
	\label{E:fib_product}
\end{equation}
Moreover, $X\ttimes_Z Y$ satisfies the universal property of the fiber product
in the category of ordered corners: whenever there are morphisms $W \to
X$ and $W \to Y$ commuting with the morphisms to $Z$, then there exists a unique morphism $W \to X\ttimes_Z Y$ fitting in the commutative diagram
\begin{equation}
\begin{tikzcd}[sep=small]
	W \ar[dd] \ar[rr] \ar[dr, dashed, "\exists !"] && Y  \ar[dd]
	\\ & X\ttimes_Z Y \ar[dl] \ar[ur] &
	\\ X \ar[rr] && Z
\end{tikzcd}
	\label{E:fib_prod_diagram}
\end{equation}
\label{T:fib_prod}
\end{thm}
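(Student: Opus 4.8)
The plan is to deduce the p-submanifold statement from the monoidal lifting criterion of Proposition~\ref{P:p-sub_lift}, and then to obtain the universal property directly from that of the ordered product in Theorem~\ref{T:ord_prod}.

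First I would work locally near a point $(x,y) \in X\times Y$ with $f(x) = g(y) = z$, fixing boundary defining functions $\set{\rho_H}$, $\set{\rho_K}$, $\set{\rho_G}$ on $X$, $Y$, $Z$ and interior coordinates $w$ on $Z$ near $z$. Since $f$ and $g$ are ordered, hence simple and b-normal, \eqref{E:bmap} reads $f^\ast \rho_G = a_G \prod_{f_\sharp(H)=G}\rho_H$ and $g^\ast\rho_G = b_G\prod_{g_\sharp(K)=G}\rho_K$ with $a_G, b_G > 0$ smooth. The condition $f = g$ then splits into the \emph{interior} equations $f^\ast w = g^\ast w$, which by ordinary transversality over the interiors cut out an interior p-submanifold transverse to the boundary structure, together with the \emph{boundary} equations $f^\ast\rho_G = g^\ast\rho_G$, each equivalent to $\sigma_G := \prod_{f_\sharp(H)=G}\rho_H \big/ \prod_{g_\sharp(K)=G}\rho_K = b_G/a_G$, a positive smooth function of $(x,y)$. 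Thus the closure in $X\times Y$ of \eqref{E:prefibprod} is locally of the form treated by Proposition~\ref{P:p-sub_lift}; the constants $c_G$ being replaced by positive smooth functions changes nothing in the argument, since only the logarithmic differentials $\bd\sigma_G$ enter via Corollary~\ref{C:lift_proj_comb}.

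To apply Proposition~\ref{P:p-sub_lift} I must verify that each monoid $M$ in the refinement characterizing $X\ttimes Y$ meets the subspace $\bigcap_G \ker\bd\sigma_G$ along a face of $M$. From the proof of Theorem~\ref{T:ord_prod}, $M$ is freely generated by a totally ordered chain of sums $h_i + k_j$, and on such a generator $\bd\sigma_G(h_i + k_j) = e_f(G, H_i) - e_g(G, K_j) \in \set{-1,0,1}$ (where $e_f$, $e_g$ are the boundary exponents of $f$, $g$) records whether $H_i$, resp.\ $K_j$, maps to $G$. The subspace $\bigcap_G\ker\bd\sigma_G$ is precisely $\set{(u,v) : \bd f_\ast u = \bd g_\ast v}$ inside $(\bN E\oplus \bN F)\otimes\bbR$, the b-normal incarnation of the fiber product. \textbf{This is the main obstacle:} I must show that the cone over $M$ meets this subspace in a face, i.e.\ that no positive combination of chain generators lying off the subspace can sum to a vector on it. Here b-transversality is essential: by Lemma~\ref{L:iterated_transv} it persists on every pair of faces $E, F$ with $f_\sharp(E) = g_\sharp(F)$, which via the diagram chase there forces $\bd f_\ast$ and $\bd g_\ast$ to be jointly surjective onto the b-normal fibre $\bN_z Z$; meanwhile the \emph{ordered} hypothesis makes the index sets $\set{a : f_\sharp(H_{i_a}) = G}$ and $\set{a : g_\sharp(K_{j_a}) = G}$ contiguous along the chain. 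Together these preclude the spurious cancellation that would destroy the face property. Granting the criterion, Proposition~\ref{P:p-sub_lift} shows the boundary equations lift to a p-submanifold of $X\ttimes Y$, and combined with the transverse interior equations this gives that \eqref{E:fib_product} is a smooth interior p-submanifold, whose boundary hypersurfaces $X\ttimes_Z Y\cap \tlift H G$ inherit an admissible order from $\Mtot(X\ttimes Y)$ since the intersections are transverse.

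For the universal property, given morphisms $W\to X$ and $W\to Y$ commuting over $Z$, Theorem~\ref{T:ord_prod} produces a unique morphism $h : W \to X\ttimes Y$ compatible with both projections. As these maps are interior, $h$ carries $W^\circ$ into $X^\circ\times_Z Y^\circ$, hence into $X\ttimes_Z Y$; by density of $W^\circ$ in $W$ and continuity, $h(W)\subset \ol{X^\circ\times_Z Y^\circ} = X\ttimes_Z Y$. Since $X\ttimes_Z Y$ is an embedded p-submanifold, $h$ corestricts to a smooth map $W \to X\ttimes_Z Y$, which is again simple, b-normal and ordered because $X\ttimes_Z Y$ meets each boundary hypersurface of $X\ttimes Y$ transversally, so that $h_\sharp$ factors through the induced order on $\Mtot(X\ttimes_Z Y)$. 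Uniqueness is immediate from the uniqueness clause of Theorem~\ref{T:ord_prod}, as any morphism $W\to X\ttimes_Z Y$ composed with the inclusion is a morphism $W \to X\ttimes Y$ over $X$ and $Y$.
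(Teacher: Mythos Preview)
Your overall strategy matches the paper's: split locally into interior equations and rational boundary equations, use Lemma~\ref{L:iterated_transv} for the former, apply Proposition~\ref{P:p-sub_lift} for the latter, and then deduce the universal property from Theorem~\ref{T:ord_prod} by a continuity argument. The universal property paragraph is essentially the paper's argument.

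The gap is at the step you flag as ``the main obstacle.'' You invoke b-transversality (via Lemma~\ref{L:iterated_transv} and joint surjectivity onto $\bN_z Z$) to verify that each monoid $M$ meets $\bigcap_G \ker\bd\sigma_G$ in a face. This is misdirected: b-transversality plays \emph{no role} in the face property --- the paper uses it only for the interior equations, where Lemma~\ref{L:iterated_transv} guarantees that the maps $y\mapsto a(0,y)$ and $y'\mapsto b(0,y')$ remain transverse at the deepest corner so that $d(a_j-b_j)$ are independent there. For the monoidal step the paper observes simply that $M$ is generated by a totally ordered chain $\{h_{\alpha(a)}+g_{\beta(a)}\}$, and since $f_\sharp,g_\sharp$ are order-preserving (this is where the \emph{ordered} hypothesis enters), the images $\bd f_\ast h_{\alpha(a)}$ and $\bd g_\ast g_{\beta(a)}$ are two nondecreasing sequences in the linearly independent generators (together with $0$) of the target monoid. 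The face property is then purely combinatorial: taking the $\preceq$-maximal generator $e_k$ appearing in either sequence, the sets $\{a:e^f_a=e_k\}$ and $\{a:e^g_a=e_k\}$ are terminal segments by monotonicity, and equating the $e_k$-coefficient forces these segments to coincide; peel off and iterate. Your appeal to ``joint surjectivity'' and ``contiguity precluding spurious cancellation'' does not supply this argument, and b-transversality is neither necessary nor sufficient for it. Conversely, your treatment of the interior equations (``by ordinary transversality over the interiors'') omits the actual use of Lemma~\ref{L:iterated_transv}, which is what extends the independence of those differentials up to the corner.
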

\begin{proof}
To show that \eqref{E:fib_product} is a p-submanifold, we work locally in coordinates 
$(x,y) \in \bbR_+^{l} \times \bbR^{m}$ for $X$, 
$(x', y') \in \bbR_+^{l'}\times \bbR^{m'}$ for $Y$, 
and $(x'',y'') \in \bbR_+^{l''} \times \bbR^{m''}$ for $Z$,
taken in normal form \eqref{E:bnormal_normal}
for the b-normal maps $f$ and $g$
so that
\[
\begin{aligned}
	f(x,y) &= \big(x^\nu, a(x,y)\big) = (x'', y'') \qquad \text{and}
	\\g(x',y') &= \big((x')^\mu, b(x',y')\big) = (x'', y'')
\end{aligned}
\]

%
By Lemma~\ref{L:iterated_transv}, the maps $y \mapsto a(0,y)$ and $y' \mapsto b(0,y')$ are transverse in the ordinary sense, from which it follows
that $d(a_j - b_j)$, $1 \leq j \leq m''$, are independent over $(x,x') = (0,0)$ and hence in a suitably small neighborhood thereof. 
Thus $\ol y_j = a_j(x,y) - b_j(x',y')$, $1 \leq j \leq m''$, can be taken as 
the first $m''$ of $m + m'$ local coordinates
$(x,x', \ol y) \in \bbR_+^{l}\times \bbR_+^{l'} \times \bbR^{m +m'}$ on $X\times Y$
in which \eqref{E:prefibprod} takes the form
\[
	\set{\tfrac{(x)^{\nu_i}}{(x')^{\mu_i}} = 1, \ \ol y_j = 0 :
	 1 \leq i \leq l'',
	\ 1 \leq j \leq m''},
\]
and it remains only to show that $\set{\tfrac{(x)^{\nu_i}}{(x')^{\mu_i}} = 1 : 1 \leq i \leq l''}$ lifts to an interior p-submanifold.

According to Proposition~\ref{P:p-sub_lift}, this is the case provided that 
each monoid $M \subset \bbZ_+\pair{g_1,\ldots,g_l,h_1,\ldots,h_{l'}}$, $g_i = x_i \pa_{x_i}$, $h_j = x'_j \pa_{x'_j}$,
associated to the blow-up $X\ttimes Y$ meets the subspace $\bigcap_{i=1}^{l''} \ker (\nu_i - \mu_i)$ along a face of $M$.
However, this subspace is spanned explicitly by all sums of the form $g_i + h_j$
such that $f_\sharp(G_i) = g_\sharp(H_j) \in \M 1(Z)$, and since $M$ is generated by a totally ordered chain $\set{g_i + h_j}$,
this intersection is precisely the face of $M$ generated by those $g_i + h_j$ with $\bd f (g_i) = \bd g(h_j)$.
It follows that $X\ttimes_Z Y$ is an interior p-submanifold of $X\ttimes Y$.

To see that it satisfies the universal property, suppose that $W$ admits morphisms to $X$ and $Y$ forming a commutative diagram
\[
\begin{tikzcd}
	W \ar[r] \ar[d] & Y  \ar[d] 
	\\ X \ar[r] & Z
\end{tikzcd}
\]
Then $W$ factors uniquely through a morphism to $X\ttimes Y$, with $W^\circ$ mapping into the lift of $X^\circ \times_{Z^\circ} Y^\circ$.
By continuity, the image of $W$ in $X\ttimes Y$ lies in the p-submanifold $X\ttimes_Z Y$, giving the diagram \eqref{E:fib_prod_diagram}.
\end{proof}

\begin{thm}
The boundary hypersurfaces of $X\ttimes_Z Y$ are given by its intersection with the boundary faces $\tlift G H$ of $X\ttimes Y$ such that $f_\sharp(G) = g_\sharp(H) \in \Mtot(Z)$, and have
the following form:
\begin{itemize}
\item If $G \in \M 1(X)$ and $H \in \M 1(Y)$ are proper hypersurfaces with $f_\sharp(G) = g_\sharp(H) = Z \in \M 0(Z)$, then
\begin{equation}
	(X\ttimes_Z Y) \cap \tlift G H \cong \ol{\set{G^\circ\times_Z H^\circ \times I}} \subset \tlift G H,
	\label{E:fib_prod_bad_face}
\end{equation}
meaning the lift of $G \times_Z H \times I$ within $\tlift G H$, considred as a blow-up of $G\times H \times I$ as in 
\eqref{E:bhs_ordprod_less} or
\eqref{E:bhs_ordprod_gtr}.
\item In all other cases, 
\begin{equation}
	(X\ttimes_Z Y) \cap \tlift G H \cong G\ttimes_K H
	\label{E:fib_prod_good_face}
\end{equation}
is naturally identified with the fiber product of $G$ and $H$ over $K = f_\sharp(G) = g_\sharp(H) \in \Mtot(Z)$.
\end{itemize}
\label{T:bhs_of_fib_prod}
\end{thm}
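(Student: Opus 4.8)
The plan is to combine the p-submanifold structure of $X\ttimes_Z Y$ established in Theorem~\ref{T:fib_prod} with the monoidal criterion from its proof to determine which faces $\tlift H G$ are met, and then to analyze each intersection by a local computation in blow-up coordinates, the two cases being distinguished by how the fiber product relations interact with the extra \emph{interval} direction of $\tlift H G$.

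\emph{Identifying the faces.} Since $X\ttimes_Z Y$ is an interior p-submanifold of $X\ttimes Y$, its boundary hypersurfaces are precisely the nonempty intersections $(X\ttimes_Z Y)\cap \tlift H G$, each of which is automatically a p-submanifold of $\tlift H G$ of the same codimension as $X\ttimes_Z Y$ in $X\ttimes Y$. To see which are nonempty, recall from the proof of Theorem~\ref{T:fib_prod} that locally $X\ttimes_Z Y = \set{\sigma_i = 1,\ \ol y_j = 0}$ with $\sigma_i = x^{\nu_i}/(x')^{\mu_i}$, and that the refinement monoid $M$ associated to a corner is generated by a totally ordered chain of sums $h+g$. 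The hypersurface $\tlift H G$ corresponds to the generator $h+g$ (with $X,Y\leftrightarrow 0$), and the p-submanifold $\set{\sigma_i=1}$ meets it exactly when $h+g$ lies in the face $M\cap\bigcap_i \ker\bd\sigma_i$, i.e.\ when $\bd\sigma_i(h+g) = \nu(E_i,H) - \mu(E_i,G) = 0$ for every $E_i\in\M 1(Z)$. Because $f$ and $g$ are simple and b-normal, these exponents are $0$ or $1$, so the condition holds if and only if $f_\sharp(H) = g_\sharp(G)$ as elements of $\Mtot(Z)$, which gives the indexing asserted in the statement.

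\emph{The good case.} Here $E = f_\sharp(H) = g_\sharp(G)$ is either a proper hypersurface of $Z$ (forcing $H,G$ both proper), or one of $H = X$, $G = Y$, in which case $\tlift H G$ carries no interval factor at all by Proposition~\ref{P:bhs_ordprod}. Working near the interior of $\tlift H G$ in the normal form \eqref{E:bnormal_normal} and in blow-up coordinates, the point is that when $E\in\M 1(Z)$ the single relation $\sigma_E = \rho_H/\rho_G$ (up to positive interior factors) restricts on the front face to an equation pinning the projective blow-up variable to one value, thereby consuming exactly the extra blow-up/interval direction; the dimension count $\dim H + \dim G - \dim E = \dim X + \dim Y - \dim Z - 1$ confirms the codimensions agree. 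To obtain the identification \eqref{E:fib_prod_good_face} globally, note that the lifted projections restrict to b-fibrations $\tlift H G \to H$ and $\tlift H G \to G$ which, on $(X\ttimes_Z Y)\cap\tlift H G$, become compatible over $E$ by continuity from the interior; since $f\rst_H$ and $g\rst_G$ are b-transverse by Lemma~\ref{L:iterated_transv}, the fiber product $H\ttimes_E G$ exists and its universal property yields a canonical morphism $(X\ttimes_Z Y)\cap\tlift H G \to H\ttimes_E G$. The local computation shows this is a local diffeomorphism restricting to a diffeomorphism of interiors, so, exactly as in the proof of Theorem~\ref{T:ord_bhs_str}, it cannot be a nontrivial cover and is therefore an isomorphism of ordered corners manifolds.

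\emph{The bad case and the main obstacle.} When $H\in\M 1(X)$ and $G\in\M 1(Y)$ are both proper with $f_\sharp(H)=g_\sharp(G)=Z$, none of the boundary relations $\sigma_i=1$ involve $\rho_H$ or $\rho_G$, since neither hypersurface maps to a hypersurface of $Z$. Restricting to the front face, the relations $\ol y_j = 0$ reduce to $f\rst_H = g\rst_G$ as maps into $Z$, cutting out $H\times_Z G$ inside $H\times G$ independently of the resolved ratio $\rho_H/\rho_G$, so the entire interval direction survives unconstrained; tracking this through the description of $\tlift H G$ as a blow-up of $H\times G\times I$ in Proposition~\ref{P:bhs_ordprod} gives \eqref{E:fib_prod_bad_face}. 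I expect the main obstacle to be the good case: precisely controlling the interaction of the relation $\sigma_E = 1$ with the projective blow-up coordinate so as to verify that it removes exactly one dimension (rather than degenerating along the front face), and then upgrading this local picture to the global isomorphism with $H\ttimes_E G$. The bad case is comparatively routine once one observes that the absence of an $E$-relation leaves the interval free.
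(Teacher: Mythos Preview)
Your argument is essentially correct and agrees with the paper on the identification of which faces are met (via the monoidal criterion) and on the bad case (the defining relations for $X\ttimes_Z Y$ do not involve $\rho_H$ or $\rho_G$, so the interval direction survives). The good case, however, is handled by a genuinely different route in the paper.

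You construct a morphism $(X\ttimes_Z Y)\cap \tlift H G \to H\ttimes_E G$ using the universal property of the \emph{target} fiber product, then argue it is a local diffeomorphism by a direct blow-up coordinate computation (the relation $\sigma_E = \rho_H/\rho_G = 1$ pins the projective coordinate on the front face), and finally conclude it is a global diffeomorphism by the covering argument from Theorem~\ref{T:ord_bhs_str}. The paper instead proves the universal property of the \emph{source}: it replaces $X,Y,Z$ by the normal models $\Np H, \Np G, \Np E$ and shows that any test diagram $F\to H,G$ over $E$ can be ``thickened'' to a commutative square $F\times\bbR_+ \to \Np H,\Np G$ over $\Np E$, uniquely up to an automorphism of $F\times\bbR_+$ restricting to the identity on $F\times\{0\}$; restriction of the resulting map $F\times\bbR_+ \to \Np H\ttimes_{\Np E}\Np G$ to $F\times\{0\}$ then gives the required unique morphism into $(X\ttimes_Z Y)\cap\tlift H G$. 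Your approach is more computational and has the virtue of being concrete about how the extra dimension is consumed; the paper's thickening argument is more conceptual, avoids coordinates, and has the pleasant feature that the failure of uniqueness of the thickening when $E=Z$ (since then $\Np H,\Np G \to Z$ are independent of the $\bbR_+$ factor) gives an intrinsic explanation for why the bad case is genuinely different.
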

\begin{rmk}
Taking $Z = \pt$ recovers $X\ttimes_Z Y = X \ttimes Y$ with the structure of its boundary faces.
\end{rmk}

\begin{proof}
As an interior p-submanifold of $X\ttimes Y$, the boundary hypersurfaces of $X\ttimes_Z Y$ are precisely its interior intersections with the hypersurfaces $\tlift G H$ of $X\ttimes Y$, 
and we observe that $X\ttimes_Z Y$ can only meet the interior of $\tlift G H$ provided that $f_\sharp(G) = g_\sharp(H) = K \in \Mtot(Z)$. 
In particular $\Mtot(X\ttimes_Z Y) \cong \Mtot(X)\times_{\Mtot(Z)}\Mtot(Y)$ as pointed ordered sets.

In the first case, if $G \in \M 1(X)$ and $H \in \M 1(Y)$ are proper
hypersurfaces mapping to the interior of $Z$, then it follows that none of the
rational boundary defining function equations defining $X\ttimes_Z Y$ involve
boundary defining functions of $G$ or $H$.
Considering the steps of the blow-up $X\ttimes Y$ as in the proof of Proposition~\ref{P:bhs_ordprod} and the lift of $\set{(x,y) : f(x) = g(y)}$ at each step, 
it follows in particular that in the blow-up of $G\times H$ itself, the lift meets the front face in the set $\ol{\set{G^\circ\times_Z H^\circ \times I}}$, i.e., it is independent
of the interval $I$ since boundary defining functions for $G$ or $H$ may be taken as coordinates along this interval.
With this observation, \eqref{E:fib_prod_bad_face} follows along the lines of the proof of Proposition~\ref{P:bhs_ordprod}.

In the other cases, we show that $\tlift G H_K := X\ttimes_Z Y \cap \tlift G H$ can be identified with the fiber product $G\ttimes_K H$.
If $G = X$, then $K = Z$ by necessity, and since $\tlift X H \cong X \ttimes H$ in this case, it is straightforward to see that the lift of $\set{(x,y) \in X \times Y : f(x) = g(y)}$
to $X\ttimes Y$ meets $\tlift X H$ in a set equivalent to the lift of $\set{(x,y) \in X \times H : f(x) = g(y)}$ to $X\ttimes H$.
A similar consideration applies to the case that $H = Y$. 

Finally, if $G$, $H$, and $K$ are all proper hypersurfaces, then our strategy is to show that $\tlift G H_K$
satisfies the universal property of the fiber product $G\ttimes_K H$,
namely that every commutative diagram formed by the outermost square of 
\begin{equation}
\begin{tikzcd}[sep=small]
	F \ar[dd,"\phi"] \ar[rr,"\psi"] \ar[dr, dashed, "\exists !"] && H  \ar[dd,"g"]
	\\ & \tlift G H_K \ar[dl] \ar[ur] &
	\\ G \ar[rr,"f"] && K
\end{tikzcd}
	\label{E:bhs_fib_diagram}
\end{equation}
determines a unique morphism $F \to \tlift G H_K$ filling in the remainder of the diagram.
It suffices to replace $X$, $Y$, and $Z$ by the normal models $\Np G$, $\Np H$ and $\Np K$, respectively. 
We proceed by first showing that the outer square of \eqref{E:bhs_fib_diagram} can be `thickened out' to a diagram of ordered corners morphisms of the form
\begin{equation}
\begin{tikzcd}
	F\times \bbR_+ \ar[r,"\wt \psi"] \ar[d,"\wt \phi"] & \Np H \ar[d, "dg"]
	\\ \Np G \ar[r, "df"]  & \Np K
\end{tikzcd}
	\label{E:bhs_fib_diagram_thick}
\end{equation}
uniquely up to an automorphism of $F\times \bbR_+$ which restricts to the identity on $F$,
whence the restriction of the map $F\times \bbR_+ \to \Np G\ttimes_{\Np K} \Np H$ to $F\times \set 0$ provides the required unique morphism characterizing 
$\tlift{G}{H}_K$ as the fiber product $G\ttimes_K H$.

Indeed, with respect to trivializations of the normal models, $df : \Np G \to \Np K$ has the form
\[
\begin{gathered}
	\Np G \cong G \times \bbR_+ \stackrel{df}{\to} K\times \bbR_+ \cong \Np K
	\\ (p,t) \mapsto \big(f(p), t\sigma(p)\big)
\end{gathered}
\]
where $\sigma = \prod_{G' \in f_\sharp^\inv(K)} \rho_{G'}$ is a product of boundary defining functions on $G$ for those faces $\pa_{G'} G$ which map to $K$ itself.
Similarly, $dg : \Np H \to \Np K$ has the form $(p',t') \mapsto \big(g(p'), t' \sigma'(p')\big)$ for a product $\sigma' = \prod_{H' \in g_\sharp^\inv(K)} \rho_{H'}$ on $H$.

Fixing boundary defining functions on $F$, it follows from commutativity of \eqref{E:bhs_fib_diagram} that $\phi^\ast \sigma = a_G \prod_{F' : F' \mapsto K} \rho_{F'}$
and $\psi^\ast \sigma' = a_H \prod_{F' : F' \mapsto K} \rho_{F'}$ for the same set $\set{\rho_{F'} : F' \mapsto K}$ of boundary defining functions on $F$, namely those associated to hypersurfaces mapped to the interior of $K$ by $\psi\circ g = \phi \circ f$ 
in \eqref{E:bhs_fib_diagram}, but generally different positive functions $a_G, a_H \in C^\infty(F; (0,\infty))$. 
But then \eqref{E:bhs_fib_diagram_thick} commutes if we define
\[
\begin{aligned}
	\wt \phi &: F\times \bbR_+ \ni (q,t) \mapsto \big(\phi(q), a_G^\inv(q) t\big) \in G \times \bbR_+ \cong \Np G
	\\ \text{and} \qquad 
	\wt \psi &: F\times \bbR_+ \ni (q,t) \mapsto \big(\psi(q), a_H^\inv(q) t\big) \in H \times \bbR_+ \cong \Np H
\end{aligned}
\]
Moreover, these are the only possible extensions of $\phi$ and $\psi$ up to an $(0,\infty)$-equivariant automorphism of $F\times \bbR_+$ of the form $(q,t) \mapsto (q,b(q)t)$ for $b \in C^\infty(F; (0,\infty))$, which in particular restricts to the identity on $F\times \set 0$, 
so the restriction $F \times \set 0 \to \tlift{G}{H}_K$ of the map $F\times \bbR_+ \to \Np G \ttimes_{\Np K} \Np H$ 
induced by any such choice is unique, giving the characterization of $\tlift{G}{H}_K$ as the fiber product $G\ttimes_K H$. 
Note that the above strategy fails if $K = Z$, since in this case the maps $\Np G \cong G \times \bbR_+ \to Z$ and $\Np H \cong H \times \bbR_+ \to Z$ are respectively independent of $t \in \bbR_+$, and while extensions $\wt \phi : F \times \bbR_+ \to \Np G$ and $\wt \psi : H \times \bbR_+ \to \Np H$ may be defined, they are certainly not unique.
\end{proof}

As an application of the previous result, note that 
if $f : X \to Y$ is any morphism, then the fiber product of $f$ with the identity $1 : Y \to Y$ (which is b-transversal to every b-map) gives an embedding of the \emph{graph} $\Gr(f) = X\ttimes_Y Y \subset X\ttimes Y$ of $f$ as a p-submanifold of $X\ttimes Y$.
In particular this shows that every morphism can be factored as a p-submanifold inclusion $X \hookrightarrow \Gr(f) \subset X\ttimes Y$ and a b-submersion $X\ttimes Y \to Y$. 

Another application, which will be important in the forthcoming work \cite{KR1}, is the following.
Let $\rho_X = \prod_{G \in \M 1(X)} \rho_G$ and $\rho_Y = \prod_{H \in \M 1
(Y)} \rho_H$ be total boundary defining functions on interior minimal manifolds
$X$ and $Y$ with ordered corners, respectively.
These constitute ordered corners morphisms $\rho_X : X \to [0,\infty)_\tmin$
and $\rho_Y : Y \to [0,\infty)_\tmin$, with target given the interior minimal
ordered corners structure, and these are b-transverse in a sufficiently small
neighborhood of the boundaries of $X$ and $Y$ where $d\rho_X \neq 0$ and $d
\rho_Y \neq 0$.

\begin{cor}
In the situation above, the fiber product $\set{\rho_X/\rho_Y = 1} \subset X \ttimes Y$ is a well-defined p-submanifold sufficiently near the boundary of $X\ttimes Y$, meeting $\tlift G H$ in a p-submanifold
of the form $G\ttimes H = G\ttimes_{\set 0} H$ for each pair $(G,H) \in \M 1(X)\times \M 1(Y)$.
\label{C:s_equals_1}
\end{cor}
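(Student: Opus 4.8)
The plan is to recognize $\set{\rho_X/\rho_Y = 1}$ as the ordered fiber product $X\ttimes_Z Y$ over $Z = [0,\infty)_\tmin$ associated to the maps $\rho_X$ and $\rho_Y$, and then to read off its structure directly from Theorems~\ref{T:fib_prod} and~\ref{T:bhs_of_fib_prod}. First I would check that $\rho_X : X \to [0,\infty)_\tmin$ is an ordered corners morphism: it is simple and b-normal because $\rho_X^\ast \rho_{\set 0} = \prod_{H} \rho_H$ has all boundary exponents equal to $1$ and $[0,\infty)$ has the single boundary hypersurface $\set 0$; and it is ordered because it carries the interior $X \in \M 0(X)$ to the interior $[0,\infty) \in \M 0(Z)$ and every proper $H \in \M 1(X)$ to the boundary point $\set 0 \in \M 1(Z)$, so that the relation $X < H$ is preserved by $[0,\infty) < \set 0$. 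It is precisely here that the interior minimal structure on the target is needed; were $Z$ taken interior maximal the map would reverse the order. The same applies to $\rho_Y$.

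Using the b-transversality assumed near the boundary, I would apply Theorem~\ref{T:fib_prod} locally in a neighborhood of $\partial(X\ttimes Y)$---being a p-submanifold is a local condition and the proof of that theorem is local---to conclude that $X\ttimes_Z Y = \set{\rho_X/\rho_Y = 1}$ is an interior p-submanifold there. The restriction to a neighborhood of the boundary is genuine, since in the interior the level set $\set{\rho_X = \rho_Y}$ may be singular at common critical points of $\rho_X$ and $\rho_Y$; these lie away from the boundary, where $d\rho_X$ and $d\rho_Y$ are nonvanishing.

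The content of the statement lies in identifying the boundary faces via Theorem~\ref{T:bhs_of_fib_prod}. A face $\tlift H G$ meets $X\ttimes_Z Y$ in its interior exactly when $(\rho_X)_\sharp(H) = (\rho_Y)_\sharp(G)$ in $\Mtot(Z)$; since $\rho_X$ and $\rho_Y$ are \emph{total} boundary defining functions, the only matching pairs are those $(H,G) \in \M 1(X)\times \M 1(Y)$ with both factors proper, each mapping to $E = \set 0$. The one real subtlety---and the step I would flag as the crux---is that $\set 0$ is a \emph{proper} boundary hypersurface of $[0,\infty)$, i.e.\ $\set 0 \in \M 1(Z)$ rather than $\M 0(Z)$. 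Consequently we never fall into the exceptional case of Theorem~\ref{T:bhs_of_fib_prod}, which requires the common image to be $Z \in \M 0(Z)$; we always land in the generic case~\eqref{E:fib_prod_good_face}, giving $(X\ttimes_Z Y)\cap\tlift H G \cong H\ttimes_E G$ with $E = \set 0$. Finally, since $E = \set 0$ is a single point, the restricted maps $H \to \set 0$ and $G \to \set 0$ are constant and the fiber product over $\set 0$ reduces to the ordered product $H\ttimes_{\set 0} G = H\ttimes G$, by the remark that $X\ttimes_{\pt} Y = X\ttimes Y$. This yields the claimed form of the boundary faces and completes the argument; the remainder is routine, the only genuine point being that using total boundary defining functions into the interior minimal half-line forces every corner into the generic case.
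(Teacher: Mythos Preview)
Your proposal is correct and takes essentially the same approach as the paper. In fact the paper does not supply a separate proof: the paragraph preceding the corollary already verifies that $\rho_X, \rho_Y : X, Y \to [0,\infty)_\tmin$ are ordered corners morphisms which are b-transverse near the boundary, and the corollary is then an immediate application of Theorems~\ref{T:fib_prod} and~\ref{T:bhs_of_fib_prod} together with the remark that $X\ttimes_{\pt} Y = X\ttimes Y$, exactly as you spell out.
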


\section{Fibered corners} \label{S:fibcorn}
We now turn to the consideration of manifolds with fibered corners.
\begin{defn}[c.f.~\cite{DLR}]
A \emph{fibered corners structure} 
(also known as a `resolution structure' \cite{AM} or an `iterated boundary fibration structure' \cite{ALMP}) on a manifold with corners $X$ consists of the structure
of a fiber bundle\footnote{In fact, by Corollary~\ref{C:loc_trivial} it is sufficient for $\bfib G$ to be a fibration, i.e., a proper surjective submersion; 
by an Ehresmann lemma for manifolds with fibered corners (Proposition~\ref{P:fib_strat_lift}), such a map is locally trivial.} of manifolds with corners
\[
	\bfib G : G \to \bfb G 
\]
for each (collective) boundary hypersurface $G \in \M 1(X)$, 
with typical fiber\footnote{If $\bfb G$ is disconnected, different connected components may have different fibers.} denoted $\bff G$, satisfying the condition
that 
whenever $G \cap G' \neq \emptyset$ then $\dim(\bfb G) \neq \dim(\bfb{G'})$;
moreover
if $\dim(\bfb G) < \dim(\bfb{G'})$, say, then 
$\bfib {G'}$ constitutes a coarser fibration on $G \cap G'$ compared to $\bfib {G}$
in the following sense:
\begin{itemize}
\item 
$\bfib {G}$ maps $G \cap G'$ surjectively onto $\bfb G$ (in other words, $G\cap G'$ is `horizontal' with respect to $\bfib{G}$),
whereas $G \cap G' = \bfib{G'}^\inv(\bface G{\bfb{G'}})$ for a boundary hypersurface
$\bface G {\bfb{G'}}$ of $\bfb{G'}$ 
(in other words, $G \cap G'$ is `vertical' with respect to $\bfib {G'}$), and
\item $\bface G {\bfb {G'}}$ has a fiber bundle structure
$
	\phi_{G G'} : \bface{G}{\bfb{G'}} \to \bfb G
$
such that the following diagram commutes:
\begin{equation}
\begin{tikzcd}
	G \cap G' \ar[r, "\phi_{G'}" ] \ar[dr,swap, "\phi_{G}"] & \pa_G \bfb{G'} \ar[d, "\phi_{GG'}"]
	\\ & \bfb G.
\end{tikzcd}
	\label{E:fc_diagram}
\end{equation}
\end{itemize}
This
induces an ordering on $\M 1(X)$ in the sense of Definition~\ref{D:ordcorn} (to be extended to $\Mtot(X)$ momentarily)
defined by 
\begin{equation}
	G < G' \iff
	G \cap G' \neq \emptyset
	\quad \text{and}\quad
	\dim(\bfb G) < \dim(\bfb {G'}).
	\label{E:size_ord}
\end{equation}

We require here in addition the structure of a fiber bundle $\phi_X : X \to \bfb X$ on $X$ itself, with fiber $\bff X$.
Of course $X \cap G = G \neq \emptyset$ for every $G \in \M 1(X)$, and in order to extend the compatibility condition above, there are two possibilities. 
We require either:
\begin{itemize}
\item 
$G = X \cap G$ is the restriction of $\phi_X$ over a proper boundary hypersurface $\pa_G \bfb X$ of $\bfb X$, which fibers over $\bfb G$ forming a commutative diagram
\[
\begin{tikzcd}
	G \ar[r, "\phi_X"] \ar[dr,swap, "\phi_G"] & \pa_G \bfb X \ar[d, "\phi_{GX}"]
	\\ &\bfb G
\end{tikzcd}
\]
in which case we set $G < X$, or else
\item
$G$ is trivially the restriction of $\phi_G$ over the `honorary boundary hypersurface' $\pa_X \bfb G = \bfb G$, which fibers over $\bfb X$
forming a commutative diagram
\[
\begin{tikzcd}
	G \ar[r, "\phi_G"] \ar[dr,swap, "\phi_X"] & \pa_X \bfb G = \bfb G \ar[d,"\phi_{XG}"]	
\	\\ &\bfb X
\end{tikzcd}
\]
in which case we set $X < G$.
\end{itemize}
This extends the order on $\M 1(X)$ to the set of principal faces $\Mtot(X)$, and in particular
every manifold with fibered corners is naturally a manifold with ordered corners
in the sense of Definition~\ref{D:ordcorn}.
In the second case above we relax the strict dimension comparison on the base manifolds to require merely that $\dim(\bfb X) \leq \dim(\bfb G)$, to account
for the possibility that $\phi_{XG} = \id$ has trivial fibers (which may occur).
As a matter of notation, we denote the fiber of $\bfib{GG'} : \bface{G}{\bfb {G'}} \to \bfb G$ by 
\[
	\bfe{G}{G'} := \bfib{GG'}^\inv(\pt).
\]
\label{D:fc}
\end{defn}

\begin{rmk}
\mbox{}
\begin{itemize}
\item 
For the definition we do not necessarily require that boundary hypersurfaces are connected; 
as in \cite{AM}
we allow $G$ to be 
a
`collective boundary hypersurface'
i.e., a union of disjoint connected boundary hypersurfaces.
One reason for this is that a fiber $\bff G$, a fibered corners manifold in its own right,
may have distinct boundary hypersurfaces which are the restrictions to the fiber of a single connected boundary hypersurface of the total space,
as in the example of a finite width M\"obius strip fibering over a circle.

Another reason comes from the connection to stratified spaces as discussed in \S\ref{S:strat}, in which the $\bfb G$ and $\bff G$ are closely related to the strata and associated links, respectively,
of a stratified space. 
In the latter theory one often wants to consider stratifications in which the links, and hence the $\bff G$, may be disconnected; for example, a torus pinched along a non-bounding circle
has a singular stratum whose link consists of two disjoint circles, and the associated manifold with fibered corners is a cylinder with both boundary components considered as a single collective boundary
hypersurface fibering over a point.

Note that if $\bfb G$ is disconnected, then different components of the fibration $G$ may have non-diffeomorphic typical fibers; unfortunately our notational convention, in which we denote the fiber systematically by $\bff G$, doesn't support this scenario.
Rather than complicate the notation further, we leave it to the reader to make the necessary adjustments.
\item
In the existing literature it is typical to only consider fibrations only on the proper boundary hypersurfaces,
and in the absence of an explicit fibration on $X$ itself, there are two canonical choices: namely $\phi_X$ can be either of the trivial fibrations: $\phi_X = \id : X \to X$, making $X$ interior maximal, or $\phi_X : X \to \pt$, making $X$ interior minimal. 
We denote these by $X_\tmax$ and $X_\tmin$, respectively.

In Appendix~\ref{S:tubes} we characterize interior fibrations $\bfib X : X \to \bfb X$ as certain surjective submersions in the category of fibered corners manifolds: see Definition~\ref{D:int_fibn}
and Corollary~\ref{C:loc_trivial}.
\end{itemize}
\end{rmk}

As a locally trivial fiber bundle,
the boundary 
hypersurfaces
of $G$ (given by $G \cap G'$) come in two types, either the preimage with respect to $\phi_G$ of a boundary face $\pa_{G'} \bfb G$ of the base when $G' < G$, or a subbundle
of $\phi_G$ with fiber a (possibly disconnected) boundary face of $\pa_{G'} \bff G$ when $G' > G$.
From this the next result immediately follows.

\begin{prop}
For each $G \in \M 1(X)$, $\bff G$, $\bfb G$, and $G$ itself all inherit fibered corners structures as follows:
\begin{itemize}
\item $\Mtot(\bfb G) \cong \set{G' \in \M 1(X): G' \leq G}$, with fibrations $\bfib{G' G} : \bface{G'}{\bfb G} \to \bfb {G'}$
and the identity fibration on $\bfb G$ itself.
In particular $\bfb G$ is interior maximal.
\item $\Mtot(\bff G) \cong \set{G' \in \M 1(X) : G' \geq G}$, with fibrations $\bfib{G'} : \bface{G'} {\bff G} \to \bfe{G}{G'}$
(obtained by restricting \eqref{E:fc_diagram} over $\pt \in \bfb G$), and the trivial fibration $\bff G \to \pt$ on the interior.
In particular $\bff G$ is interior minimal.
\item $\Mtot(G) \cong \set{G' \in \M 1(X) : G' \sim G}$, with fibrations $\bfib{G'} : \bface{G'} G := G \cap G' \to \bfb{G'}$ for
$G' \leq G$ and $\bfib{G'} : \bface{G'} G := G \cap G' \to \bface{G}{\bfb{G'}}$ for $G' > G$.
\end{itemize}
In particular every $G \in \M 1(X)$ is locally a product type hypersurface in the sense of \S\ref{S:ord_bhs}, given locally by the ordered product $G \stackrel{\text{loc}}= \bff G \ttimes \bfb G = \bff G \times \bfb G$.
\label{P:induced_fc}
\end{prop}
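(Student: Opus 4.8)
The plan is to read off each claimed structure directly from the fibered corners data on $X$ (Definition~\ref{D:fc}) using the dichotomy of boundary hypersurfaces just described, and then to verify that the compatibility diagram \eqref{E:fc_diagram} on $X$ descends to the corresponding compatibility on each of $\bfb H$, $\bff H$, and $H$. First I would fix $H$ and record its boundary hypersurfaces: the $H'\in\M 1(X)$ with $H'\sim H$ index exactly the faces $\bface{H'}H=H\cap H'$, giving $\Mtot(H)\cong\set{H':H'\sim H}$, and equipping $H$ with its own interior fibration $\bfib H$ makes $H$ a fibered corners manifold whose order is the restriction of \eqref{E:size_ord}. Applying $\bfib H$ to the faces with $H'<H$, for which $H\cap H'=\bfib H^{\,\inv}(\bface{H'}{\bfb H})$, identifies $\Mtot(\bfb H)\cong\set{H':H'\le H}$ with fibrations $\phi_{H'H}$ supplied directly by Definition~\ref{D:fc}; since every such face lies below the interior, $\bfb H$ is interior maximal. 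Dually, restricting the faces with $H'>H$ over a point $\pt\in\bfb H$ identifies $\Mtot(\bff H)\cong\set{H':H'\ge H}$ with the fibrations $\bfib{H'}:\bface{H'}{\bff H}\to\bfe{H}{H'}$ obtained by restricting \eqref{E:fc_diagram} over $\pt$, and these all lie above the interior, so $\bff H$ is interior minimal.

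The substantive point—where the main work lies—is checking that these candidate fibration assignments satisfy the comparability axiom of Definition~\ref{D:fc}, i.e.\ that the appropriate diagrams commute at every codimension-two corner. For $\bfb H$, such a corner is $\bface{H'}{\bfb H}\cap\bface{H''}{\bfb H}$ with $H'',H'<H$ and, say, $H''<H'$; its preimage under $\bfib H$ is the triple intersection $H\cap H'\cap H''$, and the required factorization of $\phi_{H''H}$ through $\phi_{H'H}$ is obtained by pushing the $X$-compatibility diagram for the pair $H''<H'$ forward along $\bfib H$ and composing it with the diagrams relating $\phi_{H'H}$ and $\phi_{H''H}$ to $\bfib{H'}$ and $\bfib{H''}$. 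The case of $\bff H$ is the same computation restricted over $\pt\in\bfb H$, and the case of $H$ itself is immediate, since its faces and their fibrations are literally those of $X$ restricted to $H$. In each case the induced order is the restriction of the order on $\M 1(X)$, so the ordered-corners axiom (comparable if and only if non-disjoint) is inherited automatically. This diagram bookkeeping is the only genuine obstacle, and it is routine precisely because \eqref{E:fc_diagram} is natural with respect to the bundle projections.

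Finally, the local product claim follows by combining local triviality of $\bfib H$ with the ordered-corners results of \S\ref{S:ord_bhs}. Over a trivializing neighborhood $U\subset\bfb H$ we have $H\rst_U\cong\bff H\times U$, and under the identifications above the faces coming from $H'<H$ lie in the base direction while those from $H'>H$ lie in the fiber direction, matching exactly the pattern $\M 1^<(H)=\bff H\times\M 1(\bfb H)$, $\M 1^>(H)=\M 1(\bff H)\times\bfb H$ of \eqref{E:hs_order_ident}. Since $\bff H$ is interior minimal and $\bfb H$ is interior maximal, Corollary~\ref{C:cart_prod} gives $\bff H\ttimes\bfb H=\bff H\times\bfb H$, so $H$ is locally the product-type hypersurface $H\stackrel{\text{loc}}=\bff H\ttimes\bfb H$, as asserted.
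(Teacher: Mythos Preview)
Your proposal is correct and follows essentially the same approach as the paper. The paper in fact treats this proposition as an immediate consequence of the preceding observation about the two types of boundary hypersurfaces of $H$ (those coming from the base when $H'<H$ versus those coming from the fiber when $H'>H$), providing no further argument; your write-up simply spells out the routine diagram-checking that underlies that ``immediately follows,'' so there is nothing to compare beyond level of detail.
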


Prior to defining morphisms of manifolds with fibered corners, we first consider some natural conditions that may be imposed.

\begin{defn}
Let $X$ and $Y$ be manifolds with fibered corners.
We say a b-map $f : X \to Y$ is \emph{fibered} if the restriction of $f$ to each $G \in \Mtot(X)$ is a map of fiber bundles: thus
\begin{equation}
\begin{tikzcd}
	G \ar[d, swap, "\bfib G"] \ar[r, "f\rst_G"] & H = f_{\#}(G) \ar[d, "\bfib H"]
	\\ \bfb G \ar[r, "f_G"] & \bfb H
\end{tikzcd}
	\label{E:fibered}
\end{equation}
for some map $f_G : \bfb G \to \bfb H$, which, as a consequence of the fibered property of $f$ for each $G' < G$, is itself fibered.
\label{D:fibered_map}
\end{defn}
This condition is natural from the point of view of stratified spaces (see \S\ref{S:strat}), since a fibered map $f : X \to Y$ descends to a well-defined
map $\ol f : \strat X \to \strat Y$ of the associated stratified spaces, obtained by collapsing the fibers of the respective boundary fibrations of $X$ and $Y$.
As it turns out, this condition alone is not sufficient to determine a category of fibered manifolds containing products; we need to impose some additional structure,
in support of which we introduce the following definitions.

\begin{defn}
Let $X$ be a manifold with fibered corners. 
\begin{enumerate}
\item 
A smooth function $u \in C^\infty(X;\bbR)$ is said to be \emph{basic at $G \in \M 1 (X)$} if its restriction to $G$ is pulled back from $\bfb G$:
$u \rst_G = \bfib G^\ast u_G$ for some $u_G \in C^\infty(\bfb G;\bbR)$; equivalently, $u$ is constant on fibers of $G$.
A function which is basic at every proper boundary hypersurface $G \in \M 1(X)$ will be simply called \emph{basic}.

\item
A boundary defining function $\rho_G$ for $G$ is \emph{good} if it is basic at $G'$ for every $G' > G$; in particular
$\rho_G \rst_{G'}$ is the pullback of a boundary defining function for $\bface G {\bfb {G'}}$ on $\bfb{G'}$. 
Note that it is not possible for $\rho_G$ to be basic at $G' < G$.

\item 
Two boundary defining functions $\rho_G$ and $\rho'_G$ for the same $G \in \M 1(X)$ are \emph{fc-equivalent} if the ratio $\rho_G/\rho'_G$ is basic (constant on fibers of each $G' \in \M 1(X)$).
This is an equivalence relation.
Note that for good boundary defining functions, $\rho_G/\rho'_G$ is automatically basic at every $G' > G$, so it is enough to require that the ratio
be basic at each $G' \leq G$.

\item
A simple, b-normal b-map $f : X \to Y$ is \emph{compatible} with given fc-equivalence classes of boundary defining functions on $X$ and $Y$, respectively,
provided that for some (and hence any) representatives $\set{\rho_G : G \in \M 1(X)}$ and $\set{\rho_H : H \in \M 1(Y)}$, 
\[
	f^\ast(\rho_H) = a_H \prod_{G \in f_\#^\inv(H)} \rho_G \implies a_H \in C^\infty(X; (0,\infty)) \text{ is basic for every $H \in \M 1(Y)$.}
\]
Equivalently, as a consequence of the normal form for b-normal maps discussed in \S\ref{S:bkg}, a map $f$ is compatible if and only if it is rigid with respect
to some representative boundary defining functions for the equivalence classes on $X$ and $Y$.
\end{enumerate}
\label{D:basic_bdf}
\end{defn}

\begin{lem}
An fc-equivalence class of good boundary defining functions for $G' \in \M1(X)$ on $X$ induces an fc-equivalence class of good boundary defining functions on each $G \in \M 1(X)$ such that $G \cap G' \neq \emptyset$, and likewise on $\bfb G$ if $G' < G$ and $\bff G$ if $G' > G$.
\label{L:induced_equiv_bdf}
\end{lem}
\begin{proof}
For $G$ and $\bff G$ the equivalence class is given by restriction of representative elements. 
For $\bfb G$, it follows by definition that 
$\rho_{G'} \rst_G = \phi_G^\ast \rho'_{G'}$ for a boundary defining
function $\rho'_{G'}$ for $\bface{G'}{\bfb{G}}$ on $\bfb G$; moreover the ratio of two such fc-equivalent boundary defining functions for $G'$ is pulled back from $\bfb{G''}$ 
when restricted over any $G'' < G$, hence they descend to fc-equivalent 
boundary defining functions 
for
$\bface{G'}{\bfb G}$ 
on $\bfb G$.
\end{proof}

It is a consequence of Proposition~\ref{P:rel_tube} (see also \cite[Prop.~3.7]{AM} or \cite[Lem.~1.4]{DLR})
that good boundary defining functions always exist. 
What is more, having fixed fc-equivalence classes of good boundary defining functions, local coordinates on $X$ can always be chosen of the form
\begin{equation}
	(y_{-n'},x_{-n'},y_{-n'+1},x_{-n'+1},\ldots,y_0,y_1,x_1,y_2,x_2,\ldots,x_n,y_{n+1})
	\label{E:coord_std_form}
\end{equation}
where the $x_i$ are representative local boundary defining functions for the
hypersurfaces $G_i$ forming a maximal chain $G_{-n'} < \cdots < G_{-1} < X < G_1 < \cdots < G_n$ and $y_i$ are coordinates in $\bbR^{k_i}$, such that
the boundary fibration $\bfib i = \bfib{G_i}$ for $-n' \leq i \leq n$ is given by the projection
\begin{equation}
	\bfib i : 
	(y_{-n'},x_{-n'},\ldots,x_{-1},y_0,y_1,x_1,y_2,x_2,\ldots,x_n,y_{n+1})\big|_{ x_i=0}
\mapsto 
	(y_{-n'},x_{-n'},\ldots,y_i)
	\label{E:coord_std_form_fibn}
\end{equation}
including the case $i = 0$ which corresponds to $\bfib X$.
This is proved as Corollary~\ref{C:str_coords} below.
In particular $(y_{-n'},x_{-n'},\ldots,y_i)$ are coordinates for $\bfb{G_i}$ and $(y_{i+1},x_{i+1},\ldots,y_{n+1})$ are coordinates for $\bff{G_i}$.
We say such coordinates are in \emph{standard form}.
\begin{rmk}
When $X$ is maximal, we may opt to enumerate hypersurfaces and coordinates with positive instead of negative integers, so using coordinates $(y_1,x_1,\ldots,x_n,y_{n+1})$
in which $\bfib i$ is still given by 
\[
	\bfib i : (y_1,x_1,\ldots,y_i,0,y_{i+1},\ldots,y_n,x_n,y_{n+1}) \mapsto (y_1,x_1,\ldots,y_i).
\]
\end{rmk}

We are now in a position to define the category of manifolds with fibered corners in which the ordered product will be shown to be the categorical product.

\begin{defn}
The \emph{category of manifolds with fibered corners} is defined as follows.
\begin{itemize}
\item An \emph{object} is a manifold with fibered corners $X$ equipped with an fc-equivalence class of good boundary defining functions for each $G \in \M1(X)$.
\item A \emph{morphism} is an ordered morphism $f : X \to Y$ in the sense of Definition~\ref{D:ordcorn} (i.e.\ a simple, interior, b-normal b-map for which $f_\sharp : \Mtot(X) \to \Mtot(Y)$ is ordered) which is fibered in the sense of Definition~\ref{D:fibered_map} and compatible with the fc-equivalence classes of boundary defining functions in the sense of Definition~\ref{D:basic_bdf}.
\end{itemize}
\label{D:fc_cat}
\end{defn}

The following result is a simple consequence of unwinding definitions.
\begin{lem}
For a manifold with fibered corners $X$, the fibration $\phi_G : G \to \bfb G$ for each $G \in \Mtot(X)$ is a morphism of fibered corners manifolds.
\label{L:bfib_mor}
\end{lem}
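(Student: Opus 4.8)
The plan is to check directly that $\phi_H : H \to \bfb H$ satisfies each of the four conditions demanded of a morphism in Definition~\ref{D:fc_cat}: that it is a simple, interior, b-normal b-map; that the induced map $\phi_{H,\sharp} : \Mtot(H) \to \Mtot(\bfb H)$ is order preserving; that it is fibered in the sense of Definition~\ref{D:fibered_map}; and that it is compatible with the induced fc-equivalence classes of good boundary defining functions in the sense of Definition~\ref{D:basic_bdf}. The first point is immediate: as a fibration, $\bfib H$ is by definition a fiber bundle, hence an interior, simple, b-normal b-fibration per the discussion in \S\ref{S:bkg}; concretely, in the standard-form coordinates \eqref{E:coord_std_form} its restriction to $H = \set{x_i = 0}$ is the coordinate projection \eqref{E:coord_std_form_fibn}, which manifestly has these properties.

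For the ordered and fibered conditions I would use the identifications of Proposition~\ref{P:induced_fc}, under which $\Mtot(H) \cong \set{H' : H' \sim H}$ and $\Mtot(\bfb H) \cong \set{H' : H' \leq H}$, the latter interior maximal with $\bfb H$ as its top element. By Definition~\ref{D:fc}, a face $\bface{H'}H = H \cap H'$ with $H' < H$ is vertical with respect to $\bfib H$ and maps onto $\bface{H'}{\bfb H}$, while one with $H' > H$ is horizontal and is collapsed onto the interior $\bfb H$; thus $\phi_{H,\sharp}$ is the identity on $\set{H' \leq H}$ and sends everything above $H$ to the maximum, so it is order preserving. For the fibered condition I would split into the same two cases and read it off the comparability diagram \eqref{E:fc_diagram}. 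For $H' < H$, applying \eqref{E:fc_diagram} to the pair $H' < H$ gives $\phi_{H'H} \circ \bigl(\phi_H\rst_{H \cap H'}\bigr) = \phi_{H'}$, where $\phi_{H'H} : \bface{H'}{\bfb H} \to \bfb{H'}$ is the induced fibration of Proposition~\ref{P:induced_fc}; this is precisely the assertion that $\phi_H\rst_{\bface{H'}H}$ is a bundle map over $\id : \bfb{H'} \to \bfb{H'}$. For $H' > H$ the target face is the interior $\bfb H$, whose fibration is $\id$ since $\bfb H$ is interior maximal, and \eqref{E:fc_diagram} applied to the pair $H < H'$ gives $\phi_H\rst_{H \cap H'} = \phi_{HH'} \circ \phi_{H'}$, exhibiting $\phi_H\rst_{\bface{H'}H}$ as a bundle map covering $f_{\bface{H'}H} = \phi_{HH'}$.

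Finally, for compatibility I would appeal to Lemma~\ref{L:induced_equiv_bdf}: the induced class on $\bfb H$ is defined so that a representative good boundary defining function $\rho_{H'}$ with $H' < H$ satisfies $\rho_{H'}\rst_H = \phi_H^\ast \rho'_{H'}$ for the corresponding representative $\rho'_{H'}$ of $\bface{H'}{\bfb H}$ on $\bfb H$, while the induced class on $H$ is obtained by restriction, so $\rho_{\bface{H'}H} = \rho_{H'}\rst_H$. Since $\phi_{H,\sharp}^\inv(\bface{H'}{\bfb H}) = \set{\bface{H'}H}$ is a single face, this says exactly that $\phi_H^\ast \rho'_{H'} = \rho_{\bface{H'}H}$ with trivial coefficient, i.e.\ $\phi_H$ is rigid with respect to these representatives, hence compatible by the characterization in Definition~\ref{D:basic_bdf}. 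The only mildly delicate point throughout is the bookkeeping of the two cases $H' < H$ and $H' > H$ and keeping straight which faces are horizontal versus vertical; once the identifications of Proposition~\ref{P:induced_fc} and the diagram \eqref{E:fc_diagram} are in hand, each of the required conditions follows mechanically.
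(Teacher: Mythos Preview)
Your proposal is correct and is precisely the ``unwinding of definitions'' that the paper alludes to without spelling out; the paper states the lemma as ``a simple consequence of unwinding definitions'' and gives no further argument, while you carry out that unwinding explicitly and accurately using Proposition~\ref{P:induced_fc}, the diagram~\eqref{E:fc_diagram}, and Lemma~\ref{L:induced_equiv_bdf}. Your treatment of compatibility is also exactly what the remark following the lemma records: goodness of the boundary defining functions is equivalent to rigidity of $\phi_H$.
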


\begin{rmk}
Note that the condition that the boundary defining functions on $X$ are good is equivalent to the rigidity of each boundary fibration $\bfib G : G \to \bfb G$, $G \in \M 1(X)$.
\end{rmk}

\subsection{Products and fiber products} \label{S:fibcorn_product}
We now proceed to show that the ordered product $X\ttimes Y$, equipped with a suitable set of fibrations and fc-equivalence classes of boundary defining functions, is also a product in the category of manifolds with fibered corners.

The fibrations on $\tlift G H$ for $(G,H) \leq (X,Y)$ are straightforward.
Indeed, the composite maps $\tlift G H \to G \to \bfb G$ and $\tlift G H \to H \to \bfb H$ are ordered morphisms, so the universal property of $\bfb G \ttimes \bfb H$ as a manifold with ordered corners implies that there is a unique consistent map
\[
	\bfib{G,H} : \tlift G H \to \bfb G\ttimes \bfb H,
	\qquad (G,H) \leq (X,Y).
\]
and it then follows 
from a local application of Theorem~\ref{T:ord_bhs_str} that
this is in fact a fiber bundle with typical fiber the join $\bff G
\jtimes_{X,Y} \bff H$ (or simply $\bff G \ttimes \bff H$ if $G = X$ or $H = Y$).
%

%
Moreover, by the universal property of products in the ordered corners category, it follows that whenever $W$ is a manifold with fibered corners with morphisms $f : W \to X$ and $g : W \to Y$ thereby inducing
an ordered morphism $W \to X\ttimes Y$, the restriction of this morphism to any $E \in \Mtot(W)$ with $E \leq W$ fits into a naturally commutative diagram
\[
\begin{tikzcd}
	E \ar[r] \ar[d,"\bfib E"] & \tlift G H \ar[d, "\bfib{\tlift G H}"]
	\\ \bfb E \ar[r] & \bfb G\ttimes \bfb H
\end{tikzcd}
\qquad G = f_\sharp(E), \quad H = g_\sharp(E),
\]
with the map $\bfb E \to \bfb G \ttimes \bfb H$ induced by $f_E : \bfb E \to \bfb G$ and $g_E : \bfb E \to \bfb H$.
In other words, the map $W \to X \ttimes Y$ is canonically fibered over each $E \leq W \in \Mtot(W)$.

Note that none of the above makes any reference to fc-equivalence classes of boundary defining functions on $X$, $Y$, or $W$.
In fact, it follows from this observation that, provided we restrict consideration to \emph{interior maximal} manifolds with fibered corners, we can define a weaker category in which the objects are merely required to have fibered corners (without specifying equivalence classes of boundary defining functions) and morphisms are merely required to be ordered and fibered; what we have just shown is that $X\ttimes Y$ is a product in this category:
\begin{thm}
The ordered product is a categorical product in the category whose objects are interior maximal manifolds with fibered corners and whose morphisms
are ordered morphisms in the sense of Definition~\ref{D:ordcorn} which are fibered in the sense of Definition~\ref{D:fibered_map}.
\label{T:weak_category}
\end{thm}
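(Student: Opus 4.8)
The plan is to upgrade the ordered-corners product theorem (Theorem~\ref{T:ord_prod}) to the fibered setting by equipping $X\ttimes Y$ with boundary fibrations and checking that the lifted projections and the induced maps are fibered. Because $X$ and $Y$ are interior maximal, every element of $\Mtot(X\ttimes Y)$ corresponds to a pair $(H,G)\leq (X,Y)$, so the fibrations constructed in the discussion preceding the theorem---the unique maps $\bfib{H,G}:\tlift H G\to \bfb H\ttimes\bfb G$ obtained by factoring the composites $\tlift H G\to H\to \bfb H$ and $\tlift H G\to G\to \bfb G$ through the universal property of $\bfb H\ttimes\bfb G$---are already defined on \emph{all} of $\Mtot(X\ttimes Y)$. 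Theorem~\ref{T:ord_bhs_str} identifies each as a locally trivial fiber bundle with fiber the join $\bff H\jtimes_{X,Y}\bff G$ (reducing to $\bff H\ttimes\bff G$ when $H=X$ or $G=Y$). I emphasize that none of this construction invokes any choice of boundary defining functions, consistent with the fact that for interior maximal spaces the base fibration is the plain projection $\pr_{\bfb H}\ttimes\pr_{\bfb G}$.

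First I would verify that these fibrations make $X\ttimes Y$ a genuine manifold with fibered corners, i.e.\ that they satisfy the comparability conditions of Definition~\ref{D:fc}. Since this is a local condition imposed wherever two hypersurfaces meet, I would work near a corner $\tlift H G\cap\tlift{H'}{G'}$ with $(H,G)<(H',G')$, using the product-type description of Theorem~\ref{T:ord_bhs_str}. The dimension inequality $\dim(\bfb H\ttimes\bfb G)<\dim(\bfb{H'}\ttimes\bfb{G'})$ follows from $\dim(\bfb H\ttimes\bfb G)=\dim\bfb H+\dim\bfb G$ together with the dimension monotonicity of the orders on $\M1(X)$ and $\M1(Y)$. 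For the structural part, the factor fibrations already satisfy Definition~\ref{D:fc}, supplying coarsening maps $\phi_{HH'}$ and $\phi_{GG'}$ (and, in the boundary cases $H=H'$ or $H'=X$, the identity or the fibration $\phi_H$ itself); applying the universal property of the ordered product of the relevant bases I would assemble $\phi_{HH'}\ttimes\phi_{GG'}$ into a coarsening fibration from the appropriate boundary face of $\bfb{H'}\ttimes\bfb{G'}$ onto $\bfb H\ttimes\bfb G$, witnessing that $\bfib{H',G'}$ is coarser than $\bfib{H,G}$ on the intersection, with commutativity of the resulting triangle inherited from commutativity of the factor triangles \eqref{E:fc_diagram}.

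Next I would check that the lifted projections $\pi_X,\pi_Y$ (which are ordered b-fibrations by Theorem~\ref{T:ord_prod}) are fibered. For $\pi_X$, restriction to $\tlift H G$ sends it to $H=(\pi_X)_\sharp(\tlift H G)\in\M1(X)$ (or to the interior when $H=X$), and the square with vertical maps $\bfib{H,G}$ and $\bfib H$ and bottom map the projection $\bfb H\ttimes\bfb G\to\bfb H$ commutes \emph{by construction} of $\bfib{H,G}$ as a factorization of $\tlift H G\to H\to\bfb H$; the same holds for $\pi_Y$. For the universal property, Theorem~\ref{T:ord_prod} already supplies a unique ordered morphism $u:Z\to X\ttimes Y$ for any fibered-corners $Z$ with morphisms $f:Z\to X$ and $g:Z\to Y$, and the discussion preceding the theorem shows the restriction of $u$ over each $E\in\Mtot(Z)$ fits into the commuting square \eqref{E:fibered} with base map induced by $f_E$ and $g_E$; as $Z$ is interior maximal this covers all of $\Mtot(Z)$, so $u$ is fibered. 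Uniqueness in the fibered category is inherited from the ordered category, since every fibered morphism is in particular ordered.

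The step I expect to be the main obstacle is the verification of the comparability conditions of Definition~\ref{D:fc} for $X\ttimes Y$: the base of each fibration is itself an ordered product and the fiber is a join rather than a plain product, so assembling the factor coarsening data $\phi_{HH'},\phi_{GG'}$ into a single coarsening map---while correctly tracking the interval direction of the join and the several cases $H<H'$, $H=H'$, $H'=X$ (and likewise for $G$)---requires care. The saving grace is that each case is resolved locally by the product-type analysis of Theorem~\ref{T:ord_bhs_str}, reducing the global comparability on $X\ttimes Y$ to the comparabilities already established on $X$ and $Y$.
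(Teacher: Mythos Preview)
Your proposal is correct and follows essentially the same approach as the paper: the paper's argument for Theorem~\ref{T:weak_category} is the short discussion immediately preceding its statement, which constructs $\bfib{H,G}$ by factoring the composites $\tlift H G\to H\to\bfb H$ and $\tlift H G\to G\to\bfb G$ through the universal property of $\bfb H\ttimes\bfb G$, invokes Theorem~\ref{T:ord_bhs_str} locally to see this is a bundle, and then observes that the induced map $Z\to X\ttimes Y$ is automatically fibered over each $E\leq Z$ via the diagram you wrote down. You give more explicit attention than the paper does to verifying the Definition~\ref{D:fc} comparability conditions between $\bfib{H,G}$ and $\bfib{H',G'}$, which the paper leaves implicit, but the underlying strategy is the same.
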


However, when the manifolds are not interior maximal, the extra data (of equivalence classes of boundary defining functions and compatible maps) is required to get a theory with products, as we now show.
Note that
while there is always a canonical map $\tlift G H \to \bfb G \ttimes \bfb H$ consistent with the composite projections $\tlift G H \to \bfb G$ and $\tlift G H \to \bfb H$, 
\emph{this is not a fiber bundle} 
when $(G,H) > (X,Y)$,
as can be seen from Theorem~\ref{T:ord_bhs_str}.
Instead, we expect $\tlift G H$ for $(G,H) > (X,Y)$ to fiber over the join $\bfb G \jtimes_{X,Y} \bfb H$.
To define these fibrations we consider the normal models $\Np G$ for $G \in \M 1(X)$.
\begin{defn}
Let $G \in \M 1(X)$.
The fibered corners structure on the normal model $\Np G$ consists of 
fibrations given by the differentials of those from $X$:
\[
\begin{gathered}
	d\bfib{G'} :\Np G \rst_{G' \cap G} \to \Np \pa_G \bfb {G'}, 
	\qquad \text{for $G' > G$},
	\\ 
	d\bfib{G'}: \Np G \rst_{G' \cap G} \to \bfb {G'}
	\qquad \text{for $G' \leq G$},
\end{gathered}
\]
where the latter factors as the composite $\Np G \to G \to \pa_{G'} \bfb G\to \bfb {G'}$.
In particular, the interior of $\Np G$ is equipped with the fibration $d\bfib{X} : \Np G \to \pa_X \bfb G = \bfb G \to \bfb X$ in case $G > X$ and 
$\Np G \to \Np \pa_G \bfb X$ in case $G < X$.
To keep the notation uncluttered, when $G' = G$ we denote $d\bfib{G}$ simply by $\bfib G : \Np G \to \bfb G$.
The order structure associated with these fibrations is consistent with \eqref{E:NpH_below} and \eqref{E:NpH_above}.

Fc-equivalence classes of good boundary defining functions on $X$ induce equivalence classes of $(0,\infty)$-equivariant defining functions on $\Np G$ 
by taking their normal derivatives as in \S\ref{S:bkg_normal}; for notational
convenience we shall simply refer to these by the same notation as the boundary defining functions on $X$ from which they are derived.
\label{D:Np_bfib}
\end{defn}

We also equip the relative cone $C_X(\bfb G)$ of Definition~\ref{D:rel_cone} with a fibered corners structure.
\begin{defn}
Let $G > X$.
The fibered corners structure on $C_X(\bfb G) = \bfb G \times [0,\infty)$ consists of fibrations
\[
	\bfb G\times \set 0 \to \bfb G, \qquad \pa_{G'} \bfb G \times [0,\infty) \to \bfb {G'},
	\qquad C_X(\bfb G) = \bfb G \times [0,\infty) \to \bfb X
\]
given by the obvious maps coming from the boundary fibrations $\bfib{G'G} : \bface{G'} {\bfb G} \to \bfb {G'}$ 
and using $\bfib{XG} : \bface{X}{\bfb G} = \bfb G \to \bfb{X}$ on $\bfb G$ itself (rather than the identity fibration) on the interior $\bfb G \times [0,\infty)$.
The ordered structure induced by these fibrations is consistent with Definition~\ref{D:rel_cone}.
Fc-equivalence classes of good boundary defining functions on $C_X(\bfb G)$ are given by pulling back $\bface{G'}{\bfb G}$ defining functions from $\bfb G$ for the boundary faces
$\bface{G'}{\bfb G} \times [0,\infty)$ and by the projection to $[0,\infty)$ for $\bfb G\times 0$.
\label{D:CX_bfib}
\end{defn}

Though we will not use it immediately, this is an appropriate moment to record the fibered corners structure of $C_X(\bff G)$ for $G < X$:
\begin{defn}
Let $G < X$.
The fibered corners structure on $C_X(\bff G) = \bff G \times [0,\infty)$ consists of boundary fibrations
\[
	C_X(\bff G) \to 
	\bfe{G}{X}\times[0,\infty)
	\qquad \bff G\times \set 0 \to \pt, \qquad \pa_{G'} \bff G \times [0,\infty) \to \bfe{G}{G'} \times [0,\infty)
\]
where we recall that $\bfe{G}{G'} = \bfib{GG'}^\inv(\pt)$, the fiber of $\bfib{GG'}: \bface{G}{\bfb {G'}} \to \bfb G$, forms the base of the boundary fibration
on $\bface{G'} {\bff G}$, and here we use the fibration induced by $\bfib{X}$ instead of $\bfib{G}$ on the interior $\bff G \times [0,\infty)$ itself.
The ordered structure induced by these boundary fibrations is consistent with the order structure in Definition~\ref{D:rel_cone}.
%
\label{D:CX_bfib_fiber}
\end{defn}

\begin{rmk}
The fibered corners structure induced on $\bfb G \jtimes_{X,Y} \bfb H$ as a boundary hypersurface of $C_X(\bfb G) \ttimes C_Y(\bfb H)$ in the case that $(G,H) > (X,Y)$ is the same as the one induced by considering it as the base of the hypersurface $\tlift{G}{H}$ of $X\ttimes Y$, and likewise, the 
fibered corners structure on $\bff G \jtimes_{X,Y} \bff H$ as a boundary hypersurface of $C_X(\bff G) \ttimes C_Y(\bff H)$ in the case that $(G,H) < (X,Y)$
is the same as the one induced by considering it as a fiber of $\tlift{G}{H}$ in $X\ttimes Y$.
\end{rmk}

The key observation leading to a well-defined theory of products in the fibered corners category is the following result, in which the compressed projection defined in 
\S\ref{S:ord_bhs} amounts to a natural thickening of the boundary fibrations $\bfib G : G \to \bfb G$ to morphisms $\tbfib G : \Np G \to C_X(\bfb G)$.

\begin{lem}
Let $G \in \M 1^>(X)$ and fix a choice of representative boundary defining functions $\set{\rho_{G'}}$ on $X$, denoting their lifts to $\Np G$ by the same notation.
\begin{enumerate}
\item 
\label{I:main_lemma_normal_one}
The compressed projection
\begin{equation}
	\tbfib G := \wt \pr_{C_X(\bfb G)}
	: \Np G \to C_X(\bfb G) 
	\qquad v \mapsto \big(\bfib G(v),\, \rho_{\geq G}(v)\big)
	\label{E:thickened_phi}
\end{equation}
is a fibered corners morphism extending $\bfib G : G \to \bfb G$, and is independent of the choice of representatives $\set{\rho_{G'}}$ up to an automorphism of $C_X(\bfb G)$. 
\item
\label{I:main_lemma_normal_funct}
The map \eqref{E:thickened_phi} is functorial in the following sense:
given a morphism $f: X \to Y$ sending $G > X$ to $H > Y$,
there exists a morphism $\wt f_G : C_X(\bfb G) \to C_Y(\bfb H)$ extending $f_G : \bfb G \to \bfb H$
such that the following diagram commutes:
\[
\begin{tikzcd}
	\Np G \ar[r,"df"] \ar[d,swap, "\tbfib G"] & \Np H \ar[d, "\tbfib H"]
	\\ C_X(\bfb G) \ar[r, "\wt f_G"] & C_Y(\bfb H)
\end{tikzcd}
\]
\end{enumerate}
\label{L:main_lemma_normal}
\end{lem}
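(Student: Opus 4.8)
The plan is to treat the two parts in turn, in both cases reducing everything to the explicit product structure $\Np H \cong \bff H\times\bfb H\times[0,\infty)$ and the formula \eqref{E:compressed_projection} for the compressed projection. For part~\ref{I:main_lemma_normal_one}, I would first invoke Lemma~\ref{L:compressed_projection}, which already gives that $\tbfib H = \wt\pr_{C_X(\bfb H)}$ is a simple, b-normal, ordered map, so that it remains only to check that it is \emph{fibered} and \emph{compatible} and that it extends $\bfib H$. The last point is immediate: restricting \eqref{E:thickened_phi} to the zero section $\{t=0\}\cong H$ kills $\rho_{\geq H}$ and leaves $\pr_{\bfb H}=\bfib H$ landing in the principal face $\bfb H\times\{0\}\cong\bfb H$. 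Compatibility I would verify by pulling back the good boundary defining functions on $C_X(\bfb H)$ from Definition~\ref{D:CX_bfib}: the defining function of $\bfb H\times\{0\}$ pulls back to $\rho_{\geq H}=t\prod_{H'>H}\rho_{H'}$, which is \emph{rigidly} the product of the defining functions of exactly those faces of $\Np H$ mapping to $\bfb H\times\{0\}$ (those with $H'\geq H$), while for $H'<H$ the $\bface{H'}{\bfb H}$-defining function pulls back rigidly to $\rho_{H'}$, a $\bfib H$-pullback by goodness of $\rho_{H'}$ (basic at $H$); thus $\tbfib H$ is rigid, hence compatible, and the fibered condition then follows face-by-face from the fibration structures of Definitions~\ref{D:Np_bfib} and \ref{D:CX_bfib}. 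Finally, replacing $\{\rho_{H'}\}$ by an fc-equivalent family $\{\rho'_{H'}=a_{H'}\rho_{H'}\}$ multiplies $\rho_{\geq H}$ by $\prod_{H'\geq H}a_{H'}$; since each $a_{H'}$ is basic, its restriction to $H$ is a $\bfib H$-pullback, so this factor equals $\bfib H^\ast c$ for a positive $c$ on $\bfb H$, and post-composition with the automorphism $(b,s)\mapsto(b,c(b)s)$ of $C_X(\bfb H)$ relates the two compressed projections.

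For part~\ref{I:main_lemma_normal_funct} I would work entirely in the normal models and, using compatibility, rigidify $f$ with respect to good representatives so that $a_G\equiv 1$ for every $G$. The base component of the square commutes because $f$ is fibered, giving $\bfib G\circ df = f_H\circ\bfib H$ at the level of normal models. For the $[0,\infty)$-component the content is the identity $df^\ast\rho_{\geq G}=\rho_{\geq H}\cdot\bfib H^\ast c$ for a suitable positive $c$ on $\bfb H$; defining $\wt f_H(b,s)=(f_H(b),\,c(b)\,s)$ then makes the square commute and manifestly extends $f_H$. To pin down the exponents of this identity I would compute the logarithmic differential $\bd(df^\ast\rho_{\geq G})(h_{H'})=\bd\rho_{\geq G}(\bd f_\ast h_{H'})$ on each generator $h_{H'}$ of $\bM$ associated to a face $H'\sim H$ of $\Np H$; since $f$ is simple and b-normal this equals $\bd\rho_{\geq G}(g_{f_\sharp(H')})$, which is $1$ precisely when $f_\sharp(H')\geq G$ and $0$ otherwise. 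Comparing with $\bd\rho_{\geq H}(h_{H'})$, which is $1$ precisely when $H'\geq H$, and using that $f_\sharp$ is order preserving (so $H'\geq H\Rightarrow f_\sharp(H')\geq G$, while $H'<H\Rightarrow f_\sharp(H')\leq G$), one sees that the ratio $df^\ast\rho_{\geq G}/\rho_{\geq H}$ has nonzero vanishing order only along the base faces $H'<H$ with $f_\sharp(H')=G$, each of which is a $\bfib H$-pullback of a base defining function by goodness. Thus the exponents of the ratio are accounted for by $\bfib H^\ast$ of a rational combination on $\bfb H$, supplying the divisorial part of $c$.

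The main obstacle is the remaining \emph{positive coefficient} in $df^\ast\rho_{\geq G}$: having matched exponents as above, one must show that what is left is a function of $b$ alone rather than of $(f,b)$, so that it genuinely descends to $C_X(\bfb H)$ and $\wt f_H$ is well defined as a map out of $\bfb H\times[0,\infty)$. This is exactly where rigidity ($a_G\equiv 1$) must be combined with the basic-ness forced by compatibility of $f$ and the goodness of the boundary defining functions: the coefficient is assembled from restrictions to $H$ of the $a_G$ (basic, hence $\bfib H$-pullbacks) and of the good defining functions of the faces $\geq H$, and the computation must confirm that no genuinely fiberwise factor survives after these are absorbed into $\rho_{\geq H}$. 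Once this is established, the resulting $\wt f_H$ is routinely checked to be simple, b-normal, fibered and compatible, i.e.\ a fibered corners morphism, completing the proof; independence of the choices entering $\wt f_H$ then follows from part~\ref{I:main_lemma_normal_one} applied on both sides.
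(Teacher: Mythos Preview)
Your proposal is correct and follows essentially the same route as the paper. For part~\ref{I:main_lemma_normal_one} your argument matches the paper's almost exactly: invoke the ordered-corners Lemma~\ref{L:compressed_projection}, verify the fibered diagrams face-by-face using Definitions~\ref{D:Np_bfib} and~\ref{D:CX_bfib}, and handle independence of representatives via the automorphism $(b,s)\mapsto(b,c(b)s)$.

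For part~\ref{I:main_lemma_normal_funct} you arrive at the right answer but make it harder on yourself than necessary. Your ``main obstacle''---showing the leftover positive coefficient in $df^\ast\rho_{\geq G}/\rho_{\geq H}$ depends only on $b$---is in fact no obstacle at all once rigidity is used properly. The paper observes directly that, with $f$ rigid, $(df)^\ast\rho_{\geq G} = \rho_{\geq H_G}$ \emph{exactly}, where $H_G = \min\{H' : f_\sharp(H') = G\} \leq H$; there is no residual smooth coefficient to worry about. Then $\rho_{\geq H_G} = \sigma\,\rho_{\geq H}$ with $\sigma = \prod_{H_G \leq H' < H}\rho_{H'}$, and since each factor is a good boundary defining function for some $H' < H$, its restriction to $H$ is pulled back from $\bfb H$, so $\sigma = \bfib_H^\ast\sigma'$ for $\sigma' \in C^\infty(\bfb H)$. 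Your logarithmic-differential computation of the exponents is correct and identifies exactly this set of factors, but once you have rigidified there is nothing further to check: set $c = \sigma'$ and you are done.
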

\begin{proof}
The map \eqref{E:thickened_phi} is clearly simple, b-normal, and compatible with the fc-equivalence classes of boundary defining functions on $\Np G$ and $C_X(\bfb G)$.
The fact that it is fibered follows
from the commutative diagrams
\[
\begin{tikzcd}
	\Np G \rst_{G\cap G'} \ar[r,"\tbfib G"] \ar[d, "d\bfib{G'}"] & \bface{G'} {\bfb G} \times [0,\infty) \ar[d, "\bfib{G'}"]
	\\ \bfb{G'} \ar[r, "1"] & \bfb{G'}
\end{tikzcd}
\qquad
\begin{tikzcd}
	\Np G \rst_{G\cap G'} \ar[r, "\tbfib G"] \ar[d, "d\bfib{G'}"] & \bfb G\times \set 0 \ar[d, "\bfib{G}"]
	\\ \Np {\bface{G}{\bfb{G'}}} \ar[r, "d\bfib{GG'}"] & \bfb G
\end{tikzcd}
\qquad
\begin{tikzcd}
	G \times \set 0 \ar[r, "\tbfib G = \bfib G"] \ar[d, "\bfib G"] & \bfb G \times \set 0 \ar[d,"1"]
	\\ \bfb G \ar[r, "1"] & \bfb G
\end{tikzcd}
\]
for $G' < G$, $G' > G$, and $G' = G$, respectively.
Suppose now that $\set{\rho'_{G'}}$ is another choice of fc-equivalent boundary defining functions on $X$, defining another such map $\tbfib G ' : \Np G \to C_X(\bfb G)$.
Recall that for $G' \neq G$, the normal derivative $d\rho'_{G'} = \nu^\ast \rho'_{G'}$ on $\Np G$ is the pullback of the restriction of $\rho'_{G'}$ to $G$; hence the ratio $d\rho'_{G'}/d\rho_{G'}= \nu^\ast (\rho'_{G'})/\nu^\ast(\rho_{G'})$ is pulled back from $\bfb G$ by fc-equivalence.
Likewise, the $(0,\infty)$ invariant ratio $d\rho'_G/d\rho_G$ is pulled back from $\bfb G$, so it follows that $\rho'_{\geq G} = \bfib{G}^\ast a \rho_{\geq G}$ for some $a \in C^\infty(\bfb G; (0,\infty))$, and hence $\tbfib G' = \alpha \circ \tbfib G$ for the automorphism $\alpha : C_X(\bfb G) \to C_X(\bfb G)$ defined by $\alpha: (b,t) \mapsto (b,a(b) t)$.
In particular $\tbfib G$ is well-defined up to automorphism of the target.

For part \ref{I:main_lemma_normal_funct}, we may use representative boundary defining functions with respect to which $f$ is rigid, and then $df$ pulls back $\rho_{\geq H}$ to a function of the form
\[
	(df)^\ast \rho_{\geq H} = \rho_{\geq G_H} = \sigma \rho_{\geq G},
\]
where $G_H = \min \set{G' : f_\sharp(G') = H} \leq G$
and $\sigma = \prod_{G_H \leq G' < G} \rho_{G'}$ is a product of boundary defining functions for those $G'$ such that $G_H \leq G' < G$.
As a product of good boundary defining functions for $G' < G$, it follows that $\sigma = \bfib G^\ast \sigma' \in \bfib G^\ast C^\infty(\bfb G)$
is basic.
Then the map
\[
	\wt f_G : C_X(\bfb G) \to C_Y(\bfb H), \qquad (b, t) \mapsto \big(f_G(b), \sigma'(b) t\big)
\]
has the required property in view of the following commutative diagram:
\[
\begin{tikzcd}
	v \ar[r,cm bar-to,"df"] \ar[dd,cm bar-to,swap, "\tbfib G"] & df(v) \ar[d,cm bar-to, "\tbfib H"]
	\\ & \big(\bfib H(df(v)),\, \rho_{\geq H}(df(v))\big) \ar[d,equal]
	\\\big(\bfib G(v),\, \rho_{\geq G}(v)\big) \ar[r,cm bar-to, "\wt f_G"] &  \big(f_G(\bfib G(v)),\, \sigma'(\bfib G(v))\rho_{\geq G}(v)\big)
\end{tikzcd}
\]
\end{proof}

\begin{thm}
If $X$ and $Y$ are manifolds with fibered corners, 
then $X\ttimes Y$ has a canonical fibered corners structure 
with fibrations of the form
\begin{gather}
\begin{tikzcd}[ampersand replacement=\&]
	\bff G\ttimes \bff Y \ar[r,no head] \& \tlift G Y \ar[d, "\phi_{G,Y}"] \\ \& \bfb G \ttimes \bfb Y
\end{tikzcd}
\quad
\begin{tikzcd}[ampersand replacement=\&]
	\bff X\ttimes \bff H \ar[r,no head] \& \tlift X H \ar[d, "\phi_{X,H}"] \\ \& \bfb X \ttimes \bfb H
\end{tikzcd}
\\
\begin{tikzcd}[ampersand replacement=\&]
	\bff G\jtimes_{X,Y} \bff H \ar[r,no head] \& \tlift G H \ar[d, "\phi_{G,H}"] \\ \& \bfb X \ttimes \bfb H
\end{tikzcd}
\quad \text{for $(G,H) < (X,Y)$}
\\
\begin{tikzcd}[ampersand replacement=\&]
	\bff G\ttimes \bff H \ar[r,no head] \& \tlift G H \ar[d, "\phi_{G,H}"] \\ \& \bfb X \jtimes_{X,Y} \bfb H
\end{tikzcd}
\quad \text{for $(G,H) > (X,Y)$}
\end{gather}
and fc-equivalence classes of good boundary defining functions given locally by rational combinations of boundary defining functions from $X$ and $Y$.
The ordered product satisfies the universal property of a product
in the category of manifolds with fibered corners, 
meaning that if $f: W \to X$ and $g: W \to Y$ are fibered corners morphisms, then the associated morphism $W \to X\ttimes Y$ of ordered corners is in fact a morphism of fibered corners.
\label{T:fibcorn_product}
\end{thm}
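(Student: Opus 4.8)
The plan is to equip each boundary hypersurface $\tlift H G$ of $X \ttimes Y$ with a boundary fibration built from the compressed projections of Lemma~\ref{L:main_lemma_normal} and the universal property of the ordered product, to verify the compatibility axioms of Definition~\ref{D:fc} by reducing them to the local product-type description of Theorem~\ref{T:ord_bhs_str}, and finally to deduce the universal property from that of the ordered corners product (Theorem~\ref{T:ord_prod}) together with the functoriality clause of Lemma~\ref{L:main_lemma_normal}.

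First I would construct the boundary fibrations. For $(H,G) \le (X,Y)$, the map $\phi_{H,G} : \tlift H G \to \bfb H \ttimes \bfb G$ is obtained, exactly as in the discussion preceding Theorem~\ref{T:weak_category}, by applying the universal property of the ordered product $\bfb H \ttimes \bfb G$ to the composites $\tlift H G \to H \to \bfb H$ and $\tlift H G \to G \to \bfb G$; Theorem~\ref{T:ord_bhs_str} then identifies it as a fiber bundle with fiber $\bff H \jtimes_{X,Y} \bff G$ (degenerating to the ordered product $\bff H \ttimes \bff G$ when $H = X$ or $G = Y$). For $(H,G) > (X,Y)$ I would instead pass to the normal models, form the compressed projections $\tbfib H = \wt \pr_{C_X(\bfb H)}$ and $\tbfib G = \wt \pr_{C_Y(\bfb G)}$, and apply the universal property of $C_X(\bfb H) \ttimes C_Y(\bfb G)$ to obtain $\tbfib H \ttimes \tbfib G : \Np H \ttimes \Np G \to C_X(\bfb H) \ttimes C_Y(\bfb G)$; restricting to $\tlift H G \cong \tlift{H_0}{G_0}$ yields $\phi_{H,G} : \tlift H G \to \bfb H \jtimes_{X,Y} \bfb G$, a fiber bundle with fiber $\bff H \ttimes \bff G$ by Theorem~\ref{T:ord_bhs_str}. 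The interior fibration is the functorially induced map $\bfib X \ttimes \bfib Y : X \ttimes Y \to \bfb X \ttimes \bfb Y$, and the independence-up-to-automorphism statement of Lemma~\ref{L:main_lemma_normal} ensures each $\phi_{H,G}$ is well defined up to an automorphism of its base.

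Next I would check the compatibility condition of Definition~\ref{D:fc} at each corner $\tlift H G \cap \tlift{H'}{G'}$, noting that $(H,G) \sim (H',G')$ by the order structure of Theorem~\ref{T:ord_prod}. Since the data defining each $\phi_{H,G}$ live on the normal models, this reduces to verifying that the restriction of the coarser boundary fibration to such a corner factors through the finer one, with base map the appropriate boundary fibration of $\bfb H \ttimes \bfb G$ or $\bfb H \jtimes_{X,Y} \bfb G$; the latter is precisely the induced fibered corners structure on these bases and fibers (compare the remark following Definition~\ref{D:CX_bfib_fiber}), so the diagram \eqref{E:fc_diagram} commutes. Carrying this out in standard coordinates \eqref{E:coord_std_form} turns each check into finite bookkeeping of projections and compressed projections, and I expect the combinatorial verification across the mixed cases (one factor below, one above $(X,Y)$, or a factor equal to the interior) to be the main obstacle.

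Finally, for the boundary defining functions I would observe that local defining functions for $\tlift H G$ are rational combinations of those pulled back from $X$ and $Y$ (as recorded after Corollary~\ref{C:lift_proj_comb}), check that they are good, and use the independence clause of Lemma~\ref{L:main_lemma_normal} to see that their fc-equivalence class is well defined. For the universal property, given fibered corners morphisms $f : Z \to X$ and $g : Z \to Y$, Theorem~\ref{T:ord_prod} already supplies the unique ordered morphism $h : Z \to X \ttimes Y$, so it remains to show $h$ is fibered and compatible. Fiberedness over $E \le Z$ was established in the discussion before Theorem~\ref{T:weak_category}, while fiberedness over $E > Z$ follows from part~\ref{I:main_lemma_normal_funct} of Lemma~\ref{L:main_lemma_normal}, which furnishes the relative-cone maps $\wt f_E$ and $\wt g_E$ making the required diagram commute. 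Compatibility follows because $h$ is the lift of a rigid map and is therefore rigid with respect to the rational boundary defining functions, again as recorded after Corollary~\ref{C:lift_proj_comb}.
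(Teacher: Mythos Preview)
Your proposal is correct and follows essentially the same route as the paper: the boundary fibrations for $(H,G)\le (X,Y)$ via the universal property of $\bfb H\ttimes\bfb G$, those for $(H,G)>(X,Y)$ via the compressed projections on normal models and Lemma~\ref{L:main_lemma_normal}, the fc-class via local rational combinations with goodness equivalent to rigidity of the fibrations, and the universal property via Theorem~\ref{T:ord_prod} together with part~\ref{I:main_lemma_normal_funct} of Lemma~\ref{L:main_lemma_normal}. The only difference is one of emphasis: you propose to verify the compatibility diagram~\eqref{E:fc_diagram} explicitly at each corner, whereas the paper treats this as implicit in the local product-type identification of Theorem~\ref{T:ord_bhs_str} (since the fibrations are literally the projections onto the maximal factor there), so what you flag as ``the main obstacle'' is in fact routine once that identification is in hand.
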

\begin{proof}
The fibrations on $\tlift G H$ for $(G,H) \leq (X,Y)$, and the fibered property of $W \to X\ttimes Y$ with respect to these fibrations has been discussed above.
The fibrations for $(G,H) > (X,Y)$ are defined as follows.
First, we use the canonical diffeomorphism $\tlift{G}{H} \cong \tlift{G_0}{H_0} \subset \Np G \ttimes \Np H$ as in the proof of Theorem~\ref{T:ord_bhs_str}
to replace $X \ttimes Y$ by $\Np G \ttimes \Np H$.
Then it follows from Lemma~\ref{L:main_lemma_normal} and Theorem~\ref{T:ord_prod} that there is a natural morphism (of ordered corners) 
$\Np G \ttimes \Np H \to C_X(\bfb G) \ttimes C_Y(\bfb H)$ restricting to a fiber bundle $\tlift{G}{H} \to \bfb G \jtimes_{X,Y} \bfb H$ with fiber $\bff G \ttimes \bff H$, and this defines
$\bfib{G,H}$, with respect to which the lifted projections $X \ttimes Y \to X$ and $X \ttimes Y \to Y$ are clearly fibered.

If $f : W \to X$ and $g : W \to Y$ are fibered corners morphisms with the property that $f_\sharp(E) = G \in \M 1(X)$ and $g_\sharp(E) = H \in \M 1(Y)$ for $E \in \M 1(W)$
with $E > W$ (and hence $G > X$ and $H > Y$), then it follows again from Lemma~\ref{L:main_lemma_normal} and Theorem~\ref{T:ord_prod} that there is a commutative diagram of
ordered corners morphisms
\[
\begin{tikzcd}[sep=small]
	\Np E \ar[r] \ar[d] & \Np G\ttimes \Np H \ar[d]
	\\ C_W(\bfb E) \ar[r] & C_X(\bfb G)\ttimes C_Y(\bfb H)
\end{tikzcd}
\quad \text{restricting to}
\quad
\begin{tikzcd}[sep=small]
	 E\ar[d] \ar[r] &\tlift {G} {H} \ar[d]
	\\ \bfb E \ar[r] & \bfb G\jtimes_{X,Y} \bfb H
\end{tikzcd}
\]
and so the map $W \to X\ttimes Y$ is fibered.

The equivalence classes of boundary defining functions on $X \ttimes Y$ are defined locally by rational combinations of boundary defining functions from $X$ and $Y$.
More precisely,
if $\set{\rho_{G}}$ and $\set{\rho_{H}}$ are representative boundary defining functions 
on $X$ and $Y$, respectively,
then on a neighborhood in $X\ttimes Y$ which meets a totally ordered set of boundary hypersurfaces $\tlift{G_i}{H_j}$, the linear system
\[
	\bd \sigma(g_i + h_j) = \begin{cases}  1, & \text{if $(g_i,h_j) = (g,h)$,} \\ 0, & \text{otherwise} \end{cases}
\]
fixes the exponents of $\sigma = \prod_{i} \rho_{G_i}^{a_i} \prod_{j} \rho_{H_j}^{b_j}$ uniquely so that it constitutes a local boundary defining function for $\tlift{G}{H}$
on such a neighborhood in view of Proposition~\ref{P:lift_proj_comb}.
Indeed, the coefficients $\set{a_i,b_j}$ are precisely those for the basis of $\bN^\ast F$, $F = \bigcap_{i,j} \tlift{G_i}{H_j}$ which is dual to the basis $\set{g_i + h_j}$ of $\bN F$;
since the latter is unimodular, it follows that $a_i, b_j \in \set{-1,0,1}$ (and in fact either all the $a_i$ are non-negative while the $b_j$ are non-positive, or vice versa).
As fc-equivalence is a local property, the equivalence class of boundary defining functions on $X\ttimes Y$ which are locally fc-equivalent to rational combinations
as above is well-defined.

The fact that these rational boundary defining functions are good is equivalent to the rigidity of $\tlift{G}{H} \to \bfb G\jtimes_{X,Y} \bfb H$ for $(G,H) > (X,Y)$
(respectively $\tlift{G}{H} \to \bfb G \ttimes \bfb H$ for $(G,H) < (X,Y)$).
To see this, first note that $\Np G \to C_X(\bfb G)$ (resp.\ $\Np G \to \bfb G$) is rigid by definition and the fact that boundary defining functions on $X$ are good, and hence $\Np G \ttimes \Np H \to C_X(\bfb G)
\times C_Y(\bfb H)$ (resp.\ $\Np G \ttimes \Np H \to \bfb G \times \bfb H$) is rigid with respect to rational boundary defining functions on the domain.
It then follows from the local coordinate structure of the unique lift $\Np G \ttimes \Np H \to C_X(\bfb G)\ttimes C_Y(\bfb H)$ (resp.\ $\Np G \ttimes \Np H \to \bfb G \ttimes \bfb H$)
that this lift is rigid with respect to rational boundary defining functions on the domain and codomain, and restricting this to $\tlift{G}{H}$ gives the desired result.

The projections $X \ttimes Y \to X$ and $X \ttimes Y \to Y$ are compatible with the fc-equivalence classes on $X\ttimes Y$ essentially by definition, and 
the fact that $W \to X \ttimes Y$ is compatible follows from the general fact that
the lift to a generalized blow-up as in Theorem~\ref{T:lifting_b-maps}.(b) of a locally rigid map is rigid with respect to rational local boundary defining functions.
\end{proof}

We next address the fibered corners structure of the fiber products considered in \S\ref{S:ord_fib_prod}.
\begin{prop}
Suppose $f : X \to Z$ and $g : Y \to Z$ are morphisms of fibered corners manifolds which are b-transverse.
Then $X\ttimes_Z Y$ obtains a canonical fibered corners structure, with fibrations on the principal faces $\tlift{G}{H}_Z := \tlift{G}{H} \cap X \ttimes_Z Y$ as follows:
\begin{itemize}
\item For $(G,H) < (X,Y)$ with $G \in \M 1(X)$, $H \in \M 1(Y)$ and $f_\sharp(G) = g_\sharp(H) = Z$, the fibration has the form
\[
\begin{tikzcd}[sep=small]
	\bff G \jtimes_{X,Y,Z} \bff H \ar[r,-] & \tlift{G}{H}_Z \ar[d]
	\\ & \bfb G \ttimes_{\bfb Z} \bfb H
\end{tikzcd}
\]
where $\bff G \jtimes_{X,Y,Z} \bff H$ is the boundary hypersurface of $C_X(\bff G) \ttimes_{\bff Z} C_Y(\bff H)$ of the form \eqref{E:fib_prod_bad_face}.
\item For $(G,H) > (X,Y)$ with $G \in \M 1(X)$, $H \in \M 1(Y)$ and $f_\sharp(G) = g_\sharp(H) = Z$, the fibration has the form
\[
\begin{tikzcd}[sep=small]
	\bff G \ttimes_{\bff Z} \bff H \ar[r,-] & \tlift{G}{H}_Z \ar[d]
	\\ & \bfb G \jtimes_{X,Y,Z} \bfb H
\end{tikzcd}
\]
where $\bfb G \jtimes_{X,Y,Z} \bfb H$ is the boundary hypersurface of $C_X(\bfb G) \ttimes_{\bfb Z} C_Y(\bfb H)$ of the form \eqref{E:fib_prod_bad_face}.
\item In all other cases, the fibration has the form
\[
\begin{tikzcd}[sep=small]
	\bff G\ttimes_{\bff K} \bff H \ar[r,-] & \tlift{G}{H}_Z = G \ttimes_K H \ar[d]
	\\ & \bfb G\ttimes_{\bfb K} \bfb H
\end{tikzcd}
	\qquad \text{where $K = f_\sharp(G) = g_\sharp(H) \in \M 1(Z).$}
\]
\end{itemize}
Moreover $X\ttimes_Z Y$ satisfies the universal property of the fiber product in the category of manifolds with fibered corners.
\label{P:fib_corn_fib_prod}
\end{prop}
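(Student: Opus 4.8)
The plan is to combine three ingredients already established: (i) the identification of $X\ttimes_Z Y$ as an interior p-submanifold of $X\ttimes Y$ and the description of its boundary hypersurfaces from Theorems~\ref{T:fib_prod} and~\ref{T:bhs_of_fib_prod}; (ii) the fibered corners structure on $X\ttimes Y$ from Theorem~\ref{T:fibcorn_product}; and (iii) the functorial thickening of boundary fibrations via compressed projections in Lemma~\ref{L:main_lemma_normal}. The overall strategy is to restrict the boundary fibrations of $X\ttimes Y$ to the p-submanifold $X\ttimes_Z Y$ and verify that they descend to the claimed fiber-product bases, then check that these restricted maps are genuine fibrations with the asserted fibers.

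First I would treat the generic case ($E = f_\sharp(H) = g_\sharp(G) \in \M 1(Z)$ a proper hypersurface, or $H = X$ or $G = Y$), where by Theorem~\ref{T:bhs_of_fib_prod} the face is $\tlift{H}{G}_Z \cong H\ttimes_E G$. Here I would argue that the boundary fibration $\phi_{H,G}$ of $X\ttimes Y$ restricts to $H\ttimes_E G$, and that the induced map factors through $\bfb H\ttimes_{\bfb E}\bfb G$: the point is that $f$ and $g$ are \emph{fibered} morphisms, so the diagrams \eqref{E:fibered} give induced base maps $f_H:\bfb H\to\bfb E$ and $g_G:\bfb G\to\bfb E$ which are themselves b-transverse (applying Lemma~\ref{L:iterated_transv} to the restrictions), so that $\bfb H\ttimes_{\bfb E}\bfb G$ is a well-defined fibered corners fiber product. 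One then checks on fibers that the fiber is $\bff H\ttimes_{\bff E}\bff G$, again using that $f,g$ restrict fiberwise to b-transverse maps. Since $\phi_{H,G}$ is a surjective submersion with the p-submanifold $X\ttimes_Z Y$ transverse to its fibers, the restriction remains a fibration of the required form.

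The harder case is $(H,G)$ with $H\in\M 1(X)$, $G\in\M 1(Y)$ proper and $f_\sharp(H)=g_\sharp(G)=Z\in\M 0(Z)$, giving the join-type fibers. Here I would pass to the normal models $\Np H$, $\Np G$, $\Np Z = Z$ as in Theorem~\ref{T:bhs_of_fib_prod}, use Lemma~\ref{L:main_lemma_normal} to thicken the boundary fibrations to compressed projections into the relative cones $C_X(\bfb H)\ttimes C_Y(\bfb G)$ (for $(H,G)>(X,Y)$) or $C_X(\bff H)\ttimes C_Y(\bff G)$ (for $(H,G)<(X,Y)$), and then intersect with the lifted fiber-product equations. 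The key observation, already noted in the proof of Theorem~\ref{T:bhs_of_fib_prod}, is that when $f_\sharp(H)=g_\sharp(G)=Z$ none of the rational defining equations cutting out $X\ttimes_Z Y$ involve the defining functions of $H$ or $G$; consequently the fiber-product condition is transverse to the interval direction introduced by the front-face blow-up, and the intersection picks out precisely the principal boundary hypersurface of $C_X(\bfb H)\ttimes_{\bfb Z}C_Y(\bfb G)$ of the form \eqref{E:fib_prod_bad_face}, namely $\bfb H\jtimes_{X,Y,Z}\bfb G$ (respectively $\bff H\jtimes_{X,Y,Z}\bff G$ for the fiber). I expect this to be the main obstacle, since one must carefully track how the fiber-product p-submanifold meets the join construction and confirm that the thickened maps $\tbfib H$, $\tbfib G$ remain b-transverse after lifting so that the relative cone fiber product is itself well-defined; Lemma~\ref{L:iterated_transv} applied to the compressed projections, together with the functoriality in Lemma~\ref{L:main_lemma_normal}.\ref{I:main_lemma_normal_funct}, should supply exactly this.

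Finally, for the universal property I would invoke the universal property of $X\ttimes_Z Y$ as an ordered corners fiber product from Theorem~\ref{T:fib_prod}: given fibered corners morphisms $W\to X$ and $W\to Y$ commuting over $Z$, the unique ordered morphism $W\to X\ttimes_Z Y$ supplied there is automatically fibered and compatible with the fc-equivalence classes, because these properties hold for the maps into $X\ttimes Y$ by Theorem~\ref{T:fibcorn_product} and are inherited by the factorization through the p-submanifold. The compatibility of the induced fc-equivalence class of boundary defining functions on $X\ttimes_Z Y$ follows by restricting the rational combinations constructed in Theorem~\ref{T:fibcorn_product} to the p-submanifold, so that no additional choices are needed.
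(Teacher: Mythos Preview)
Your proposal is correct and follows essentially the same approach as the paper: reduce to normal models, use the compressed projections of Lemma~\ref{L:main_lemma_normal} to build the maps into the relative cone products (or directly into $\bfb H\ttimes_{\bfb Z}\bfb G$ in the $(H,G)<(X,Y)$ case), identify the fibers and bases via universal properties, and inherit the fc-equivalence class by restriction from $X\ttimes Y$. The paper is somewhat terser---it phrases each case as ``the relevant face satisfies the universal property of the claimed fiber product'' rather than explicitly invoking Lemma~\ref{L:iterated_transv} for b-transversality of the base and fiber maps---but the underlying mechanism is the same as yours.
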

\begin{proof}
Suppose first that $G < X$ and $H < Y$ with $f_\sharp(G) = g_\sharp(H) = Z \in \M 0(Z)$, and replace $X$ and $Y$ by $\Np G$ and $\Np H$, respectively.
From the commutative diagram
\[
\begin{tikzcd}[sep=small]
	\Np G \ar[d] \ar[r]& Z\ar[d]& \Np H  \ar[d] \ar[l]
	\\\bfb G \ar[r] & \bfb Z &  \bfb H \ar[l]
\end{tikzcd}
\]
it follows that $\Np G \ttimes_Z \Np H$ admits a natural map to $\bfb G \ttimes_{\bfb Z} \bfb H$, and restricting over a point in the latter
shows that the fiber may be identified with $C_X(\bff G) \ttimes_{\bff Z} C_Y(\bff H)$, as it satisfies the relevant universal property. 
Restriction to the face $\tlift{G}{H}_Z \subset \Np G\ttimes_Z \Np H$ shows that this fibers over $\bfb G \ttimes_{\bfb Z} \bfb H$ with fiber $\bff G \jtimes_{X,Y,Z} \bff H$,
using the characterization of the associated principal face of $C_X(\bff G) \ttimes_{\bff Z} C_Y(\bff H)$ from 
Theorem~\ref{T:bhs_of_fib_prod}.

In a similar manner, if $G > X$ and $H > Y$ are such that $f_\sharp(G) = g_\sharp(H) = Z$, then the commutative diagram
\[
\begin{tikzcd}[sep=small]
	\Np G \ar[d] \ar[r]& Z\ar[d]& \Np H  \ar[d] \ar[l]
	\\C_X(\bfb G) \ar[r] & \bfb Z &  C_Y(\bfb H) \ar[l]
\end{tikzcd}
\]
obtained from Lemma~\ref{L:main_lemma_normal} shows that $\Np G \ttimes_Z \Np H$ admits a natural map to $C_X(\bfb G) \ttimes_{\bfb Z} C_Y(\bfb H)$, restricting over a point
of which shows that the fiber may be identified with $\bff G \ttimes_{\bff Z} \bff H$. 
The restriction to $\tlift{G}{H}_Z$ maps over the face $\bfb G \jtimes_{X,Y,Z} \bfb H \subset C_X(\bfb G) \ttimes_{\bfb Z} C_Y(\bfb H)$, giving the result in this case.

In all other cases, $\tlift{G}{H}_Z \cong G \ttimes_{K} H$, where $K = f_\sharp(G) = g_\sharp(H) \in \M 1(Z)$, fits into the diagram
\[
\begin{tikzcd}[sep=small]
	& G\ttimes_K H \ar[dr] \ar[dl] \ar[d]&
	\\ G \ar[r] \ar[d]& K \ar[d]& H\ar[d] \ar[l]
	\\ \bfb G \ar[r] & \bfb K & \bfb H \ar[l]
\end{tikzcd}
\]	
and consequently maps naturally to $\bfb G \ttimes_{\bfb K} \bfb H$, and restriction over a point in the latter shows that the fiber satisfies the universal property
of $\bff G \ttimes_{\bff K} \bff H$, proving the result in these cases.

Note that in all cases, the fibration just described coincides with the restriction to $\tlift{G}{H}_Z$ of the fibration $\bfib{G,H}$ on $\tlift{G}{H}$.
This is clear in all cases, except perhaps the last case when $(G,H) > (X,Y)$, but in that case the diagram of relevant maps
\[
\begin{tikzcd}[sep=small]
	\Np G \ar[d] \ar[r]& \Np K\ar[d]& \Np H  \ar[d] \ar[l]
	\\C_X(\bfb G) \ar[r] & C_Z(\bfb K) &  C_Y(\bfb H) \ar[l]
\end{tikzcd}
\]
leads to the natural map $\Np G \ttimes_{\Np K} \Np H \to C_X(\bfb G) \ttimes_{C_Z(\bfb K)} C_Y(\bfb H)$,
and arguing as in the proof of Theorem~\ref{T:bhs_of_fib_prod} shows that the relevant boundary face of $C_X(\bfb G) \ttimes_{C_Z(\bfb K)} C_Y(\bfb H)$ satisfies the universal
property of $\bfb G \ttimes_{\bfb K} \bfb H$, to which the restriction to $\tlift{G}{H}_Z$ maps.

The fc-equivalence classes of boundary defining functions of $X\ttimes_Z Y$ are inherited by restriction from the classes on $X\ttimes Y$, and the fact that these are good with respect to the 
fibrations described amounts to rigidity of all maps just described with respect to the fc-equivalence classes on $\Np G$, $\Np H$, and so on.
\end{proof}

Returning to the application discussed in Corollary~\ref{C:s_equals_1} in the fibered corners setting yields the following.
\begin{cor}
If $X$ and $Y$ are manifolds with fibered corners with representative total boundary defining functions $\rho_X = \prod_{G \in \M 1(X)} \rho_G : X \to [0,\infty)_\tmin$ and $\rho_Y = \prod_{H \in \M 1(Y)} \rho_H : Y \to [0,\infty)_\tmin$, then the fiber product
\[
	X\ttimes_{[0,\infty)_\tmin} Y = \set{\rho_X/\rho_Y = 1} \subset X\ttimes Y
\]
is a p-submanifold in a sufficiently small neighborhood of the boundary (where $d\rho_X \neq 0$ and $d \rho_Y \neq 0$), and has fibrations of the form
\[
\begin{tikzcd}[sep=small]
	\bff G\ttimes \bff H \ar[r,-] &\tlift{G}{H} \cap \set{\rho_X/\rho_Y = 1} \ar[d, "\bfib{G,H}"] 
	\\ & \bfb G \ttimes \bfb H
\end{tikzcd}
\]
with fiber $\bff G \ttimes \bff H = (\bff G)_\tmin \ttimes (\bff H)_\tmin$ and base $\bfb G \ttimes \bfb H = (\bfb G)_\tmax \ttimes (\bfb H)_\tmax$.
\label{C:s_equals_1_fibered}
\end{cor}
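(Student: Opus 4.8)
The plan is to deduce the result directly from the fiber product machinery of Proposition~\ref{P:fib_corn_fib_prod}, with the real content being the identification of the correct case together with a short computation showing that the fiber products over the target boundary degenerate to ordinary ordered products. First I would record that the total boundary defining functions constitute fibered corners morphisms $\rho_X : X \to [0,\infty)_\tmin$ and $\rho_Y : Y \to [0,\infty)_\tmin$ in the relevant interior minimal setting (where every $H \in \M 1(X)$ lies above $X$): each is simple, since all boundary exponents of $\rho_X = \prod_H \rho_H$ equal $1$; b-normal, since every $H$ maps to the single hypersurface $\set 0 \in \M 1([0,\infty)_\tmin)$; ordered and fibered, since $\rho_X$ restricts over each $H$ to the constant map to the point $\set 0$, trivially fibered over $\bfb H \to \pt$; and compatible with the evident fc-classes. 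The pair is b-transverse at \emph{every} boundary point: although the classical differential $d\rho_X$ vanishes at corners of codimension $\geq 2$, the b-differential sends each generator $\rho_H\pa_{\rho_H}$ to the generator of $\bN \set 0$ via \eqref{E:monoid_gen_map}, so $\bd(\rho_X)_\ast$ already surjects onto $\bT[0,\infty)$ over $\pa X$. This is precisely why the hypotheses of Corollary~\ref{C:s_equals_1}, Theorem~\ref{T:fib_prod}, and Proposition~\ref{P:fib_corn_fib_prod} are met in a neighborhood of the boundary where $d\rho_X \neq 0$ and $d\rho_Y \neq 0$.

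With b-transversality established, Corollary~\ref{C:s_equals_1} (or Theorem~\ref{T:fib_prod} directly) identifies $\set{\rho_X/\rho_Y = 1}$ with the fiber product $X\ttimes_{[0,\infty)_\tmin} Y$ and shows it is a p-submanifold near $\pa(X\ttimes Y)$, meeting each $\tlift H G$ in $H \ttimes_{\set 0} G = H\ttimes G$. It then remains to read off the boundary fibrations from Proposition~\ref{P:fib_corn_fib_prod}. The crux of the argument is the observation that for every $(H,G) \in \M 1(X)\times \M 1(Y)$ we have $E := f_\sharp(H) = g_\sharp(G) = \set 0 \in \M 1([0,\infty)_\tmin)$, which is a \emph{proper} boundary hypersurface of the target rather than its interior. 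Consequently we are never in the first two (join) cases of Proposition~\ref{P:fib_corn_fib_prod}, which require $f_\sharp(H) = g_\sharp(G) = Z \in \M 0(Z)$, but always in the third (``all other cases'') case, giving a boundary fibration on $\tlift{H}{G}_Z$ with fiber $\bff H\ttimes_{\bff E}\bff G$ and base $\bfb H\ttimes_{\bfb E}\bfb G$.

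Finally I would compute that this third case collapses to the asserted form. Since $\set 0$ is a single point, its induced fibered corners structure has $\bff E = \pt$ and $\bfb E = \pt$, so the fiber products over $E$ degenerate to ordinary ordered products: $\bff H\ttimes_{\bff E}\bff G = \bff H\ttimes \bff G$ and $\bfb H\ttimes_{\bfb E}\bfb G = \bfb H\ttimes \bfb G$. Invoking Proposition~\ref{P:induced_fc}, which asserts that each $\bff H$ is interior minimal and each $\bfb H$ interior maximal, the fiber and base acquire the stated forms $(\bff H)_\tmin \ttimes (\bff G)_\tmin$ and $(\bfb H)_\tmax \ttimes (\bfb G)_\tmax$, completing the identification of $\bfib{H,G}$.

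I expect the main obstacle to be the bookkeeping that places the argument firmly in the non-join case: one must check that sending the proper hypersurfaces $H$ and $G$ to the \emph{boundary} point $\set 0$ of $[0,\infty)$ — the exact opposite of the configuration mapping into the interior of $Z$ that produces the join products in the first two cases of Proposition~\ref{P:fib_corn_fib_prod} — is what forces the plain ordered-product fibers and bases. A secondary point requiring care is the b-transversality at deep corners, which holds only at the level of b-differentials and must be carefully distinguished from the classical statement that $d\rho_X$ vanishes there, explaining the restriction to the region where the ordinary differentials are nonzero.
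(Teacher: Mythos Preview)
Your proposal is correct and follows precisely the approach implicit in the paper, which states the corollary without proof as an immediate consequence of Corollary~\ref{C:s_equals_1} and Proposition~\ref{P:fib_corn_fib_prod}; your key observation that $f_\sharp(H) = g_\sharp(G) = \set 0$ is a \emph{proper} hypersurface of $[0,\infty)_\tmin$ (placing you in the third case of Proposition~\ref{P:fib_corn_fib_prod} with $\bff E = \bfb E = \pt$) is exactly the point. One minor clarification: the restriction to a neighborhood of the boundary is not needed because of deep corners (where, as you correctly note, b-transversality holds via the b-differential) but rather because at \emph{interior} points far from $\pa X$ or $\pa Y$ the ordinary differentials $d\rho_X$, $d\rho_Y$ may vanish and b-transversality there reduces to classical transversality.
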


\section{Relation to other theories} \label{S:appl}

In this section we discuss how products of fibered corners manifolds relate and/or generalize products in other categories.
\subsection{Smoothly stratified spaces} \label{S:strat}

We briefly recall the definition of smoothly stratified spaces, referring to \cite{Albin,Pflaum} for a more complete account.
A \emph{stratified space} is a topological space $\strat X$ decomposed as a disjoint union $\strat X = \bigsqcup_i \sstrat i$
of locally closed subspaces $\sstrat i$ called \emph{strata}, each of which is itself a manifold of some dimension, satisfying the \emph{frontier condition} that $\sstrat i \cap \ol{\sstrat j} \neq \emptyset$ if and only if $\sstrat i \subset \ol{\sstrat j}$, which defines a partial order $\sstrat i < \sstrat j$.
In particular, there is a maximal \emph{principal stratum} $\sstrat 0 = \strat X^\circ$ which is dense, with the rest referred to as the \emph{singular strata}.
The closed strata $\ol {\sstrat i}$ are again stratified spaces with strata $\set{\sstrat j : \sstrat j < \sstrat i}$.
A \emph{smoothly stratified space} (aka `Thom-Mather stratified space') is a stratified space $\strat X$ along with \emph{control data} consisting of open neighborhoods $\nO i \supset \sstrat i$ in $\strat X$ called \emph{tubes}, equipped with retractions $r_i : \nO i \to \sstrat i$ and distance functions $\rho_i : \nO i \to \bbR_+$ such that $\sstrat i = \rho_i^\inv(0)$ and satisfying
the conditions that whenever $\sstrat i < \sstrat j$,
\[
	\rho_i \circ r_{j} = \rho_i, \quad r_i \circ r_{j} = r_i, 
	\qquad \text{and}
	\qquad (r_i, \rho_i) : \nO i \cap \sstrat j \to \sstrat i \times (0,\infty) \quad \text{is a submersion.}
\]
As a consequence of Thom's isotopy lemma 
for smoothly
stratified spaces (see \cite[3.9.2]{Pflaum}),
the $\nO i$ are locally trivial bundles of cones over
$\sstrat i$, the links of which are stratified spaces of lower depth.

Associated to each interior maximal manifold with fibered corners $X$ is 
a smoothly stratified space $\strat X$ obtained by iteratively collapsing the fibers over each boundary hypersurface in reverse
order (starting with the largest base and ending with the smallest).
The strata of $\strat X$ are the smooth manifolds $\sstrat G = \bfb G^\circ$, for $G \in \Mtot(X)$, with principal smooth stratum $X^\circ$, and the control data
are given by the passage to the quotient of tubular neighborhoods of the boundary hypersurfaces, their associated retractions, and the images of (good) boundary defining functions.
The closure $\ol{\sstrat G} = \strat{B}_G$ is the stratified space obtained by fiber collapse of the fibered corners manifold $\bfb G$ itself, and the partial
order 
$\sstrat G < \sstrat {G'}$ 
on strata
defined by
$\sstrat {G} \subset \ol{\sstrat {G'}}$ coincides with the order $G < G'$ induced by the fibered corners structure.
The link of the cone bundle $\nO G \to \sstrat G$ is the stratified space $\strat {F}_G$ obtained by collapsing the fibers of the boundary hypersurfaces of $\bff G$.

Conversely, as shown in \cite{ALMP,Albin} (a result originating in \cite{Verona}; see also \cite[3.9.4]{Pflaum}), every smoothly stratified space can be resolved to a manifold with fibered corners by iteratively replacing the cone bundle $\nO G$
of each singular stratum with an associated cylinder bundle in order from smallest stratum to largest, with the tubes $\nO G$ resolving to tubular neighborhoods of the boundary hypersurfaces, and distance functions resolving to boundary defining functions.

\begin{rmk}
While this constitutes an equivalence of objects between smoothly stratified spaces on one hand, and (interior maximal) manifolds with fibered corners on the other hand,
correspondingly little has been written about morphisms.
In \cite{ALMP}, Albin et.\ al.\ show that the resolution procedure above is functorial with respect to \emph{isomorphisms}, meaning controlled isomorphisms of stratified spaces
on one hand, and fibered diffeomorphisms of fibered corners manifolds on the other,
but in general, pinning down a suitable class of morphisms on the stratified spaces side is difficult. 
In addition to the requisite condition that a map $f : \strat X \to \strat Y$ be \emph{stratified}, meaning that it sends strata to strata and restricts to a smooth map of manifolds thereon (which corresponds to the fibered condition of Definition~\ref{D:fibered_map}), the typical conditions discussed in the literature are inadequate for our purposes.
Indeed, the weakest typical condition is that the map $f$ is `weakly controlled', meaning it maps tubes to tubes and commutes with the retractions.
This is on the one hand too strong, since it amounts to extending the boundary fibrations to open neighborhoods of the boundary hypersurfaces and demanding an extension of the fiber bundle maps 
\eqref{E:fibered}
to such neighborhoods, and it does not seem that every fibered corners morphism satisfies such a condition.
On the other hand, it is also too weak, since it does not impose the kind of algebraic behavior with respect to boundary defining functions that a general b-map satisfies.
From our point of view, the right class of morphisms are those maps of smoothly stratified spaces which are descended from fibered corners morphisms as defined here, but it
seems difficult to characterize these directly using only the data of stratified spaces.
\end{rmk}

There is little discussion of products of stratified spaces in the literature, but the definition of a smoothly stratified structure on the cartesian product $\strat X \times \strat Y$ is obvious enough.
It has strata $\sstrat G \times \sstrat H$ for $(G,H) \in \Mtot(X)\times \Mtot(Y)$, with control data $(\nO G \times \nO H, r_G\times r_H, \rho_G + \rho_H)$
(it seems any homogeneous combination of $\rho_G$ and $\rho_H$ would also suffice, such as $(\rho_G^p + \rho_H^p)^{1/p}$ for $p \geq 1$).
This satisfies the universal property of the product for most reasonable notions of morphism of stratified spaces.

If we knew that the projections $\strat X \times \strat Y \to \strat X$ and $\strat X \times \strat Y \to \strat Y$ lifted to fibered corners morphisms upon resolution, as well as the map into $\strat X \times \strat Y$ induced by a good map $\strat W \to \strat X$ and $\strat W \to \strat Y$, then an easy application of the universal property
of the product in both categories would give a quick proof of the following result. 
However, since this is unavailable at present, we settle for a coordinate based proof.

\begin{thm}
Let $X$ and $Y$ be interior maximal manifolds with fibered corners, with associated smoothly stratified spaces $\strat X$ and $\strat Y$.
Then the ordered product $X\ttimes Y$ is equivalent to the resolution of the product $\strat X \times \strat Y$ of stratified spaces.
\label{T:prod_resolves_prod}
\end{thm}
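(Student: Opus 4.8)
The plan is to leverage the established equivalence of objects between smoothly stratified spaces and interior maximal manifolds with fibered corners. Since resolution is well-defined up to fibered diffeomorphism, it suffices to prove that the smoothly stratified space $\strat{X\ttimes Y}$ obtained by collapsing the fibers of the boundary fibrations of $X\ttimes Y$ is isomorphic to $\strat X\times \strat Y$; the claimed equivalence of $X\ttimes Y$ with the resolution of $\strat X\times \strat Y$ follows formally. Because the functoriality of resolution with respect to the projections is unavailable, I would carry out this identification in local coordinates, matching both the combinatorics and the smooth control data. As a bonus, this reorganizes the ``coordinate-based proof'' around the intrinsic boundary fibration structure of $X\ttimes Y$ already computed in Theorem~\ref{T:fibcorn_product}.

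First I would fix standard-form coordinates on $X$ and $Y$ as in \eqref{E:coord_std_form} (Corollary~\ref{C:str_coords}), associated to maximal chains $H_1<\cdots<H_n<X$ and $G_1<\cdots<G_m<Y$, with representative good boundary defining functions $x_i,x'_j$ and fiber/base coordinates $y_i,y'_j$, which I may assume exhaust the local hypersurfaces. Collapsing fibers presents $\strat X$ locally as an iterated cone in which the $x_i$ are the distance functions, the tubes and retractions descend from tubular neighborhoods and the boundary fibrations, and the strata are $\sstrat{H_i}=\bfb{H_i}^\circ$; similarly for $\strat Y$. The product $\strat X\times \strat Y$ then has strata $\sstrat H\times \sstrat G$ indexed by $(H,G)\in\Mtot(X)\times\Mtot(Y)$, control data $(\nO H\times\nO G,\ r_H\times r_G,\ \rho_H+\rho_G)$, and the product order. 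On the other side, Theorem~\ref{T:fibcorn_product}(a) shows the boundary hypersurfaces $\tlift H G$ of $X\ttimes Y$ are indexed by the same set with the same partial order, and that the associated strata $\bfb{\tlift H G}^\circ$ are exactly $\bfb H^\circ\times\bfb G^\circ=\sstrat H\times\sstrat G$. This matches the index sets, strata, and orders.

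The heart of the argument is the identification of the links together with the transverse smooth structure. On the stratified side, near $\sstrat H\times\sstrat G$ the product is locally a bundle whose fiber is the product of cones $C(\strat{\bff H})\times C(\strat{\bff G})=C\bigl(\strat{\bff H}\ast\strat{\bff G}\bigr)$, so the link of $\sstrat H\times\sstrat G$ is the topological join $\strat{\bff H}\ast\strat{\bff G}$. On the fibered corners side, Theorem~\ref{T:fibcorn_product}(b) identifies the fiber of $\bfib{H,G}$ as the join product $\bff H\jtimes_\tmax\bff G$, whose associated stratified space is precisely $\strat{\bff H}\ast\strat{\bff G}$ by Corollary~\ref{C:join}. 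I would show this matching is implemented concretely by the blow-up: the model computation is that blowing up the codimension-two corner $\{x_i=x'_j=0\}$ introduces polar coordinates $x_i=r\cos\theta$, $x'_j=r\sin\theta$ whose angular variable $\theta\in[0,\tfrac{\pi}{2}]\cong I$ realizes the interval factor of \eqref{E:intro_join_fiber}, while $r$ becomes a boundary defining function fc-equivalent to the resolved distance function, since $\rho_{H_i}+\rho_{G_j}=x_i+x'_j=r(\cos\theta+\sin\theta)$ with $\cos\theta+\sin\theta>0$ on $I$. Iterating this across the chain of corners, in the order consistent with the product order—which is exactly the smallest-stratum-first order of the resolution procedure (smaller $(H,G)$ correspond to smaller, deeper strata)—shows step by step that the iterated blow-up defining $X\ttimes Y$ reproduces the resolution of $\strat X\times\strat Y$.

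The hard part will be the bookkeeping of the iterated, parameterized cone structure. I must verify at every stage of the blow-up that the front face carries precisely the relative join $\bff H\jtimes_\tmax\bff G$ with its correct fibered corners structure, that an order of blow-ups consistent with the product order genuinely coincides with the smallest-stratum-first order of resolution, and that the resolved distance functions $\rho_H+\rho_G$ agree up to fc-equivalence with the rational boundary defining functions furnished by Theorem~\ref{T:fibcorn_product}. The most delicate point is checking that the retractions $r_H\times r_G$ resolve to the boundary fibrations $\bfib{H,G}$ with the cone/join directions encoded correctly by the compressed projections of Theorem~\ref{T:ord_bhs_str}; ensuring these agree over each stratum, compatibly as the depth increases, is where the coordinate computation is most involved and is the step I would treat with the greatest care.
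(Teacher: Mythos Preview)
Your overall strategy---showing that collapsing the boundary fibrations of $X\ttimes Y$ recovers $\strat X\times\strat Y$ as a smoothly stratified space, and then invoking the object-level equivalence---is sound, and in the end it reduces to the same local coordinate verification that the paper performs. The paper simply runs the computation in the other direction: it starts from the singular coordinates on $\strat X\times\strat Y$, applies the resolution procedure (replacing cones by cylinders) step by step, and checks that the resulting coordinates are exactly the rational blow-up coordinates on $X\ttimes Y$ associated to the relevant totally ordered chain. So the substantive content of the two proofs is the same iterated coordinate identification; you have wrapped it in more of the structural language of Theorem~\ref{T:fibcorn_product}.

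There is one genuine gap you should fix. Your identification of the link of $\sstrat H\times\sstrat G$ relies on Corollary~\ref{C:join}, but in the paper Corollary~\ref{C:join} is \emph{derived from} Theorem~\ref{T:prod_resolves_prod} (applied to the cones $C_\tmax(\bff H)$ and $C_\tmax(\bff G)$). As written, your argument is circular. You can repair this in one of two ways: either set up an explicit induction on depth, so that the theorem---and hence Corollary~\ref{C:join}---is already available for the strictly lower-depth fibers $\bff H,\bff G$; or drop the appeal to Corollary~\ref{C:join} and verify the link identification directly as part of your iterated coordinate computation. In the second case you are doing exactly the paper's proof, and you may find (as the paper does) that rational coordinates such as $x_i/x'_j$ are much cleaner to iterate than the trigonometric polar coordinates you propose; the positivity argument $\cos\theta+\sin\theta>0$ works at a single step but becomes awkward to track through a nested chain.
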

\begin{proof}
Let 
\begin{equation}
	(y_1,x_1,y_2,x_2,\ldots,y_n,x_n,y_{n+1})
	\label{E:str_coord_X}
\end{equation}
denote standard form coordinates on a (maximal) fibered corners manifold $X$.
Iteratively collapsing the boundary fibers leads to singular coordinates
\[
	(y_1,r_1,r_1y_2,r_2,r_2y_3,\ldots, r_n, r_n y_{n+1}), \qquad r_i = r_{i-1}x_i = x_1\cdots x_i
\]
on the associated stratified space $\strat X$; conversely, the resolution procedure is implemented in coordinates by 
iteratively dividing the coordinates to the right of each $r_i$ by $r_i$ starting with $r_1$. 

Likewise, if $Y$ is another fibered corners manifold with standard form coordinates
\begin{equation}
	(y'_1,x'_1,y'_2,x'_2,\ldots,y'_n,x'_{m},y'_{m+1})
	\label{E:str_coord_Y}
\end{equation}
with associated singular coordinates 
\[
	(y'_1,r'_1,r'_1y'_2,r'_2,r'_2y'_3,\ldots, r'_m, r'_m y'_{m+1})
\]
on $\strat Y$, then singular coordinates on $\strat X \times \strat Y$ are given by
\[
	(y_1,y'_1,r_1,r_1y_2,r'_1, r'_1y'_2, \ldots, r_n, r_n y_{n+1})
\]
assuming without loss of generality that $m \leq n$.
We will focus attention on the totally ordered chain of strata $(1,1)<(1,2)<(2,3)<\cdots<(m-1,m)<(m,0)<\cdots<(n,0)$; others are similar.

The resolution procedure consists of first replacing $r_1$ and $r'_1$ by $r_1 + r'_1$ and $s_1 = r_1/(r_1 + r'_1)$ (or $s'_1 = 1- s_1 = r'_1/(r_1 + r'_1)$), dividing all coordinates to the right by $r_1 + r'_1$.
Then, near the lift of (say) $s'_1 = 0$ and $r_2 = 0$, the next step is to replace $s'_1$ and $s_2 = r_2/(r_1 + r'_1)$ by $s'_1 + s_2$ and $s_2/(s'_1 + s_2) = r_2/(r'_1 + r_2)$, divide all coordinates to the right by $s'_1 + s_2$, and so on.
However, as is frequently the case with blow-up computations, it is more convenient here to use equivalent rational coordinates, in which 
$(r_1,r'_1)$ is replaced instead by $(r_1,r'_1/r_1)$ and all coordinates to the right  are divided by $r_1$; then $(r'_1/r_1, r_2/r_1)$ is replaced by $(r'_1/r_1, r_2/r'_1)$
and all coordinates to the right are divided by $r'_1/r_1$, and so on.
The end of this process results in standard form coordinates
\[
	\big(y_1,y'_1, x_1, y_2, \tfrac{x'_1}{x_1}, y'_2, \tfrac{x_1x_2}{x'_1},y_3, \tfrac{x'_1x'_2}{x_1x_2}, y'_3, \ldots, y'_{m+1}, \tfrac{x_1\cdots x_{m+1}}{x'_1\cdots x'_m}, y_{m+1}, \ldots, x_n, y_{n+1}\big)
\]
for the resolution of $\strat X \times \strat Y$ which are identical to the rational coordinates near $\tlift{H_1}{K_1} \cap \tlift{H_1}{K_2} \cdots \cap \tlift{Y}{H_n}$
for the ordered product $X\ttimes Y$ based on \eqref{E:str_coord_X} and \eqref{E:str_coord_Y}.
Other coordinate charts based on other totally ordered chains of strata involve similar computations.
\end{proof}

If $X$ is a manifold with fibered corners, then the natural fibered corners structure on the cone $C_\tmax(X)$ 
according to Definition~\ref{D:CX_bfib_fiber} consists of fibrations
\[
	X\times 0 \to \pt, \qquad G\times [0,\infty) \to \bfb G \times [0,\infty), \qquad X\times [0,\infty) \to X \times [0,\infty).
\]
It is clear that the stratified space associated to $C_\tmax(X)$ is the cone
\[
	\strat C (\strat X) = \big(\strat X \times[0,\infty)\big)/(\strat X \times \set 0)
\]
with link $\strat X$.
From the well-known fact that the product of cones is a cone over the join of the links, we obtain the following.

\begin{cor}
For fibered corners manifolds $X$ and $Y$, the stratified space associated to $C_\tmax(X)\ttimes C_\tmax(Y)$ is the cone
\[
	\strat C(\strat X) \times \strat C(\strat Y) \cong \strat C(\strat X \star \strat Y)
\]
and in particular, the stratified space associated to the maximal join $X\jtimes_\tmax Y$ is the topological join
\[
	\strat X \star \strat Y = (\strat X \times \strat Y\times [0,1])/\sim,
\]
where $(x,y,0) \sim (x',y,0)$ for all $x,x' \in \strat X$ and $y \in \strat Y$ and $(x,y,1) \sim (x,y',1)$ for all $x \in \strat X$, $y,y' \in \strat Y$.
\label{C:join}
\end{cor}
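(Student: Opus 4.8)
The plan is to obtain both assertions as consequences of Theorem~\ref{T:prod_resolves_prod} together with the quoted topological fact that a product of cones is a cone over the join of the links. First I would observe that $C_\tmax(X)$ and $C_\tmax(Y)$ are interior maximal manifolds with fibered corners whose associated stratified spaces are the cones $\strat C(\strat X)$ and $\strat C(\strat Y)$, as established in the discussion immediately preceding the statement. Applying Theorem~\ref{T:prod_resolves_prod} to these two spaces then identifies the stratified space associated to the ordered product $C_\tmax(X)\ttimes C_\tmax(Y)$ with the product $\strat C(\strat X)\times \strat C(\strat Y)$ of stratified spaces. Invoking the classical homeomorphism $\strat C(A)\times \strat C(B)\cong \strat C(A\star B)$ with $A=\strat X$ and $B=\strat Y$ yields the first displayed isomorphism and shows that $C_\tmax(X)\ttimes C_\tmax(Y)$ resolves the cone $\strat C(\strat X\star \strat Y)$.

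For the second statement I would track the vertex of this cone through the two identifications. Under fiber collapse the boundary hypersurface of $C_\tmax(X)\ttimes C_\tmax(Y)$ that maps onto the deepest stratum is $\tlift{X\times 0}{Y\times 0}$, the lift of the product of the two principal (cone point) hypersurfaces $X\times 0$ and $Y\times 0$; by Definition~\ref{D:rel_join} this hypersurface is precisely the maximal join $X\jtimes_\tmax Y$. Since its base is the ordered product of the two one-point bases and hence a single point, $X\jtimes_\tmax Y$ fibers over a point and therefore coincides with its own fiber. Consequently the stratum it collapses onto is the zero-dimensional apex stratum $\sstrat{X\times 0}\times \sstrat{Y\times 0}$, the product of the two cone points, which under the homeomorphism $\strat C(\strat X)\times \strat C(\strat Y)\cong \strat C(\strat X\star \strat Y)$ is exactly the vertex of the cone.

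Finally I would match the two descriptions of the link at this apex: on the fibered corners side, the link of the stratum onto which $\tlift{X\times 0}{Y\times 0}$ collapses is the stratified space of its fiber, namely $\strat{X\jtimes_\tmax Y}$, while on the cone side the link of the vertex of $\strat C(\strat X\star \strat Y)$ is by definition $\strat X\star \strat Y$; equating the two gives $\strat{X\jtimes_\tmax Y}\cong \strat X\star \strat Y$. The hard part is not any computation but the combinatorial bookkeeping in the middle step: correctly identifying the principal hypersurface $\tlift{X\times 0}{Y\times 0}$ simultaneously with the maximal join (via Definition~\ref{D:rel_join}) and with the apex stratum of the resolved product cone. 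Everything else reduces to a direct appeal to Theorem~\ref{T:prod_resolves_prod} and to the classical product-of-cones formula, whose standard proof I would simply cite rather than reproduce.
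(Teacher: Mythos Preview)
Your argument is correct and follows exactly the route the paper intends: the corollary is stated immediately after the observation that the stratified space of $C_\tmax(X)$ is $\strat C(\strat X)$ and the remark that a product of cones is a cone over the join, with no further proof given. You have simply made explicit the bookkeeping (invoking Theorem~\ref{T:prod_resolves_prod}, identifying $\tlift{X\times 0}{Y\times 0}$ with $X\jtimes_\tmax Y$ via Definition~\ref{D:rel_join}, and matching links at the apex) that the paper leaves to the reader.
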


\subsection{Many body spaces} \label{S:mb}

Many body compactifications of vector spaces go back at least to \cite{Vasy}, and have been discussed more recently in \cite{Kmb,AMN}.
Using notation from \cite{Kmb}, recall that given a finite dimensional vector space $V$ and a \emph{linear system} $\cS_V$, meaning a finite set of subspaces of $V$ which is closed under intersection and contains $\set 0$ and $V$, 
the \emph{many body compactification} $\mb V$ of $V$ is the manifold with corners
\[
	\mb V = [\ol{V}; \set{\pa \ol S : S \in \cS_V}],
\]
given by the blow-up in the radial compactification $\ol V$ of the boundaries of the subspaces in $\cS_V$, taken in order of size.
The boundary hypersurfaces of $\mb V$ are indexed by the subspaces in $\cS_V$, ordered by containment; in fact $\Mtot(\mb V) \cong \cS_V$ as ordered sets, with the interior indexed by $\set 0$.
In addition, $\mb V$ has a natural interior minimal fibered corners structure, with fibrations consisting of products
\begin{equation}
	\phi_S : H_{S} = \bfb S\times \bff S \to \bfb S,
	\quad\text{where} \quad \bff S = \mb{V/S},
	\quad \bfb S = [\pa \ol S; \set{\pa\ol {S'} : S' < S \in \cS_V}]
	\label{E:mbfaces}
\end{equation}
for each $S \in \cS_V$, where $V/S$ is equipped with the linear system $\cS_{V/S} = \set{S'/(S/\cap S) : S' \in \cS_V}$, and $\bfb S = H_{S} \subset \mb S$ is the maximal boundary face
of the many body compactification of $S$ itself, with respect to the system $\cS_S = \set{S' \in \cS_V : S' \subset S}$. 
The boundary hypersurfaces of $\bfb S$ decompose as
\begin{equation}
	\pa_{S'} \bfb S = \bfb {S'} \times \bfb {S/S'}, \quad S' < S.
	\label{E:mbbasefaces}
\end{equation}
The fc-equivalence class of boundary defining functions on $\mb V$ is well-defined by lifting rational combinations of radial functions on $\ol V$ which define $\pa \ol S$ for $S \in \cS_V$.

As shown in \cite{Kmb}, the many body compactification is functorial with respect to \emph{admissible} linear maps, meaning $f : V \to W$ satisfies $f^\inv(\cS_W) \subset \cS_V$,
with such $f$ extending to b-maps $\mbf f : \mb V \to \mb W$. 
If in addition $f(\cS_V) = \cS_W$ --- what is called in \cite{Kmb} an \emph{admissible quotient} --- then $\mbf f$ is a b-fibration and satisfies the 
conditions to be a morphism in the category of interior minimal fibered corners. 
Indeed, the restriction of the lifted map splits as a product with respect to \eqref{E:mbfaces} and \eqref{E:mbbasefaces}.

It is clear that the stratified space associated to $\mb V$ by collapsing the fibers of all boundary fibrations (we do not collapse the fibers over the interior) is the radial compactification $\ol V$, with principal stratum $V$, and (closed) singular strata consisting of the boundaries $\pa \ol S$ of the subspaces in $\cS_V$, the links of which are the normal quotients $\ol{V/S}$ with strata $\pa\ol{S'/S}$ for $S'>S$.
In this category, a natural alternative to the usual product of $\ol V$ and $\ol W$ as stratified spaces (which does not resolve to a many body compactification of a vector space) is the space $\ol {V\times W}$ with closed singular strata $\pa(\ol{S\times T})$ for $(S,T) \in \cS_V\times \cS_W$, the resolution of which is the many body compactification $\mb{V\times W}$.

Note that, as spheres, the closed singular strata $\pa(\ol{S\times T})$ are homeomorphic to the topological joins $\pa\ol S \star \pa \ol T$ of the singular strata of the factors,
a property which generalizes to the interior minimal fibered corners setting since $\bfb G \jtimes_{\tmin} \bfb H$ has associated stratified space the join $\strat B_G \star \strat B_H$ of the stratified spaces associated to $\bfb G$ and $\bfb H$ as a consequence of the results of \S\ref{S:equiv}.

\begin{thm}
Let $\mb V$ and $\mb W$ be many body compactifications of $(V, \cS_V)$ and $(W, \cS_W)$, respectively.
Then the ordered product $\mb V\ttimes \mb W$ as interior minimal manifolds with fibered corners is canonically isomorphic to the many body compactification $\mb{V\times W}$ of $(V\times W, \cS_V \times \cS_W)$.
\label{T:mb_prod}
\end{thm}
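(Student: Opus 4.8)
The plan is to construct a canonical morphism $\mb{V\times W}\to\mb V\ttimes\mb W$ from the universal property of the ordered product and then to prove it is a diffeomorphism. First I would record the combinatorial data shared by the two sides. The set $\cS_V\times\cS_W=\set{S\times S' : S\in\cS_V,\ S'\in\cS_W}$ is closed under intersection, since $(S_1\times S_1')\cap(S_2\times S_2')=(S_1\cap S_2)\times(S_1'\cap S_2')$, and contains $\set 0$ and $V\times W$, so it is a linear system and $\mb{V\times W}$ is defined. By the many body indexing, $\Mtot(\mb{V\times W})\cong\cS_V\times\cS_W$ as ordered sets, with hypersurfaces $H_{S\times S'}$ for $(S,S')\neq(\set 0,\set 0)$. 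On the other side, since $\mb V$ and $\mb W$ are interior minimal, every pair is comparable to the basepoint, so Theorem~\ref{T:fibcorn_product} gives $\Mtot(\mb V\ttimes\mb W)\cong\Mtot(\mb V)\times\Mtot(\mb W)\cong\cS_V\times\cS_W$ as well, with hypersurfaces $\tlift{H_S}{H_{S'}}$.

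Next I would build the map. The projections $\pi_V\colon V\times W\to V$ and $\pi_W\colon V\times W\to W$ are admissible quotients: e.g.\ $\pi_V(S\times S')=S$, so $\pi_V(\cS_V\times\cS_W)=\cS_V$ and $\pi_V^\inv(\cS_V)=\set{S\times W : S\in\cS_V}\subset\cS_V\times\cS_W$. By the functoriality of the many body compactification recalled before the statement, they lift to fibered corners morphisms (b-fibrations) $\widetilde\pi_V\colon\mb{V\times W}\to\mb V$ and $\widetilde\pi_W\colon\mb{V\times W}\to\mb W$. The universal property of the ordered product then yields a unique fibered corners morphism $\Phi\colon\mb{V\times W}\to\mb V\ttimes\mb W$ intertwining these with the lifted projections, and restricting to the identity on the common interior $V\times W$. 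Tracking the projections shows $\Phi_\sharp\colon H_{S\times S'}\mapsto\tlift{H_S}{H_{S'}}$ is exactly the order isomorphism $\cS_V\times\cS_W\to\cS_V\times\cS_W$ identified above.

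It remains to show $\Phi$ is a diffeomorphism, which I would do by proving it is a proper local diffeomorphism of degree one over the interior. In the b-normal directions this is combinatorial: since $\Phi$ is simple and b-normal, each $\bd\Phi_\ast\colon\bM F\to\bM\Phi_\sharp(F)$ sends generators to generators, and because $\Phi_\sharp$ is an order isomorphism it restricts to a bijection between the chains in $\cS_V\times\cS_W$ indexing $F$ and $\Phi_\sharp(F)$, so these monoid homomorphisms are isomorphisms. For the tangential directions I would induct on $\dim V+\dim W$ (base case $V=\set 0$ or $W=\set 0$, where $\Phi$ is the identity), comparing boundary fibrations face by face: the fiber of $\tlift{H_S}{H_{S'}}$ is $\bff{H_S}\ttimes\bff{H_{S'}}=\mb{V/S}\ttimes\mb{W/S'}$, which by the inductive hypothesis equals $\mb{(V/S)\times(W/S')}$, the fiber $\bff{H_{S\times S'}}$ of the corresponding face of $\mb{V\times W}$; moreover $\Phi$ restricts on each face to the morphism produced by the same universal property one dimension down, hence a diffeomorphism by induction. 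Combined with the normal monoid isomorphisms, $\Phi$ is a local diffeomorphism everywhere; being proper and of degree one over the dense interior it is a global diffeomorphism, and as it respects the fc-equivalence classes (the good rational boundary defining functions of $\mb V\ttimes\mb W$ pull back to those of $\mb{V\times W}$) it is an isomorphism of fibered corners.

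The main obstacle is the face-by-face comparison, and specifically the identification of the \emph{bases} of the boundary fibrations: on the ordered product the base of $\tlift{H_S}{H_{S'}}$ is the minimal join $\bfb{H_S}\jtimes_\tmin\bfb{H_{S'}}$, whereas on $\mb{V\times W}$ the base of $H_{S\times S'}$ is the many body base $\bfb{(S\times S')}=[\pa\ol{S\times S'};\dots]$. I would match these using the fact that $\pa\ol{S\times S'}=S(S\times S')$ is the join of spheres $S(S)\star S(S')$, together with the results of \S\ref{S:equiv} and the stratified correspondence of \S\ref{S:strat}: $\bfb{H_S}\jtimes_\tmin\bfb{H_{S'}}$ resolves precisely the join $\strat{\bfb{H_S}}\star\strat{\bfb{H_{S'}}}$ of the associated stratified spheres, which is the stratified space underlying $\bfb{(S\times S')}$, and uniqueness of the resolution then gives the base identification compatibly with $\Phi$.
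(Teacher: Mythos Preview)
Your overall architecture matches the paper's: construct the canonical morphism $\Phi\colon\mb{V\times W}\to\mb V\ttimes\mb W$ from the universal property, observe that $\Phi_\sharp$ is an order isomorphism, note that $\Phi$ is the identity on the common interior, and then argue face by face that $\Phi$ is a diffeomorphism. The place where your argument and the paper's diverge, and where yours has a genuine gap, is the treatment of the \emph{bases} of the boundary fibrations.

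On a face indexed by $(S,S')$ you correctly identify the fibers and invoke induction to conclude that the restriction of $\Phi$ to fibers, which is the universal map $\mb{(V/S)\times(W/S')}\to\mb{V/S}\ttimes\mb{W/S'}$, is an isomorphism. But to conclude that $\Phi$ is a local diffeomorphism on the whole face you also need the \emph{induced map on bases} to be an isomorphism, and your argument for this is only that the two base spaces $\bfb{H_{S\times S'}}$ and $\bfb{H_S}\jtimes_\tmin\bfb{H_{S'}}$ are abstractly diffeomorphic, via uniqueness of resolution of the join of stratified spheres. That does not show that the specific map on bases induced by $\Phi$ is that diffeomorphism; it only shows some diffeomorphism exists. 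Without this, the ``combined with the normal monoid isomorphisms, $\Phi$ is a local diffeomorphism everywhere'' step does not follow.

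The paper closes this gap with a different idea: in the many body setting one may identify the relative cone $C_\tmin(\bfb{H_S})$ with the normal model $\Np(\bfb{H_S}\subset\mb S)$, and under this identification the thickened base map of Lemma~\ref{L:main_lemma_normal} coincides with the normal differential of the universal map $\mb{S\times S'}\to\mb S\ttimes\mb{S'}$ for the subspaces themselves. Thus the induced map on bases $\bfb{H_{S\times S'}}\to\bfb{H_S}\jtimes_\tmin\bfb{H_{S'}}$ is literally the restriction to the maximal hypersurface of another instance of the same universal map, now for strictly smaller linear systems, and the induction (on total depth $|\cS_V|+|\cS_W|$, rather than on dimension) applies directly. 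This still leaves the single case $(S,S')=(V,W)$, where no reduction occurs; the paper handles that by an explicit full-rank check in product radial coordinates over the interior of the maximal face. Your proposal does not isolate or address this exceptional case either.
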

\begin{proof}
The lifted projections $\mb{V\times W} \to \mb V$ and $\mb{V\times W} \to \mb W$ are interior minimal morphisms, so by the universal property
there is an associated map $h : \mb{V\times W} \to \mb V \ttimes \mb W$,
with induced map $h_\sharp : \Mtot(\mb{V\times W}) \to \Mtot(\mb V \ttimes \mb W)$ 
an isomorphism of ordered sets identified with $\cS_{V\times W} \cong \cS_V \times \cS_W$.
Both $\mb{V\times W}$ and $\mb V \ttimes \mb W$ have interior identified with $V \times W$, on which $h$ restricts to the identity,
so it remains to show that $h$ extends to an isomorphism over each boundary hypersurface.
We proceed by induction on the total depth of $(V,W)$, meaning the sum $\abs{\cS_V} + \abs{\cS_W}$, which is equivalent to the depth of $\mb{V \times W}$ or $\mb V \ttimes \mb W$ as fibered corners manifolds,
with depth zero coinciding with the trivial case $V = W = \set 0$.

The restriction to a hypersurface $h : H_{S\times T} \subset \mb{V\times W} \to \tlift{H_S}{H_T} \subset \mb{V}\ttimes \mb W$ is a fibered map
\begin{equation}
\begin{tikzcd}
	\bff{S\times T} \times \bfb {S\times T} \ar[d] \ar[r, "h"] &(\bff {S}\ttimes \bff T) \times (\bfb S \jtimes \bfb T) \ar[d]
	\\ \bfb{S\times T} \ar[r, "h_{S\times T}"] & \bfb S \jtimes \bfb T
\end{tikzcd}
	\label{E:mb_hs_map}
\end{equation}
where $\jtimes = \jtimes_\tmin$,
with $\bfb S \jtimes \bfb T$ replaced simply by $\bfb S$ or $\bfb T$ in case $T = 0$ or $S = 0$, respectively.
Restricting over a point in the base and recalling that
$\bff S = \mb{V/S}$,
$\bff T = \mb{W/T}$,
and
$\bff {S\times T} = \mb {(V\times W)/(S \times T)} = \mb{V/S\times W/T}$, 
it is clear that the map on fibers is the universal map $\mb{V/S\times W/T} \to \mb{V/S} \ttimes \mb{W/T}$, in particular independent of the point in the base, and hence the top row of \eqref{E:mb_hs_map}
splits as a product.
Since $(S,T) \neq (0,0)$ here (as we are considering a proper boundary hypersurface), the pair $(V/S,W/T)$ has strictly lower total depth, and so the first factor
$\bff{S\times T} \to \bff{S}\ttimes \bff T$ is an isomorphism by the inductive hypothesis.

Next we recall that the lower map $h_{S\times T}$ is induced by a thickened map $C_\tmin(\bfb {S \times T}) \to C_\tmin(\bfb S)\ttimes C_\tmin(\bfb T)$ afforded by Lemma~\ref{L:main_lemma_normal}; however
in this many body setting this has a natural reinterpretation.
Indeed,
here we may generally identify the cone $C_\tmin(\bfb S)$ with the normal model $\Np \bfb S$ for the maximal hypersurface $\bfb S \subset \mb S$ in the many body compactification for $S$ itself,
and the thickened horizontal map on cones in part (b) of Lemma~\ref{L:main_lemma_normal} then coincides with the normal derivative of the induced map between the many body compactifications of
the relevant subspaces.
In particular the map $\bfb {S\times T} \to \bfb S \jtimes \bfb T$ coincides with the restriction to the maximal boundary hypersurface of the map $\mb {S\times T} \to \mb S \ttimes \mb T$,
and again by induction it follows that the latter is an isomorphism, \emph{except} in the case that $(S,T) = (V,W)$ itself.

To take care of this final case, we verify by hand that the map $\mb {V \times W} \to \mb V \ttimes \mb W$ has full rank over the interior of the maximal boundary hypersurface $H_{V\times W} = \bfb {V\times W}$.
Representing points in $V \times W$ by `product radial' coordinates
\[
	(v,w) = (Rr \nu, Rs \omega), 
	\qquad \text{where} \quad R = \sqrt{\abs v^2 + \abs w^2}, \quad r = \frac{\abs v}{R}, \quad s = \frac{\abs{w}}{R} = \sqrt{1 - r^2}
\]
where $\nu$ and $\omega$ are respective unit vectors for some norms on $V$ and
$W$, leads to coordinates $(x,r, \nu, \omega)$ near the boundary
of the radial compactification $\ol {V\times W}$, with $x = R^\inv$.
Assuming now that
$r \in (0,1)$ and $\nu$ and $\omega$ are limited to open sets disjoint from any of the proper subspaces in $\cS_V$ or $\cS_W$,
these remain valid coordinates for the interior of the maximal boundary face $\bfb{V\times W} \subset \mb{V\times W}$.
Likewise, $(x', \nu)$, where $x' = \abs{v}^\inv$ and $(x'', \omega)$, where $x'' = \abs{w}^\inv$ form coordinates for $\ol V$ and $\ol W$ lifting to $\mb V$ and $\mb W$,
and the lifted projections $\mb{V\times W} \to \mb V$ and $\mb{V\times W} \to \mb W$ are given respectively by
\[
	(x,r,\nu,\omega) \mapsto (\tfrac{x}{r}, \nu) \qquad \text{and} \qquad (x,r,\nu,\omega) \mapsto (\tfrac{x}{\sqrt{1-r^2}}, \omega).
\]
It follows that coordinates near the maximal boundary hypersurface of $\mb{V}\ttimes \mb{W}$ 
are given by $(x',\tfrac{x'}{x''}, \nu,\omega)$, in which the lifted map $\mb {V\times W} \to \mb V \ttimes \mb W$ takes the form
\[
	(x,r,\nu,\omega) \mapsto (\tfrac{x}{r}, \tfrac{r}{\sqrt{1 - r^2}}, \nu, \omega)
\]
which has full rank down to $\set {x = 0}$ for $r \in (0,1)$.
\end{proof}

\section{Geometric structures} \label{S:geom}

In this section we consider certain classes of metrics on interior minimal and maximal manifolds with fibered corners.
A convenient and equivalent way to discuss these classes of metrics is in terms of their associated \emph{geometric structures}, as encoded by associated
\emph{rescaled tangent bundles}, on which a Riemannian metric of the given type on the interior extends to a uniformly bounded inner product up to all boundary
faces.
Both structures to be considered begin with the following.
\begin{defn}
The algebra of \emph{edge vector fields} is defined as
\[
	\eV(X) = \set{V \in \bV(X) : V\rst_G(\bfib G^\ast (C^\infty(\bfb G))) = 0}
\]
or those vector fields whose restriction to every $G \in \M 1(X)$ is tangent to the fiber
of $\bfib G$.
This definition has appeared in various forms in \cite{ALMP, ALMP2, AGR} and
others sometimes under the name `iterated edge' or `ie', and extends the
definition due to \cite{Mazzeo} in the depth 1 case.
This is a Lie subalgebra of $\bV(X)$ and forms a locally free sheaf of constant
rank, defining the \emph{edge tangent bundle} by
\begin{equation}
	\eT X \to X, \quad \text{where} \quad \eV(X) = C^\infty(X; \eT X).
	\label{E:eT}
\end{equation}
In local standard form coordinates, a frame for $\eT X$ is furnished by the vector fields
\begin{equation}
	v_1\pd{y_1},\,
	v_1\pd{x_1},\,
	v_2\pd{y_2},\,
	v_2\pd{x_2},\,
	\ldots, \,
	v_n\pd{y_n},\,
	v_n\pd{x_n},\,
	\pd{z},
	\qquad v_k = x_k\cdots x_n,
\label{E:vf_edge}
\end{equation}
where $\pd{y_j}$ is shorthand for various partial derivatives $\pd{y_j^l}$ associated to the components of $y_j \in \bbR^{k_j}$ and we use $\pd{z}$ in place of $\pd{y_{n+1}}$.
Since these vector fields will be of interest in both the interior maximal and minimal cases, and since the interior fibration won't play a role, we enumerate boundary hypersurfaces exclusively with positive integers in all cases throughout this section. 
\label{D:edge_str}
\end{defn}

The vector fields of particular interest in the setting of interior maximal manifolds are the following.
\begin{defn}
A \emph{wedge vector field} (called an `iterated incomplete edge' vector field in \cite{ALMP, ALMP2}, with the name `wedge' coming from \cite{GKMwedge} in the depth 1 case) is one of the form $\tfrac 1 {\rho_X} V$, where $V \in \cV_e(X)$ and $\rho_X = \prod_{G \in \M 1(X)} \rho_G$ is a total boundary defining function.
Such a vector field is singular at the boundary faces of $X$, and the set $\wV(X)$ of wedge vector fields does not form a Lie algebra.
Instead the \emph{wedge tangent bundle} may be defined as a rescaling (see \cite{MAPSIT} for a general discussion of this procedure) of $\eT X$ by the requirement
\[
	\rho_X C^\infty(X; \wT X) = C^\infty(X; \eT X).
\]
(See \cite{AGR} for an alternate definition via the cotangent space $\wT^\ast X$.)
Note that $\wV(X)$ and $\wT X$ are independent of the choice of $\rho_X$.
A local frame for $\wT X$ is given in standard form coordinates by
\begin{equation}
	\pd{y_1},\,
	\pd{x_1},\,
	w_1\pd{y_2},\,
	w_1\pd{x_2},\,
	\ldots,\,
	w_{n-1}\pd{y_n},\,
	w_{n-1}\pd{x_n},\,
	w_{n}\pd{z},
	\qquad w_k = \frac 1{x_1\cdots x_k}.
\label{E:vf_wedge}
\end{equation}
\label{D:wedge_str}
\end{defn}

As for $\bT X$, both $\eT X$ and $\wT X$ bundles are canonically isomorphic to the usual tangent bundle over the interior of $X$, and
a \emph{wedge metric} 
on $X$ is then by definition the Riemannian metric on $X^\circ$ induced by uniformly bounded inner product on $\wT X$ which is smooth on the interior and conormal up to the boundary (i.e., all b-derivatives are also uniformly bounded; this is in accord with the usual geometric analysis definition).
%
%
Wedge metrics are incomplete on $X$ (hence the term `incomplete iterated edge metric' of \cite{ALMP, ALMP2}), and contain the iterated conic metrics
studied by Cheeger in \cite{Cheeger} as special cases.
%
%
While a general wedge metric is defined by an arbitrary inner product on $\wT X$, a model wedge metric in standard form coordinates which shows off the iterated conic 
aspect of the geometry (and an example of what is called a `rigid wedge metric' in \cite{ALMP}) is one of the form
\[
	g_X = dy_1^2 + dx_1^2 + x_1^2\big(dy_2^2 + dx_2^2 + \cdots +x_{n}^2(dy_n^2 + dx_n^2 + x_n^2dz^2)\cdots \big)
	= g_{\bfb 1} + d\rho_1^2 + \rho_1^2 \kappa_{\bff 1}
\]
where $dy_i^2$ is shorthand for an inner product in the variables $y_i \in \bbR^{k_i}$, $g_{\bfb 1}$ denotes a Riemannian metric on $\bfb 1$ and $\kappa_{\bff 1}$ denotes
a family of wedge metrics on $\bff 1$.

In contrast, the following structure is of interest particularly in the setting of interior minimal manifolds.

\begin{defn}
The algebra of \emph{quasi fibered boundary (QFB), or simply \phistr\ vector fields} is the set
\[
	\phiV(X) = \set{V \in \eV(X) : V\rho_X \in \rho_X^2C^\infty(X)},
\]
consisting of those edge vector fields which annhilate a total boundary defining function $\rho_X$ to second order \cite{CDR}.
While $\eV(X)$ and $\wV(X)$ only depend on the fibered corners structure, $\phiV(X)$ also depends on the choice of total boundary defining function $\rho_X$ up to equivalence,
where two total boundary defining functions $\rho_X$ and $\rho'_X$ are equivalent if their ratio is basic \cite{CDR}.
In particular, $\phiV(X)$ is well-defined by a choice of fc-equivalence classes of boundary defining functions on $X$.

As with edge vector fields above, $\phiV(X)$ is a Lie subalgebra of $\bV(X)$ and forms a locally free sheaf of constant rank, defining the \emph{\phistr\ tangent bundle} (which is again 
canonically isomorphic to $TX$ in the interior) by
\begin{equation}
	\phiT X \to X, \quad \text{where} \quad \phiV(X) = C^\infty(X; \phiT X).
	\label{E:phi_T}
\end{equation}
In local standard form coordinates a local frame for $\phiT X$ is given by
\begin{equation}
\begin{gathered}
	v_1\pd{y_1},\,
	x_1v_1\pd{x_1}, \,
	v_2\pd{y_2},\,
	v_2\pns{x_2\pd{x_2} - x_1 \pd{x_1}},\,
	\ldots,\,
	v_n\pd{y_n},\,
	v_n\pns{x_n\pd{x_n} - x_{n-1}\pd{x_{n-1}}},\,
	\pd{z},
	\\ v_k = x_k\cdots x_n.
\label{E:vf_phi}
\end{gathered}
\end{equation}
Note that the basis element $v_1 x_1 \tpd{x_1}$ can be replaced by $v_1x_k \tpd{x_k}$ for any $k$ by taking appropriate linear combinations.
\label{D:phi_str}
\end{defn}
A \phistr\ metric on $X$ is by definition a (complete) Riemannian metric on $X^\circ$ induced by a uniformly bounded inner product on $\phiT X$.
Note that such a metric need not be smooth up to the boundary.
When the fibers of the maximal hypersurfaces are trivial, a \phistr\ metric is known as a `quasi-asymptotically conical' (QAC) metric \cite{CDR, DM}.
A particularly nice model \phistr\ metric in local standard form coordinates is one of the form
\[
\begin{gathered}
	\frac{dv_1^2}{v_1^4}  +\frac{1}{v_1^2}\pns{dy_1^2 + dx_1^2 + x_1^2\big(dy_2^2 + dx_2^2 + \cdots +x_{n}^2(dy_n^2 + x_n^2dz^2)\cdots \big)}
	\\ = \frac{dv_1^2}{v_1^4} + \sum_{i=1}^{n-1} \frac{dx_i^2}{v_i^2} + \sum_{i=1}^n \frac{dy_i^2}{v_i^2} + dz^2.
\end{gathered}
\]
where again
$dy_i^2$ is shorthand for an inner product in the variables $y_i \in \bbR^{k_i}$.
Note that such a metric can support additional terms of the form $\tfrac{dv_i^2}{v_i^4}$, and in particular restricts to a \phistr\ metric
on any interior fiber of a boundary hypersurface.

\subsection{Products of \texorpdfstring{\phistr\ }{Phi\ }structures} \label{S:geom_phi}

We first consider the product of \phistr\ structures on interior minimal manifolds $X$ and $Y$ with fibered corners, proving
an isomorphism between $\phiT X \oplus \phiT Y$ and $\phiT (X\ttimes Y)$.
In fact we give two proofs, one based on direct analysis of \phistr-tangent bundles using 
Proposition~\ref{P:phi_splitting} below, which decomposes $\phiT X$ with respect
to the bundle structure of the compressed projection $\tbfib H : \Np H \to C_X(\bfb H)$ of Lemma~\ref{L:main_lemma_normal}.
The second proof is based on an analysis of \phistr\ metrics taken to be in a convenient local form.

The \phistr\ structure on $X\ttimes Y$ is well-defined by the fc-equivalence class of boundary defining functions given by local rational combinations of representative 
boundary defining functions on $X$ and $Y$, as discussed in \S\ref{S:fibcorn_product}.
However, the following characterization of total boundary defining functions on $X\ttimes Y$ is conceptually satisfying.
\begin{prop}
Let $\rho_X$ and $\rho_Y$ be representative total boundary defining functions on $X$ and $Y$. 
Then the reciprocal $p$-sum
\[
	\rho_{X,Y,p} = (\rho_X^{-p} + \rho_Y^{-p})^{-1/p} = 
	\frac{\rho_X\,\rho_Y}{(\rho_X^p + \rho_Y^p)^{1/p}}
\]
is a representative total boundary defining function on $X\ttimes Y$ for any $1 \leq p \in \bbR$.
\label{P:product_tbdf}
\end{prop}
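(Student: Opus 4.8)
The plan is to verify the defining--function property of $\rho_{X,Y,p}$ directly in the local blow-up coordinates furnished by the monoid refinement of $X\ttimes Y$. First I would fix a maximal corner of $X\ttimes Y$; by the proof of Theorem~\ref{T:ord_prod} the boundary hypersurfaces $\tlift{H}{G}$ through it form a totally ordered chain whose associated monoid $M$ is freely generated by the corresponding chain of sums $h_{i_m}+g_{j_m}$, and in the dual blow-up coordinates $(t_1,\dots,t_N)$ the lifts of the given total boundary defining functions are monomials $\rho_X = u_X\prod_m t_m^{a_m}$ and $\rho_Y = u_Y\prod_m t_m^{b_m}$ up to positive smooth factors $u_X,u_Y$. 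Here $a_m=\bd\rho_X(h_{i_m}+g_{j_m})\in\{0,1\}$ equals $1$ exactly when the $X$--component of the $m$-th face is a proper hypersurface, $b_m\in\{0,1\}$ is the analogous exponent for $Y$, and $(a_m,b_m)\neq(0,0)$ since $\tlift{H}{G}$ is a proper face.

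The key observation is that the ratio $w=\rho_X/\rho_Y$, a rational combination with $\bd w(h_{i_m}+g_{j_m})=a_m-b_m\in\{-1,0,+1\}$, takes a determinate sign on each monoid of the refinement: because the generators of $M$ form a monotone chain, a single chart cannot simultaneously meet a face of type $(a_m,b_m)=(1,0)$ and one of type $(0,1)$, so $a_m-b_m$ is everywhere $\geq 0$ or everywhere $\leq 0$ on $M$. By Corollary~\ref{C:lift_proj_comb}, $w$ therefore lifts to a smooth b-map $X\ttimes Y\to[0,\infty]$. Rewriting the definition as $\rho_{X,Y,p}^{-p}=\rho_X^{-p}(1+w^p)$, i.e. $\rho_{X,Y,p}=\rho_X(1+w^p)^{-1/p}$ (or symmetrically $\rho_Y(1+w^{-p})^{-1/p}$), I would use the first form on charts where $w$ is finite and the second on charts where $w^{-1}$ vanishes. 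On such a chart one of $\rho_X,\rho_Y$ is itself a local total boundary defining function, all of its exponents being $1$ there, and since $(1+w^p)^{-1/p}\in(0,1]$ is bounded away from $0$, the product vanishes to exactly first order on each $\tlift{H}{G}$ and is positive in the interior.

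The main obstacle is the regularity of the factor $(1+w^p)^{-1/p}$ where $w$, a product of boundary coordinates, vanishes. When $p\in\bbZ$ this factor is genuinely $C^\infty$, since $w^p$ is smooth and $u\mapsto u^{-1/p}$ is smooth on $u\geq 1$, so $\rho_{X,Y,p}$ is a bona fide smooth total boundary defining function in the fc-equivalence class of Theorem~\ref{T:fibcorn_product}. For general real $p\geq 1$, however, $w^p$ is only conormal at $\{w=0\}$, so $(1+w^p)^{-1/p}$ fails to be $C^\infty$ while remaining positive, bounded above and below, and conormal on each chart; thus $\rho_{X,Y,p}$ agrees with the canonical smooth total boundary defining function up to a positive bounded factor, which is the sense in which it represents the class and defines the same QFB structure $\phiV(X\ttimes Y)$ of Definition~\ref{D:phi_str}. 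I would close by noting that, since the order of blow-up is immaterial and the whole construction is manifestly symmetric in $X$ and $Y$, the local forms glue to a well-defined global function independent of the chosen chart, completing the argument.
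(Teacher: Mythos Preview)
Your approach is essentially the same as the paper's: both observe that in any maximal chart the ratio $w=\rho_X/\rho_Y$ has determinate sign on the associated monoid (equivalently, a chart cannot meet both a face of type $\tlift{H}{Y}$ and one of type $\tlift{X}{G}$), that consequently one of $\rho_X$ or $\rho_Y$ already serves as a local total boundary defining function, and that $\rho_{X,Y,p}$ differs from it by a function of $w$ alone. The paper expresses this last step by writing $\rho_X/\rho_{X,Y,p}=\bigl((\rho_X/\rho_Y)^p+1\bigr)^{1/p}$ and asserting that this is ``a smooth function of $\rho_X/\rho_Y$, hence basic''; it then concludes fc-equivalence directly, rather than going through the explicit monomial form of $\rho_X,\rho_Y$ in blow-up coordinates as you do.

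You are right to flag the regularity issue for non-integer $p$: the map $u\mapsto(1+u^p)^{1/p}$ has an asymptotic expansion $1+\tfrac{1}{p}u^p+O(u^{2p})$ at $u=0$ and is therefore not $C^\infty$ there when $p\notin\bbZ$, so $\rho_{X,Y,p}$ genuinely fails to be a smooth boundary defining function at faces of type $\tlift{H}{Y}$ or $\tlift{X}{G}$ in that case. The paper's proof glosses over this point. Since the only application in the paper is $p=2$ (in the alternate proof of the QFB metric theorem), your observation does not affect anything downstream; you should regard the proposition as stated for integer $p\geq 1$, with your conormal interpretation as the appropriate weakening for general real $p$.
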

\begin{proof}
Consider a neighborhood in $X\ttimes Y$ of maximal depth, meeting a maximal totally ordered chain of boundary hypersurfaces, 
which without loss of generality (exchanging the role of $X$ and $Y$ if necessary) contains hypersurfaces of the form $\tlift{G}{Y}$ but none of the form $\tlift{X}{H}$.
Denote by $\rho_G$ and $\rho'_H$ representative boundary defining functions for $G \in \M 1(X)$ and $H \in \M 1 (Y)$, respectively,
and denote by $g = \rho_G \pa_{\rho_G}$ and $h = \rho'_H \pa_{\rho'_H}$ the generators of the associated monoids.

We claim that in such a neighborhood, $\rho_X = \prod_{G \in \M 1(X)} \rho_G$ is a
representative total boundary defining function on $X\ttimes Y$, while $\rho_X/\rho_Y = \prod_G
\rho_G \prod_H {\rho'}_H^\inv$ is basic.
Indeed, both are rational combinations, and $\bd \rho_X$ pairs to $1$ with every generator $g + h$ or $g + 0$ of
the associated monoid characterizing this neighborhood of $X\ttimes Y$, so it is locally a total boundary defining function.
Likewise, $\bd (\rho_X/\rho_Y)$ pairs to $1$ with every generator of the form $h + 0$ and to $0$ with every generator of the form $g + h$, so it is locally
the product of boundary defining functions for faces of the form $\tlift{G}{Y}$.
On any such face $\rho_X/\rho_Y$ vanishes identically, while at any face of the form $\tlift{G'}{H}$ with $Y < H$, the boundary defining functions for $\tlift{G}{Y}$ 
are all basic since $\tlift{G}{Y} < \tlift{G'}{H}$, thus $\rho_X/\rho_Y$ is basic.

The result now follows from the observation that
\[
	\frac{\rho_X}{\rho_{X,Y,p}} = \frac{(\rho_X^p + \rho_Y^p)^{1/p}}{\rho_Y} = \pns{\bpns{\tfrac{\rho_X}{\rho_Y}}^{p} + 1}^{1/p}
\]
is a smooth function of $\rho_X/\rho_Y$, hence basic, so $\rho_X$ and $\rho_{X,Y,p}$ are locally equivalent for any $p$.
\end{proof}

\begin{prop}
Let $G \in \M 1(X)$ where $X$ is an interior minimal manifold with fibered corners, with normal model $\Np G$. 
Then the differential of the compressed projection $\tbfib G : \Np G \to C_X(\bfb G) = C_\tmin(\bfb G)$ of Lemma~\ref{L:main_lemma_normal} extends by continuity from the interior to a short exact sequence
\begin{equation}
	0 \to \phiT \bff G \to \phiT \Np G \to \phiT C_\tmin(\bfb G) \to 0
	\label{E:phiT_exact_seq}
\end{equation}
of vector bundles over $G\times 0$.
\label{P:phi_splitting}
\end{prop}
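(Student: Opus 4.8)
The plan is to reduce to the interiors, where the statement is classical, and then verify the extension to the boundary by an explicit computation in the local $\phi$-frame \eqref{E:vf_phi}. Since $X$ is interior minimal, every $H \in \M 1(X)$ satisfies $H > X$, so $\tbfib H = \wt\pr_{C_X(\bfb H)}$ is the fibered corners morphism of Lemma~\ref{L:main_lemma_normal}, restricting over the interiors to a surjective submersion $\Np H^\circ \to C_X(\bfb H)^\circ$. On the interiors $\phiT$ agrees with the ordinary tangent bundle, so the usual differential already gives the short exact sequence $0 \to T(\mathrm{fibre}) \to T\Np H \to \tbfib H^\ast T C_X(\bfb H) \to 0$ there, with $T(\mathrm{fibre})$ agreeing with $\phiT\bff H$. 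It then remains to show that the $\phi$-differential of $\tbfib H$ extends by continuity from the interior to a bundle map $\phiT \Np H \to \tbfib H^\ast \phiT C_X(\bfb H)$ which is fibrewise surjective with kernel the vertical bundle $\phiT \bff H$.

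To do this I would work in standard-form coordinates near a point of maximal depth. Writing $H = H_i$ in a maximal chain $X < H_1 < \cdots < H_n$, the normal model carries coordinates $(y_1, x_1, \ldots, x_{i-1}, y_i, t, y_{i+1}, x_{i+1}, \ldots, x_n, y_{n+1})$, where $t$ is the defining function of the principal face $H_i$, the $x_1, \ldots, x_{i-1}$ define the base faces ($H' < H$) and the $x_{i+1}, \ldots, x_n$ the fibre faces ($H' > H$), in accordance with Proposition~\ref{P:induced_fc}. The base $C_X(\bfb H)$ then has coordinates $(y_1, x_1, \ldots, x_{i-1}, y_i, s)$ with cone variable $s = \rho_{\geq H} = t\, x_{i+1}\cdots x_n$, and $\tbfib H$ is the map $(b,f,t) \mapsto (b,s)$. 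The essential bookkeeping is a pair of identities: the $\phi$-rescaling factors $v_k = x_k\cdots x_{i-1}\, t\, x_{i+1}\cdots x_n$ of $\Np H$ coincide for $k \le i$ with the rescaling factors $u_k = x_k\cdots x_{i-1}\, s$ of $C_X(\bfb H)$ --- in particular $v_i = s$ --- precisely because $s = t\,x_{i+1}\cdots x_n$; while for $k > i$ they coincide with the rescaling factors $x_k\cdots x_n$ of the fibre $\bff H$.

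Granting these, the proof is a direct inspection of the frame \eqref{E:vf_phi}. Under $d\tbfib H$ one has $\pd{y_j}\mapsto \pd{y_j}$ and $x_j\pd{x_j}\mapsto x_j\pd{x_j}$ for the base directions $j < i$, $t\pd t \mapsto s\pd s$, and $x_j\pd{x_j}\mapsto s\pd s$ as well as $\pd{y_j},\,\pd{y_{n+1}}\mapsto 0$ for the fibre directions $j > i$. Consequently the horizontal frame elements $v_k\pd{y_k}$ ($k \le i$), $v_1 x_1\pd{x_1}$, the radial combinations $v_k(x_k\pd{x_k} - x_{k-1}\pd{x_{k-1}})$ for $2 \le k \le i-1$, and $v_i(t\pd t - x_{i-1}\pd{x_{i-1}})$ map precisely onto the full $\phi$-frame of $C_X(\bfb H)$ (using $v_k = u_k$ and $v_i = s$), giving surjectivity and smoothness of the extension. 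The remaining vertical elements $v_k\pd{y_k}$ ($k > i$), $v_{i+1}(x_{i+1}\pd{x_{i+1}} - t\pd t)$, the radial combinations for $k \ge i+2$, and $\pd{y_{n+1}}$ all map to $0$ and span the kernel; restricted to a single fibre of $\tbfib H$ they are exactly the $\phi$-frame of $\bff H$. The splitting of the frame yields exactness, and since every image is a smooth section of $\phiT C_X(\bfb H)$ up to the boundary, the $\phi$-differential extends continuously, as required; charts meeting fewer hypersurfaces are special cases of the same computation.

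The step I expect to be the main obstacle is the identification of the kernel with $\phiT\bff H$. Because $\tbfib H$ is the \emph{compressed} projection, its fibres are not the coordinate slices $\{t = \mathrm{const}\}$ but the twisted loci $\{t\, x_{i+1}\cdots x_n = \mathrm{const}\}$, along which $t$ scales like $(x_{i+1}\cdots x_n)^{-1}$. Hence the genuine fibre-tangent direction at the innermost fibre level is not $x_{i+1}\pd{x_{i+1}}$ but the combination $x_{i+1}\pd{x_{i+1}} - t\pd t$ --- which is exactly the $\phi$-frame vector appearing above, and which restricts on each fibre to the first radial $\phi$-vector field of $\bff H$. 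Checking this compatibility, namely that the compression is precisely what makes the vertical $\phi$-bundle coincide with $\phiT\bff H$, is the crux of the argument and explains why the compressed, rather than the naive, projection appears in the statement.
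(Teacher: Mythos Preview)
Your proposal is correct and follows essentially the same approach as the paper: both verify the sequence by a direct computation in standard-form local coordinates, pushing forward the $\phi$-frame \eqref{E:vf_phi} under $\tbfib H$ and checking that the horizontal elements hit the $\phi$-frame of $C_X(\bfb H)$ while the vertical ones span $\phiT\bff H$. Your discussion of why the kernel element $v_{i+1}(x_{i+1}\pd{x_{i+1}} - t\pd t)$ restricts on a fibre to the first radial $\phi$-vector of $\bff H$ is in fact more explicit than the paper's, which simply asserts the identification of the kernel with a basis for $\phiT\bff H$ without elaborating on the twisting.
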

\begin{proof}
We verify this in standard form local coordinates 
assuming $G = G_k$,
in which case $\tbfib k$ takes the form
\[
	(y_1,x_1,y_2,x_2, \ldots, y_k, x_k, \ldots, y_n,x_n, z) \mapsto (y_1,x_1,\ldots,y_k,\, t)
	\quad t = x_k\cdots x_n.
\]
By direct computation, the basis \eqref{E:vf_phi} 
transforms as follows:
\[
\begin{aligned}
	{x_1\cdots x_n} x_1\npd{x_1} &\mapsto {x_1\cdots x_{k-1}t} x_1\npd{x_1},	
	\\ {x_i \cdots x_{n}} \npd{y_i} &\mapsto {x_i \cdots x_{k-1}t} \npd{y_i}, \quad i \leq k
	\\{x_i \cdots x_{n}}(x_i\npd{x_i} - x_{i-1}\npd{x_{i-1}})  &\mapsto {x_i \cdots x_{k-1}}t(x_i\npd{x_i} - x_{i-1}\npd{x_{i-1}}),
	   \quad i < k,
	\\{x_k\cdots x_n}(x_k \npd{x_k} - x_{k-1}\npd{x_{k-1}}) &\mapsto t(t \npd{t} - {x_{k-1}}\npd{x_{k-1}}),
	\\ {x_{k+1}\cdots x_n} (x_{k+1}\npd{x_{k+1}} - x_k \npd{x_k}) &\mapsto  0,
	\\ {x_j\cdots x_{n}} \npd{y_{j}} &\mapsto 0,
	\quad j \geq k+1,
	\\ {x_j\cdots x_{n}}(x_{j}\npd{x_j} - x_{j-1}\npd{x_{j-1}}) &\mapsto 0,
	\quad j \geq k+2,
	\\ \npd z &\mapsto 0.
\end{aligned}
\]
The first four lines of the above constitute a basis for $\phiT C_\tmin(\bfb {k})$, with coordinates $(y_1,x_1,\ldots, x_{k-1},y_k,t)$ while the rest, spanning the kernel, constitute
a basis for $\phiT \bff {k}$, with coordinates $(y_{k+1}, \ldots, y_n,x_n,z)$.
\end{proof}

\begin{thm}
If $X$ and $Y$ are interior minimal fibered corners,
then the lifted projection maps $\pi_X : X\ttimes Y \to X$ and $\pi_Y : X\ttimes Y \to Y$ 
induce an isomorphism
\[
	\phiT (X\ttimes Y) \cong \pi_X^\ast \phiT X \oplus \pi_Y^\ast \phiT Y.
\]
In particular, if $g_X$ and $g_Y$ are \phistr-metrics on $X$ and $Y$, then $\pi_X^\ast g_X \oplus \pi_Y^\ast g_Y$ is a \phistr-metric on $X\ttimes Y$.
\label{T:phi_metric}
\end{thm}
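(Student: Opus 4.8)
The plan is to exhibit the asserted isomorphism as the restriction to the $\phi$-bundles of the map assembled from the two lifted projections, and then to verify the isomorphism property by reducing to the interior (where it is transparent) together with a fiberwise check over the boundary using the normal models and Proposition~\ref{P:phi_splitting}. Concretely, since $\pi_X$ and $\pi_Y$ are b-maps, their b-differentials assemble into a bundle map over $X\ttimes Y$,
\[
	\Psi = \bigl(\bd(\pi_X)_\ast,\, \bd(\pi_Y)_\ast\bigr) : \bT(X\ttimes Y) \to \pi_X^\ast \bT X \oplus \pi_Y^\ast \bT Y .
\]
Over the interior $X^\circ\times Y^\circ$ this is the canonical splitting $T(X^\circ\times Y^\circ) = TX^\circ\oplus TY^\circ$, under which all three $\phiT$ bundles reduce to ordinary tangent bundles; hence $\Psi$ is already an isomorphism there, and, the ranks agreeing as $\dim(X\ttimes Y) = \dim X + \dim Y$, everything comes down to the behavior of $\Psi$ up to and across the front faces of the blow-up defining $X\ttimes Y$.

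First I would show $\Psi$ restricts to a bundle map $\phiT(X\ttimes Y) \to \pi_X^\ast\phiT X \oplus \pi_Y^\ast\phiT Y$, i.e.\ that it carries $\phi$-vector fields to $\phi$-vector fields. Here the representative total boundary defining function on $X\ttimes Y$ supplied by Proposition~\ref{P:product_tbdf}, the reciprocal $p$-sum $\rho_{X,Y,p}$, does the real work: because $\rho_X/\rho_{X,Y,p} = \bigl((\rho_X/\rho_Y)^p+1\bigr)^{1/p}$ is a smooth function of the basic ratio $\rho_X/\rho_Y$, I can write $\pi_X^\ast\rho_X = \psi\,\rho_{X,Y,p}$ with $\psi$ basic and positive. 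For $V\in\phiV(X\ttimes Y)$ one then has $V(\pi_X^\ast\rho_X) = \psi\,V\rho_{X,Y,p} + \rho_{X,Y,p}\,V\psi$, where $V\rho_{X,Y,p}\in\rho_{X,Y,p}^2 C^\infty$ by hypothesis and $V\psi\in\rho_{X,Y,p} C^\infty$ since $V$ is edge (tangent to the boundary fibers) while $\psi$ is basic; thus $V(\pi_X^\ast\rho_X)\in\rho_X^2 C^\infty$, which is exactly the $\phi$-condition on the $X$-factor, and symmetrically for $Y$. The edge part of the condition is preserved because $\pi_X$ and $\pi_Y$ are fibered morphisms. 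This verification, with its bookkeeping of how the rescalings $v_i = x_i\cdots x_n$ for $X$, their analogues for $Y$, and the mixed factor $\psi$ recombine at the front faces, is the step I expect to be the main obstacle.

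Second, I would show $\Psi$ is a fiberwise isomorphism over each boundary hypersurface $\tlift H G$. As $X$ and $Y$ are interior minimal, every proper face has $(H,G)>(X,Y)$ and fibers as $\bff H\ttimes\bff G \to \tlift H G \to \bfb H\jtimes_\tmin\bfb G$ by \eqref{E:intro_bdy_fib_rev}. Passing to the normal models $\Np H$, $\Np G$ via the canonical diffeomorphism $\tlift H G\cong\tlift{H_0}{G_0}$, I would apply Proposition~\ref{P:phi_splitting} to the compressed projections $\tbfib H:\Np H\to C_X(\bfb H)$ and $\tbfib G:\Np G\to C_Y(\bfb G)$ of Lemma~\ref{L:main_lemma_normal} to split $\phiT$ along the fiber/base decomposition induced by $\tbfib H\ttimes\tbfib G$. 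Since the lifted projections commute with this decomposition, the fiberwise isomorphism for $\Psi$ reduces to the corresponding statements on the fiber factor $\bff H\ttimes\bff G$ and on the base cone-join $\bfb H\jtimes_\tmin\bfb G$ separately, each of lower depth and handled by induction. Alternatively, as a second self-contained proof, one may compute directly in the rational standard-form coordinates on $X\ttimes Y$ produced as in the proof of Theorem~\ref{T:prod_resolves_prod} and check that the frame \eqref{E:vf_phi} for $\phiT(X\ttimes Y)$ maps under $\Psi$ to a frame for $\pi_X^\ast\phiT X\oplus\pi_Y^\ast\phiT Y$.

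Finally, the metric statement is an immediate corollary of the bundle isomorphism. Given inner products $g_X$ on $\phiT X$ and $g_Y$ on $\phiT Y$, the fiberwise direct sum $\pi_X^\ast g_X \oplus \pi_Y^\ast g_Y$ is a smooth, uniformly bounded inner product on $\pi_X^\ast\phiT X\oplus\pi_Y^\ast\phiT Y\cong\phiT(X\ttimes Y)$ which restricts over the interior to the Riemannian sum $\pi_X^\ast g_X+\pi_Y^\ast g_Y$; by definition, therefore, the lift of $g_X+g_Y$ is a \phistr-metric on $X\ttimes Y$.
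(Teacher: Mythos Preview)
Your approach follows the paper's first proof in its overall architecture (induction on depth combined with Proposition~\ref{P:phi_splitting}), and your opening verification that $\Psi$ carries $\phiV(X\ttimes Y)$ into the pullback $\phi$-bundles via the reciprocal $p$-sum total boundary defining function of Proposition~\ref{P:product_tbdf} is a nice explicit complement to the paper's more constructive route. However, your inductive step has a gap. Working at a general hypersurface $\tlift H G$ with both $H\in\M 1(X)$ and $G\in\M 1(Y)$ proper, the short exact sequence of Proposition~\ref{P:phi_splitting} does reduce the fiber side to $\phiT(\bff H\ttimes\bff G)\cong\phiT\bff H\oplus\phiT\bff G$, which is covered by induction; but on the base side you are left comparing $\phiT C(\bfb H\jtimes_\tmin\bfb G)$ with $\phiT C(\bfb H)\oplus\phiT C(\bfb G)$, and the join $\bfb H\jtimes_\tmin\bfb G$ is not an ordered product of interior minimal manifolds, so the inductive hypothesis (which is the theorem itself, stated only for ordered products) does not apply to it. Passing instead to the cone product $C(\bfb H)\ttimes C(\bfb G)$ does not help either, since this need not have strictly lower depth than $X\ttimes Y$ (take $H$ and $G$ maximal).

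The paper sidesteps this by a more careful choice of hypersurface. A point of maximal depth in $X\ttimes Y$ lies in some \emph{minimal} hypersurface, and since $X$ and $Y$ are interior minimal this is necessarily of the form $\tlift H Y$ (or $\tlift X G$) with $H$ minimal in $\M 1(X)$, so $\bfb H$ is a closed manifold of depth zero. The compressed projection $\Np\tlift H Y\to C(\bfb H)$ then has fiber $\bff H\ttimes Y$, an honest ordered product of strictly lower depth, and one can arrange $\pi_X^\ast\rho_{\geq H}=\rho_{\geq\tlift H Y}$ so that the base map to $C(\bfb H)$ is literally the identity. The sequence of Proposition~\ref{P:phi_splitting} on $\Np\tlift H Y$ thus maps compatibly to that on $\Np H$ with the base cone matched, the induction applies cleanly to the single fiber factor $\bff H\ttimes Y$, and the result assembles as
\[
\phiT(\Np\tlift H Y)\cong\phiT(\bff H\ttimes Y)\oplus\phiT C(\bfb H)\cong\bigl(\phiT\bff H\oplus\phiT Y\bigr)\oplus\phiT C(\bfb H)\cong\phiT\Np H\oplus\phiT Y.
\]
Your argument becomes correct with this single adjustment: perform the inductive step at a minimal hypersurface rather than at a generic $\tlift H G$.
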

\begin{proof}
The isomorphism is unique if it exists, since it extends the canonical isomorphism $T(X^\circ \times Y^\circ) \cong \pi_X^\ast TX^\circ \oplus \pi_Y^\ast TY^\circ$ over the interior.
To show existence, we proceed by induction on depth of $X\ttimes Y$, starting with the isomorphism $T(X\times Y) \cong TX\oplus TY$ in depth 0.

In general, a point of maximal depth in $X\ttimes Y$ lies in an intersection of boundary hypersurfaces which must include a minimal hypersurface, without loss of generality of the form $\tlift G Y$, 
where $G$ is a minimal boundary hypersurface of $X$; in particular $\bfb G$ has depth 0.
%
%
The projection $X\ttimes Y \to X$ induces via Lemma~\ref{L:main_lemma_normal} a diagram
\[
\begin{tikzcd}[sep=small]
	\Np \tlift{G}{Y} \ar[r]\ar[d] &\Np G \ar[d]
	\\ C(\bfb G) \ar[r] & C(\bfb G)
\end{tikzcd}
\]
where $C = C_\tmin$,
which we can arrange to be the identity along the bottom row since we may take $\pi_X^\ast \rho_{\geq G} = \rho_{\geq \tlift{G}{Y}}$, 
and the restriction to the fibers over the cone end
of which is the projection $\bff G \ttimes Y \to \bff G$.
By induction the projections induce an isomorphism $\phiT (\bff G \ttimes Y) \cong \phiT \bff G \oplus \phiT Y$ (since $\bff G \ttimes Y$ has strictly lower depth),
and in light of Proposition~\ref{P:phi_splitting} we obtain a diagram of short exact sequences
\[
\begin{tikzcd}
	0 \ar[r] & \phiT (\bff G \ttimes Y) \ar[r] \ar[d,"\cong"] & \phiT \Np \tlift{G}{Y} \ar[r] & \phiT C(\bfb G) \ar[d,equal] \ar[r] & 0
	\\ 0 \ar[r] & \phiT \bff G \oplus \phiT Y \ar[r] & \phiT \Np G \oplus \phiT Y \ar[r] & \phiT C(\bfb G) \ar[r] & 0
\end{tikzcd}
\]
in which the left and right maps are isomorphisms, inducing an isomorphism in the middle,
leading to 
the desired isomorphism
upon identifying the normal models $\Np \tlift G Y$ and $\Np G$ with neighborhoods in $X\ttimes Y$ and $X$, respectively.
\end{proof}

\begin{proof}[Alternate proof of Theorem~\ref{T:phi_metric}]
For the second proof, we use the characterization of $\phiT X$ as the unique vector bundle (up to isomorphism) which extends $TX^\circ$ and to which any \phistr\ metric
$g_X$ on $X^\circ$ extends to be nondegenerate and nonsingular.
We take advantage of the freedom to choose $g_X$ to be in a particularly convenient form.

When $X$ or $Y$ is of depth $0$, the result is trivial, and proceeding by induction on the depth of $X\ttimes Y$, we may assume that $X$ and $Y$ are of positive depth.
Letting $G$ and $H$ be minimal boundary hypersurfaces of $X$ and $Y$, it suffices by induction to prove the result near the lift of $G\times H$.  
Working locally,
we may assume $\phi_G$ and $\phi_H$ are trivial, and choose the defining \phistr\ metrics $g_X$ and $g_Y$ to have the simple form
\begin{equation}
\begin{aligned}
	g_X&= \frac{d\rho_X^2}{\rho_X^4}+ \frac{g_{\bfb G}}{\rho_X^2} + \kappa_G,  \quad \mbox{in the region} \quad \rho_X<\rho_{>G}, \\
	g_Y&= \frac{d\rho_Y^2}{\rho_Y^4}+ \frac{g_{\bfb H}}{\rho_Y^2} + \kappa_H,  \quad \mbox{in the region}\quad \rho_Y<\rho_{>H},\end{aligned}
\end{equation}
where $g_{\bfb G}$ and $g_{\bfb H}$ are smooth metrics on $\bfb G$ and $\bfb H$, while $\kappa_G$ and $\kappa_H$ are \phistr\ metrics associated to 
$\bff G$ and $\bff H$ with total boundary defining functions $\rho_{>G} = \prod_{G' > G} \rho_{G'}$ and $\rho_{>H}$.  
Since $\rho_G= \frac{\rho_X}{\rho_{>G}}$ is a boundary defining function for $G$, the condition $\rho_X<\rho_{>G}$ is equivalent to $\rho_G<1$.  
With $u_X = \rho_X^\inv$ and $u_Y = \rho_Y^\inv$, this suggests using the polar coordinates $r = \sqrt{u_X^2 + u_Y^2}$,
$\theta= \arctan \frac{u_X}{u_Y} = \arctan \frac{\rho_Y}{\rho_X}$, 
leading to the total boundary defining function $\rho := \rho_{X,Y,2} = r^\inv = (\rho_X^{-2} + \rho_Y^{-2})^{-1/2}$ on $X\ttimes Y$, with 
respect to which
the cartesian product $g$ of $g_X$ and $g_Y$ takes the form
\begin{equation}
\begin{aligned}
	g &= dr^2 + r^2\left( d\theta^2+ \sin^2\theta g_{\bfb G}+ \cos^2\theta g_{\bfb H} \right) + \kappa_G + \kappa_H \\
   	  &= \frac{d\rho^2}{\rho^4}+ \frac{1}{\rho^2}\left(d\theta^2+ \sin^2\theta g_{\bfb G}+ \cos^2\theta g_{\bfb H}\right)+ \kappa_G+ \kappa_H 
	\quad \mbox{in the region} \quad \rho< (\rho_{> G}^{-2} + \rho_{> H}^{-2})^{-1/2}
\end{aligned}
\label{qfb.2}
\end{equation} 
on $\nU{\tlift G H} \cong (\bfb G\jtimes \bfb H)\times (\bff G\ttimes \bff H) \times [0,1)_{\rho}$.
By induction, the cartesian product $\kappa_G+ \kappa_H$ is a \phistr\ metric associated to the manifold with fibered corners $\bff G\ttimes \bff H$ with total boundary defining function $(\rho_{>G}^{-2} + \rho_{>H}^{-2})^{-1/2}$, 
while $d\theta^2 + \sin^2 \theta g_{\bfb G} + \cos^2 \theta g_{\bfb H}$ is manifestly a wedge metric on $\bfb G\jtimes \bfb H = \bfb G \times \bfb H \times [0,\pi/2]_\theta$.
It follows that \eqref{qfb.2} is locally a \phistr\ metric on $X\ttimes Y$ near $\tlift G H$ with respect to the total boundary defining function $\rho$, 
and hence by induction, is a \phistr\ metric globally on $X\ttimes Y$.
\end{proof}


\subsection{Products of wedge structures} \label{S:geom_wedge}

We now turn to the product statement for wedge structures on interior maximal manifolds, in the form of Theorem~\ref{T:wedge_metric} below.
An initial attempt along the lines of the previous section would begin by observing that the map $\bfib G : \Np G \to \bfb G$
induces a short exact sequence
\[
	0 \to \rho_{<G}^\inv \wT C(\bff G) \to \wT \Np G \to \wT \bfb G \to 0
\]
which while true, turns out not to be immediately useful since it is not obvious that the lifted projections off of $X\ttimes Y$ implement the 
correct rescalings necessary to identify the rescaled bundles $\rho_{<\tlift{G}{H}}^\inv \wT \big(C(\bff G) \ttimes C(\bff H)\big)$ with $\rho_{<G}^\inv \wT C(\bff G)$ and
$\rho_{<H}^\inv \wT C(\bff H)$.

Instead, we implement a dual approach involving a somewhat unnatural map from $\Np G$ to $C(\bff G)$, assuming $G = \bff G \times \bfb G$ is a product.
Since $G$ is a product locally, and since we need not be concerned with uniqueness---the desired isomorphism $\wT (X\ttimes Y) \cong \wT X \oplus \wT Y$ being the unique extension of $T(X^\circ\times Y^\circ) \cong \pi_X^\ast TX^\circ \times \pi_Y^\ast TY^\circ$ as long as it exists---this unnaturality is tolerable.

\begin{prop}
Let $G = \bff G \times \bfb G$ be a product-type hypersurface of an interior maximal manifold $X$ with fibered corners. 
Then the compressed projection 
\[
	\Np G \to C_\tmax(\bff G),
	\quad v \mapsto (\pi_{\bff G}(v), \rho_{\leq G}(v)), \qquad \rho_{\leq G} = \prod_{G' \leq G} \rho_{G'}
\]
induces a short exact sequence
\[
	0 \to \wT \bfb G \to \wT \Np (\bff G \times \bfb G) \to \wT C_\tmax(\bff G) \to 0
\]
over the 0-section $G \subset \Np G$.
\label{P:wT_exact_seq}
\end{prop}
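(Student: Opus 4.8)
The plan is to verify the sequence directly in local standard-form coordinates and then observe that the resulting identification is frame-independent. By Proposition~\ref{P:induced_fc} and Corollary~\ref{C:str_coords} I would fix standard coordinates $(y_1,x_1,\dots,x_{k-1},y_k,\,t,\,y_{k+1},x_{k+1},\dots,x_n,y_{n+1})$ on $\Np H$ with $H=H_k$, in which $(y_1,x_1,\dots,y_k)$ coordinatize $\bfb H$, $(y_{k+1},\dots,y_{n+1})$ coordinatize $\bff H$, and $t$ is the normal coordinate of the $0$-section $H=\{t=0\}$. In these coordinates $\rho_{\leq H}=t\,x_1\cdots x_{k-1}$ is exactly the cone variable $s$ on $C_\tmax(\bff H)=\bff H\times[0,\infty)_s$, whose own standard coordinates are $(s,y_{k+1},x_{k+1},\dots,y_{n+1})$ with $s$ the minimal defining function of the tip $\bff H\times\{0\}$, so that the compressed projection \eqref{E:compressed_projection} reads $p=\pi_{\bff H}\times\rho_{\leq H}\colon(y,x,t,\dots)\mapsto(y_{k+1},\dots,y_{n+1},\,s)$.

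The heart of the argument is to apply $dp$ to the wedge frame \eqref{E:vf_wedge} of $\Np H$ and compare with that of $C_\tmax(\bff H)$. I would record that the weights $w_\bullet=1/(x_1\cdots x_\bullet)$ occurring in \eqref{E:vf_wedge} cancel precisely against the monomial $s=t\,x_1\cdots x_{k-1}$, so that the normal frame vector satisfies $w_{k-1}\pd{t}\mapsto\pd{s}$ and the fiber frame vectors map bijectively onto the fiber part of the wedge frame of $C_\tmax(\bff H)$; together these span $\wT C_\tmax(\bff H)$ along its tip, which is the quotient $\wT\bff H$ of the stated sequence. By contrast each base frame vector $\pd{y_1},\pd{x_1},\dots,w_{k-1}\pd{y_k}$ is sent to a multiple of $t\,\pd{s}$. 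This shows first that $dp$ extends from the interior to a smooth surjective bundle map $\wT\Np H\to\wT C_\tmax(\bff H)$ covering $p$, and second---and this is exactly the reason for restricting to the $0$-section---that over $\{t=0\}$ the images of the base vectors vanish identically, so that $\ker(dp)\rst_H$ is spanned precisely by the base wedge frame. The latter is canonically $\wT\bfb H$, identified via the differential of the base projection $\Np H\to\bfb H$; away from the $0$-section the kernel instead mixes $\pd{t}$ with the base $x$-directions and is \emph{not} $\wT\bfb H$. Exactness of $0\to\wT\bfb H\to\wT_0\Np H\to\wT\bff H\to0$ then follows by a dimension count from the explicit images.

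The final step is to check naturality and independence of choices: the sub-bundle is intrinsically $\ker(dp)\rst_H$ and the quotient is the image of $dp$, while replacing the chosen defining functions $\{\rho_{H'}\}$ alters $\rho_{\leq H}$ only by a basic positive factor, hence $p$ only by an automorphism of $C_\tmax(\bff H)$ as in Lemma~\ref{L:compressed_projection}, under which the sequence transforms compatibly. Since $H$ is a product-type hypersurface, the product coordinates above cover it and the resulting sequence is coordinate-independent. I expect the only genuine obstacle to be the bookkeeping in the frame computation---confirming that the singular rescalings in \eqref{E:vf_wedge} cancel against the weights in $\rho_{\leq H}$ so that $dp$ is an honest nonsingular map of the rescaled bundles---together with keeping straight why surjectivity with kernel exactly $\wT\bfb H$ is a phenomenon of the $0$-section alone; once the images of the frame are in hand, both assertions are immediate.
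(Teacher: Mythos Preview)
Your approach is correct and essentially identical to the paper's: both compute the pushforward of the local wedge frame \eqref{E:vf_wedge} under the compressed projection in standard-form coordinates and read off kernel and image over the zero-section. The paper simply takes $x_k$ itself as the normal coordinate (so the cone variable is $t = x_1\cdots x_k$) rather than introducing a separate $t$, and omits your closing remarks on naturality, but the substantive computation is the same; note that in both the paper's proof and your frame count the quotient is actually $\wT C_\tmax(\bff H)$ along the tip, of rank $\dim \bff H + 1$, so the ``$\wT \bff H$'' in the displayed sequence should be read as shorthand for this.
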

\begin{proof}
We compute in standard form local coordinates supposing $G = G_k$, in which the map is given by 
\[
	(y_1,x_1,\ldots,y_k,x_k,y_{k+1},\ldots,y_n,x_n,z) \mapsto (t,y_{k+1},x_{k+1},\ldots, y_n,x_n,z)
	\quad t = x_1\cdots x_k.
\]
The basis \eqref{E:vf_wedge} is transformed as follows:
\[
\begin{aligned}
	\tfrac 1{x_1\cdots x_{j-1}} \npd{x_j} &\mapsto  
		x_{j+1}\cdots x_n \pa_t, \quad j \leq k-1,
	\\\tfrac 1{x_1\cdots x_{j-1}} \npd{y_j} &\mapsto 0, 
		\quad j \leq k,
	\\ \tfrac 1{x_1\cdots x_{k-1}} \npd{x_k} &\mapsto \npd t,
	\\ \tfrac 1{x_1\cdots x_{j-1}} \npd{x_j} &\mapsto \tfrac 1{tx_{k+1}\cdots x_{j-1}} \npd{x_j}, \quad j \geq k+1,
	\\ \tfrac 1{x_1\cdots x_{j-1}} \npd{y_j} &\mapsto \tfrac 1{tx_{k+1}\cdots x_{j-1}} \npd{y_j}, \quad j \geq k+1,
	\\ \tfrac 1{x_1\cdots x_{n}} \npd{z} &\mapsto \tfrac 1{tx_{k+1}\cdots x_{n}} \npd{z}.
\end{aligned}
\]
The right hand sides of the first two lines, constituting the kernel of the map over $G_{k} = \set{x_k=0}$, form a basis for $\wT \bfb k$
while the right hand side of the remaining lines form a basis for $\wT C_\tmax(\bff G)$.
\end{proof}

\begin{thm}
For $X$ and $Y$ interior maximal manifolds with fibered corners, the lifted projections induce an isomorphism
\begin{equation}
	\wT(X\ttimes Y) \cong \pi_X^\ast \wT X\oplus \pi_Y^\ast \wT Y,
	\label{E:wedge_tangent_prod}
\end{equation}
In particular, if $g_X$ and $g_Y$ are wedge metrics on $X$ and $Y$,
respectively, then $\pi_X^\ast g_X \oplus \pi_Y^\ast g_Y$ is a wedge metric on
$X\ttimes Y$.
\label{T:wedge_metric}
\end{thm}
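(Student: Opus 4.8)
The plan is to prove the bundle isomorphism \eqref{E:wedge_tangent_prod} directly and then read off the metric statement, exactly as in the first proof of Theorem~\ref{T:phi_metric} but with Proposition~\ref{P:wT_exact_seq} playing the role of Proposition~\ref{P:phi_splitting}. Granting \eqref{E:wedge_tangent_prod}, the metric assertion is immediate: if $g_X$ and $g_Y$ are wedge metrics, i.e.\ inner products on $\wT X$ and $\wT Y$, then $\pi_X^\ast g_X\oplus\pi_Y^\ast g_Y$ is an inner product on $\wT(X\ttimes Y)$ restricting to the product metric over the interior, hence a wedge metric lifting $g_X+g_Y$. Note also that \eqref{E:wedge_tangent_prod}, if it exists, is unique, since it must extend the canonical identification $T(X^\circ\times Y^\circ)\cong \pi_X^\ast TX^\circ\oplus\pi_Y^\ast TY^\circ$ over the interior; this uniqueness is what lets the isomorphism be built locally and patched.

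First I would induct on the depth of $X\ttimes Y$, the depth-$0$ case being the tautological $T(X\times Y)\cong TX\oplus TY$. For the inductive step, a point of maximal depth meets (after possibly exchanging $X$ and $Y$) a face of the form $\tlift H Y$ with $H$ a maximal proper boundary hypersurface of $X$; by Proposition~\ref{P:induced_fc} this forces $\bff H$ to have depth $0$, so that the base $\bfb H\ttimes Y$ of the boundary fibration $\tlift H Y\to\bfb H\ttimes Y$ (with fiber $\bff H$, per \eqref{E:intro_bdy_fib_prod}) has strictly smaller depth. Passing to normal models and applying Proposition~\ref{P:wT_exact_seq} to the product-type hypersurfaces $\tlift H Y$ and $H$ gives, over their zero sections, the two short exact sequences
\[
0\to\wT(\bfb H\ttimes Y)\to\wT_0\Np\tlift H Y\to\wT\bff H\to 0,\qquad 0\to\wT\bfb H\to\wT_0\Np H\to\wT\bff H\to 0,
\]
with common quotient $\wT\bff H$. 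Splitting these and inserting the inductive isomorphism $\wT(\bfb H\ttimes Y)\cong\wT\bfb H\oplus\wT Y$ then yields
\[
\wT_0\Np\tlift H Y\cong\wT(\bfb H\ttimes Y)\oplus\wT\bff H\cong(\wT\bfb H\oplus\wT\bff H)\oplus\wT Y\cong\wT_0\Np H\oplus\wT Y,
\]
which, after identifying $\Np\tlift H Y$ and $\Np H$ with neighborhoods in $X\ttimes Y$ and $X$, realizes \eqref{E:wedge_tangent_prod} near the chosen face.

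The hard part, as anticipated in the opening of \S\ref{S:geom_wedge}, is to verify that this splicing is genuinely induced by the lifted projections $\pi_X,\pi_Y$ --- that the common factors $\wT\bff H$ match and that no spurious power of a boundary defining function intervenes, so that the \emph{rescalings} implicit in the wedge bundles are compatible. This is precisely why one works with the \emph{unnatural} compressed projection $\Np H\to C_\tmax(\bff H)$, $v\mapsto(\pr_{\bff H}(v),\rho_{\leq H}(v))$ of Proposition~\ref{P:wT_exact_seq} rather than the natural fibration $\bfib H:\Np H\to\bfb H$: its compressing function $\rho_{\leq H}=\prod_{H'\leq H}\rho_{H'}$ is a product of defining functions of faces $H'\leq H$, and these lift under $\pi_X$ to defining functions of the faces $\tlift{H'}{Y}\leq\tlift H Y$. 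Choosing the representatives so that $\pi_X^\ast\rho_{\leq H}=\rho_{\leq\tlift H Y}$ (the dual of the arrangement $\pi_X^\ast\rho_{\geq H}=\rho_{\geq\tlift H Y}$ used in the proof of Theorem~\ref{T:phi_metric}), the two compressed projections fit into a square over the identity of $C_\tmax(\bff H)$, which makes the induced splitting $\pi_X$-equivariant and transports the correct wedge rescaling. Establishing this commutativity and the resulting matching of the $\wT\bff H$ summands is the principal obstacle; the remainder is bookkeeping in standard form coordinates as in Proposition~\ref{P:wT_exact_seq}.

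As an alternative I would also give a metric-based argument paralleling the second proof of Theorem~\ref{T:phi_metric}, exploiting the freedom to put the defining wedge metrics in convenient form. Taking $H$ and $G$ maximal proper (so $\bff H,\bff G$ have depth $0$) and choosing $g_X=g_{\bfb H}+d\rho_H^2+\rho_H^2\kappa_H$ and $g_Y=g_{\bfb G}+d\rho_G^2+\rho_G^2\kappa_G$ near $\tlift H G$, with $\kappa_H,\kappa_G$ smooth fiber metrics, I would pass to the small-end polar coordinates $r=\sqrt{\rho_H^2+\rho_G^2}$, a local defining function for the front face $\tlift H G$, and $\theta=\arctan(\rho_G/\rho_H)\in[0,\tfrac{\pi}{2}]$, obtaining
\[
g_X+g_Y=g_{\bfb H}+g_{\bfb G}+dr^2+r^2\bigl(d\theta^2+\cos^2\theta\,\kappa_H+\sin^2\theta\,\kappa_G\bigr).
\]
Here $g_{\bfb H}+g_{\bfb G}$ is a wedge metric on the lower-depth product $\bfb H\ttimes\bfb G$ by induction, while the parenthesized tensor is manifestly a wedge metric on the join $\bff H\jtimes_\tmax\bff G$; hence the whole expression is an iterated conic, i.e.\ wedge, metric on $X\ttimes Y$ near $\tlift H G$, with cone radius $r$ and link the join, in agreement with the boundary fibration structure of Theorem~\ref{T:fibcorn_product}. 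The degenerate faces $\tlift H Y$ and $\tlift X G$ are handled by the same computation with one conic factor absent.
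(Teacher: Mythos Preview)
Your first (bundle-based) argument is essentially the paper's own first proof: induction on depth, reduction to a maximal face $\tlift{H}{Y}$ via its normal model, invocation of Proposition~\ref{P:wT_exact_seq} together with the identification $\pi_X^\ast\rho_{\leq H}=\rho_{\leq\tlift HY}$, and the inductive isomorphism on the lower-depth base $\bfb H\ttimes Y$. (Minor note: the quotient in the short exact sequence should be $\wT C_\tmax(\bff H)$ rather than $\wT\bff H$, as the proof of Proposition~\ref{P:wT_exact_seq} makes clear; this is a typo in the proposition's displayed statement that you have faithfully reproduced, but it does not affect the argument.)

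Your alternate metric argument, however, departs from the paper's in a way that breaks it. The paper takes $H$ and $G$ \emph{minimal}, so that $\bfb H$ and $\bfb G$ have depth $0$; the model $g_X=d\rho_H^2+g_{\bfb H}+\rho_H^2\kappa_H$ with $\rho_H$ a boundary defining function and $\kappa_H$ a wedge metric on the deep fiber $\bff H$ is then genuinely a wedge metric on all of the tube of $H$. You instead take $H$ maximal, so $\bfb H$ is deep, and assert the form $g_X=g_{\bfb H}+d\rho_H^2+\rho_H^2\kappa_H$ with $g_{\bfb H}$ a wedge metric on $\bfb H$. But this is \emph{not} a wedge metric on $X$ near the lower faces: in standard form coordinates with $H=H_n$ the wedge coframe element is $x_1\cdots x_{n-1}\,dx_n$, not $dx_n$, so $d\rho_H^2=dx_n^2$ has an unbounded coefficient $(x_1\cdots x_{n-1})^{-2}$ when expressed in $\wT^\ast X$. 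The correct peeling from the maximal end is $g_X=g_{\bfb H}+\rho_{<H}^2\bigl(d\rho_H^2+\rho_H^2\kappa_H\bigr)$, with the extra factor $\rho_{<H}^2$; carrying this factor through the polar substitution destroys the clean $dr^2+r^2(\cdots)$ form you rely on. This is exactly why the paper's alternate proof works from the \emph{minimal} face, where $g_{\bfb H}$ is smooth and no such prefactor appears, and uses the inductive hypothesis away from $r=0$ to force the join metric on $\bff H\jtimes\bff G$ to be wedge.
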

\begin{proof}
Such an isomorphism is unique if it exists, being the continuation of the standard isomorphism 
$T(X^\circ\times Y^\circ) \cong TX^\circ \oplus TY^\circ$
over the interior, which also constitutes the result when $X$ and $Y$ both have depth 0.
Proceeding then by induction on the depth of $X\ttimes Y$, it suffices to consider a point in $X\ttimes Y$ of maximal depth,
lying in a maximal boundary hypersurface, which without loss of generality we may assume 
has the form $\tlift G Y $ where $G \in \M 1 (X)$ is a maximal boundary hypersurface of $X$.

Restricting attention to a sufficiently small neighborhood, we may assume $G = \bff G \times \bfb G$ and $\tlift{G}{Y} = (\bff G) \times (\bfb G\ttimes Y)$
are products, and consider via Proposition~\ref{P:wT_exact_seq}
the compressed projection diagram 
\[
\begin{tikzcd}[sep=small]
	\Np \tlift{G}{Y} \ar[r] \ar[d] & \Np G \ar[d]
	\\ C(\bff G) \ar[r] & C(\bff G)
\end{tikzcd}
\]
where $C = C_\tmax$, and which we can assume is the identity along the bottom row
since we may take $\rho_{\leq \tlift G Y} = \pi_X^\ast \rho_{\leq G}$ by maximality.
Over the cone end the diagram is fibered, given by the projection $\bfb G \ttimes Y \to \bfb G$ on fibers.
Since $\bfb G\ttimes Y$ has lower depth, Proposition~\ref{P:wT_exact_seq}
and induction give a diagram
\[
\begin{tikzcd}
	0 \ar[r] & \wT (\bfb G\ttimes Y) \ar[r]\ar[d,"\cong"] & \wT\Np\tlift {G}{Y} \ar[r] & \wT C(\bff G) \ar[d, equal] \ar[r] & 0
	\\0 \ar[r] &  \wT \bfb G\oplus \wT Y \ar[r] &\wT \Np G \oplus \wT Y \ar[r] & \wT C(\bff G) \ar[r] & 0
\end{tikzcd}
\]
of short exact sequences, in which the left and right hand vertical maps are isomorphisms, assembling to the desired isomorphism 
in the middle upon identifying $\Np G$ and $\Np \tlift{G}{Y}$ with suitable neighborhoods in $X$ and $X \ttimes Y$.
\end{proof}

\begin{proof}[Alternate proof of Theorem~\ref{T:wedge_metric}]
For the second proof, we use the characterization of $\wT X$ as the unique vector bundle (up to isomorphism) which extends $TX^\circ$ and to which any wedge metric
$g_X$ on $X^\circ$ extends to be nondegenerate and nonsingular.

The result is trivial when $X$ or $Y$ is of depth $0$, and we
then proceed by induction on the depth of $X\ttimes Y$.
Let $G$ and $H$ be minimal hypersurfaces of $X$ and $Y$ respectively; 
by induction it suffices to prove the result near the lift of $G\times H$.  
In particular, $\bfb G$ and $\bfb H$ are closed manifolds of depth 0, 
and, by localizing as necessary, we may assume that $\phi_G$ and $\phi_H$ are trivial, so $G = \bfb G \times \bff G$ and $H = \bfb H \times \bff H$.

In order to prove the isomorphism \eqref{E:wedge_tangent_prod} we may freely choose the defining wedge metrics $g_X$ and $g_Y$.
Hence on neighborhoods $\bfb G \times \bff G \times [0,1)$ and $\bfb H \times \bff H \times [0,1)$
we may assume that $g_X$ and $g_Y$ have the form
\begin{equation}
	g_X=d\rho_G^2+ g_{\bfb G}+ \rho_G^2\kappa_G  
	\quad \mbox{and} \quad 
	g_Y=d\rho_H^2+ g_{\bfb H}+ \rho_H^2\kappa_H
\label{wedge.2}
\end{equation}  
where 
$\kappa_G$ and $\kappa_H$ are wedge metrics on $\bff G$ and $\bff H$, respectively, and where $g_{\bfb G}$ and $g_{\bfb H}$ are smooth metrics on $\bfb G$ and $\bfb H$, respectively.
Using the polar coordinates $r= \sqrt{\rho_G^2+\rho_H^2}, \theta= \arctan \frac{\rho_G}{\rho_H}$, the cartesian product $g = \pi_X^\ast g_X + \pi_Y^\ast g_Y$ takes the form
\[
	g= dr^2 + g_{\bfb G}+ g_{\bfb H}+ r^2\left( d\theta^2+ \sin^2\theta \kappa_G+ \cos^2\theta \kappa_H \right).
\]
Clearly, this is a wedge metric on the neighborhood 
\[
	\tlift{G}{H} \times [0,1) \cong \bfb G\times \bfb H \times (\bff G\jtimes \bff H) \times [0,1) 
\]
provided we know that the geometric join $d\theta^2+ \sin^2\theta \kappa_G+ \cos^2\theta \kappa_H$ lifts to a wedge metric on $\bff G\jtimes \bff H$.
However, by our inductive assumption on the depth, $g$ is known to be a wedge metric away from $r = 0$, which in particular forces this geometric join to be 
a wedge metric on $\bff G \jtimes \bff H$.
%
\end{proof}

\section{Equivalence of minimal and maximal joins} \label{S:equiv}
As noted previously, the ordered products $X_\tmax \ttimes Y_\tmax$ and $X_\tmin \ttimes Y_\tmin$ are generally not diffeomorphic.
%
By contrast, in this section we prove a rather remarkable diffeomorphism between the minimal and maximal joins $X\jtimes_\tmax Y$ and $X\jtimes_\tmin Y$.
Note that since $X \jtimes_\tmax Y$ is interior minimal while $X \jtimes_\tmin Y$ is interior maximal, we don't expect (nor do we obtain) a natural
fibered corners isomorphism from one to the other; however it happens that the diffeomorphism does interwtine all the boundary fibrations (just not the interior fibration);
in particular 
it follows that these two extremal versions of the join product have the same underlying 
stratified space, namely the topological join of the two associated stratified spaces.

\begin{thm}
Let $X$ and $Y$ be manifolds with fibered corners with total boundary defining functions $\rho_X = \prod_{G \in \M 1(X)} \rho_G$ and $\rho_Y = \prod_{H \in \M 1(Y)} \rho_H$. 
Then the b-map
\begin{equation}
	(X \times \bbR_+)\times (Y \times \bbR_+) \to (X \times \bbR_+) \times (Y\times \bbR_+),
	\quad \big((x,t),(y,s)\big) \mapsto \big((x,s\rho_Y(y)), (y,t\rho_X(x))\big)
\label{E:join_cone_map}
\end{equation}
lifts to a b-map (not a fibered corners morphism)
\[
	C_\tmax(X) \ttimes C_\tmax(Y) \to C_\tmin(X) \ttimes C_\tmin(Y)
\]
the restriction of which to the principal boundary hypersurface is a diffeomorphism
\begin{equation}
	X\jtimes_\tmax Y \cong X \jtimes_\tmin Y
	\label{E:join_diffeo}
\end{equation}
In addition, \eqref{E:join_diffeo} is fibered over boundary hypersurfaces, and constitutes a fibered corners isomorphism 
$X\jtimes_\tmax Y\cong (X\jtimes_\tmin Y)_\tmin$.

In particular, it follows from Corollary~\ref{C:join} that the stratified space associated to $X\jtimes_\tmin Y$ is also the topological join $\strat X \star \strat Y$
of the stratified spaces associated to $X$ and $Y$.
\label{T:equiv_joins}
\end{thm}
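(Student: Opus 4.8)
The plan is to realize the diffeomorphism \eqref{E:join_diffeo} as the restriction of a lift of \eqref{E:join_cone_map} to the two ordered products of cones, and to establish its invertibility by a composition trick rather than by inverting it directly (its inverse on the cartesian products is singular). First I would reduce to a neighborhood of maximal depth, so that $X$ and $Y$ are locally totally ordered with monoid generators $h_1 < \cdots < h_n$ and $g_1 < \cdots < g_m$, and record the generators $a,b$ of the cone faces $X\times 0$, $Y\times 0$. In $C_\tmax(X)$ the cone face is minimal ($a < h_1 < \cdots < h_n <$ interior) whereas in $C_\tmin(X)$ it is maximal (interior $< h_1' < \cdots < h_n' < a'$), so by the proof of Theorem~\ref{T:ord_prod} both $C_\tmax(X)\ttimes C_\tmax(Y)$ and $C_\tmin(X)\ttimes C_\tmin(Y)$ are generalized blow-ups of the common cartesian product whose refinements consist of the monoids generated by maximal totally ordered chains of sums $p+q$, taken in the max-order respectively the min-order. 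Viewing \eqref{E:join_cone_map} as a rigid b-map $\Phi$, its boundary behaviour $t' = s\rho_Y$, $s' = t\rho_X$, $\rho'_{H_i}=\rho_{H_i}$, $\rho'_{G_j}=\rho_{G_j}$ produces the monoid homomorphism $\bd\Phi_\ast$ determined on generators by
\[
	a \mapsto b', \quad b \mapsto a', \quad h_i \mapsto h_i' + b', \quad g_j \mapsto g_j' + a'.
\]

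The key step, and the main obstacle, is to verify the hypothesis of Theorem~\ref{T:lifting_b-maps}(b): that $\bd\Phi_\ast$ carries each source refinement monoid into some target refinement monoid, so that $\Phi$ lifts to $\wt\Phi : C_\tmax(X)\ttimes C_\tmax(Y)\to C_\tmin(X)\ttimes C_\tmin(Y)$. A source monoid is $\bbZ_+\pair{p_\alpha+q_\alpha}$ for a lattice path from the corner $(a,b)$ to the interior corner, and I would show that the image of its generators lies in the monoid of a canonically associated target path. The mechanism is that $\bd\Phi_\ast(p+q)$ contains $a'$ precisely when the $Y$-factor is present and $b'$ precisely when the $X$-factor is present, so whenever both are present the image automatically contains the cone sum $a'+b'$, while the residual summands $h_i'$ and $g_j'$ reassemble into a genuine min-order chain. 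Checking that this assignment is in fact a bijection between maximal chains of the two orders both confirms the lifting and exhibits the expected matching of boundary hypersurfaces; this lattice-path bookkeeping is the technical heart of the proof.

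For invertibility I would exploit the symmetry between the minimal and maximal constructions: the identical formula lifts in the opposite direction to $\wt\Phi' : C_\tmin(X)\ttimes C_\tmin(Y)\to C_\tmax(X)\ttimes C_\tmax(Y)$, and by the functoriality of the lift in Theorem~\ref{T:lifting_b-maps} the composite $\wt\Phi'\circ\wt\Phi$ is the lift of the scaling map $((x,t),(y,s))\mapsto ((x,t\rho_X\rho_Y),(y,s\rho_X\rho_Y))$ of both cone coordinates by the common factor $\rho_X\rho_Y$. This scaling preserves the ratio $t:s$ and fixes $x$ and $y$, hence acts as the identity on the dense interior $X^\circ\times Y^\circ\times(0,1)$ of the principal boundary hypersurface and therefore, being smooth, restricts to the identity on all of $X\jtimes_\tmax Y$. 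The symmetric computation shows $\wt\Phi\circ\wt\Phi'$ restricts to the identity on $X\jtimes_\tmin Y$, so the restrictions of $\wt\Phi$ and $\wt\Phi'$ to the principal faces are mutually inverse, yielding the diffeomorphism \eqref{E:join_diffeo}.

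Finally, because $\Phi$ fixes the $X$- and $Y$-coordinates and only permutes and rescales the cone directions, which are exactly the directions collapsed to form the join interval, I would verify by induction on depth, using Lemma~\ref{L:main_lemma_normal} and the naturality of the compressed projections, that $\wt\Phi_0$ sends each boundary hypersurface $\tlift{H'}{G'}$ of $X\jtimes_\tmax Y$ to the corresponding one of $X\jtimes_\tmin Y$ and intertwines their boundary fibrations; only the interior fibrations differ (minimal versus maximal), so \eqref{E:join_diffeo} is a fibered corners isomorphism $X\jtimes_\tmax Y\cong (X\jtimes_\tmin Y)_\tmin$. Since the associated stratified space is produced purely by collapsing boundary fibers, fiber collapse commutes with this isomorphism, and Corollary~\ref{C:join} then identifies the stratified space of $X\jtimes_\tmin Y$ with the topological join $\strat X \star \strat Y$, completing the argument.
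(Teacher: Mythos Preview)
Your argument for the lifting via Theorem~\ref{T:lifting_b-maps} and the chain bijection (swapping $\xi\leftrightarrow 0$, $\eta\leftrightarrow 0$) matches the paper's, but your proof of invertibility on the principal face is genuinely different. The paper passes to the quotient monoids modulo $\xi+\eta$ and observes that the induced homomorphism becomes an \emph{isomorphism} of monoids (the offending summand $\xi+\eta$ disappears), which shows the restricted map is simple and b-normal; but since a monoid isomorphism alone does not guarantee the map extends to a diffeomorphism up to the boundary, the paper then writes out explicit standard-form coordinates on both joins and checks that the map identifies them coordinate by coordinate. Your composition trick---observing that the same formula lifts equally well in the reverse direction and that $\Phi\circ\Phi$ is the diagonal scaling $(t,s)\mapsto(\rho_X\rho_Y\,t,\rho_X\rho_Y\,s)$, which preserves the projective ratio $[t:s]$ and hence restricts to the identity on the interior of the principal face---is a cleaner route to the diffeomorphism and avoids the coordinate computation entirely.

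The one place your proposal is thin is the fibered property. Invoking Lemma~\ref{L:main_lemma_normal} and an unspecified induction on depth is not obviously adequate: the boundary fibrations of the two joins are built from the compressed projections in \emph{opposite} senses (fibers versus bases appearing in the join factor), and showing $\wt\Phi_0$ intertwines them requires tracking how the exchange $\xi\leftrightarrow 0$, $\eta\leftrightarrow 0$ swaps the roles of the faces $H\times Y\times\{0\}$ and $H\times Y\times[0,1]$ (and likewise for $G$). In the paper this comes for free from the coordinate computation, since in standard form the fibrations are coordinate projections and the map visibly identifies the coordinate lists; if you retain the composition argument for the diffeomorphism you will still need a separate verification along these lines, and the coordinate check is probably the shortest way to do it.
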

\begin{rmk}
Note that the fibered corners structure on $C_\tmin(X)$ or $C_\tmax(X)$ does not make use of the interior fibration on $X$ itself.
Viewing the joins as blow-ups of $X \times Y \times [0,1]$, the isomorphism \eqref{E:join_diffeo} identifies 
boundary hypersuraces in a manner which is somewhat surprising, namely:
\begin{itemize}
\item the lift of $X\times Y \times \set{0}$ is exchanged with the lift of $X \times Y \times \set 1$,
\item the lift of $G\times H \times [0,1]$ for $G \in \M 1(X)$ and $H \in \M 1(Y)$ is sent to itself; however,
\item the lift of $G \times Y \times \set 0$ is exchanged with the lift of $G \times Y \times [0,1]$, and
\item the lift of $X \times H \times \set 1$ is exchanged with the lift of $X \times H \times [0,1]$.
\end{itemize}
\end{rmk}

\begin{proof}
Enumerate the hypersurfaces of $X$ and $Y$ by $G_1,\ldots,G_m$ and $H_1,\ldots,H_n$, respectively, and suppose that they are ordered in the sense that $G_i < G_j$ 
implies $i < j$ and similarly for $H_i$. 
On $C_{\text{max/min}}(X)$ (and its product with $C_{\text{max/min}}(Y)$) denote by $g_i$ the monoid generator associated to the boundary hypersurface $G_i \times \bbR_+$ and by $\xi$ the generator associated to the 
boundary hypersurface $X \times 0$.
Likewise,
on $C_{\text{max/min}}(Y)$ (and its product with $C_{\text{max/min}}(X)$) denote by $h_i$ the monoid generator associated to the boundary hypersurface $H_i \times \bbR_+$ and by $\eta$ the generator associated to the
boundary hypersurface $Y \times 0$.
On the unresolved product $C_\tmax(X)\times C_\tmax(Y)$, the map \eqref{E:join_cone_map} is associated to monoid homomorphisms generated by
\begin{equation}
	\xi \mapsto \eta, 
	\quad \eta \mapsto \xi,
	\quad g_i \mapsto g_i + \eta,
	\quad h_j \mapsto \xi + h_j.
	\label{E:swindle_generators}
\end{equation}
We proceed to show that the map lifts to the ordered products.
The ordered product of the domain, $C_\tmax(X) \ttimes
C_\tmax(Y)$ is characterized as a blow-up via Theorem~\ref{T:lifting_b-maps}
by monoids generated by maximal ordered chains of sums
$a + b$ where
\[
\begin{gathered}
	a \in \set{\xi < g_1 < \cdots < g_m < 0},
	\\ b \in \set{\eta < h_1 < \cdots < h_n < 0}.
\end{gathered}
\]
Such a chain must begin with $\xi + \eta$, and may have terms of the form $g_i + \eta$ or $\xi + h_j$ but not both, and also of the form $g_i + 0$ or $0 + h_j$ but not both.
Though it is not really a generator, it is useful to imagine $0 + 0$ at the end of the chain.
On the other hand,
the ordered product of the range, $C_\tmin(X)\ttimes C_\tmin(Y)$ is characterized by monoids generated by maximal ordered chains of sums $a + b$ where
\[
\begin{gathered}
	 a \in \set{0 < g_1 < \cdots < g_m < \xi}
	\\ b \in \set{0 < h_1 < \cdots < h_n < \eta}.
\end{gathered}
\]
Such an ordered chain must end with $\xi + \eta$, and may have terms of the form $g_i + \eta$
or $\xi + h_j$ but not both, and also of the form $g_i +
0$ or $0 + h_j$ but not both.
Again, though it is not a generator, it is useful to imagine $0 + 0$ at the beginning of the chain.

To each maximal ordered chain of the source $C_\tmax(X)\ttimes C_\tmax(Y)$, we associate a maximal ordered chain of the target $C_\tmin(X) \ttimes C_\tmin(Y)$
by exchanging $\xi$ with $0$ and $\eta$ with $0$ in the list of generators (in particular $\xi + \eta$ is exchanged with $0 + 0$ at the start or end), and we claim that the monoid homomorphism generated by \eqref{E:swindle_generators} induces
a homomorphism from the monoid generated by this source chain into (but not onto) the monoid generated by the target chain.
Indeed, the generators map as
\[
\begin{aligned}
	\xi + \eta &\mapsto \xi + \eta 
	& g_i + h_j &\mapsto g_i + h_j
	\\ \xi + 0 &\mapsto 0 + \eta
	& 0 + \eta &\mapsto \xi + 0
	\\ g_i + 0 &\mapsto g_i + \eta
	& 0 + h_j &\mapsto \xi + h_j
	\\ g_i + \eta &\mapsto (g_i + 0) + (\xi + \eta)
	& \xi + h_i &\mapsto (0+ h_i) + (\xi + \eta)
\end{aligned}
\]
It follows from Theorem~\ref{T:lifting_b-maps} that \eqref{E:join_cone_map} lifts to a b-map $C_\tmax(X) \ttimes C_\tmax(Y) \to C_\tmin(X)\ttimes C_\tmin(Y)$.
Note that the monoid homomorphism is not an isomorphism (it is injective but not surjective) owing to the last line; in particular the lifted map is not b-normal.

To understand what the map looks like when restricted to $X \jtimes_\tmax Y \subset  C_\tmax(X)\ttimes C_\tmax(Y)$, recall that the b-normal monoids of a boundary hypersurface
are the quotient of those from the ambient space by the generator of the said hypersurface, which is this case is $\xi + \eta$. 
Thus the monoids of $X\jtimes_\tmax Y$ are generated by ordered chains as described above with the omission of $\xi + \eta$ at the start, while the monoids of $X\jtimes_\tmin Y$ are generated by ordered chains as for $C_\tmin(X)\ttimes C_\tmin(Y)$ with the omission of $\xi + \eta$ at the end.
Moreover, the induced map on the quotient monoids is generated as follows:
\begin{equation}
	[g_i + \eta] \mapsto [(g_i + \eta) + \xi] = [g_i + 0],
	\quad [\xi + h_j] \mapsto [\eta + (\xi + h_j)] = [0 + h_j]
	\quad [0 + \eta] \mapsto [\xi + 0]	
	\quad [\xi + 0 ] \mapsto [0+ \eta]	
	\label{E:swindle_generators_on_join}
\end{equation}

Thus each monoid of $X\jtimes_\tmax Y$ is mapped isomorphically to a related monoid of $X \jtimes_\tmin Y$ obtained by exchanging $\xi$ with $0$ and $\eta$ with $0$.
In particular it follows that $X\jtimes_\tmax Y \to X \jtimes_\tmin Y$ is simple, b-normal and ordered if the domain is made interior minimal (or the target maximal).

To see that it is also fibered (and to check that the diffeomorphism on the interior extends up to the boundary faces, which does not follow automatically from 
the above), we resort to using coordinates.
Before recording the general result, it is illustrative to consider some explicit examples setting $n = m = 2$ for simplicity.
Begining with standard form coordinates 
$(t, y_1,x_1,y_2,x_2,y_3)$ on $C_\tmax(X)$ and $(s,y'_1,x'_1,y'_2,x'_2,y'_3)$ on $C_\tmax(Y)$
taken so that the $x_i$ and $x'_i$ agree with the boundary defining functions used to construct the map,
standard form coordinates on $C_\tmax(X)\ttimes C_\tmax(Y)$ associated to the chain 
\[
	\xi + \eta < \xi + h_1 < g_1 + h_1 < g_1 + h_2 <  g_2 +h_2 < g_2 + 0 <( 0 + 0)
\]
(with notation as above)
are given by 
\begin{equation}
	(s, y'_1, \tfrac{t}{s}, y_1, \tfrac{sx'_1}{t}, y'_2, \tfrac{tx_1}{sx'_1}, y_2, \tfrac{sx'_1x'_2}{tx_1}, y'_3, \tfrac{tx_1x_2}{sx'_1x'_2}, y_3)
\label{E:max_cone_prod_coords}
\end{equation}
(Recall that the exponents of the rational boundary defining functions of the blow-up are the coefficients of the basis vectors which are dual
to the basis defined by the chain above.)
In a similar manner, with coordinates
$(y_1,x_1,y_2,x_2,y_3, x_3, y_{4},\tau)$ on $C_\tmin(X)$ and $(y'_1,x'_1,y'_2,x'_2,y'_3,x'_3,y'_{4},\sigma)$ on $C_\tmin(Y)$, standard form coordinates 
on $C_\tmin(X)\ttimes C_\tmin(Y)$ associated to the chain
\[
	(0 + 0 )< 0 + h_1 < g_1 + h_1 < g_1 + h_2 < g_2 + h_2 < g_2 + \eta < \xi + \eta
\]
are given by 
\begin{equation}
	(y'_1, \tfrac{x'_1x'_2\sigma}{x_1x_2\tau}, y_1, \tfrac{x_1x_2\tau}{x'_2\sigma}, y'_2, \tfrac{x'_2\sigma}{x_2 \tau}, y_2, \tfrac{x_2 \tau}{\sigma}, y'_3, \tfrac{\sigma}{\tau}, y_3, \sigma)
\label{E:min_cone_prod_coords}
\end{equation}
The maximal join is given by $s = 0$ in \eqref{E:max_cone_prod_coords} and the minimal join is given by $\sigma = 0$ in \eqref{E:min_cone_prod_coords}, and the map
\eqref{E:join_cone_map}, determined in coordinates by setting $\tau = sx'_1x'_2$ and $\sigma = tx_1 x_2$, perfectly identifies $s = 0$ in \eqref{E:max_cone_prod_coords}
with $\sigma = 0$ in \eqref{E:min_cone_prod_coords}, in particular identifying the boundary fibrations which are given by projections since the coordinates are in standard form.

For another example, 
standard form coordinates on $C_\tmax(X)\ttimes C_\tmax(Y)$ associated to the chain 
\[
	\xi + \eta < \xi + h_1 < \xi + h_2 < \xi + 0 < g_1 + 0 < g_2 + 0 < (0+0)
\]
are given by 
\begin{equation}
	(s, y'_1, x'_1, y'_2, x'_2, y'_3, \tfrac{t}{sx'_1x'_2}, y_1, x_1, y_2, x_2, y_3)
\label{E:max_cone_prod_coords_again}
\end{equation}
whereas coordinates on $C_\tmin(X)\ttimes C_\tmin(Y)$ associated to the chain
\[
	(0+0 )< 0 + h_1 < 0 + h_2 < 0 + \eta < g_1 + \eta < g_2 + \eta < \xi + \eta
\]
are given by 
\begin{equation}
	(y'_1, x'_1, y'_2, x'_2, y'_3, \tfrac{\tau x_1x_2}{\sigma}, y_1, x_1, y_2, x_2, y_3,\sigma)
	\label{E:min_cone_prod_coords_again}
	\end{equation}
and again $\set{s = 0}$ in \eqref{E:max_cone_prod_coords_again} is identified with $\set{\sigma = 0}$ in \eqref{E:min_cone_prod_coords_again} by $\tau = sx'_1x'_2$ and
$\sigma = tx_1x_2$.

In the general setting, standard form coordinates on $C_\tmax(X)\ttimes C_\tmax(Y)$ associated to a given maximal ordered `source chain' have the form 
\begin{equation}
	(\sigma_0, \zeta_1,\sigma_1,\zeta_2, \ldots, \zeta_{m+n},\sigma_{m+n}, \zeta_{m+n+1})
	\label{E:swindle_gen_coords_source}
\end{equation}
where $(\sigma_0,\ldots,\sigma_{m+n})$ are rational boundary defining
coordinates made up from the $(s,x_1,\ldots,x_m)$ and  $(t,x'_1,\ldots,x'_n)$ with exponents given by the
coefficients of the basis which is dual to the source chain (with
$\set{\sigma_0=0}$ defining principal boundary hypersurface $X\jtimes_\tmax
Y$), and $\zeta_i$ stands for those interior coordinates ($y_j$ or $y'_j$ for some $j$) for the base of the
boundary fibration on $\set{\sigma_i = 0}$ which belong to the fiber of the
boundary fibration on $\set{\sigma_{i-1} = 0}$. 
Likewise, standard form coordinates on $C_\tmin(X)\ttimes C_\tmin(Y)$ associated to the ordered chain given by exchanging $\xi \leftrightarrow 0$ and $\eta \leftrightarrow 0$ 
in the source chain 
have a similar form
\begin{equation}
	(\zeta_1, \sigma'_1, \zeta_2, \sigma'_2, \ldots, \zeta_{m+n}, \sigma'_{m+n}, \zeta_{m+n+1}, \sigma'_{m+n+1})
	\label{E:swindle_gen_coords_target}
\end{equation}
where $(\sigma'_1,\ldots,\sigma'_{m+n+1})$ are rational boundary defining coordinates made up from $(x_1,\ldots,x_m,\sigma)$ and $(x'_1,\ldots,x'_n,\tau)$,
with $\set{\sigma'_{m+n+1} = 0}$ defining the principal hypersurface $X\jtimes_{\tmin} Y$, and with the same ordered sequence of interior coordinates $\zeta_j$. 
The form of the boundary defining coordinates is entirely algebraic, and since \eqref{E:swindle_generators_on_join} is an isomorphism, it follows that
$\tau = sx_1\cdots x_m$ and $\sigma = tx'_1\cdots x'_n$ lifts to an identification $\sigma_i = \sigma'_i$ for $1 \leq i \leq m + n$. 
The interior coordinates $\zeta_j$ are identical between the source and target, and it follows that the induced map from $\set{\sigma_0 = 0}$ in \eqref{E:swindle_gen_coords_source} to $\set{\sigma'_{m+n+1} = 0}$ in \eqref{E:swindle_gen_coords_target} is a diffeomorphism which intertwines the boundary fibrations.
\end{proof}

\appendix
\section{Tube systems and fibrations} \label{S:tubes}

When working with manifolds with corners, it is often convenient to make use of tubular neighborhoods of boundary hypersurfaces.
\begin{defn}
Let $X$ be a manifold with corners.
A \emph{tube} for $G \in \M 1(X)$ is an open neighborhood $\nU G$ of $G$
with one of the following data, which are equivalent up to possibly replacing $\nU G$ by a smaller neighborhood:
\begin{enumerate}
\item A map $\nU G \to  G\times [0,\infty)$ which is a diffeomorphism onto its image and which restricts to the identity $1 : G \subset \nU G \to G\times \set0$.
\label{I:tube_diff}
\item A retraction $r_G : \nU G \to G$ and a nondegenerate local boundary defining function $\rho_G : \nU G \to [0,\infty)$.
\label{I:tube_bdf_retr}
\item A vector field $\xi_G$ on $\nU G$ which is inward pointing at $G$.
\label{I:tube_vfield}
\end{enumerate}
\label{D:tube}
\end{defn}

The equivalence of \ref{I:tube_diff} and \ref{I:tube_bdf_retr} is given by equating the diffeomorphism with the product $r_G \times \rho_G : \nU G \to G\times [0,\infty)$,
and the equivalence of \ref{I:tube_diff} and \ref{I:tube_vfield} follows from the flow-out of $G$ by $\xi_G$ in one direction (shrinking $\nU G$ if necessary) and the pullback of $\pa_t$ on $[0,\infty)$
in the other direction.

\begin{defn}
Let $X$ be a manifold with fibered corners.
A \emph{tube system} on $X$ consists of a tube $(\nU G, \xi_G)$ for each $G \in \M 1(X)$ 
as well as tubes $(\nU {G,G'}, \xi_{G,G'})$ for $\bface{G} {\bfb {G'}} \subset \bfb{G'}$ for each $G < G'$
such that
\begin{enumerate}
\item $\nU G \cap \nU {G'} = \emptyset$ if $G \cap G' = \emptyset$; otherwise $[\xi_G, \xi_{G'}] = 0$ on $\nU G \cap \nU {G'}$,
\label{I:tube_system_commute}
\item if $G < G'$, then $\xi_{G'}$ is tangent to the fibers of $\bfib G$ at $G \cap \nU {G'}$,
\label{I:tube_system_tangent}
\item if $G < G'$, then $\bfib{G'}(\nU G \cap G') \subset \nU {G,G'}$ and $\xi_{G} \rst_{\nU G \cap G'}$ is $\bfib {G'}$-related to $\xi_{G,G'}$ on $\nU {G,G'} \subset \bfb {G'}$, and
\label{I:tube_system_compat}
\item
$\bset{(\nU {G,G'}, \xi_{G,G'}) : G < G'}$ forms a tube system on $\bfb{G'}$.
\end{enumerate}
\label{D:tube_system}
\end{defn}

Note that the retractions and boundary defining functions for a tube system on $X$ correspond to the tubes of the associated stratified space $\strat X$ as discussed in \S\ref{S:strat}.
The existence of tube systems is proved in 
\cite[Prop.~3.7]{AM} and \cite[Lem.~1.4]{DLR}, 
on which the proof of Proposition~\ref{P:rel_tube} below is based.
In it we establish the existence of tube systems relative to a given interior fibration $\bfib X : X \to \bfb X$, though since we will also characterize interior fibrations in the process, proving an Ehresmann lemma for fibered corners, we make the following (temporary) definition:
\begin{defn}
Let $X$ be a manifold with fibered corners (the fibration $\bfib X$ need not be defined).
An \emph{interior fibration} is a surjective submersion $f : X \to Y$ onto an interior maximal manifold with fibered corners $Y = Y_\tmax$ which is ordered and fibered, and which has the property that whenever $f_\sharp(G) = H \in \M 1(Y)$, the bottom row of the commutative diagram
\[
\begin{tikzcd}
	G \ar[r, "f"] \ar[d,swap, "\bfib G"] & H \ar[d, "\bfib H"]
	\\ \bfb G \ar[r, "f_H"', "\cong"] & \bfb H
\end{tikzcd}
\]
is a diffeomorphism identifying $\bfb G \cong \bfb H$.
\label{D:int_fibn}
\end{defn}

\begin{prop}
Let $X$ be a manifold with fibered corners with representative boundary defining functions $\set{\rho_G}$ for the given fc-equivalence classes and $f : X \to Y$ an interior fibration in the sense
of Definition~\ref{D:int_fibn}.
Then there exists a tube system on $X$ with the following properties:
\begin{enumerate}
\item 
\label{I:rel_tube_fiber}
$\xi_G$ is tangent to the fibers of $f$ for every $G \in \M 1(X)$
such that $f_\sharp(G) = Y \in \M 0(Y)$.
\item 
\label{I:rel_tube_bdf}
$\xi_G(\rho_G) = 1$ at all boundary hypersurfaces of $X$ for each $G \in \M 1(X)$. 
In particular, the boundary defining functions determined by the tubes $\nU G \cong G \times [0,\ve_G)$ are fc-equivalent to the $\set{\rho_G}$.
\end{enumerate}
\label{P:rel_tube}
\end{prop}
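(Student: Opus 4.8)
The plan is to adapt the inductive construction of tube systems from \cite[Prop.~3.7]{AM} and \cite[Lem.~1.4]{DLR}, carrying the two additional requirements \ref{I:rel_tube_fiber} and \ref{I:rel_tube_bdf} along at each stage. I would induct on the depth of $X$, the depth-zero case being vacuous since there are then no boundary hypersurfaces and the interior fibration imposes no tube data. For the inductive step, note first that for each $H \in \M 1(X)$ the base $\bfb H$ is an interior maximal manifold with fibered corners of strictly smaller depth, equipped by Lemma~\ref{L:induced_equiv_bdf} with an induced representative fc-equivalence class of boundary defining functions and carrying its identity interior fibration. The inductive hypothesis then furnishes a tube system $\set{(\nU{H,H'},\xi_{H,H'}) : H < H'}$ on each $\bfb{H'}$ satisfying the normalization \ref{I:rel_tube_bdf} (condition \ref{I:rel_tube_fiber} being vacuous for an identity fibration). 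These supply exactly the base data appearing in Definition~\ref{D:tube_system}, so it remains to produce the vector fields $\xi_H$ on $X$ itself projecting onto them.

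To build $\xi_H$, I would work in standard form coordinates \eqref{E:coord_std_form} adapted simultaneously to the fibered corners structure and to $f$ (as provided by Corollary~\ref{C:str_coords} together with the submersion hypothesis on $f$), taking a representative $\rho_H$ itself as the boundary defining coordinate for $H$. In such a chart the coordinate field $\partial_{\rho_H}$ is inward pointing, satisfies $\partial_{\rho_H}(\rho_H) = 1$ identically, is tangent to the fibers of every coarser fibration $\bfib{H''}$ with $H'' < H$ (since $\rho_H$ is then a fiber coordinate of $\bfib{H''}$), is $\bfib{H'}$-related for each $H' > H$ to the coordinate field on the base $\bfb{H'}$, and—when $f_\sharp(H) = Y$—is tangent to the fibers of $f$. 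These chart-local fields can be adjusted to project onto the prescribed base fields $\xi_{H,H'}$ using the local triviality of $\bfib{H'}$, and then patched with a partition of unity. The crucial observation is that condition \ref{I:rel_tube_fiber}, the normalization \ref{I:rel_tube_bdf}, and the tangency and $\bfib{H'}$-relatedness conditions of Definition~\ref{D:tube_system} are all \emph{linear} in $\xi_H$ and hence stable under convex combinations; since a partition of unity combines the local models convexly with $\sum_\alpha \psi_\alpha = 1$, the global field still satisfies $\xi_H(\rho_H) = 1$ and retains all the tangency and relatedness properties. In particular $\rho_H$ is then a valid flow parameter for $\xi_H$, so the boundary defining function determined by the resulting tube may be taken to equal $\rho_H$, yielding the claimed fc-equivalence at once.

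The main obstacle is the commutation condition \ref{I:tube_system_commute}, namely $[\xi_H,\xi_{H'}] = 0$ on overlaps, which is nonlinear and \emph{not} preserved by convex combination, and is therefore in tension with the partition-of-unity patching used above. This is precisely the delicate point already treated in \cite{AM,DLR}: one arranges a compatible product (collar) structure in a neighborhood of each corner $H \cap H'$ in which $\xi_H$ and $\xi_{H'}$ are simultaneously the respective coordinate fields $\partial_{\rho_H}$ and $\partial_{\rho_{H'}}$, and performs the interpolation only in the complementary single-collar regions where no commutation constraint is active. I would import this bookkeeping essentially verbatim, observing only that the two new requirements impose nothing extra near corners: the corner product coordinates can be chosen compatible with $f$ and with the given $\rho_H$, so that the coordinate fields there are automatically tangent to the $f$-fibers for $f$-vertical hypersurfaces and normalized by $\partial_{\rho_H}(\rho_H) = 1$. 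The one genuine thing to verify is that the base tube systems on the various $\bfb{H'}$ obtained from the inductive hypothesis can be matched coherently across the different corners so that the conditions of Definition~\ref{D:tube_system} hold simultaneously; this follows, as in the cited references, from the compatibility already built into the fibered corners structure.
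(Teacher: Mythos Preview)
Your proposal has two genuine gaps. First, invoking Corollary~\ref{C:str_coords} for standard form coordinates is circular: in this paper that corollary is proved \emph{using} Proposition~\ref{P:rel_tube} (its proof begins ``Fix a tube system on $X$ as per Proposition~\ref{P:rel_tube}''). Second, and more substantively, applying the outer inductive hypothesis to each $\bfb{H'}$ with its \emph{identity} interior fibration throws away exactly what is needed to reconcile condition~\ref{I:rel_tube_fiber} with Definition~\ref{D:tube_system}\ref{I:tube_system_compat}. If $H < H'$ with $f_\sharp(H) = Y$, then also $f_\sharp(H') = Y$ (since $Y$ is maximal and $f_\sharp$ is ordered), so $f\rst_{H'}$ descends to an interior fibration $f_{H'} : \bfb{H'} \to Y$. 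Now Definition~\ref{D:tube_system}\ref{I:tube_system_compat} forces $\xi_H\rst_{H'}$ to be $\bfib{H'}$-related to your prescribed base field $\xi_{H,H'}$, while condition~\ref{I:rel_tube_fiber} forces $\xi_H$ to be tangent to the fibers of $f$; these are compatible only if $\xi_{H,H'}$ is itself tangent to the fibers of $f_{H'}$, which your identity-fibration hypothesis does not provide. The paper's proof instead applies the inductive hypothesis to $\bfb{H'}$ equipped with the fibration $f_{H'} : \bfb{H'} \to Y$ (or the isomorphism $\bfb{H'} \cong \bfb G$ when $f_\sharp(H') = G \in \M 1(Y)$), so that the base fields $\xi_{H,H'}$ come already $f_{H'}$-tangent where required.

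For the commutation condition~\ref{I:tube_system_commute}, the paper uses an inner induction on $\M 1(X)$ in reverse order: having constructed $\xi_{H'}$ for all $H' > H$, one defines $\xi_H\rst_{H'}$ as a $\bfib{H'}$-lift of $\xi_{H,H'}$ and then extends across $\nU{H'}$ by \emph{flowing out along $\xi_{H'}$}, which forces $[\xi_H,\xi_{H'}] = 0$ by construction; the partition of unity is used only away from these already-constructed collars. This flow-out is indeed the mechanism in \cite{AM,DLR}, but your sketch (build in charts, patch convexly, then repair commutation afterward) does not actually implement it, and is in tension with the adjustment ``to project onto the prescribed base fields $\xi_{H,H'}$'' that you mention only in passing.
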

\begin{rmk}
Observe that $X \to \pt$ and $1 : X \to X$ are always interior fibrations according to Definition~\ref{D:int_fibn} and neither imposes any additional conditions on a tube system.
Thus the result with either of these interior fibrations furnishes the existence of tube systems in general.
\end{rmk}
\begin{proof}
The proof is by induction on the depth of $X$; if $X$ has depth 0 then the statement is vacuous.
Thus assume the result holds for all spaces of depth strictly less than that of $X$; in particular each $\bfb G$ is equipped with a tube system 
satisfying properties \ref{I:rel_tube_fiber} and \ref{I:rel_tube_bdf} with respect to representative boundary defining functions on $\bfb G$ and 
the 
interior fibration $\bfb G \to Y$ in case $f_\sharp(G) = Y$ or $\bfb G \stackrel \cong \to \bfb H$ in case $f_\sharp(G) = H \in \M 1(Y)$. 

We proceed to construct $\xi_G$ for each $G \in \M 1(X)$ by another induction on $\M 1(X)$ in reverse order.
For $G$ maximal, there are two cases to consider: if $f_\sharp(G) = Y$, then $NG \subset \ker df$ and we may choose $\xi_G$ near $G$ to be transversal to $G$ and tangent to the fibers of $f$.
The condition that $\xi_G(\rho_G) = 1$ at $G$ is consistent with tangency to the fibers of $f$ since $\rho_G$ and $f$ are transversal.
Note that for each $G' < G$, either $f_\sharp(G') = Y$, in which case $G' \to Y$ factors as the composition of $\bfib{G'} : G' \to \bfb {G'}$ with $f_{G'} : \bfb {G'} \to Y$, and taking $\xi_G$ tangent to the fibers
of $\bfib{G'}$ on $G'$ is consistent with $\xi_G$ being tangent to the fibers of $f$; otherwise
$f_\sharp(G') = H \in \M 1(Y)$ and in this case $\bfib {G'} : G' \to \bfb {G'}$ factors as the composition of $f: G' \to H$ with $\bfib H : H \to \bfb{H} \cong \bfb{G'}$, so taking $\xi_G$
tangent to the fibers of $f$ makes it automatically tangent to the fibers of $\bfib{G'}$ at $G'$.
Either case is consistent with the condition that $\xi_G(\rho_G) = 1$ over $G'$ since $\rho_G$ is transverse to $\bfib {G'}$ for $G' < G$ as well as to $f$ as noted above.
Note that the flow of $\xi_G$ just constructed preserves the distribution $\ker df$ by integrability of the latter.
If on the other hand $f_\sharp(G) = H \in \M 1(Y)$, then we may simply take $\xi_G$ to be transversal to $G$, tangent to the fibers of $\bfib{G'}$ for each $G' < G$, as well as 
$f$-related to some vector normal field to $H$; this again ensures that the flow of $\xi_G$ preserves $\ker f$ by the identity $f_\ast([\xi_G, \eta]) = [f_\ast \xi_G, f_\ast \eta] = 0$
for $\eta$ tangent to the fibers of $f$.
Again the condition that $\xi_G(\rho_G) = 1$ at every $G' \in \M 1(X)$ can be imposed since $\rho_G$ is transverse to $\bfib{G'}$ for $G' < G$. 

Proceeding now by induction, suppose that $\xi_{G'}$ has been chosen for all $G' > G$ to satisfy the conditions of Definition~\ref{D:tube_system} as well as conditions \ref{I:rel_tube_fiber} and \ref{I:rel_tube_bdf} above.
We first define $\xi_G \rst_{G'}$ for each $G' > G$ to be a lift by $\bfib{G'}$ of the normal vector field $\xi_{G,G'}$ for $\bface{G} {\bfb{G'}} \subset \bfb{G'}$;
such a lift is automatically tangent to the fibers of $f$ in case $f_\sharp(G) = Y$ since $f : G' \to Y$ factors through $\bfib{G'}$ in this case and $\xi_{G,G'}$ is already tangent to the fibers
of $\bfb{G'} \to Y$.
Likewise, since $\rho_G \rst_{G'}$ is the pull-back of a boundary defining function $\rho_{G,G'}$ for $\bface{G} {\bfb {G'}} \subset \bfb{G'}$ and we may assume that $\xi_{G,G'}(\rho_{G,G'}) = 1$,
the condition that $\xi_G(\rho_G) = 1$ is satisfied here.
Note that such lifts may be chosen consistently over $G' \cap G''$ by Definition~\ref{D:tube_system}.\ref{I:tube_system_compat} applied to $\xi_{G,G'}$ and $\xi_{G,G''}$. 
We then extend $\xi_G$ over the tubular neighborhoods $\nU {G'}$ of each $G' > G$ by flowing out along $\xi_{G'}$; by Definition~\ref{D:tube_system}.\ref{I:tube_system_commute} these flow-outs are consistent on $\nU {G'} \cap \nU {G''}$. 
Since the $\xi_{G'}$ preserve $\ker f$, the extensions of $\xi_G$ remain tangent to the fibers of $f$ if applicable. 
Finally, away from the neighborhoods $\nU {G'}$ for $G' > G$ we 
proceed as in the base case, taking $\xi_G$ to be normal to $G$, tangent to the fibers of $f$ if $f_\sharp(G) = Y$ (and $f$-related to a normal vector field
for $f_\sharp(G) \in \M 1(Y)$ otherwise), tangent to the fibers of $\bfib{G'}$ for $G' < G$, and such that $\xi_G(\rho_G) = 1$ over $G' \leq G$; then we use a partition of unity to assemble $\xi_G$ globally to satisfy 
Definition~\ref{D:tube_system}.\ref{I:tube_system_commute}--\ref{I:tube_system_compat} and with respect to the vector fields already constructed, as well as conditions \ref{I:rel_tube_fiber} and \ref{I:rel_tube_bdf} above.
This completes the inner induction on $G \in \M 1(X)$, at the end of which $X$ is equipped with a tube system having the required properties,
completing the outer induction.
\end{proof}

\begin{prop}
Let $f : X \to Y$ be an interior fibration of manifolds with fibered corners in the sense of Definition~\ref{D:int_fibn}.
%
Then there exists a connection on $f : X \to Y$ with respect to which 
the lift $\wt \eta$ of any vector field $\eta \in \cV(Y)$ satisfies the following properties 
with respect to each $G \in f_\sharp^\inv(Y) \subset \M 1(X)$:
\begin{enumerate}
\item 
$\wt \eta\rst_G$ is 
$\bfib G$-related to a lift $\wt \eta_G$ of $\eta$ to $\bfb G$
by a connection on $f_G : \bfb G \to Y$ satisfying the same properties, and
\label{I:fib_strat_lift_ind}
\item $\wt \eta(\rho_G) = 0$ on a sufficiently small neighborhood of $G$ 
for some boundary defining function $\rho_G$ (coming from a tube system) in the fc-equivalence class on $X$. 
\label{I:fib_strat_lift_bdf}
\end{enumerate}
\label{P:fib_strat_lift}
\end{prop}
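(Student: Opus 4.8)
The plan is to prove this by induction on the depth of $X$, mirroring the structure of the proof of Proposition~\ref{P:rel_tube} and using the tube system furnished there to control the connection near the boundary. When $X$ has depth $0$ the boundary conditions are vacuous, and any ordinary Ehresmann connection on the fibration $f$ of closed manifolds suffices. For the inductive step, I would first fix, via Proposition~\ref{P:rel_tube}, a tube system $\set{(\nU H, \xi_H)}$ on $X$ adapted to $f$, so that $\xi_H$ is tangent to the fibers of $f$ whenever $f_\sharp(H) = Y$, that $\xi_H(\rho_H) = 1$ at every boundary hypersurface, and that the $\xi_H$ commute on overlapping tubes. In particular each $\nU H$ carries a collar $\nU H \cong H \times [0,\ve_H)$ with $\rho_H$ the collar coordinate, and this collar is compatible with $f$ in the sense that $f$ is independent of $\rho_H$ there, since $\rho_H$ is transverse to $f$.

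By induction, each base $\bfb H$ with $f_\sharp(H) = Y$ has strictly smaller depth and hence carries a connection for the interior fibration $f_H : \bfb H \to Y$ satisfying the stated properties; denote the associated horizontal lift by $\eta \mapsto \wt \eta_H$. I would then construct the horizontal complement $W \subset TX$ to $\ker df$ by an inner induction over $\M 1(X)$ in decreasing order. Near a hypersurface $H$ with $f_\sharp(H) = Y$, recall that $f\rst_H = f_H \circ \bfib H$ factors through $\bfb H$; I would define $W\rst_H$ as the lift of the horizontal distribution of $f_H$ through $\bfib H$, using a connection on the fiber bundle $\bfib H : H \to \bfb H$. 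This renders $W\rst_H$ tangent to $H$ and $\bfib H$-related to the distribution downstairs, securing condition~\ref{I:fib_strat_lift_ind}. I would then extend $W$ across the collar $\nU H \cong H \times [0,\ve_H)$ as the pullback under projection to $H$: this has no $\partial_{\rho_H}$ component, hence $\wt\eta(\rho_H) = 0$ as required by condition~\ref{I:fib_strat_lift_bdf}, while still projecting to $\eta$ because $f$ is $\rho_H$-independent on the collar. For hypersurfaces $H$ with $f_\sharp(H) = G \in \M 1(Y)$ the map $f_H : \bfb H \to \bfb G$ is a diffeomorphism and no condition is imposed, so here I would simply take $W\rst_H$ to be $f$-related to a chosen complement downstairs and extend across the collar as before.

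To globalize, I would check that the near-tube extensions agree on overlaps $\nU H \cap \nU {H'}$: the commutativity $[\xi_H, \xi_{H'}] = 0$ and the compatibility clauses of Definition~\ref{D:tube_system} make the collar structures, and hence the $\rho_H$-independent extensions, mutually consistent, while over a corner $H \cap H'$ the competing $\bfib H$- and $\bfib {H'}$-relatedness conditions are reconciled by the commuting triangle~\eqref{E:fc_diagram}. Finally I would patch the near-boundary distribution with an arbitrary complement on the interior using a partition of unity subordinate to the tubes. Since the set of linear complements to the fixed subbundle $\ker df$ is convex, a convex combination is again a complement, and conditions~\ref{I:fib_strat_lift_ind} and~\ref{I:fib_strat_lift_bdf} — being closed linear conditions on $W$ — survive the averaging, producing the desired global connection and closing the induction.

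The main obstacle is the coherence of the inductively chosen base connections over the corners of $X$. For $H < H'$ with $f_\sharp(H) = f_\sharp(H') = Y$, the connection on $\bfb {H'}$ restricts over its boundary face $\bface{H}{\bfb{H'}}$, which fibers over $\bfb H$ via $\phi_{HH'}$, and for the lift on $X$ to satisfy condition~\ref{I:fib_strat_lift_ind} simultaneously at $H$ and $H'$ this restricted connection must be $\phi_{HH'}$-related to the very connection chosen on $\bfb H$. Arranging that all these base connections fit together coherently is precisely what the recursive form of condition~\ref{I:fib_strat_lift_ind} (applied inside $\bfb{H'}$) together with the tube-system compatibility is designed to deliver; the delicate point is to verify this consistency carefully enough that the partition-of-unity averaging does not destroy the relatedness at deeper corners.
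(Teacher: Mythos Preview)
Your approach is in the right spirit but differs from the paper's in a key structural way. You induct on the depth of $X$ and attempt to build the horizontal distribution near \emph{all} hypersurfaces $H \in f_\sharp^{-1}(Y)$ simultaneously, then patch via a partition of unity; you correctly identify that the main danger is coherence at corners, where the inductively chosen connections on the various $\bfb H$ must be compatible under the maps $\phi_{HH'}$. This can be made to work, but the verification is genuinely delicate, and your closing paragraph essentially defers it.

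The paper instead inducts on the depth of the \emph{fiber} of $f$ and handles one hypersurface at a time via a doubling trick. It picks a single maximal $H$ with $f_\sharp(H) = Y$, builds the connection on the collar $\nU H \cong H \times [0,\ve)$ exactly as you describe (compose a connection on $\bfib H \circ \pr_1$ annihilating $\rho_H$ with the inductively given connection on $f_H : \bfb H \to Y$), and then---rather than repeating this for every other $H'$ and patching---doubles $X$ across $H$ to obtain $\wt X_H$, on which $\wt f : \wt X_H \to Y$ is an interior fibration of strictly smaller fiber depth. The inductive hypothesis furnishes a connection on a neighborhood of $X \setminus \nU H$ inside $\wt X_H$ satisfying all the required properties at the remaining hypersurfaces, and this is glued to the near-$H$ connection. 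The advantage is that the coherence-at-corners problem you flag simply does not arise: only one collar construction is performed per inductive step, and compatibility with the deeper $H' < H$ is automatic because the connection on $\bfb H$ already annihilates the $\rho_{H',H}$, which pull back to the $\rho_{H'}$ by the tube-system axioms. Your approach buys directness at the cost of a corner-consistency check; the paper's buys cleanliness at the cost of the doubling maneuver.
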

\begin{proof}
We fix a tube system on $X$ satisfying the conditions of Proposition~\ref{P:rel_tube} and use the associated boundary defining functions.
The proof is by induction on the depth of the fiber of $f :X \to Y$; if this fiber has depth 0 then the two conditions are vacuous and any connection suffices.
Thus we may assume by induction that such a connection exists for every interior fibration with fiber depth less than that of $f$;
in particular we may assume each $f_G : \bfb G \to Y$ is equipped with a suitable connection for each $G \in f_\sharp^\inv(Y)$, satisfying \ref{I:fib_strat_lift_bdf} with
respect to the boundary defining functions of the induced tube system on $\bfb G$.

Consider now a maximal boundary hypersurface $G$ such that $f_\sharp(G) = Y$ and consider the tubular neighborhood $\nU G \cong G \times[0,\ve)$.
The fibration $\bfib G\circ \pr_1 : G\times [0,\ve) \to \bfb G$ is transverse to $\rho_G = \pr_2: G\times[0,\ve) \to [0,\ve)$, so we may construct
a connection on $\bfib G \circ \pr_1$ whose horizontal distribution annhilates $\rho_G$ in a possibly smaller neighborhood of $G$. 
Composing this connection with the connection on $f_G : \bfb G \to Y$ gives a connection with the required properties near $G$, since the lift $\wt \eta_G$
of $\eta \in \cV(Y)$
to $\bfb G$ annihilates the boundary defining function $\rho_{G',G}$ for $\bface{G'} {\bfb G}$ for each $G' < G \in f_\sharp^\inv(Y)$, and by
properties of the tube system $\rho_{G'}$ on $\nU G \cap \nU {G'}$ is a lift of $\rho_{G',G}$ on $\nU {G'} \cap \nU G$.

Having defined the connection on a neighborhood of $G$, we now consider the
double, $\wt X_G$, of $X$ across $G$; this consists of two copies of $X$ glued
along $G$ with opposite orientations, and $f: X \to Y$ extends to a smooth map
$\wt f: \wt X_G \to Y$ (provided the gluing is performed with respect to a
choice of normal direction along $\ker df \rst_G$) which is again an interior
fibration 
according to Definition~\ref{D:int_fibn}.
The complement of $\nU G \subset X$ has a neighborhood which may be identified with an open set in $\wt X_G$, and since $\wt f : \wt X_G \to Y$ 
has strictly lower depth fibers, the connection on $\nU G$ may be combined with a suitable connection on this neighborhood by the inductive assumption.
\end{proof}

\begin{cor}
An interior fibration in the sense of Definition~\ref{D:int_fibn} is a locally trivial fiber bundle of manifolds with fibered corners. 
More precisely, for every $p \in Y$ there exists a neighborhood $\cO \ni p$ and diffeomorphism $f^\inv(\cO) \cong f^\inv(p) \times \cO$
such that for each $G \in f_\sharp^\inv(Y) \cap \M 1(X)$ the diagrams
\[
\begin{tikzcd}
	f^\inv(\cO) \ar[r, "\cong"] \ar[dr,swap, "f"] & f^\inv(p) \times \cO \ar[d, "\pr_2"] 
	\\ & \cO
\end{tikzcd}
\qquad \text{and} \qquad
\begin{tikzcd}
	f^\inv(\cO) \cap G \ar[r, "\cong"] \ar[d,swap, "\bfib {G}"] & (f^\inv(p)\cap G) \times \cO \ar[d, "\bfib G \times 1"] 
	\\ f_{G}^\inv(\cO) \ar[r, "\cong"] \ar[dr,swap,  "f_G"] & f_G^\inv(p) \times \cO \ar[d, "\pr_2"]
	\\ & \cO
\end{tikzcd}
\]
commute 
and 
the 
diffeomorphism $\cO \times f^\inv(p)\cong f^\inv(\cO)$ can be arranged to pull back $\rho_G$ to be independent of $\cO$ in a sufficiently
small neighborhood of $f^\inv(p) \cap G$
for some
representative boundary defining function $\rho_G$.

In particular, if $f : X \to Y$ is an interior fibration, then $X$ can be equipped with the fibered corners
structure in which $\bfib X = f$.
\label{C:loc_trivial}
\end{cor}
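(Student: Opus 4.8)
The plan is to prove the corollary by an Ehresmann-type argument, using the connection supplied by Proposition~\ref{P:fib_strat_lift} to transport fibres and assemble the local trivialization. Since $X$ is compact the submersion $f$ is proper, so all flows below are defined for as long as their projections to $Y$ remain in the chart. Fix $p \in Y$ and, as $Y = Y_\tmax$ is a manifold with corners, choose a product chart $\cO \ni p$ with coordinates $(u_1,\dots,u_d)$, $d = \dim Y$, some ranging in $\bbR_+$ (boundary directions of $Y$) and some in $\bbR$ (interior directions), centred so that $p$ is the origin. Let $\eta_i = \partial_{u_i} \in \cV(Y)$ be the commuting coordinate vector fields and let $\wt\eta_i \in \cV(X)$ denote their horizontal lifts for a connection on $f$ furnished by Proposition~\ref{P:fib_strat_lift}.

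Writing $\Psi_i^{u}$ for the time-$u$ flow of $\wt\eta_i$, define
\[
	\Phi : f^\inv(p)\times \cO \to f^\inv(\cO),
	\qquad
	\Phi(q,u) = \Psi_d^{u_d}\circ\cdots\circ\Psi_1^{u_1}(q).
\]
Because the $\eta_i$ are coordinate fields on $\cO$, flowing them in sequence from the origin reproduces the chart, so $f\circ\Phi(q,u) = u$ regardless of the chosen order of composition; in particular the first diagram commutes. The map $\Phi$ is injective since $f\circ\Phi$ recovers $u$ and each flow is invertible, and surjective since any $x$ with $f(x)=u$ is the image of $q = \Psi_1^{-u_1}\circ\cdots\circ\Psi_d^{-u_d}(x)\in f^\inv(p)$; properness of $f$ guarantees that these (one-sided, when $p$ lies in a corner) flows remain in $f^\inv(\cO)$. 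Thus $\Phi$ is a diffeomorphism intertwining $f$ with $\pr_2$.

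It then remains to read off the boundary behaviour from the two properties of the connection. For $H \in f_\sharp^\inv(Y)\cap\M 1(X)$, property~\ref{I:fib_strat_lift_ind} asserts that each $\wt\eta_i\rst_H$ is $\bfib H$-related to a lift $\wt\eta_{i,H}$ on $\bfb H$ for a connection on the interior fibration $f_H : \bfb H \to Y$ satisfying the same properties. Hence $\Phi$ preserves $H$ and its restriction descends under $\bfib H$ to the analogous map built from the connection on $f_H$; as $f_H$ has strictly smaller fibre depth, an induction on the depth of the fibre of $f$ identifies this descended map with a local trivialization of $f_H$, yielding the second commuting diagram. Property~\ref{I:fib_strat_lift_bdf}, that $\wt\eta_i(\rho_H)=0$ near $H$ for a representative $\rho_H$ in the fc-equivalence class, shows the flows preserve $\rho_H$ there, so $\Phi^\ast\rho_H$ is independent of $\cO$ on a neighbourhood of $f^\inv(p)\cap H$, as required. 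Finally, local triviality exhibits $f$ as a genuine locally trivial fibre bundle, and its ordered, fibered, and base-diffeomorphism properties from Definition~\ref{D:int_fibn} are precisely the compatibility conditions of Definition~\ref{D:fc} needed for $f$ to serve as the interior fibration, so $X$ acquires the fibered corners structure with $\bfib X = f$.

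The main obstacle is the careful Ehresmann bookkeeping at the corners of $Y$: verifying that the composite one-sided flows defining $\Phi$ cover all of $f^\inv(\cO)$ and restrict compatibly both to each vertical boundary hypersurface $H$ and to the inductively constructed trivialization of $f_H$. Here properness of $f$ (from compactness of $X$) supplies completeness of the flows, and the fibre-relatedness in property~\ref{I:fib_strat_lift_ind} is exactly what reconciles the trivializations of $X$ and of the $\bfb H$ under $\bfib H$; although the order of composition of the $\Psi_i$ is immaterial for the base map $f\circ\Phi$, it must be tracked to confirm smoothness of $\Phi$ up to the corners.
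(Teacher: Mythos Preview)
Your proposal is correct and follows essentially the same approach as the paper: choose coordinate vector fields on a chart around $p$, lift them via the connection of Proposition~\ref{P:fib_strat_lift}, and use the ordered composition of their flows to build the local trivialization, with properties~\ref{I:fib_strat_lift_ind} and~\ref{I:fib_strat_lift_bdf} supplying the compatibility with the boundary fibrations and the triviality of $\rho_H$. The paper's proof is considerably terser and leaves implicit the properness/completeness argument, the explicit bijectivity of $\Phi$, and the induction on fibre depth for the second diagram, all of which you spell out.
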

\begin{proof}
Fix a neighborhood $\cO'$
of an arbitrary point $p \in Y$ with coordinates $(u_1,\ldots,u_n) \in \bbR_+^k \times \bbR^{n-k}$ and associated coordinate vector fields $\pa_{u_1},\ldots,\pa_{u_n}$. 
Let $\wt \eta_1,\ldots,\wt \eta_n$ be lifts of the $\pa_{u_i}$ on $f^\inv(\cO')$ with respect to a connection afforded by Proposition~\ref{P:fib_strat_lift} (these need not commute).
Then the ordered flow
\[
	\big((t_1,\ldots,t_n),q\big) \mapsto \exp(t_1\wt \eta_1)\cdots \exp(t_n \wt \eta_n)q
\]
induces a diffeomorphism $\cO \times f^\inv(p) \cong f^\inv(\cO)$
where
$\cO \subset \cO'$ is a suitably small neighborhood of $p$.
By the properties of the vector fields $\wt \eta_i$ the local diffeomorphism respects the fibered corners structure on fibers in that it trivializes the boundary fibrations and representative boundary 
defining functions for $H \in f_\sharp^\inv(Y)$ as required.
\end{proof}

Corollary~\ref{C:loc_trivial} is used in \cite{KR1,KR2} to provide local trivializations for bundles of QFB manifolds, a construction that is also useful to see that the results of \cite{Ammar} 
automatically hold for QFB metrics.

\begin{cor}
Let $X$ be a manifold with fibered corners with interior fibration $\bfib X : X \to \bfb X$. 
Then standard form coordinates exist near any point of $X$.
\label{C:str_coords}
\end{cor}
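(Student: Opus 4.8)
The plan is to argue by induction on the depth of $X$, constructing the coordinates \eqref{E:coord_std_form} and verifying the fibration formula \eqref{E:coord_std_form_fibn} near a fixed point $p \in X$. After shrinking to a neighborhood of $p$ we may assume that the boundary hypersurfaces of $X$ through $p$ form a single maximal chain $H_{-n'} < \cdots < H_{-1} < X < H_1 < \cdots < H_n$, so that the local depth is $d = n' + n$. The case $d = 0$ is immediate, since then $\bfib X$ is a submersion of boundaryless manifolds which trivializes near $p$ by Corollary~\ref{C:loc_trivial}, yielding coordinates $(y_0, y_1)$ in which $\bfib X$ is the projection to $y_0$.

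First I would reduce the general case to the interior maximal and interior minimal cases using the local triviality of $\bfib X$. By Corollary~\ref{C:loc_trivial} there is a neighborhood $\cO$ of $\bfib X(p)$ in $\bfb X$ and a diffeomorphism $\bfib X^{\inv}(\cO) \cong \cO \times \bff X$ carrying $\bfib X$ to the projection onto $\cO$, compatible with all boundary fibrations and pulling back the representative boundary defining functions of the $H_i$, $i > 0$, to functions independent of $\cO$. Now $\bfb X$ is interior maximal of depth $n'$ (Definition~\ref{D:int_fibn}), while $\bff X$ is interior minimal of depth $n$ (its boundary hypersurfaces correspond to the $H_i$ with $i > 0$, all lying above the interior). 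Applying the interior maximal case to $(\bfb X, \id)$ and the interior minimal case to $(\bff X, \bff X \to \pt)$ produces standard form coordinates $(y_{-n'}, x_{-n'}, \ldots, y_0)$ on $\cO$ and $(y_1, x_1, \ldots, x_n, y_{n+1})$ on $\bff X$, whose concatenation is the desired chart on $\bfib X^{\inv}(\cO)$. Here one checks, using the two commuting diagrams of Corollary~\ref{C:loc_trivial}, that each $\bfib{H_i}$ becomes the asserted projection: for $i < 0$ this follows from the standard form on the base together with $H_i = \bfib X^{\inv}(\bface{H_i}{\bfb X})$, while for $i > 0$ it follows from the second diagram, which trivializes $\bfib{H_i}$ over $\cO$ and gives the projection to $(y_{-n'}, \ldots, y_0, y_1, \ldots, y_i) = (y_{-n'}, \ldots, y_i)$.

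It then remains to treat the interior maximal and interior minimal cases, which I would do by a parallel induction on depth, peeling off the extreme hypersurface. For $X = X_\tmax$ interior maximal, let $H = H_{-1}$ be the maximal proper hypersurface through $p$. Fixing a tube system as in Proposition~\ref{P:rel_tube}, the associated retraction $r_H$ and good boundary defining function give a diffeomorphism $\nU H \cong H \times [0,\ve)$ with $x_{-1} = \rho_H$ the projection to $[0,\ve)$. The hypersurface $H$, equipped with its induced fibration $\bfib H : H \to \bfb H$ as interior fibration, is a fibered corners manifold of depth $d - 1$, to which the already established general case applies, yielding coordinates $(y_{-n'}, x_{-n'}, \ldots, y_{-1}, y_0)$ near $p$. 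Pulling these back along $r_H$ and inserting $x_{-1}$ produces the chart \eqref{E:coord_std_form}, and the tube system conditions \ref{I:tube_system_tangent}--\ref{I:tube_system_compat} of Definition~\ref{D:tube_system} guarantee that each $\bfib{H_i}$, $i < 0$, remains the correct projection throughout $\nU H$, not merely along $H$. The interior minimal case is identical with the minimal hypersurface $H_1$ peeled off instead. Since the reduction of the previous paragraph lands in these two cases at the same depth $d$, the three statements---interior maximal, interior minimal, and general---must be organized into a single induction on depth in which, at depth $d$, the maximal and minimal cases invoke only the general case at depth $d-1$, and the general case invokes the maximal and minimal cases at depth $\leq d$; this ordering avoids any circularity.

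The main obstacle, and the bulk of the work, is the bookkeeping that all the boundary fibrations $\bfib{H_i}$ simultaneously acquire the projection form \eqref{E:coord_std_form_fibn}, and that the chosen rational boundary defining functions stay mutually consistent---fc-equivalent to the given representatives and independent of the transverse directions---under the successive identifications. This is exactly what the strengthened conclusions of Corollary~\ref{C:loc_trivial}, namely the two commuting diagrams together with the assertion that the trivializing diffeomorphism may be taken to pull back $\rho_H$ to a function independent of $\cO$, and the tangency and compatibility conditions of the tube system (Definition~\ref{D:tube_system}, built on Proposition~\ref{P:fib_strat_lift}) are engineered to provide, so the verification at each inductive step reduces to unwinding these compatibilities rather than to any genuinely new construction.
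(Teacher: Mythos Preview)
Your proof is correct and relies on the same two inputs as the paper's argument---the tube systems of Proposition~\ref{P:rel_tube} and the local triviality of Corollary~\ref{C:loc_trivial}---but the organization is genuinely different. The paper's proof is a two-line sketch: fix a single tube system on $X$; the commuting normal vector fields $\xi_H$ furnish \emph{all} the boundary defining coordinates $x_i$ simultaneously, and then ``local triviality and the iterated condition of the boundary fibrations'' supply the interior coordinates $y_i$ so that every $\bfib_i$ becomes a projection. In contrast, you first split $X$ locally as $\bfb X \times \bff X$ via Corollary~\ref{C:loc_trivial} applied to $\bfib_X$, reduce to the interior maximal and minimal cases, and then peel off one extreme hypersurface at a time, extracting a single $x_i$ per inductive step. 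Your approach makes the induction completely explicit and isolates exactly which compatibility (the two diagrams of Corollary~\ref{C:loc_trivial}, or the tangency/compatibility conditions \ref{I:tube_system_tangent}--\ref{I:tube_system_compat} of Definition~\ref{D:tube_system}) is invoked at each stage; the paper's version is terser but leans more heavily on the reader to see that the commuting tube system does all the work at once. Both are valid; yours is closer to a full proof, the paper's closer to a proof sketch.
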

\begin{proof}
Fix a tube system on $X$ as per Proposition~\ref{P:rel_tube}.
The boundary defining functions $\rho_G : \nU G \cong G \times [0,\infty) \to
[0,\infty)$ associated to the tubes furnish local boundary defining coordinates
$(x_{-n'},\ldots, x_{-1},x_1,\ldots,x_n)$ satisfying the requisite conditions,
and by local triviality and the iterated condition of the boundary fibrations, interior coordinates $(y_{-n'},\ldots,y_0,y_1,\ldots,y_{n+1})$ can be chosen 
so that the boundary fibrations correspond to projections.
\end{proof}

%
%

\bibliographystyle{amsplain}
\bibliography{fcprod}

\end{document}